\newcommand{\mailurl}[1]{\email{\href{mailto:#1}{#1}}}
\let\uml\"
\title[On the global homotopy theory of symmetric monoidal categories]{On the global homotopy theory of\\ symmetric monoidal categories}
\author{Tobias Lenz}
\address{Mathematisches Institut, Rheinische Friedrich-Wilhelms-Universit\"at Bonn, Endenicher Allee 60, 53115 Bonn, Germany \& Max-Planck-Institut für Mathematik, Vivatsgasse 7, 53111 Bonn, Germany}
\curraddr{\scshape Mathematical Institute, University of Utrecht, Budapestlaan 6, 3584 CD Utrecht, The Netherlands}
\subjclass[2010]{55P91 (primary), % Equivariant homotopy theory
19D23 (secondary)% K-theory of symmetric monoidal categories
}
\keywords{Symmetric monoidal categories, parsummable categories, equivariant algebraic $K$-theory, global homotopy theory, $G$-global homotopy theory}
\newtheorem{thm}{Theorem}[section]
\newtheorem{lemma}[thm]{Lemma}
\newtheorem{cor}[thm]{Corollary}
\newtheorem{prop}[thm]{Proposition}
\newtheorem{introthm}{Theorem}
\theoremstyle{definition}
\newtheorem{rk}[thm]{Remark}
\newtheorem{constr}[thm]{Construction}
\newtheorem{warn}[thm]{Warning}
\newtheorem{ex}[thm]{Example}
\newtheorem{nex}[thm]{Non-example}
\newtheorem*{claim*}{Claim}
\renewcommand\qedsymbol{\scriptsize$\triangle$}}
\newtheorem{defi}[thm]{Definition}
\numberwithin{equation}{section}
\newcommand{\nerve}{\textup{N}}
\newcommand{\h}{\textup{h}}
\newcommand{\cat}[1]{\textbf{\textup{#1}}}
\newcommand{\blank}{{\textup{--}}}
\newcommand{\pr}{{\textup{pr}}}
\newcommand{\Hom}{{\textup{Hom}}}
\newcommand{\id}{{\textup{id}}}
\newcommand{\ev}{{\textup{ev}}}
\newcommand{\im}{\mathop{\textup{im}}}
\newcommand{\Fun}{\textup{Fun}}
\newcommand{\Inj}{\textup{Inj}}
\newcommand{\supp}{\mathop{\textup{supp}}\nolimits}
\newcommand{\Maps}{\mathord{\textup{maps}}}
\newcommand{\Ob}{\mathop{\textup{Ob}}}
\newcommand{\myh}{\mathord{\textup{`$\mskip-.1\thinmuskip h\mskip-.4\thinmuskip$'}}}
\newcommand{\sat}{{\textup{sat}}}
\newcommand{\forget}{\mathop{\textup{forget}}}
\newcommand{\bmM}{\hbox{\hfuzz=20pt\setbox0=\hbox to 0pt{$\mathcal M$}\kern-.175pt\copy0\kern.175pt\copy0\kern.175pt\copy0\kern.175pt$\mathcal M$\kern-.175pt}}
\newcommand{\bmEM}{{\bm{E}\bmM}}
\let\oldboxtimes=\boxtimes
\renewcommand{\boxtimes}{\mathchoice{\mathbin{\raise1.5pt\hbox{\scriptsize$\oldboxtimes$}}}%
{\mathbin{\raise1.5pt\hbox{\scriptsize$\oldboxtimes$}}}%
{\mathbin{\raise.75pt\hbox{\scriptsize$\scriptstyle\oldboxtimes$}}}%
{\mathbin{\raise.75pt\hbox{\scriptsize$\scriptscriptstyle\oldboxtimes$}}}}
\begin{document}
\begin{abstract}
\emph{Parsummable categories} were introduced by Schwede as input for his global algebraic $K$-theory construction. We prove that their whole homotopy theory with respect to the so-called \emph{global equivalences} can already be modelled by the more mundane symmetric monoidal categories.

In another direction, we show that the resulting homotopy theory is also equivalent to the homotopy theory of a certain simplicial analogue of parsummable categories, that we call \emph{parsummable simplicial sets}. These form a bridge to several concepts of `globally coherently commutative monoids' like ultra-commutative monoids and global $\Gamma$-spaces, that we explore in \cite{g-global}.
\end{abstract}

\maketitle
\setcounter{tocdepth}{1}
\tableofcontents

\section*{Introduction}
The \emph{algebraic $K$-theory} of rings encodes information about a wide range of phenomena in number theory, geometry, and other areas of pure mathematics. While historically the roots of the subject lie in algebra, Quillen's construction \cite{quillen-plus} of the $K$-groups of a ring $R$ is decidedly homotopy theoretic in nature: he first assigns to $R$ an infinite loop space (or, in modern interpretation, a connective spectrum) $\textbf{K}(R)$, and the $K$-groups are then only obtained in a second step as the homotopy groups of it.

Quillen's second construction \cite{quillen-theorem-A} made it clear that algebraic $K$-theory does not really depend on the ring $R$ itself, but only on its module category. Building on this observation, algebraic $K$-theory was soon extended to more general categorical inputs. In particular, May \cite{may-permutative} constructed the algebraic $K$-theory of small symmetric monoidal categories, and an equivalent construction was later given by Shimada and Shimakawa \cite{shimada-shimakawa}; more precisely, they show how symmetric monoidal categories yield \emph{special $\Gamma$-spaces} in the sense of Segal, which we can think of as `commutative monoids up to coherent systems of homotopies.' Segal's delooping machinery \cite{segal-gamma} then associates to each (special) $\Gamma$-space a connective spectrum, and together this yields the $K$-theory $\textbf{K}(\mathscr C)$ of a symmetric monoidal category $\mathscr C$. If $R$ is a ring, then applying $\textbf K$ to a skeleton of the symmetric monoidal category of finitely generated projective $R$-modules and $R$-linear isomorphisms under direct sum recovers the usual $K$-theory of $R$.

\subsection*{\texorpdfstring{$\bm K$}{K}-theory as group completion} A particularly striking structural insight on the $K$-theory of symmetric monoidal categories is Thomason's result \cite[Theorem~5.1 and Lemma~1.9.2]{thomason} that $\textbf{K}$ exhibits the homotopy category of connective spectra as localization of the category of small symmetric monoidal categories.

This result was later refined by Mandell~\cite[Theorem~1.4]{mandell}, who showed that already the intermediate passage to the homotopy category of special $\Gamma$-spaces is a localization, i.e.~symmetric monoidal categories model all `coherently commutative monoids in spaces.' Thomason's original result then follows from this via Segal's comparison between the homotopy theories of (special) $\Gamma$-spaces and connective spectra \cite[Proposition~3.4]{segal-gamma}.

More precisely, Segal shows (in modern language) that the passage from special $\Gamma$-spaces to connective spectra is a Bousfield localization and that it identifies the homotopy category of connective spectra with the one of the \emph{very special} (or \emph{grouplike}) $\Gamma$-spaces. Together with Mandell's result we can view this as a precise formulation of the slogan that $K$-theory is `higher group completion,' just like $K_0$ can be defined as an ordinary group completion.

\subsection*{Equivariant and global algebraic \texorpdfstring{$\bm K$}{K}-theory} The study of \emph{$G$-equivariant algebraic $K$-theory} for a fixed finite group $G$ already began in the 80's, but recent years have seen a renewed interest in it, for example through the work of Merling \cite{merling} and her coauthors \cite{equivariant-A, mmo}.

One possible approach to the subject goes back to Shimakawa \cite{shimakawa}, who developed \emph{$\Gamma$-$G$-spaces} as a $G$-equivariant generalization of Segal's $\Gamma$-spaces, and used this machinery to construct the equivariant $K$-theory $\textbf{K}_G(\mathscr C)$ of a small symmetric monoidal category $\mathscr C$ with a suitable $G$-action. Here $\textbf{K}_G(\mathscr C)$ is a $G$-spectrum in the sense of $G$-equivariant stable homotopy theory; we emphasize that this theory is richer than the na\"ive homotopy theory of $G$-objects in spectra, and similarly for $\Gamma$-$G$-spaces. In particular, Shimakawa's result is not simply a consequence of functoriality of the usual non-equivariant $K$-theory constructions---for example, we can extract from $\textbf K_G(\mathscr C)$ not only a single  $\mathbb N$-graded $K$-group, but in fact one for each subgroup $H\subset G$, and these graded abelian groups are connected by additional structure maps providing them with the structure of a so-called \emph{$G$-Mackey functor}.

In this sense it turns out that the $G$-equivariant algebraic $K$-theory of a symmetric monoidal category $\mathscr C$ with \emph{trivial} $G$-action already contains interesting additional information. If we fix $\mathscr C$ and vary $G$, this yields a family of equivariant $K$-theory spectra associated to $\mathscr C$, which are related by suitable change-of-group maps. A rigorous framework meant to capture the notion of such families is \emph{global stable homotopy theory} in the sense of \cite{schwede-book}, and it is therefore natural to ask whether we can collect all this equivariant information in a single \emph{global spectrum}.

A candidate for this has been recently proposed by Schwede \cite{schwede-k-theory}, who introduced \emph{global algebraic $K$-theory}. His approach differs from the other constructions discussed above in that it is not based on symmetric monoidal categories, but on so-called \emph{parsummable categories}. However, there is a specific way to assign a parsummable category to a small symmetric monoidal category, which can then be used to define its global algebraic $K$-theory.

\subsection*{New results}
The present article studies the global homotopy theory of symmetric monoidal and parsummable categories, in particular laying the groundwork for refinements of Thomason's and Mandell's results to equivariant and global algebraic $K$-theory.

More precisely, there is a notion of \emph{global weak equivalences} of parsummable categories, and global algebraic $K$-theory is invariant under them \cite[Theorem~4.16]{schwede-k-theory}. On the other hand, Schwede introduced a \emph{global model structure} on the category of small categories (modelling unstable global homotopy theory) in \cite{schwede-cat}, and we call a strong symmetric monoidal functor a global weak equivalence if its underlying functor is a weak equivalence in this model structure. As our first main result, we compare these two homotopy theories, thereby bringing Schwede's construction on an equal footing with the other approaches considered above:

\begin{introthm}[see Theorem~\ref{thm:perm-cat-vs-parsumcat}]\label{thm:perm-vs-par-sum-cat}
The passage from small symmetric monoidal categories to parsummable categories defines an equivalence of homotopy theories with respect to the global weak equivalences (i.e.~it induces an equivalence on the corresponding $\infty$-categorical localizations).
\end{introthm}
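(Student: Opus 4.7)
The plan is to exhibit the comparison functor $\Phi\colon\cat{SymMonCat}\to\cat{ParSumCat}$ as the left adjoint in a Quillen equivalence between global model structures on both sides, and to deduce the equivalence of $\infty$-categorical localizations from this. For the model structures, I would transfer Schwede's global model structure on $\cat{Cat}$ along the forgetful functors to underlying categories, so that weak equivalences match those in the theorem by definition.

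The right adjoint $U\colon\cat{ParSumCat}\to\cat{SymMonCat}$ should be the functor extracting an underlying symmetric monoidal structure from a parsummable category, and the Quillen-adjunction property should then be essentially automatic: weak equivalences and fibrations on both sides are detected on underlying categories, and $U$ visibly preserves such data. Moreover, the derived unit $\mathscr{C}\to U\Phi(\mathscr{C})$ should be a global weak equivalence because the underlying category of $\Phi(\mathscr{C})$ is a free thickening of $\mathscr{C}$ by the globally contractible translation category $E\mathcal{M}$ of the injection monoid, and hence has the same global homotopy type as $\mathscr{C}$.

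The main obstacle will be to prove that the derived counit $\Phi U(C)\to C$ is a global weak equivalence for every (fibrant) parsummable category $C$. This is the genuine rectification content of the theorem: every parsummable category is, up to global weak equivalence, of the standard form produced by $\Phi$ from an ordinary symmetric monoidal category. A natural line of attack is to realize $C$ as the homotopy colimit of a bar-type resolution $B_\bullet(\Phi U,\Phi U,C)$ by free parsummable categories, and then use that global weak equivalences are detected on $G$-fixed categories for finite subgroups $G\subset\mathcal{M}$ to reduce the question to a fixed-point calculation. The required contracting homotopies on each fixed-point simplicial category should ultimately rely on $E\mathcal{M}$ being a free and contractible $G$-category for every such finite $G$. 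If a direct adjoint turns out unavailable or badly behaved, the same bar-resolution ideas can instead be applied at the level of $\infty$-categorical localizations to establish essential surjectivity, with fully faithfulness following from a mapping-space computation in terms of homotopy fixed points.
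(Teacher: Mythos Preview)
Your proposal has a genuine gap: the functor $\Phi$ by itself is \emph{not} an equivalence of homotopy theories with respect to the global weak equivalences, and no bar-resolution argument can repair this. The paper records (see the remark after Theorem~\ref{thm:perm-cat-vs-parsumcat}) that there exist parsummable categories---for instance one whose global $K$-theory is $\textbf{K}_{\textup{gl}}(\mathbb C)$---that are not globally weakly equivalent to $\Phi(\mathscr C)$ for \emph{any} permutative category $\mathscr C$. In particular $\Phi$ is not essentially surjective on the global homotopy category, so your proposed counit $\Phi U(C)\to C$ cannot be a global weak equivalence in general, and the contracting-homotopy argument you sketch must fail. The heuristic ``$E\mathcal M$ is globally contractible'' is misleading here: for a universal subgroup $H\subset\mathcal M$ the fixed points $\Phi(\mathscr C)^H$ are typically much smaller than the homotopy fixed points, and it is the former that control global weak equivalences of parsummable categories.

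The missing ingredient is the \emph{saturation} $(\blank)^{\sat}$. The paper's comparison functor is the composite $(\blank)^{\sat}\circ\Phi$, and the hard work lies in showing that the inclusion $\cat{ParSumCat}^{s}\hookrightarrow\cat{ParSumCat}$ of saturated parsummable categories is a homotopy equivalence for the \emph{global} weak equivalences (Theorem~\ref{thm:parsum-cat-sat-global}). This is not formal: the natural map $\mathcal C\to\mathcal C^{\sat}$ is only a categorical equivalence, not a global one, so one needs an independent replacement functor landing in (weakly) saturated objects. The paper constructs such a replacement via the simplicial detour $\textup{C}_\bullet\circ\nerve$ and proves that $\textup{C}_X$ is always weakly saturated (Proposition~\ref{prop:c-bullet-ws}); only then can the categorical comparison $\Phi\colon\cat{PermCat}\simeq\cat{ParSumCat}$ (with respect to underlying equivalences, Theorem~\ref{thm:categorical-comparison}) be leveraged, because on saturated objects categorical equivalences and global weak equivalences agree (Lemma~\ref{lemma:cat-between-ws}). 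Your outline also presupposes model structures and a right adjoint $U$ whose existence the paper explicitly flags as unclear, but even granting those, the essential obstruction above remains.
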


In particular, symmetric monoidal categories are just as good from the perspective of global algebraic $K$-theory as general parsummable categories. This also follows the general pattern that \emph{on the pointset level} global objects can often be modelled by ordinary non-equivariant objects, and that it is only through the notion of weak equivalence that their equivariant behavior emerges.

As our second contribution, we introduce \emph{parsummable simplicial sets} as a simplicial analogue of parsummable categories. There is again a suitable notion of global weak equivalences, and with respect to these we prove:

\begin{introthm}[see Theorem~\ref{thm:nerve-vs-c-bullet}]\label{thm:par-sum-cat-vs-sset}
The nerve defines an equivalence of homotopy theories between the categories of parsummable categories and of parsummable simplicial sets.
\end{introthm}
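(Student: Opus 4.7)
The plan is to promote the nerve to a functor $\nerve$ from parsummable categories to parsummable simplicial sets, verify that it preserves and reflects global weak equivalences, and then produce a homotopy inverse $c_\bullet$ (as suggested by the theorem label). Since $\nerve$ is right adjoint to the fundamental category functor, it preserves all small limits and in particular finite products. Whatever structure (e.g.\ an action of a suitable operadic category such as $\cat{EM}$, or a partial-injection structure) defines a parsummable category therefore transports levelwise to a canonical parsummable structure on the nerve, yielding a well-defined lift.

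Next I would verify that this lift takes (and reflects) global weak equivalences. The key input is that global equivalences should be detected by an equivariant/fixed-point condition indexed over finite groups, and $\nerve$ commutes with all such limits; combined with the classical fact that the ordinary nerve sends categorical equivalences to weak homotopy equivalences, this reduces preservation to a levelwise statement on each fixed-point piece. Reflection should follow analogously, since the nerve is already conservative for the relevant low-dimensional homotopical invariants of ordinary categories. For the equivalence of $\infty$-categorical localizations, the cleanest route is model-categorical: lift a global model structure from $\cat{Cat}$ to parsummable categories, and set up an analogous model structure on parsummable simplicial sets, then show that $\nerve$ is the right adjoint of a Quillen equivalence. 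If such model structures turn out to be inconvenient, one can instead work $\infty$-categorically, constructing for each parsummable simplicial set $X$ a parsummable category $c_\bullet(X)$ with $\nerve\,c_\bullet(X)$ globally equivalent to $X$, and checking fully-faithfulness via mapping space calculations; Theorem~A is a useful lever here, letting us replace parsummable categories by the more rigid symmetric monoidal categories on the source.

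The main obstacle will be producing an inverse $c_\bullet$ compatible with the parsummable structure. The naive candidate, applying the fundamental category functor $\tau_1$ levelwise, fails because $\tau_1$ does not preserve products and hence does not descend to parsummable objects. I expect the fix to involve first passing through a cofibrant replacement or a Thomason-style subdivision to make the relevant simplicial sets ``free enough'' that their categorification respects the parsum operation up to coherent homotopy, and then checking that this replacement can be carried out equivariantly and compatibly with the action of the structural operadic category. Verifying this compatibility---rather than the underlying nerve-vs-$\tau_1$ comparison, which is essentially classical---is where I expect the bulk of the technical work to lie.
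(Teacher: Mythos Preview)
Your outline has the right broad shape---lift $\nerve$ via its preservation of products, observe it creates $G$-global weak equivalences because it commutes with fixed points, and then build an explicit inverse---but the concrete plans you offer for the inverse all miss the mark.

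First, the model-categorical route is not available: the paper explicitly notes that it is unclear whether the $G$-global weak equivalences on $\cat{ParSumCat}$ are part of a model structure, so you cannot simply ``lift a global model structure from $\cat{Cat}$'' and run a Quillen equivalence argument. Second, your fallback of ``$\tau_1$ plus Thomason-style subdivision'' is the wrong flavor of Thomason. The paper's inverse $\textup{C}_\bullet$ is not a rectified homotopy category but an $E\mathcal M$-equivariant enhancement of the \emph{category of simplices}: objects of $\textup{C}_X$ are $E\mathcal M$-equivariant maps $E\Inj(S,\omega)\times\prod_{a\in A}\Delta^{m_a}\to X$, and the comparison $\nerve\textup{C}_X\to X$ is a last-vertex map. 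The equivariance is built into the domains from the start, so there is no need to ``make things free enough'' after the fact. Proving this map is a $G$-global weak equivalence is the real work (done via Quillen's Theorem~A on slices and a careful analysis of fixed points of $E\Inj(S,T)\times\prod\Delta^{m_a}$), and lifting the box-product compatibility is a direct if lengthy verification that $\textup{C}_\bullet$ is lax symmetric monoidal and that $\epsilon,\tilde\epsilon$ are monoidal transformations.

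Finally, your suggestion to invoke Theorem~A to reduce to symmetric monoidal categories is circular: in the paper, Theorem~A is proved in Section~6 using the weak saturatedness of the categories $\textup{C}_X$, which is itself a byproduct of the proof of Theorem~B.
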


We are particularly interested in parsummable simplicial sets because they form a bridge to several concepts of `globally coherently commutative monoids' that we study in \cite{g-global}. In particular, we show in \emph{op.~cit.} that their homotopy theory is equivalent to a suitable global version of $\Gamma$-spaces and to Schwede's \emph{ultra-commutative monoids} \cite{schwede-book} (viewed through the eyes of finite groups). These further comparisons use techniques from homotopical algebra, and in particular they use that the global weak equivalences of parsummable simplicial sets are part of a model structure. It is not clear, whether such model structures also exist on the other categories discussed in this article.

While interesting in their own right, Theorems~\ref{thm:perm-vs-par-sum-cat} and~\ref{thm:par-sum-cat-vs-sset} above are crucial ingredients to the proof we give in \cite{g-global} that symmetric monoidal categories model all `globally coherently commutative monoids,' which can be viewed as a global refinement of Mandell's theorem and as an  `additive' version of Schwede's result that categories model all unstable global homotopy types \cite[Theorem~3.3]{schwede-cat}. Together with a global version of Segal's delooping theory, that we also develop in \cite{g-global}, this in particular allows us to refine Thomason's original result to a global comparison.

\subsection*{\texorpdfstring{$\bm G$}{G}-global homotopy theory}
In this article we will actually prove Theorems~\ref{thm:perm-vs-par-sum-cat} and~\ref{thm:par-sum-cat-vs-sset} in greater generality: namely, we allow an additional discrete, but possibly infinite group $G$ to act everywhere, and we consider the resulting categories of $G$-objects with respect to so-called \emph{$G$-global weak equivalences}, which for $G=1$ recovers the previous definitions.

However, for general $G$, the $G$-global weak equivalences are typically finer than the underlying global weak equivalences, and they are in particular fine enough to recover the usual $G$-equivariant information when $G$ is finite. More precisely, Shimakawa's equivariant $K$-theory is invariant under $G$-global weak equivalences,{\hskip0pt plus 2pt} so{\hskip0pt plus 2pt} that{\hskip0pt plus 2pt} the{\hskip0pt plus 2pt} $G$-global{\hskip0pt plus 2pt} versions{\hskip0pt plus 2pt} of{\hskip0pt plus 2pt} Theorems~\ref{thm:perm-vs-par-sum-cat}{\hskip0pt plus 2pt} and~\ref{thm:par-sum-cat-vs-sset}{\hskip0pt plus 2pt} provide structural information about the equivariant algebraic $K$-theory of symmetric monoidal categories with suitable $G$-actions. We study $G$-global homotopy theory systematically in \cite{g-global}, where we in particular use these generalizations to prove $G$-equivariant versions of Thomason's and Mandell's results.

\subsection*{Outline}
In Section~\ref{sec:reminder} we review the basic theory of tame $E\mathcal M$-categories and parsummable categories, and we define the $G$-global homotopy theory of $E\mathcal M$-$G$-categories and $G$-parsummable categories. Their simplicial counterparts are then introduced and studied in Section~\ref{sec:parsum-sset}.

Section~\ref{sec:construction} is devoted to the construction of the tame $E\mathcal M$-$G$-category $\textup{C}_X$ associated to a tame $E\mathcal M$-$G$-simplicial set $X$. We prove in Section~\ref{sec:unstable} that the resulting functor is homotopy inverse to the nerve, and that the $E\mathcal M$-$G$-categories arising this way satisfy a certain technical condition that we call \emph{weak saturatedness}. Section~\ref{sec:comp-parsum} is then devoted to lifting the results of the previous two sections to $G$-parsummable categories and $G$-parsummable simplicial sets, in particular proving Theorem~\ref{thm:par-sum-cat-vs-sset}.

Finally, we prove Theorem~\ref{thm:perm-vs-par-sum-cat} in Section~\ref{sec:sym-mon} by using the weak saturatedness established in Section~\ref{sec:unstable} to reduce it to the categorical comparison result of \cite{perm-parsum-categorical}.

\subsection*{Acknowledgements} This article was written as part of my PhD thesis at the University of Bonn, and I would like to thank my advisor Stefan Schwede for suggesting equivariant and global algebraic $K$-theory as a thesis topic, as well as for helpful remarks on a previous version of this article. I am moreover grateful to the anonymous referee for various comments that helped improve the exposition of the present article. Finally, I am indebted to Markus Hausmann, who first suggested to me that there should be a notion of $G$-global homotopy theory. This proved to be a fruitful way of thinking about many phenomena surrounding global $K$-theory, which in particular permeates \cite{g-global}.

I am grateful to the Max Planck Institute for Mathematics in Bonn for their hospitality and support during my PhD studies. At the time the first version of this article was written, I was an associate member of the Hausdorff Center for Mathematics, funded by the Deutsche Forschungsgemeinschaft (DFG, German Research Foundation) under Germany's Excellence Strategy (GZ 2047/1, project ID 390685813).

\section{A reminder on parsummable categories}\label{sec:reminder}
\subsection{\texorpdfstring{$\bmM$}{M}-sets and tameness} We begin by recalling the monoid $\mathcal M$ as well as some basic results about the combinatorics of $\mathcal M$-actions from \cite{I-vs-M-1-cat}.

\begin{defi}
If $A, B$ are sets, then we write $\Inj(A,B)$ for the set of injective maps $A\to B$. We write $\omega$ for the countably infinite set $\{1,2,\dots\}$, and we write $\mathcal M$ for $\Inj(\omega,\omega)$ considered as a monoid under composition.
\end{defi}

\begin{warn}
In \cite{I-vs-M-1-cat, schwede-k-theory}, the symbol $M$ is used for the above monoid, while \cite{schwede-k-theory} uses $\mathcal M$ for what we call $E\mathcal M$ below. The reason for this change in notation is consistency with \cite{g-global}, where actions of the above monoid are studied in their own right. In particular we prove in \cite{g-global} that simplicial sets with an action of $\Inj(\omega,\omega)$ (when equipped with a slightly subtle notion of weak equivalence) already model global homotopy theory, so we think it deserves to be notationally distinguished from a generic monoid.
\end{warn}

Next, we come to the notions of support and tameness for $\mathcal M$-sets, whose categorical and simplicial counterparts will later be central for defining parsummable categories and parsummable simplicial sets, respectively.

\begin{defi}
Let $X$ be an $\mathcal M$-set, let $x\in X$, and let $A\subset\omega$ be finite. We say that \emph{$x$ is supported on $A$} if $u.x=x$ for all $u\in\mathcal M$ fixing $A$ pointwise. We say that $x$ is \emph{finitely supported} if it is supported on some finite set. The $\mathcal M$-set $X$ is called \emph{tame} if all its elements are finitely supported.
\end{defi}

\begin{defi}
Let $X$ be an $\mathcal M$-set and let $x\in X$ be finitely supported. Then the \emph{support} $\supp(x)$ of $x$ is the intersection of all finite sets on which it is supported.
\end{defi}

\begin{lemma}\label{lemma:M-set-supported-on-supp}
In the above situation, $x$ is supported on $\supp(x)$.
\end{lemma}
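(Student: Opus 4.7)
The plan is to first reduce the lemma to a binary intersection claim and then prove that claim via an explicit factorization argument.

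For the reduction, I would establish the auxiliary statement that if $x$ is supported on finite sets $A, B \subseteq \omega$, then $x$ is supported on $A \cap B$. Granting this, the lemma follows from a finite-intersection argument: fix any $F_0$ in the nonempty family $\mathcal{F}$ of finite sets on which $x$ is supported, so that $\supp(x) \subseteq F_0$. For each of the finitely many $n \in F_0 \setminus \supp(x)$, the definition of $\supp(x)$ as the intersection of $\mathcal{F}$ provides some $F_n \in \mathcal{F}$ not containing $n$, and then $\supp(x) = F_0 \cap \bigcap_n F_n$ exhibits $\supp(x)$ as a finite intersection of elements of $\mathcal{F}$. Iterated application of the binary claim then yields that $x$ is supported on this intersection, which is exactly $\supp(x)$.

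For the binary claim, I would prove the more technical statement that every $u \in \mathcal{M}$ fixing $A \cap B$ pointwise admits a decomposition $u = \rho \sigma \rho'$ with $\rho, \rho' \in \mathcal{M}$ fixing $A$ pointwise and $\sigma \in \mathcal{M}$ fixing $B$ pointwise. Once this factorization is in hand, if $x$ is supported on both $A$ and $B$, then each of $\rho, \sigma, \rho'$ individually fixes $x$, so $u.x = \rho.(\sigma.(\rho'.x)) = x$, giving the claim.

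The decomposition itself is built by hand, using the infinitude of $\omega$ to introduce fresh elements outside $A \cup B \cup u(A \cup B)$. The key structural observation---which is what makes the construction work without appealing to inverses in $\mathcal{M}$---is that $u$ fixing $A \cap B$ pointwise combined with injectivity of $u$ forces $u$ to send all of $(A \cup B) \setminus (A \cap B)$ into $\omega \setminus (A \cap B)$. The construction then proceeds by cases on where $u$ sends elements of $A \setminus B$ and $B \setminus A$: when $a \in A \setminus B$ satisfies $u(a) \in A \setminus B$, one sets $\rho'(a) = a$ and $\sigma(a) = u(a)$; when $u(a) \notin A$, one routes $a$ through a fresh element, declaring $\sigma(a)$ fresh and then $\rho$ of this fresh element to be $u(a)$; the symmetric analysis handles $b \in B \setminus A$, routing via $\rho'$ to a fresh element exactly when $u(b)$ lands in $A \setminus B$. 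The main obstacle I expect is the bookkeeping needed to verify that these partial definitions admit a simultaneously injective extension to $\omega \setminus (A \cup B)$, which is ultimately a matter of selecting enough distinct fresh elements and then completing each of $\rho, \sigma, \rho'$ to an injective self-map of $\omega$.
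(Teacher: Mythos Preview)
The paper does not give a self-contained argument; it simply cites \cite[Proposition~2.3]{I-vs-M-1-cat}. Your reduction to the binary claim (closure of the family of supporting sets under finite intersections) and the strategy of factoring $u$ through injections fixing $A$ or $B$ is exactly the standard route, and is almost certainly what the cited reference does. So in spirit you are reconstructing the omitted proof rather than taking a different path.

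There is, however, one point where your sketch is thinner than it should be. You describe how to define $\rho,\sigma,\rho'$ on $A\cup B$ (routing through fresh elements as needed) and then say the remaining task is ``completing each of $\rho,\sigma,\rho'$ to an injective self-map of $\omega$.'' But the three completions are not independent: the identity $\rho\sigma\rho'=u$ must hold on \emph{all} of $\omega$, not just on $A\cup B$, so the values on $\omega\smallsetminus(A\cup B)$ are still constrained by one another. For instance, if $u$ happens to be a bijection then all three factors are forced to be bijections, so one cannot simply extend them ``arbitrarily injectively.'' The factorization claim is nonetheless true, and your case analysis on $A\cup B$ is the right start; you just need to propagate the same routing-through-fresh-elements idea to the complement as well (there are infinitely many fresh elements available, so this goes through).

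A cleaner alternative that sidesteps this bookkeeping: rather than insisting on $\rho\sigma\rho'=u$ globally, only arrange that the composite agrees with $u$ on $A$, and then invoke Lemma~\ref{lemma:supp-m-set-agree} (which is proved independently in the same reference) to conclude $(\rho\sigma\rho').x=u.x$. Concretely, given $u$ fixing $A\cap B$, first choose $u'\in\mathcal M$ agreeing with $u$ on $A$ but sending $B\smallsetminus A$ into a fresh region disjoint from $A\cup B\cup u(A)$; then $u.x=u'.x$ since $x$ is supported on $A$. Next choose $u''\in\mathcal M$ agreeing with $u'$ on $B$ but fixing $A$; this is now possible with no injectivity conflict because $u'(B\smallsetminus A)$ avoids $A$. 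Then $u'.x=u''.x$ since $x$ is supported on $B$, and $u''.x=x$ since $u''$ fixes $A$. This two-step replacement gives the binary intersection property with much lighter bookkeeping than the exact three-fold factorization.
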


Put differently, a finitely supported $x$ is supported on a unique minimal set.

\begin{proof}
This is immediate from \cite[Proposition~2.3]{I-vs-M-1-cat}, also see the discussion after Proposition~2.4 of \emph{op.~cit.}
\end{proof}

\begin{ex}\label{ex:Inj}
If $A$ is any finite set, then we can consider $\Inj(A,\omega)$ with $\mathcal M$-action given by $\mathcal M\times\Inj(A,\omega), (u,i)\mapsto u\circ i$. This is tame: an injection $i\colon A\to\omega$ is obviously supported on $i(A)$. In fact, $\supp(i)=i(A)$: namely, if $B\not\supset i(A)$, then we can pick an $a\in i(A)\smallsetminus B$ and an injection $u$ fixing $B$ pointwise with $a\notin\im u$. Then $u.i=u\circ i$ does not hit $a$, so $u.i\not=i$, i.e.~$i$ is not supported on $B$.
\end{ex}

\begin{nex}\label{nex:inj-infinite}
If $A$ is countably infinite, then $\Inj(A,\omega)$ is \emph{not} tame, in particular $\mathcal M$ with its left regular action is not tame. In fact, no element is finitely supported: if $i\colon A\to\omega$ is any injection, and $B\subset\omega$ is any finite set, then $B\not\supset i(A)$ as the right hand side is infinite. The same argument as in the previous example then shows that $i$ is not supported on $B$.
\end{nex}

We close this discussion by collecting some basic facts about the support for easy reference.

\begin{lemma}\label{lemma:supp-change}
\begin{enumerate}
\item If $f\colon X\to Y$ is an $\mathcal M$-equivariant map of $\mathcal M$-sets and $x\in X$ is finitely supported, then also $f(x)$ is finitely supported. Moreover, $\supp(f(x))\subset\supp(x)$.\label{item:sc-m-equiv}
\item If $X$ is an $\mathcal M$-set and $x\in X$ is finitely supported, then $u.x$ is finitely supported for all $u\in\mathcal M$. Moreover, $\supp(u.x)=u(\supp(x))$.
\end{enumerate}
\end{lemma}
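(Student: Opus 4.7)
For Part (1), the plan is to unwind the definitions directly. With $A:=\supp(x)$ (on which $x$ is supported by Lemma~\ref{lemma:M-set-supported-on-supp}), I would observe that for any $v\in\mathcal M$ fixing $A$ pointwise, equivariance gives $v.f(x) = f(v.x) = f(x)$; hence $f(x)$ is supported on $A$, is in particular finitely supported, and satisfies $\supp(f(x))\subset A=\supp(x)$.

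For Part (2), I would split the equality $\supp(u.x) = u(\supp(x))$ into its two inclusions, writing $A:=\supp(x)$. For the upper bound $\supp(u.x)\subset u(A)$, the plan is to show directly that $u.x$ is supported on $u(A)$: if $v\in\mathcal M$ fixes $u(A)$ pointwise, then $v\circ u$ agrees with $u$ pointwise on $A$, so I would invoke the key fact (implicit in the Proposition~2.3 of \cite{I-vs-M-1-cat} cited for Lemma~\ref{lemma:M-set-supported-on-supp}) that the action of $\mathcal M$ on a finitely supported point depends only on the restriction to its support. This immediately gives $v.(u.x) = (v\circ u).x = u.x$, so indeed $\supp(u.x)\subset u(A)$, and $u.x$ is finitely supported.

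For the lower bound $u(A)\subset\supp(u.x)$, my plan is to leverage the upper bound by producing a partial inverse to $u$. Concretely, the assignment $u(a)\mapsto a$ is a well-defined injection from the finite set $u(A)$ to $A$ since $u$ is injective, and because both $\omega\smallsetminus u(A)$ and $\omega\smallsetminus A$ are infinite it extends to some $u'\in\mathcal M$. By construction $u'\circ u$ fixes $A$ pointwise, so $u'.(u.x) = x$. Applying the already-established upper bound to $u'$ acting on $u.x$ then yields $\supp(x)\subset u'(\supp(u.x))$. For any $a\in A$ I could therefore write $a = u'(b)$ with $b\in\supp(u.x)$; comparing with $a = u'(u(a))$ from the construction of $u'$ and using injectivity of $u'$ forces $b = u(a)$, so $u(a)\in\supp(u.x)$ as required.

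I expect the only real subtlety to be the key fact used for the upper bound—that two elements of $\mathcal M$ agreeing on $\supp(x)$ act identically on $x$—which is not literally among the statements recalled above but is the content of the machinery of \cite{I-vs-M-1-cat} behind Lemma~\ref{lemma:M-set-supported-on-supp}. Once granted, both inclusions of the support formula drop out formally, with the only piece of cleverness being the partial-inverse construction of $u'$ in the lower bound.
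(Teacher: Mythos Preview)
Your proof is correct. The paper itself does not give a self-contained argument here: it simply cites \cite{I-vs-M-1-cat} (Proposition~2.5 and the surrounding discussion) for both parts, so your write-up is strictly more detailed than what appears in the paper.

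One small remark: the ``key fact'' you invoke for the upper bound in Part~(2)---that two elements of $\mathcal M$ agreeing on $\supp(x)$ act identically on $x$---is recorded in the paper as Lemma~\ref{lemma:supp-m-set-agree}, immediately \emph{after} the present lemma. Since both lemmas are proved in the paper by citation to the same external source, there is no circularity, and you could simply point forward to Lemma~\ref{lemma:supp-m-set-agree} rather than describing the fact as ``implicit'' in the machinery behind Lemma~\ref{lemma:M-set-supported-on-supp}. Your partial-inverse trick for the lower bound is exactly the standard argument and is what underlies the cited Proposition~2.5-(ii).
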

\begin{proof}
The first statement is immediate from the definition and it also appears without proof in \cite[discussion after Proposition~2.5]{I-vs-M-1-cat}. The second statement is \cite[Proposition~2.5-(ii)]{I-vs-M-1-cat}.
\end{proof}

\begin{lemma}\label{lemma:supp-m-set-agree}
Let $X$ be an $\mathcal M$-set, let $x\in X$ be supported on the finite set $A\subset\omega$, and let $u,v\in\mathcal M$ with $u(a)=v(a)$ for all $a\in A$. Then $u.x=v.x$.
\end{lemma}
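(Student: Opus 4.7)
The plan is to exhibit a common ``left factor'' for $u$ and $v$ modulo the stabilizer of $A$: concretely, I aim to construct $p\in\mathcal M$ together with $w,w'\in\mathcal M$ fixing $A$ pointwise, such that $u = p\circ w$ and $v = p\circ w'$. Granting this, the defining property of ``supported on $A$'' yields $w.x = x = w'.x$, and then
\begin{equation*}
u.x \;=\; p.(w.x) \;=\; p.x \;=\; p.(w'.x) \;=\; v.x,
\end{equation*}
which is the claim.

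For this strategy to work, $p$ must satisfy two requirements. First, $p|_A = u|_A = v|_A$, so that the candidates $w := p^{-1}\circ u$ and $w' := p^{-1}\circ v$ automatically fix $A$ pointwise. Second, $u(\omega)\cup v(\omega)\subseteq p(\omega)$, so that $p^{-1}$ is defined on the images of $u$ and $v$ and the compositions are genuine elements of $\mathcal M$. To construct such a $p$ I would set $p|_A := u|_A$ (noting that this maps $A$ into $u(A)$) and then extend over $\omega\setminus A$ by any injection into $\omega\setminus u(A)$ whose image contains the countable set $u(\omega\setminus A)\cup v(\omega\setminus A)$. Such an extension exists because $\omega\setminus A$ is countably infinite and the subset we need to hit sits inside the countably infinite set $\omega\setminus u(A)$ (injectivity of $u$ and the identity $v(A)=u(A)$ ensure this containment).

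Once $p$ is in hand, the verification is immediate: $w$ and $w'$ are injections $\omega\to\omega$ because $u,v,p^{-1}$ are injective on the relevant domains; for $a\in A$ we have $w(a) = p^{-1}(u(a)) = p^{-1}(p(a)) = a$ and analogously $w'(a)=a$, so both fix $A$ pointwise; and $p\circ w = u$, $p\circ w' = v$ hold on all of $\omega$ by construction.

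The only potentially delicate step is the set-theoretic construction of $p$, but this is routine bookkeeping about countable subsets of $\omega$ and I do not expect a conceptual obstacle. The whole content of the lemma is the purely combinatorial observation that two elements of $\Inj(\omega,\omega)$ agreeing on a finite set $A$ admit a common left factor up to precomposition by elements fixing $A$ pointwise.
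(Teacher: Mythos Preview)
Your proof is correct. The construction of $p$ works exactly as you describe: taking $p|_A = u|_A$ and extending by any bijection $\omega\setminus A \to \omega\setminus u(A)$ (which exists since both sets are countably infinite) already guarantees that $p(\omega)\supseteq u(\omega)\cup v(\omega)$, so the compositions $w=p^{-1}u$ and $w'=p^{-1}v$ are well-defined elements of $\mathcal M$ fixing $A$ pointwise, and the conclusion follows as you wrote.

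The paper itself does not give a proof here; it simply cites \cite[Proposition~2.5-(i)]{I-vs-M-1-cat}. Your argument is essentially the standard one used in that reference (and elsewhere), so there is no substantive difference to discuss. If anything, you could streamline the exposition by noting that a bijection $\omega\setminus A\to\omega\setminus u(A)$ suffices for the extension, which removes the need to track the set $u(\omega\setminus A)\cup v(\omega\setminus A)$ explicitly.
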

\begin{proof}
This is \cite[Proposition~2.5-(i)]{I-vs-M-1-cat}.
\end{proof}

\subsection{\texorpdfstring{$\bmEM$}{EM}-categories and parsummable categories} We recall the so-called `chaotic categories':

\begin{constr}
Let $X$ be a set. We write $EX$ for the (small) category with set of objects $\Ob(EX)=X$ and precisely one morphism $x\to y$ for any $x,y\in X$, which we denote by $(y,x)$. The composition is then uniquely determined by this; explicitly, $(z,y)(y,x)=(z,x)$ for all $x,y,z\in X$.

For any map of sets $f\colon X\to Y$ there is a unique functor $Ef\colon EX\to EY$ that is given on objects by $f$, and this way $E$ becomes a functor $\cat{Set}\to\cat{Cat}$. It is easy to check that $E$ is right adjoint to $\Ob$ with counit $\Ob EX\to X$ the identity.
\end{constr}

In particular, $E$ preserves products and terminal objects, so it sends ordinary monoids to monoids in the $1$-category $\cat{Cat}$ (i.e.~\emph{strict monoidal categories}). Central to Schwede's construction of global algebraic $K$-theory is the categorical monoid $E\mathcal M$ obtained this way.

\begin{defi}
An \emph{$E\mathcal M$-category} is a category $\mathcal C$ together with a strict action of $E\mathcal M$. A \emph{map of $E\mathcal M$-categories} is a functor $f\colon\mathcal C\to\mathcal D$ strictly commuting with the action, i.e.~such that the diagram
\begin{equation*}
\begin{tikzcd}
E\mathcal M\times\mathcal C\arrow[r, "\text{act}"]\arrow[d, "E\mathcal M\times f"'] &\mathcal C\arrow[d, "f"]\\
E\mathcal M\times\mathcal D\arrow[r, "\text{act}"'] & \mathcal D
\end{tikzcd}
\end{equation*}
commutes. We write $\cat{$\bmEM$-Cat}$ for the category of small $E\mathcal M$-categories.
\end{defi}

We will denote $E\mathcal M$-categories by the calligraphic letters $\mathcal C,\mathcal D$, and so on (distinguishing them from ordinary categories denoted by $\mathscr C$, $\mathscr D$, etc.).

If $\mathcal C$ is an $E\mathcal M$-category, then we have in particular an action of the discrete monoid $\mathcal M$ on $\mathcal C$, which then restricts to an $\mathcal M$-action on the (large) set $\Ob\mathcal C$. In addition, we are given for each $u,v\in\mathcal M$ a natural isomorphism $[u,v]\colon (v.\blank)\Rightarrow(u.\blank)$ given on $x\in\mathcal C$ by $(u,v).\id_x\colon v.x\to u.x$. Specializing to $v=1$ this yields in particular for each $x\in\mathcal C$ an isomorphism $u^x_\circ\colon x\to u.x$. From functoriality and associativity of the action one easily concludes that
\begin{equation}\label{eq:associativity-u-circ}
(uv)^x_\circ=u^{v.x}_\circ v^x_\circ
\end{equation}
for all $u,v\in\mathcal M$ and $x\in\mathcal C$. The following useful lemma shows that $E\mathcal M$-actions on categories can conversely be described by the above data, considerably simplifying their construction:

\begin{lemma}
Let $\mathcal C$ be a category, and assume we are given an $\mathcal M$-action on $\Ob\mathcal C$ together with for each $u\in\mathcal M$, $x\in\mathcal C$ an isomorphism $u^x_\circ\colon x\to u.x$ such that these data satisfy the relation $(\ref{eq:associativity-u-circ})$.

Then there exists a unique $E\mathcal M$-action on $\mathcal C$ extending the $\mathcal M$-action on $\Ob\mathcal C$ and such that $u^x_\circ=[u,1]_x$ for all $x\in\mathcal C$ and $u\in\mathcal M$.
\end{lemma}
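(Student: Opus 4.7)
The plan is to first extract from the hypotheses what any $E\mathcal{M}$-action on $\mathcal{C}$ extending the given data must look like; this yields uniqueness and a candidate formula, which we then verify satisfies the action axioms.

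For uniqueness, suppose $\mathcal{C}$ carries an $E\mathcal{M}$-action with $[u,1]_x=u^x_\circ$. Naturality of $[u,1]\colon\id_\mathcal{C}\Rightarrow(u.\blank)$ applied to any $f\colon x\to y$ in $\mathcal{C}$ forces
\[
u.f = u^y_\circ\circ f\circ (u^x_\circ)^{-1}.
\]
The factorization $(u,v)=(u,1)\circ(1,v)$ in $E\mathcal{M}$, together with the observation that $(1,v)$ is inverse to $(v,1)$, then forces $[u,v]_x=u^x_\circ\circ(v^x_\circ)^{-1}$. Finally, since every morphism $((u,v),f\colon x\to y)$ in $E\mathcal{M}\times\mathcal{C}$ factors as $(\id_u,f)\circ((u,v),\id_x)$, its image under the action is forced to be $(u.f)\circ[u,v]_x$. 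Everything is therefore determined by the $u^x_\circ$.

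For existence, we take the above formulas as definitions. A direct check shows that each $u.\blank$ preserves identities and composition, so is an endofunctor of $\mathcal{C}$. Applying $(\ref{eq:associativity-u-circ})$ with $u=v=1$ yields $1^x_\circ=(1^x_\circ)^2$, hence $1^x_\circ=\id_x$ and $1.f=f$; the associativity $(uv).f=u.(v.f)$ then follows by substituting $(uv)^y_\circ=u^{v.y}_\circ v^y_\circ$ and $(uv)^x_\circ=u^{v.x}_\circ v^x_\circ$ from $(\ref{eq:associativity-u-circ})$ and canceling. Thus $u\mapsto u.\blank$ is an $\mathcal{M}$-action on $\mathcal{C}$ as a category, extending the given one on $\Ob\mathcal{C}$.

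It remains to assemble these pieces into a functor $E\mathcal{M}\times\mathcal{C}\to\mathcal{C}$. The arrows $[u,v]_x:=u^x_\circ\circ(v^x_\circ)^{-1}\colon v.x\to u.x$ are natural in $x$ precisely by the definition of $u.\blank$ on morphisms, and from the formula one reads off both $[u,u]_x=\id_{u.x}$ and $[u,v]_x\circ[v,w]_x=[u,w]_x$. Declaring the action on $((u,v),f\colon x\to y)$ to be $[u,v]_y\circ v.f$ (equivalently $u.f\circ[u,v]_x$ by naturality), functoriality reduces to these identities combined with the already-established functoriality of each $u.\blank$. The main obstacle is the bookkeeping in the $\mathcal{M}$-action associativity $(uv).f=u.(v.f)$; once that is secured via $(\ref{eq:associativity-u-circ})$, the remaining verifications are formal consequences of the forced formulas.
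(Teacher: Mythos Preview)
Your argument is correct. The paper itself does not give a proof of this lemma but simply cites \cite[Proposition~2.6]{schwede-k-theory}; what you have written is essentially the standard direct argument behind that citation: derive the forced formulas $u.f=u^y_\circ\, f\,(u^x_\circ)^{-1}$ and $[u,v]_x=u^x_\circ\,(v^x_\circ)^{-1}$ from uniqueness, then verify functoriality and the action axioms using $(\ref{eq:associativity-u-circ})$.

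One small remark: the full associativity of the $E\mathcal M$-action requires not just $(uv).f=u.(v.f)$ but the identity $\text{act}\big((uu',vv'),f\big)=\text{act}\big((u,v),\text{act}((u',v'),f)\big)$ for general morphisms in $E\mathcal M\times\mathcal C$. You subsume this under ``the remaining verifications are formal consequences of the forced formulas,'' which is true---both sides reduce to $(uu')^y_\circ\, f\,\big((vv')^x_\circ\big)^{-1}$ via $(\ref{eq:associativity-u-circ})$---but it is worth being aware that this is the actual content of the associativity check, not merely $(uv).f=u.(v.f)$.
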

\begin{proof}
See \cite[Proposition~2.6]{schwede-k-theory}.
\end{proof}

There is also a relative version of the lemma, that allows us to check $E\mathcal M$-equivariance of functors in terms of the above data:

\begin{lemma}
Let $\mathcal C,\mathcal D$ be $E\mathcal M$-categories and let $f\colon \mathcal C\to\mathcal D$ be a functor of their underlying categories. Then $f$ is $E\mathcal M$-equivariant if and only if $\Ob f\colon\Ob\mathcal C\to\Ob\mathcal  D$ is $\mathcal M$-equivariant and $f(u^x_\circ)=u^{f(x)}_\circ\colon f(x)\to u.(f(x))=f(u.x)$ for all $x\in \mathcal C$, $u\in\mathcal M$.
\end{lemma}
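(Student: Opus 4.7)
The plan is to verify both directions of the equivalence directly, using the previous lemma as guidance: since an $E\mathcal M$-action is determined by the $\mathcal M$-action on objects together with the isomorphisms $u^x_\circ$, it is enough to check equivariance of $f$ on these data. The ``only if'' direction is immediate by restriction: $E\mathcal M$-equivariance of $f$ restricts along the inclusion $\mathcal M\hookrightarrow E\mathcal M$ to $\mathcal M$-equivariance of $\Ob f$, and applying $f$ to the defining identity $u^x_\circ=(u,1).\id_x$ gives $f(u^x_\circ)=(u,1).\id_{f(x)}=u^{f(x)}_\circ$.

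For the ``if'' direction I would show $f((u,v).\alpha)=(u,v).f(\alpha)$ for every morphism $\alpha\colon x\to y$ in $\mathcal C$ and every pair $u,v\in\mathcal M$, in three steps. First, functoriality of the action applied to the factorization $(u,v)=(u,1)\circ(1,v)$ in $E\mathcal M$ gives $[u,v]_y=u^y_\circ\circ[1,v]_y$, and since $(1,v)\circ(v,1)=\id_1$ the morphism $[1,v]_y$ must be inverse to $v^y_\circ=[v,1]_y$, yielding the closed formula
\[
[u,v]_y=u^y_\circ\circ(v^y_\circ)^{-1}.
\]
Applying $f$ together with hypothesis~(ii) and the analogous formula in $\mathcal D$ shows that $f([u,v]_y)=[u,v]_{f(y)}$. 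Second, naturality of $[u,1]\colon\id\Rightarrow(u.\blank)$ at $\alpha$ reads $u^y_\circ\circ\alpha=u.\alpha\circ u^x_\circ$, and hence $u.\alpha=u^y_\circ\circ\alpha\circ(u^x_\circ)^{-1}$; applying $f$ and invoking (ii) again gives $f(u.\alpha)=u.f(\alpha)$, i.e.~$\mathcal M$-equivariance of $f$ on morphisms. Finally, factoring the morphism $((u,v),\alpha)\colon(v,x)\to(u,y)$ in $E\mathcal M\times\mathcal C$ as $((u,v),\id_y)\circ(\id_v,\alpha)$ and applying the action functor yields $(u,v).\alpha=[u,v]_y\circ v.\alpha$; combining the previous two identities then gives $f((u,v).\alpha)=(u,v).f(\alpha)$ as desired.

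The argument is essentially a bookkeeping exercise once the decomposition of a general action morphism into the two ``elementary'' pieces---an $\mathcal M$-action on a morphism and a canonical isomorphism $[u,v]_y$---has been identified. The only point that requires a moment's care is the derivation of the closed formula for $[u,v]_y$, but it falls out automatically from functoriality of the action together with the observation that $[1,v]_y$ and $v^y_\circ$ are categorical inverses; no further input about $\mathcal M$ or the chaotic category $E$ is needed.
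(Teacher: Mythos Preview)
Your proof is correct. The paper itself does not argue the statement but simply cites \cite[Corollary~1.3]{perm-parsum-categorical}, so your direct verification goes beyond what the paper provides. The decomposition you use---expressing $(u,v).\alpha$ as $[u,v]_y\circ v.\alpha$ and reducing everything to the closed formula $[u,v]_y=u^y_\circ\circ(v^y_\circ)^{-1}$---is exactly the natural way to prove this and matches how the paper later manipulates these structure maps (e.g.\ in Remark~\ref{rk:nerve-induced-action}). There is nothing to add.
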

\begin{proof}
This is \cite[Corollary~1.3]{perm-parsum-categorical}.
\end{proof}

We can now define the categorical counterpart of the support:

\begin{defi}
Let $\mathcal C\in\cat{$\bmEM$-Cat}$, let $x\in\mathcal C$, and let $A\subset\omega$ be a finite set. Then we say that $x$ is \emph{supported on $A$} if $x$ is supported on $A$ as an element of the $\mathcal M$-set $\Ob(\mathcal C)$. We write $\mathcal C_{[A]}\subset\mathcal C$ for the full subcategory spanned by the objects supported on $A$.

We morever say that $x$ is \emph{finitely supported} if it is supported on some finite set, and we write $\mathcal C^\tau$ for the full subcategory of those.

Finally, we call $\mathcal C$ \emph{tame} if all its objects are finitely supported (i.e.~if $\Ob(\mathcal C)$ is tame), and we denote the full subcategory of $\cat{$\bmEM$-Cat}$ spanned by the tame $E\mathcal M$-categories by $\cat{$\bmEM$-Cat}^\tau$.
\end{defi}

Lemma~\ref{lemma:supp-change}-$(\ref{item:sc-m-equiv})$ shows that any $E\mathcal M$-equivariant functor $f\colon\mathcal C\to\mathcal D$ restricts to $\mathcal C_{[A]}\to\mathcal D_{[A]}$ for all finite $A\subset\omega$, and hence in particular to $\mathcal C^\tau\to\mathcal D^\tau$.

\begin{defi}
Let $\mathcal C\in\cat{$\bmEM$-Cat}$, and let $x\in\mathcal C$ be finitely supported. Then the \emph{support} $\supp(x)$ of $x$ is the intersection of all (finite) sets $A\subset\omega$ on which $x$ is supported, i.e.~its support as an element of the $\mathcal M$-set $\Ob(\mathcal C)$.
\end{defi}

Lemma~\ref{lemma:M-set-supported-on-supp} shows that $x$ is indeed supported on $\supp(x)$, also see~\cite[Proposition~2.13-(i)]{schwede-k-theory}.

\begin{ex}\label{ex:EInj-supp}
If $A$ is any finite set, then the $\mathcal M$-action on $\Inj(A,\omega)$ from Example~\ref{ex:Inj} induces an $E\mathcal M$-action on $E\Inj(A,\omega)$ recovering the original action on objects. In particular, this $E\mathcal M$-category is tame and the support of an object $i\colon A\to\omega$ is precisely $i(A)$.
\end{ex}

At first sight, the definition of support might seem a bit to weak (or `un-categorical') as for $u\in\mathcal M$ fixing $\supp(x)$ pointwise we only explicitly require that $x=u.x$ and not that the canonical comparison isomorphism $u^x_\circ\colon x\to u.x$ be the identity. However, it turns out that this seemingly stronger condition is in fact automatically satisfied:

\begin{lemma}\label{lemma:support-strong}
Let $\mathcal C\in\cat{$\bmEM$-Cat}$, let $x\in\mathcal C$ be finitely supported, and let $u,v\in\mathcal M$ agree on $\supp(x)$. Then $u.x=v.x$. If moreover $u',v'\in\mathcal M$ agree on $\supp(x)$, then $[u',u]_x=[v',v]_x\colon u.x=v.x\to u'.x=v'.x$.

In particular, $u^x_\circ=\id_x$ if $u$ restricts to the identity on $\supp(x)$.
\end{lemma}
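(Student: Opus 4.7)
The plan is as follows. Statement (1) is immediate from Lemma~\ref{lemma:supp-m-set-agree} applied to the $\mathcal M$-set $\Ob(\mathcal C)$. For statement (2), the factorisation $(u',u) = (u',1) \circ (1,u)$ in the chaotic category $E\mathcal M$ yields $[u',u]_x = (u')^x_\circ \circ (u^x_\circ)^{-1}$, so (2) reduces to the claim that $u^x_\circ$ depends only on $u|_A$, where $A := \supp(x)$. The ``In particular'' statement is then the special case $u' = v' = v = 1$ of (2), which forces $[1,u]_x = [1,1]_x = \id_x$ and hence $u^x_\circ = \id_x$.

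For the independence claim, I would reduce to its special case $v = 1$, namely the ``In particular'' statement. Given $u, v$ with $u|_A = v|_A$, assume without loss of generality that $|\omega \setminus u(\omega)| \leq |\omega \setminus v(\omega)|$. One constructs $w \in \mathcal M$ with $w|_{u(A)} = \id$ and $v = w \circ u$ by first defining $w(u(n)) := v(n)$ on $\im u$ and then extending by any injection $\omega \setminus \im u \hookrightarrow \omega \setminus \im v$. Since $\supp(u.x) = u(A)$ by Lemma~\ref{lemma:supp-change}, the ``In particular'' statement applied to $u.x$ and $w$ gives $w^{u.x}_\circ = \id$, and (\ref{eq:associativity-u-circ}) then yields $v^x_\circ = (wu)^x_\circ = w^{u.x}_\circ \circ u^x_\circ = u^x_\circ$ as desired.

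It remains to prove the ``In particular'' statement: $w^x_\circ = \id_x$ whenever $w|_A = \id_A$. In the favourable case $|\mathrm{Fix}(w) \setminus A| = \infty$, choose $w' \in \mathcal M$ with $w'|_A = \id_A$ and $w'(\omega \setminus A) \subseteq \mathrm{Fix}(w) \setminus A$; then $ww' = w'$ and $w'.x = x$, so (\ref{eq:associativity-u-circ}) gives $(w')^x_\circ = (ww')^x_\circ = w^x_\circ \circ (w')^x_\circ$, and cancelling the invertible $(w')^x_\circ$ yields $w^x_\circ = \id_x$. For general $w$, I factor $w = w_1 \circ w_2$ with both $w_i \in \mathcal M$ fixing $A$ and each in the favourable case, so that $w^x_\circ = w_1^x_\circ \circ w_2^x_\circ = \id_x$. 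Concretely: pick an infinite subset $I \subseteq \omega \setminus A$ with $I \cap w(I) = \emptyset$ such that $\omega \setminus (A \cup I \cup w(I))$ is also infinite, let $w_2$ be the involution swapping $i \leftrightarrow w(i)$ for $i \in I$ and fixing every other element of $\omega$, and set $w_1 := w \circ w_2$; then $w_1 w_2 = w$ since $w_2^2 = \id$, $w_2$ has infinite fixed set $\omega \setminus (A \cup I \cup w(I))$, and $w_1$ fixes every element of the infinite set $w(I)$ since $w_1(w(i)) = w(w_2(w(i))) = w(i)$.

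The main obstacle I anticipate is the combinatorial construction in this last step: verifying that an infinite $I \subseteq \omega \setminus A$ with $I \cap w(I) = \emptyset$ and infinite complement always exists. This amounts to analysing the orbit structure of $w|_{\omega \setminus A}$; the undirected ``graph of $w$'' on $\omega \setminus A$ has maximum degree $\leq 2$ and is thus a disjoint union of paths and (finite) cycles, whence a greedy argument selecting, say, every third vertex of each component produces the desired $I$.
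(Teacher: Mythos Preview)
Your argument is correct and self-contained, whereas the paper simply defers to \cite[Proposition~2.13-(ii)]{schwede-k-theory} without giving any details. The reduction of statement~(2) to the ``in particular'' clause via the factorisation $[u',u]_x=(u')^x_\circ\circ(u^x_\circ)^{-1}$ together with the cocycle identity~(\ref{eq:associativity-u-circ}) is clean, and the ``favourable case plus factorisation $w=w_1w_2$'' strategy for the special statement is exactly right. Your construction of $w$ with $v=wu$ in the reduction step is also fine; note that the assumption $|\omega\setminus u(\omega)|\le|\omega\setminus v(\omega)|$ is precisely what guarantees the extension of $w$ from $\im u$ to all of $\omega$ exists.

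One small gap in the final combinatorial step: the prescription ``select every third vertex of each component'' does not always produce an $I$ with $I\cap w(I)=\varnothing$. For a finite $w$-cycle $a_0\to a_1\to\cdots\to a_{\ell-1}\to a_0$ of length $\ell\equiv 1\pmod 3$ (and $\ell\ge 4$), taking $I\cap\{a_0,\dots,a_{\ell-1}\}=\{a_0,a_3,\dots,a_{\ell-1}\}$ gives $w(a_{\ell-1})=a_0\in I$. The fix is easy: on any finite cycle of length $\ge 2$ you may put a \emph{single} element into $I$ (so $I\cap w(I)=\varnothing$ on that cycle trivially), or opt to contribute nothing; then, if all components are finite, split the infinitely many cycles of length $\ge 2$ into two infinite families, letting one family contribute to $I$ and the other to the complement of $I\cup w(I)$. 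If instead some component is an infinite path, your ``every third vertex'' already works on that component alone and you can ignore all others. With this adjustment the construction of $I$ goes through in all cases.
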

\begin{proof}
This follows from \cite[Proposition~2.13-(ii)]{schwede-k-theory}.
\end{proof}

\begin{warn}
The above lemma should not be misunderstood as saying that $u.x=x$ for $u\in\mathcal M$, $x\in\mathcal C$ if and only if $u^x_\circ$ is the identity. In particular, if $H\subset\mathcal M$ is a subgroup (i.e.~a submonoid that at the same time is a group), then for $x\in\mathcal C^H$ the structure isomorphisms $h_\circ\colon x\to h.x$ are usually non-trivial, and instead they define a potentially interesting $H$-action on $x$. In fact, for so-called \emph{saturated} $E\mathcal M$-categories $\mathcal C$, that we will recall below, and `nicely' embedded subgroups $H$, all $H$-objects in $\mathcal C$ arise this way, cf.~\cite[Construction~7.4]{schwede-k-theory}.
\end{warn}

We are now ready to introduce the box product of tame $E\mathcal M$-categories \cite[Definition~2.32]{schwede-k-theory}:

\begin{defi}
Let $\mathcal C,\mathcal D\in\cat{$\bmEM$-Cat}^\tau$. Their \emph{box product} $\mathcal C\boxtimes\mathcal D$ is the full subcategory of $\mathcal C\times\mathcal D$ spanned by the pairs $(c,d)$ of \emph{disjointly supported} objects, i.e.~pairs such that $\supp(c)\cap\supp(d)=\varnothing$.
\end{defi}

By \cite[Proposition~2.13-(iii)]{schwede-k-theory} the diagonal $E\mathcal M$-action on $X\times Y$ restricts to an $E\mathcal M$-action on $X\boxtimes Y$, which is again tame by \cite[Corollary~2.34]{schwede-k-theory}. The box product then becomes a subfunctor of the cartesian product, and it is not hard to check that the usual unitality, associativity, and symmetry isomorphisms of the cartesian product restrict to corresponding isomorphisms for $\boxtimes$, making it the tensor product of a preferred symmetric monoidal structure on $\cat{$\bmEM$-Cat}^\tau$ with unit the terminal $E\mathcal M$-category \cite[Proposition~2.35]{schwede-k-theory}.

\begin{defi}
A \emph{parsummable category} is a commutative monoid for the box product on $\cat{$\bmEM$-Cat}^\tau$. We write $\cat{ParSumCat}$ for the corresponding category of commutative monoids (whose morphisms are the monoid homomorphisms).
\end{defi}

Explicitly, this means that the data of a parsummable category consist of a tame $E\mathcal M$-category $\mathcal C$ equipped with an object $0$ of empty support as well as a suitably functorial `addition' defined for any pair of disjointly supported objects $x,y\in\mathcal C$ and for any pair of morphisms $f\colon x\to x'$, $g\colon y\to y'$ such that $x,y$ and $x',y'$ are disjointly supported. The addition has to be strictly associative, unital, and commutative whenever this makes sense. In general, two given objects $x,y\in\mathcal C$ might not be summable (i.e.~$\supp(x)\cap\supp(y)\not=\varnothing$), but \cite[proof of Theorem~2.33]{schwede-k-theory} shows that we can always replace $x,y$ by an isomorphic pair of summable objects.

\begin{ex}
Any abelian monoid $A$ becomes a parsummable category when viewed as a discrete category with trivial $E\mathcal M$-action.
\end{ex}

\begin{ex}
Associated to any tame $E\mathcal M$-category $\mathcal C$ we have a \emph{free parsummable category} $\textbf{P}\mathcal C\mathrel{:=}\coprod_{n\ge0}\mathcal C^{\boxtimes n}/\Sigma_n$ (where $\Sigma_n$ denotes the symmetric group); here the additive unit is the unique object $0\in\mathcal C^{\boxtimes 0}/\Sigma_0$ and the sum is given by concatenation. This defines a left adjoint to the forgetful functor $\cat{ParSumCat}\to\cat{$\bmEM$-Cat}^\tau$, see~\cite[Example~4.6]{schwede-k-theory}.
\end{ex}

\begin{ex}\label{ex:Phi}
Let $\mathscr C$ be a permutative category, i.e.~a symmetric monoidal category in which the associativity and unitality isomorphisms are the respective identities. Then \cite[Construction~11.1]{schwede-k-theory} associates a parsummable category $\Phi(\mathscr C)$ to $\mathscr C$ as follows: an object of $\Phi(\mathscr C)$ is an infinite family $(X_1,X_2,\dots)$ of objects in $\mathscr C$ such that $X_i=\bm1$ (the unit of the tensor product) for almost all $i$. If $(Y_1,Y_2,\dots)$ is another such object, then a morphism $(X_1,\dots)\to (Y_1,\dots)$ in $\Phi(\mathscr C)$ is a morphism $\bigotimes_{i=1}^\infty X_i\to\bigotimes_{i=1}^\infty Y_i$ in $\mathscr C$; note that these infinite tensor products indeed make sense as $\otimes$ is assumed to be strictly associative and unital. The $\mathcal M$-action on $\Ob(\Phi(\mathscr C))$ is given by `permuting and extending by $\bm 1$,' i.e.~if $u\in\mathcal M$, then $u.(X_1,\dots)=(Y_1,\dots)$ with
\begin{equation*}
    Y_i=\begin{cases}
        \,X_j & \text{if $u(j)=i$}\\
        \,\bm1 & \text{if $i\notin\im(u)$};
    \end{cases}
\end{equation*}
the comparison map $u_\circ : (X_1,\dots)\to (Y_1,\dots)$ is obtained as a suitable composition of symmetry isomorphisms, see \emph{loc.~cit.} for details.

The sum on objects is given by `interlacing': if $(X_1,\dots)$ and $(Y_1,\dots)$ are disjointly supported, then we have $X_i=\bm1$ or $Y_i=\bm1$ (or both) for every $i$, and we define $(X_1,\dots)+(Y_1,\dots)=(Z_1,\dots)$ with
\begin{equation*}
    Z_i=\begin{cases}
        \,X_i & \text{if $Y_i=\bm1$}\\
        \,Y_i & \text{otherwise}.
    \end{cases}
\end{equation*}
The sum on morphisms is given by tensoring and conjugating with suitable composites of symmetry isomorphisms; we again refer to the aforementioned reference for details.
\end{ex}

\subsection{\texorpdfstring{$\bm{G}$}{G}-global homotopy theory of \texorpdfstring{$\bm{G}$}{G}-parsummable categories}
For the rest of this article, let us fix a discrete (not necessarily finite) group $G$. We will be interested in the category $\cat{$\bmEM$-$\bm G$-Cat}$ of $G$-objects in $\cat{$\bmEM$-Cat}$, whose objects we call small $E\mathcal M$-$G$-categories. They can also be alternatively described as small categories with an $E\mathcal M$-action and a $G$-action such that the two actions commute, or as small categories with an action of the categorical monoid $E\mathcal M\times G$.

Any $\mathcal C\in\cat{$\bmEM$-$\bm G$-Cat}$ has in particular an underlying $E\mathcal M$-action, from which it inherits the notions of support and tameness. The full subcategory
\begin{equation*}
\cat{$\bmEM$-$\bm G$-Cat}^\tau\subset\cat{$\bmEM$-$\bm G$-Cat}
\end{equation*}
of the tame $E\mathcal M$-$G$-categories is then canonically identified with $G$-objects in $\cat{$\bmEM$-Cat}^\tau$.

In particular, the box product of tame $E\mathcal M$-categories together with its unitality, associativity, and symmetry isomorphisms automatically lifts to a symmetric monoidal structure on $\cat{$\bmEM$-$\bm G$-Cat}^\tau$. We write $\cat{$\bm G$-ParSumCat}$ for the corresponding category of commutative monoids and call its objects \emph{$G$-parsummable categories}. The category $\cat{$\bm G$-ParSumCat}$ is then again canonically identified with the category of $G$-objects in $\cat{ParSumCat}$.

We now want to study $E\mathcal M$-$G$-categories and $G$-parsummable categories from an equivariant point of view. The crucial observation for this is that any group embeds into $\mathcal M$ in a particularly nice way; in order to describe these embeddings we need the following two notions:

\begin{defi}
Let $H$ be a finite group. A countable $H$-set $X$ is called a \emph{complete $H$-set universe} if every finite $H$-set embeds into $X$ equivariantly.
\end{defi}

\begin{defi}
A finite subgroup $H\subset\mathcal M$ is called \emph{universal} if $\omega$ equipped with the restriction of the tautological $\mathcal M$-action to $H$ is a complete $H$-set universe.
\end{defi}

\begin{rk}
The above notion of universality is analogous to \cite[Definitions~1.3 and~1.4]{schwede-L} which studies global homotopy theory with respect to compact Lie groups.
\end{rk}

\begin{lemma}\label{lemma:universal-abundant}
Any finite group $H$ admits an injective homomorphism $i\colon H\to\mathcal M$ with universal image. If $j\colon H\to\mathcal M$ is another such homomorphism, then there exists a {\rm(}non-canonical\/{\rm)} invertible element $\varphi\in\mathcal M$ with $j(h)=\varphi i(h) \varphi^{-1}$ for all $h\in H$.
\end{lemma}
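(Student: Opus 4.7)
The plan is to treat the two assertions separately, using the standard classification of countable $H$-sets by orbit types.

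For existence, I would construct a countable complete $H$-set universe $X$ by hand---for instance, by taking a disjoint union $\bigsqcup_{K}(H/K)\times\omega$ where $K$ ranges over the (finitely many) subgroups of $H$, so that each orbit type appears with countably infinite multiplicity. Any finite $H$-set splits as a finite disjoint union of transitive $H$-sets $H/K_1,\dots,H/K_n$, and picking $n$ distinct copies of each occurring orbit type inside $X$ gives an equivariant embedding. In particular $H$ itself (the regular $H$-set) embeds, so the $H$-action on $X$ is faithful. Choosing an arbitrary bijection $X\cong\omega$ transports this action to a faithful $H$-action on $\omega$, i.e.~an injective homomorphism $i\colon H\to\mathrm{Sym}(\omega)\subset\mathcal M$; since the embedding property is preserved by the bijection, $i(H)$ is universal.

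For the uniqueness statement, the key point is that any two countable complete $H$-set universes are $H$-equivariantly isomorphic. Indeed, by the classification of countable $H$-sets, such an isomorphism exists if and only if for each subgroup $K\subset H$ the two universes contain the same (finite or countably infinite) number of orbits of type $H/K$. But if $X$ is a complete $H$-set universe and some orbit type $H/K$ occurs only $n<\infty$ times in $X$, then the finite $H$-set $(H/K)^{\sqcup(n+1)}$ would fail to embed equivariantly in $X$---a contradiction. Hence every orbit type occurs countably infinitely often in any complete $H$-set universe, so all such universes are isomorphic.

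Applying this to the two $H$-set structures on $\omega$ induced by $i$ and $j$, I obtain an $H$-equivariant bijection $\varphi\colon\omega\to\omega$. Equivariance translates to $\varphi\circ i(h)=j(h)\circ\varphi$ for all $h\in H$, and since $\varphi$ is a bijection it lies in the group of invertible elements of $\mathcal M$, yielding $j(h)=\varphi i(h)\varphi^{-1}$ as desired. The main obstacle is really the multiplicity argument in the second paragraph: once one sees that completeness forces every orbit type to appear with the maximal possible (countably infinite) multiplicity, both the existence of $X$ and the uniqueness up to conjugation fall out of standard $H$-set combinatorics.
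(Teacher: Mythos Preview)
Your argument is correct and is precisely the standard one. The paper itself does not actually spell out a proof: it only remarks that the argument is ``completely analogous to \cite[proof of Proposition~1.5]{schwede-L}'' and leaves the details to the reader. What you have written is exactly the finite-group instance of that argument---construct a complete $H$-set universe as a countable disjoint union of all orbit types, transport along a bijection with $\omega$, and for uniqueness observe that completeness forces every orbit type to occur with countably infinite multiplicity, so any two such universes are $H$-equivariantly isomorphic. One small cosmetic point: orbit types are indexed by \emph{conjugacy classes} of subgroups rather than subgroups themselves, but this does not affect your multiplicity argument at all.
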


This lemma appeared in a preliminary version of \cite{schwede-k-theory}. It tells us in particular that we can associate to an $E\mathcal M$-category $\mathcal C$ for any \emph{abstract} finite group $H$ an underlying $H$-category $\mathcal C_H$ well-defined up to (a priori non-canonical) isomorphism by picking an injective homomorphism $i\colon H\to\mathcal M$ with universal image and setting $\mathcal C_H=i^*\mathcal C$, which explains the global equivariant behaviour of $E\mathcal M$-categories.

\begin{proof}[\llap`Proof\kern1pt\rlap'\kern-.5pt]
This is completely analogous to \cite[proof of Proposition~1.5]{schwede-L}, and we leave the details to the interested reader.
\end{proof}

In order to define the `$G$-global homotopy theory' of small $E\mathcal M$-$G$-categories, we introduce some notation: if $H\subset\mathcal M$ is a subgroup and $\varphi\colon H\to G$ is a homomorphism, then we write $\Gamma_{\varphi}\mathrel{:=}\{(h,\varphi(h)) : h\in H\}$ for the corresponding graph subgroup of $E\mathcal M\times G$. If $\mathcal C\in\cat{$\bmEM$-$\bm G$-Cat}$, then we abbreviate $\mathcal C^\varphi\mathrel{:=}\mathcal C^{\Gamma_\varphi}$ for the category of $\Gamma_\varphi$-fixed points, and if $f\colon\mathcal C\to\mathcal D$ is a morphism in $\cat{$\bmEM$-$\bm G$-Cat}$, then we write $f^{\varphi}\mathrel{:=}f^{\Gamma_\varphi}\colon\mathcal C^\varphi\to\mathcal D^\varphi$. This yields a functor $(\blank)^\varphi\colon\cat{$\bmEM$-$\bm G$-Cat}\to\cat{Cat}$.

\begin{defi}
A morphism $f\colon\mathcal C\to\mathcal D$ in $\cat{$\bmEM$-$\bm G$-Cat}$ is called a \emph{$G$-global weak equivalence} if $f^\varphi\colon\mathcal C^\varphi\to\mathcal D^\varphi$ is a weak homotopy equivalence (i.e.~a weak homotopy equivalence on nerves) for each universal subgroup $H\subset\mathcal M$ and each homomorphism $\varphi\colon H\to G$.

A morphism of $G$-parsummable categories is called a $G$-global weak equivalence if it so as a morphism in $\cat{$\bmEM$-$\bm G$-Cat}$.
\end{defi}

For $G=1$, Schwede \cite[Definition~2.26]{schwede-k-theory} considered these under the name `global equivalence.' Here we use the term `$G$-global \emph{weak} equivalence' instead in order to emphasize that these are a refinement of the weak homotopy equivalences instead of the \emph{categorical equivalences}, i.e.~those $(E\mathcal M\times G)$-equivariant functors that are equivalences of underlying categories.

In fact, the $G$-global weak equivalences and the categorical equivalences are in general incomparable, i.e.~neither notion implies the other. However, there is at least a particular class of interesting $E\mathcal M$-$G$-categories for which the $G$-global weak equivalences are indeed weaker than the categorical equivalences, which we will now introduce.

\begin{constr}
Let $\mathcal C$ be a (small) $E\mathcal M$-$G$-category, let $H\subset\mathcal M$ be universal, and let $\varphi\colon H\to G$ be any group homomorphism. Then $H$ acts on $EH$ via its left regular action, and it acts on $\mathcal C$ via the diagonal of its actions via $\mathcal M$ and $\varphi$. We write $\mathcal C^{\myh\varphi}\mathrel{:=}\Fun(EH,\mathcal C)^\varphi$ for the fixed points of the induced conjugation action. In other words, $\Fun(EH,\mathcal C)^\varphi$ is the subcategory of those $\Phi\colon EH\to \mathcal C$ such that $\Phi\circ (h.\blank)=\big((h,\varphi(h)).\blank\big)\circ\Phi$ together with those natural transformations $\tau\colon\Phi\Rightarrow\Psi$ such that $\tau_h=(h,\varphi(h)).\tau_1$.

Restricting along $EH\to {*}$ produces a fully faithful functor $\mathcal C\to\Fun(EH,\mathcal C)$ that is equivariant with respect to the above action on the target and $H$ acting on $\mathcal C$ as before. In particular, we get an induced functor $\mathcal C^\varphi\to\Fun(EH,\mathcal C)^\varphi=\mathcal C^{\myh\varphi}$ that is again fully faithful as a limit of fully faithful functors.
\end{constr}

Here we use the notation $\mathcal C^{\myh\varphi}$ as we want to stress that the above are homotopy fixed points (commonly denoted $\mathcal C^{h\varphi}$), but \emph{with respect to the categorical equivalences} and not with respect to the weak homotopy equivalences or $G$-global weak equivalences. In particular, if $f$ is any categorical equivalence, then $f^{\myh\varphi}$ will be an equivalence of categories by \cite[Proposition~2.16]{merling}, but if $f$ is a $G$-global weak equivalence, then $f^{\myh\varphi}$ need not be a weak homotopy equivalence, as the following example for $G=1$ shows:

\begin{ex}
Let $\mathcal C$ be the groupoid $B(\mathbb Z/2)$ and let $f\colon\mathcal D\to\mathcal C$ be a weak homotopy equivalence from a category $\mathcal D$ without non-trivial isomorphisms; this exists for example by taking a cofibrant replacement in the Thomason model structure on $\cat{Cat}$ \cite[Proposition~5.7]{thomason-unstable}, or via Proposition~\ref{prop:last-vertex} below.

If we equip $\mathcal C$ and $\mathcal D$ with the trivial $E\mathcal M$-action, then $f$ becomes a global weak equivalence in $\cat{$\bmEM$-Cat}^\tau$. However, if $H\subset\mathcal M$ is a universal subgroup isomorphic to $\mathbb Z/2$, then $f^{\myh H}$ is a map from $\mathcal D^{\myh H}\simeq\mathcal D$ (here we used that $\mathcal D$ has no non-trivial isomorphisms, so that any map from $EH$ is constant) to $\mathcal C^{\myh H}\simeq\Fun(B(\mathbb Z/2),B(\mathbb Z/2))$, and this can't be a weak homotopy equivalence as the nerve of the latter has two path components (corresponding to the identity of $\mathbb Z/2$ and the zero homomorphism).
\end{ex}

\begin{defi}
A small $E\mathcal M$-$G$-category $\mathcal C$ is called \emph{saturated} if for all universal $H\subset\mathcal M$ and all $\varphi\colon H\to G$ the above functor $\mathcal C^\varphi\hookrightarrow\mathcal C^{\myh\varphi}$ is an equivalence of categories. It is called \emph{weakly saturated} if $\mathcal C^\varphi\hookrightarrow\mathcal C^{\myh\varphi}$ is a weak homotopy equivalence.

We write $\cat{$\bmEM$-$\bm G$-Cat}^{\tau,s}\subset\cat{$\bmEM$-$\bm G$-Cat}^{\tau,ws}\subset\cat{$\bmEM$-$\bm G$-Cat}^{\tau}$ for the full subcategories of saturated and weakly saturated tame $E\mathcal M$-$G$-categories, respectively.
\end{defi}

If $G=1$, the above definition of saturatedness agrees with \cite[Definition~7.3]{schwede-k-theory}. However, for the present article the weak notion will be more important.

\begin{lemma}\label{lemma:cat-between-ws}
Let $f\colon\mathcal C\to\mathcal D$ be a categorical equivalence in $\cat{$\bmEM$-$\bm G$-Cat}^{\tau,ws}$. Then $f$ is a $G$-global weak equivalence.
\end{lemma}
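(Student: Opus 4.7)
The plan is to exploit the defining square of weak saturatedness together with the observation, recorded just after the construction of $\mathcal C^{\myh\varphi}$, that the functor $(\blank)^{\myh\varphi}=\Fun^\varphi(EH,\blank)$ preserves categorical equivalences. Fix a universal subgroup $H\subset\mathcal M$ and a homomorphism $\varphi\colon H\to G$; the goal is to show that $f^\varphi\colon\mathcal C^\varphi\to\mathcal D^\varphi$ is a weak homotopy equivalence.

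I would first write down the naturality square
\begin{equation*}
\begin{tikzcd}
\mathcal C^\varphi\arrow[r, "f^\varphi"]\arrow[d, hook] & \mathcal D^\varphi\arrow[d, hook]\\
\mathcal C^{\myh\varphi}\arrow[r, "f^{\myh\varphi}"'] & \mathcal D^{\myh\varphi}
\end{tikzcd}
\end{equation*}
coming from the inclusions $\mathcal C^\varphi\hookrightarrow\mathcal C^{\myh\varphi}$ and $\mathcal D^\varphi\hookrightarrow\mathcal D^{\myh\varphi}$. By the weak saturatedness of $\mathcal C$ and $\mathcal D$, both vertical arrows are weak homotopy equivalences. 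On the other hand, $\Fun(EH,\blank)$ sends categorical equivalences to categorical equivalences, and restricting to the full subcategory $\Fun^\varphi(EH,\blank)$ of equivariant functors and equivariant natural transformations is compatible with this (this is exactly the remark made in the text immediately after the construction of $(\blank)^{\myh\varphi}$). Hence $f^{\myh\varphi}$ is an equivalence of categories, and in particular induces a weak homotopy equivalence on nerves.

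At this point two-out-of-three for weak homotopy equivalences in the above square forces $f^\varphi$ to be a weak homotopy equivalence. Since the argument only used the weak saturatedness of $\mathcal C$ and $\mathcal D$ and held for arbitrary universal $H\subset\mathcal M$ and arbitrary $\varphi\colon H\to G$, this shows that $f$ is a $G$-global weak equivalence.

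The only non-routine point is the claim that $f^{\myh\varphi}$ is a categorical equivalence. Its \emph{underlying} (non-equivariant) version is standard, since $\Fun(EH,\blank)$ is a $2$-functor and $EH$ is cofibrant in the canonical model structure; the content is that one can arrange an inverse and the unit/counit $2$-isomorphisms to be $H$-equivariant with respect to the diagonal action used in defining $\mathcal C^{\myh\varphi}$. If this were not already asserted in the paragraph following the construction, I would spell it out by averaging or, more cleanly, by producing the inverse as $\Fun^\varphi(EH,g)$ for a quasi-inverse $g$ of $f$ and observing that the natural isomorphisms $fg\cong\id_{\mathcal D}$, $gf\cong\id_{\mathcal C}$ can be replaced by equivariant ones after passing to $EH$-parametrised versions, since $EH$ has contractible fixed-point categories for every subgroup of $H$.
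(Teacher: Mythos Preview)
Your proof is correct and follows exactly the same approach as the paper's: write down the naturality square comparing $(\blank)^\varphi$ and $(\blank)^{\myh\varphi}$, use weak saturatedness for the vertical maps, use that $(\blank)^{\myh\varphi}$ preserves categorical equivalences for the bottom map, and conclude by $2$-out-of-$3$. The only caveat is your speculative final paragraph: a quasi-inverse $g$ of $f$ need not be $(E\mathcal M\times G)$-equivariant, so $\Fun^\varphi(EH,g)$ is not literally defined as written---but since you correctly defer to the paper's assertion that $(\blank)^{\myh\varphi}$ sends categorical equivalences to equivalences, this does not affect the argument.
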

\begin{proof}
Let $H\subset\mathcal M$ be a universal subgroup, and let $\varphi\colon H\to G$ be any group homomorphism. Then we have a commutative diagram
\begin{equation*}
\begin{tikzcd}
\mathcal C^\varphi\arrow[d, hook]\arrow[r, "f^\varphi"] &\mathcal D^\varphi\arrow[d,hook]\\
\mathcal C^{\myh\varphi}\arrow[r, "f^{\myh\varphi}"'] &\mathcal D^{\myh\varphi}
\end{tikzcd}
\end{equation*}
with the vertical maps as above. The bottom map is an equivalence of categories because $f$ is, hence in particular a weak homotopy equivalence. Moreover, the vertical maps are weak homotopy equivalences by assumption, so that the top arrow is also a weak homotopy equivalence by $2$-out-of-$3$ as desired.
\end{proof}

\begin{rk}
The same argument shows that if $\mathcal C$ and $\mathcal D$ are actually saturated, then any categorical equivalence $f\colon\mathcal C\to\mathcal D$ even induces \emph{equivalences of categories} on all the relevant fixed points.
\end{rk}

We also recall our \emph{saturation construction} that first appeared for $G=1$ as \cite[Construction~7.20]{schwede-k-theory}:

\begin{constr}
Let $\mathcal C$ be a small $E\mathcal M$-category. Then $\Fun(E\mathcal M,\mathcal C)$ carries two commuting $E\mathcal M$-actions: one via the given action on $\mathcal C$ and one via the right $E\mathcal M$-action on itself via precomposition. We equip $\Fun(E\mathcal M,\mathcal C)$ with the diagonal of these two actions.

Now assume that $\mathcal C$ is tame. We write $\mathcal C^{\sat}\mathrel{:=}\Fun(E\mathcal M,\mathcal C)^\tau$. If $f\colon\mathcal C\to\mathcal D$ is an $E\mathcal M$-equivariant functor, then we write $f^{\sat}\mathrel{:=}\Fun(E\mathcal M,f)^\tau$; we omit the easy verification that $f^{\sat}$ is well-defined and $E\mathcal M$-equivariant, and that this way $(\blank)^{\sat}$ becomes an endofunctor of $\cat{$\bmEM$-Cat}^\tau$.

Finally, we consider the $E\mathcal M$-equivariant functor $\mathcal C\to\Fun(E\mathcal M,\mathcal C)$ induced by $E\mathcal M\to{*}$, which restricts to an $E\mathcal M$-equivariant functor $s\colon\mathcal C\to\mathcal C^{\sat}$, see~\cite[Theorem~7.24-(iv)]{schwede-k-theory}. We omit the easy verification that $s$ is natural.

Pulling through the $G$-actions everywhere, we can upgrade $(\blank)^{\sat}$ to an endo\-functor of $\cat{$\bmEM$-$\bm G$-Cat}^\tau$, and $s$ automatically defines a natural transformation from the identity to this lift.
\end{constr}

\begin{thm}\label{thm:sat-cat}
Let $\mathcal C$ be a tame $E\mathcal M$-$G$-category.
\begin{enumerate}
\item\label{item:sc-sat} $\mathcal C^{\sat}$ is saturated, so that $(\blank)^{\sat}$ restricts to a functor \begin{equation*}\cat{$\bmEM$-$\bm G$-Cat}^\tau\to\cat{$\bmEM$-$\bm G$-Cat}^{\tau,\text{s}}.\end{equation*}
\item $s\colon\mathcal C\to\mathcal C^{\sat}$ is a categorical equivalence.\label{item:sc-cat-eq}
\end{enumerate}
In particular, the inclusion $\cat{$\bmEM$-$\bm G$-Cat}^{\tau,\text{s}}\hookrightarrow\cat{$\bmEM$-$\bm G$-Cat}^\tau$ is a homotopy equivalence \emph{with respect to the categorical equivalences on both sides}, and $(\blank)^\sat$ is homotopy inverse to it.
\end{thm}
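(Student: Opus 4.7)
My plan is to handle Part~(2) by reduction to the $G=1$ case, to prove Part~(1) by adapting Schwede's rectification argument to the $G$-twisted setting, and then to derive the ``in particular'' statement formally.

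Part~(2) is essentially immediate: since the saturation functor and the natural transformation $s$ are defined by pulling $G$-actions through the $G=1$ construction, the underlying functor of $s$ is literally the $G=1$ map, which is a categorical equivalence by \cite[Theorem~7.22(iv)]{schwede-k-theory}. Because a morphism in $\cat{$\bm{E\mathcal M}$-$\bm G$-Cat}$ is a categorical equivalence iff its underlying functor is, this suffices.

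For Part~(1), fix a universal $H\subset\mathcal M$ and a homomorphism $\varphi\colon H\to G$; we must show that the inclusion $(\mathcal C^\sat)^\varphi\hookrightarrow\Fun^\varphi(EH,\mathcal C^\sat)$ is an equivalence of categories. Fully faithfulness is automatic (as noted after the construction), so only essential surjectivity remains. An object $\Phi\in\Fun^\varphi(EH,\mathcal C^\sat)$ unpacks, via currying, to a tame functor $\hat\Phi\colon EH\times E\mathcal M\to\mathcal C$ satisfying the twisted identity $\hat\Phi(hv,w)=(h,\varphi(h))\cdot\hat\Phi(v,wh)$ for all $h\in H$, $v\in EH$, $w\in E\mathcal M$. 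My plan is to adapt Schwede's rectification construction from the proof of \cite[Theorem~7.22]{schwede-k-theory} to this $\varphi$-twisted setting: using universality of $H\subset\mathcal M$, produce a $\varphi$-fixed object $\Phi_0\in(\mathcal C^\sat)^\varphi$ together with an isomorphism $\Phi(1)\cong\Phi_0$ in $\mathcal C^\sat$; by chaoticity of $EH$, this extends uniquely to a natural isomorphism $\Phi\Rightarrow c_{\Phi_0}$ in $\Fun(EH,\mathcal C^\sat)$, and the twisted equivariance of $\hat\Phi$ together with the pointwise nature of the $G$-action on $\mathcal C^\sat$ should ensure this natural isomorphism automatically lies in $\Fun^\varphi$.

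The ``in particular'' statement is then formal: Part~(1) ensures that $(\blank)^\sat$ lands in $\cat{$\bm{E\mathcal M}$-$\bm G$-Cat}^{\tau,\text{s}}$, and Part~(2) provides a natural pointwise categorical equivalence $s\colon\id\Rightarrow i\circ(\blank)^\sat$, where $i$ denotes the inclusion. Restricting $s$ to $\cat{$\bm{E\mathcal M}$-$\bm G$-Cat}^{\tau,\text{s}}$ gives the reverse natural categorical equivalence $\id\Rightarrow(\blank)^\sat\circ i$, exhibiting $i$ and $(\blank)^\sat$ as mutually inverse homotopy equivalences with respect to the categorical equivalences. The main obstacle is Part~(1): the $G=1$ rectification already rests essentially on the universality of $H$, and for $G\neq 1$ we must further verify that the construction intertwines the $G$-action with the $E\mathcal M$-action via $\varphi$. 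Since $G$ acts only pointwise on $\mathcal C^\sat$ and commutes with the $E\mathcal M$-action, this should reduce to careful bookkeeping of the twist through Schwede's construction, but that bookkeeping is the nontrivial new input.
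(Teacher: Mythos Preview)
Your Part~(2) and the ``in particular'' clause are fine and match the paper's reasoning.

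For Part~(1), your route is genuinely different from the paper's, and there is a soft spot. The paper does \emph{not} work directly with $\mathcal C^\sat$; instead it factors through the non-tame $\Fun(E\mathcal M,\mathcal C)$ and proves two separate claims: first that $\Fun(E\mathcal M,\mathcal C)$ is saturated (via the observation that both $\mathcal M$ and $H\times\mathcal M$ are free $H$-sets, so $E(\pr)\colon E(H\times\mathcal M)\to E\mathcal M$ is an $H$-equivariant equivalence, whence restriction along it identifies $\Fun(E\mathcal M,\mathcal C)^\varphi$ with $\Fun^\varphi(EH,\Fun(E\mathcal M,\mathcal C))$); and second that the inclusion $\mathcal C^\sat\hookrightarrow\Fun(E\mathcal M,\mathcal C)$ induces equivalences on $\varphi$-fixed points (using a finite faithful $H$-subset $S\subset\omega$, a map $\chi\colon\Inj(S,\omega)\to H$, and Schwede's support characterisation Proposition~7.20 to see that $E(\chi\circ\textup{res})^*$ lands in the tame part). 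Both parts of the theorem then fall out by $2$-out-of-$3$ in a small commutative square. This separation means neither step requires tracking a $\varphi$-twist through an explicit rectification: the first step is a formal statement about chaotic categories on free $H$-sets, and the second is about support.

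Your plan instead attacks $(\mathcal C^\sat)^\varphi\hookrightarrow\Fun^\varphi(EH,\mathcal C^\sat)$ directly. This is plausible, but your sentence ``the twisted equivariance of $\hat\Phi$ \dots\ should ensure this natural isomorphism automatically lies in $\Fun^\varphi$'' is not correct as stated. Chaoticity of $EH$ only tells you that \emph{any} isomorphism $\tau_1\colon\Phi(1)\cong\Phi_0$ extends uniquely to a natural isomorphism $\tau$; for $\tau$ to be $H$-equivariant you need the specific condition $\tau_1\circ\Phi(1,h)=(h,\varphi(h)).\tau_1$ for all $h\in H$, which is a constraint on $\tau_1$, not a consequence of chaoticity. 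So the work is not ``automatic'': you must produce $\Phi_0$ and $\tau_1$ \emph{together} so that this identity holds, and that is exactly the content you have deferred to ``careful bookkeeping.'' The paper's decomposition sidesteps this verification entirely; if you pursue your direct route, be explicit that the rectified isomorphism satisfies the equivariance identity above, rather than asserting it follows formally.
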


Here we call a homotopical functor $F\colon\mathscr C\to\mathscr D$ between categories with weak equivalences a \emph{homotopy equivalence} if there exists a homotopical functor $G\colon\mathscr D\to\mathscr C$ that is \emph{homotopy inverse} to it, i.e.~such that both $FG$ and $GF$ are connected by zig-zags of natural levelwise weak equivalences to the respective identities.

\begin{proof}
This is similar to the usual global situation, where this argument appeared in slightly different form as \cite[Theorem~7.24]{schwede-k-theory}.

We will show that $\Fun(E\mathcal M,\mathcal C)$ is saturated, and that the inclusion $\mathcal C^{\sat}=\Fun(E\mathcal M,\mathcal C)^\tau\hookrightarrow\Fun(E\mathcal M,\mathcal C)$ induces an \emph{equivalence of categories} on $\varphi$-fixed points for all universal $H\subset\mathcal M$ and all $\varphi\colon H\to G$. Applying the latter to $H=1$ in particular shows that $\mathcal C^\sat\hookrightarrow\Fun(E\mathcal M,\mathcal C)$ is a categorical equivalence, and since so is $\mathcal C\to\Fun(E\mathcal M,\mathcal C)$ (as $E\mathcal M\simeq{*}$), also $s\colon\mathcal C\to\mathcal C^{\sat}$ is a categorical equivalence by $2$-out-of-$3$, proving $(\ref{item:sc-cat-eq})$. On the other hand, it shows that for any $\varphi$ as above the top horizontal and right hand vertical map in the evident commutative diagram
\begin{equation*}
\begin{tikzcd}
(\mathcal C^\sat)^\varphi\arrow[r]\arrow[d,hook] & \Fun(E\mathcal M,\mathcal C)^\varphi\arrow[d,hook]\\
(\mathcal C^\sat)^{\myh\varphi}\arrow[r] & \Fun(E\mathcal M,\mathcal C)^{\myh\varphi}
\end{tikzcd}
\end{equation*}
are equivalences of categories. Moreover, we can also deduce that the lower horizontal map is an equivalence of categories (as a homotopy limit of equivalences), hence so is the left hand vertical map by $2$-out-of-$3$, which will precisely prove $(\ref{item:sc-sat})$.

It remains to prove the two claims. For the first one, we have to show that for all $H,\varphi$ as above the canonical map $\Fun(E\mathcal M,\mathcal C)^\varphi\to\Fun(EH,\Fun(E\mathcal M,\mathcal C))^\varphi$ is an equivalence. Under the identification $\Fun(EH,\Fun(E\mathcal M,\mathcal C))\cong\Fun(EH\times E\mathcal M,\mathcal C)\cong\Fun(E(H\times\mathcal M),\mathcal C)$ given by the adjunction isomorphism and the fact that $E$ preserves products, the right hand side of the above map corresponds to the fixed points with respect to the same $H$-action on $\mathcal C$ as before and the $H$-action on $E(H\times \mathcal M)$ induced by the left regular $H$-action and the $H$-action on $\mathcal M$ via $h.u=uh^{-1}$. Under this identification, the canonical map is induced by $E(\pr)\colon E(H\times\mathcal M)\to E\mathcal M$, and it will be enough to show that this is an $H$-equivariant equivalence of categories, i.e.~an equivalence in the $2$-category of $H$-categories, $H$-equivariant functors, and $H$-equivariant natural transformations.

For this we observe that both $H\times\mathcal M$ and $\mathcal M$ are free $H$-sets. Thus, there exists an $H$-equivariant map $r\colon\mathcal M\to H\times\mathcal M$. It is then easy to check that for varying $u\in\mathcal M$ the unique maps $\pr(r(u))\to u$ in $E\mathcal M$ assemble into an $H$-equivariant isomorphism $E(\pr)E(r)=E(\pr\circ r)\cong\id_{E\mathcal M}$, and similarly $E(r)E(\pr)\cong\id_{E(H\times\mathcal M)}$ equivariantly. This completes the proof of the first claim.

For the second claim we observe that $(\mathcal C^{\sat})^\varphi\to \Fun(E\mathcal M,\mathcal C)^\varphi$ is always fully faithful as a limit of fully faithful functors, so that it is enough to show that it is also essentially surjective. Moreover, \cite[Proposition~7.22]{schwede-k-theory} shows that $\Phi\colon E\mathcal M\to\mathcal C$ is supported on some finite set $A$ if (and only if) all $\Phi(u)$ are supported on $A$ and $\Phi$ factors through the restriction $E(\textup{res}_A)\colon E\mathcal M\to E\Inj(A,\omega)$.

As $H$ is a complete $H$-set universe, $\omega$ contains a finite faithful $H$-subset $S$ (for example, we could take any free $H$-orbit). By faithfulness, $\Inj(S,\omega)$ is a free $H$-set, hence there exists an $H$-equivariant map $\chi\colon\Inj(S,\omega)\to H$. As above one then argues that $E(\chi\circ\textup{res}_S)^*\colon\Fun(EH,\mathcal C)^\varphi\to\Fun(E\mathcal M,\mathcal C)^\varphi$ is an equivalence of categories. Moreover, the above characterization shows that $E(\chi\circ\textup{res}_S)^*$ lands in $\Fun(E\mathcal M,\mathcal C)^\tau=\mathcal C^{\sat}$; more precisely, $E(\chi\circ\textup{res}_S)^*(\Psi)=E(\textup{res}_S)^*E(\chi)^*(\Psi)$ is supported on $S\cup\bigcup_{h\in H}\supp(\Psi(h))$ for any $\Psi\colon EH\to\mathcal C$. We conclude that $(\mathcal C^{\sat})^\varphi\hookrightarrow\Fun(E\mathcal M,\mathcal C)^\varphi$ is essentially surjective, completing the proof of the second claim and hence of the theorem.
\end{proof}

\begin{cor}\label{cor:ws-vs-s}
The inclusion $\cat{$\bmEM$-$\bm G$-Cat}^{\tau,s}\hookrightarrow\cat{$\bmEM$-$\bm G$-Cat}^{\tau,ws}$ is a homotopy equivalence with respect to the $G$-global weak equivalences on both sides.
\end{cor}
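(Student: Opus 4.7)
The plan is to show that the saturation endofunctor $(\blank)^\sat$ of Theorem~\ref{thm:sat-cat} restricts to a homotopy inverse of the inclusion with respect to the $G$-global weak equivalences, essentially by combining the previous theorem with Lemma~\ref{lemma:cat-between-ws}.

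First I would observe that any equivalence of categories is a weak homotopy equivalence on nerves, so $\cat{$\bm{E\mathcal M}$-$\bm G$-Cat}^{\tau,s}\subset\cat{$\bm{E\mathcal M}$-$\bm G$-Cat}^{\tau,ws}$. Consequently, Theorem~\ref{thm:sat-cat}-$(\ref{item:sc-sat})$ lets us restrict $(\blank)^\sat$ to a functor $\cat{$\bm{E\mathcal M}$-$\bm G$-Cat}^{\tau,ws}\to\cat{$\bm{E\mathcal M}$-$\bm G$-Cat}^{\tau,s}$, which we view as a candidate homotopy inverse to the inclusion $i$.

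Next, I would combine Theorem~\ref{thm:sat-cat}-$(\ref{item:sc-cat-eq})$ with Lemma~\ref{lemma:cat-between-ws}: for any weakly saturated $\mathcal C$, the natural map $s\colon\mathcal C\to\mathcal C^\sat$ is a categorical equivalence between weakly saturated (in fact, the target is saturated) objects, and is therefore a $G$-global weak equivalence. This produces natural $G$-global weak equivalences $\id\Rightarrow i\circ(\blank)^\sat$ on $\cat{$\bm{E\mathcal M}$-$\bm G$-Cat}^{\tau,ws}$ and $\id\Rightarrow(\blank)^\sat\circ i$ on $\cat{$\bm{E\mathcal M}$-$\bm G$-Cat}^{\tau,s}$, witnessing both composites of interest.

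Finally, to promote $(\blank)^\sat$ to a homotopical functor on $\cat{$\bm{E\mathcal M}$-$\bm G$-Cat}^{\tau,ws}$, I would feed a $G$-global weak equivalence $f\colon\mathcal C\to\mathcal D$ between weakly saturated objects into the naturality square of $s$: three of the four edges (the top one being $f$, the verticals being instances of $s$ as above) are $G$-global weak equivalences, so $f^\sat$ is also one by $2$-out-of-$3$. This completes the verification that $(\blank)^\sat$ and $i$ are mutually homotopy inverse. There is no serious obstacle; the only thing to be careful about is making sure that the naturality-square step is only applied on the weakly saturated subcategory, so that both verticals really are $G$-global weak equivalences by the preceding paragraph.
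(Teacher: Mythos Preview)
Your proposal is correct and follows essentially the same approach as the paper: both use $(\blank)^\sat$ as the homotopy inverse and reduce everything to the observation that $s\colon\mathcal C\to\mathcal C^\sat$ is a $G$-global weak equivalence for weakly saturated $\mathcal C$, which follows from Theorem~\ref{thm:sat-cat} and Lemma~\ref{lemma:cat-between-ws}. Your version is simply more explicit, in particular spelling out the $2$-out-of-$3$ argument for homotopicality that the paper leaves implicit.
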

\begin{proof}
We claim that $(\blank)^{\sat}$ restricts to the desired homotopy inverse, for which it suffices to show that $s\colon\mathcal C\to\mathcal C^{\sat}$ is a $G$-global weak equivalence for all weakly saturated $\mathcal C$. This is in turn an immediate consequence of the previous theorem together with Lemma~\ref{lemma:cat-between-ws}.
\end{proof}

Next, we turn to parsummable structures. Schwede shows in \cite[Construction~7.25]{schwede-k-theory} (as an application of \cite[Proposition~7.22]{schwede-k-theory} mentioned above) that the canonical isomorphism
$\Fun(E\mathcal M,\mathcal C)\times\Fun(E\mathcal M,\mathcal D)\to\Fun(E\mathcal M,\mathcal C\times\mathcal  D)$ restricts for any tame $E\mathcal M$-categories $\mathcal C,\mathcal D$ to a morphism $\mathcal C^{\sat}\boxtimes\mathcal D^{\sat}\to(\mathcal C\boxtimes\mathcal D)^{\sat}$ and that this together with the unique map ${*}\to {*}^{\sat}$ makes $(\blank)^{\sat}$ into a lax symmetric monoidal functor with respect to $\boxtimes$. In particular, if $\mathcal C$ is a parsummable category, then $\mathcal C^{\sat}$ again admits a natural parsummable structure: explicitly, the addition in $\mathcal C^{\sat}$ is given pointwise, and the unit for the addition is the functor constant at zero. The $E\mathcal M$-equivariant functor $s\colon\mathcal C\to\mathcal C^{\sat}$ is then in fact a morphism of parsummable categories by \cite[Theorem~7.27]{schwede-k-theory}. We therefore immediately conclude from the above:

\begin{cor}\label{cor:sat-ps-cat}
The inclusion $\cat{$\bm G$-ParSumCat}^{s}\hookrightarrow\cat{$\bm G$-ParSumCat}$ of the full subcategory of saturated parsummable categories is a homotopy equivalence with respect to the \emph{categorical equivalences} on both sides. A homotopy inverse is given by the above saturation construction.\qed
\end{cor}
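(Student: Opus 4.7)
The plan is to observe that every ingredient needed has already been assembled, so the proof reduces to bookkeeping on top of Theorem~\ref{thm:sat-cat}. Concretely, the strategy is: lift the endofunctor $(\blank)^{\sat}$ and the natural transformation $s$ from $\cat{$\bm{E\mathcal M}$-$\bm G$-Cat}^\tau$ to $\cat{$\bm G$-ParSumCat}$, check that the lifted functor lands in the saturated subcategory, and then verify that $s$ provides the required natural categorical equivalences.

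For the first step I would invoke the paragraph immediately preceding the corollary: Schwede's results (\cite[Construction~7.23 and Theorem~7.25]{schwede-k-theory}) say that $(\blank)^{\sat}$ is lax symmetric monoidal with respect to $\boxtimes$ on $\cat{$\bm{E\mathcal M}$-Cat}^\tau$ and that $s$ is a monoidal natural transformation from the identity. Since lax symmetric monoidal functors preserve commutative monoids, $(\blank)^{\sat}$ descends to an endofunctor of $\cat{ParSumCat}$ and $s$ to a natural transformation of such; pulling through the $G$-action (which commutes with everything in sight) upgrades both to the $G$-equivariant setting, giving an endofunctor $(\blank)^{\sat}\colon\cat{$\bm G$-ParSumCat}\to\cat{$\bm G$-ParSumCat}$ together with a natural morphism $s\colon\id\Rightarrow(\blank)^{\sat}$ of $G$-parsummable categories.

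The second step is to refine the codomain to $\cat{$\bm G$-ParSumCat}^s$. The notion of saturatedness is phrased purely in terms of the underlying $E\mathcal M$-$G$-category, so this is immediate from Theorem~\ref{thm:sat-cat}(\ref{item:sc-sat}). The third step is symmetric: since the categorical equivalences on $\cat{$\bm G$-ParSumCat}$ (respectively $\cat{$\bm G$-ParSumCat}^s$) are by definition those morphisms whose underlying morphism of $E\mathcal M$-$G$-categories is a categorical equivalence, Theorem~\ref{thm:sat-cat}(\ref{item:sc-cat-eq}) tells us that $s$ is a natural categorical equivalence between the identity of $\cat{$\bm G$-ParSumCat}$ and $\incl\circ(\blank)^{\sat}$, and equally between the identity of $\cat{$\bm G$-ParSumCat}^s$ and $(\blank)^{\sat}\circ\incl$ (using that on a saturated $\mathcal C$ the composite is still $\mathcal C^{\sat}$, which is connected to $\mathcal C$ by the same $s$).

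There is no real obstacle here: everything depends only on the underlying $E\mathcal M$-$G$-categorical data, for which Theorem~\ref{thm:sat-cat} already provides the conclusion. The only thing worth double-checking during write-up is that the lax monoidal structure map $\mathcal C^{\sat}\boxtimes\mathcal D^{\sat}\to(\mathcal C\boxtimes\mathcal D)^{\sat}$ is compatible with the $G$-action, but this is automatic since the $G$-action on both sides is induced from the given $G$-action on $\mathcal C$ and $\mathcal D$ via pointwise action on $\Fun(E\mathcal M,\blank)$.
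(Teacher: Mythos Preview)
Your proposal is correct and follows essentially the same route as the paper: the corollary is marked with a \qed because it is immediate from the preceding paragraph, which (just as you outline) lifts $(\blank)^{\sat}$ and $s$ to $\cat{$\bm G$-ParSumCat}$ via the lax symmetric monoidal structure and then invokes Theorem~\ref{thm:sat-cat}.
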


\begin{cor}\label{cor:ws-vs-s-parsummable}
The inclusion $\cat{$\bm G$-ParSumCat}^{s}\hookrightarrow\cat{$\bm G$-ParSumCat}^{ws}$ is a homotopy equivalence with respect to the $G$-global weak equivalences on both sides.\qed
\end{cor}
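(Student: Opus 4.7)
The plan is to mimic the proof of Corollary~\ref{cor:ws-vs-s} essentially verbatim, now in the parsummable setting. As candidate homotopy inverse I take the saturation construction $(\blank)^{\sat}$, which by the discussion preceding Corollary~\ref{cor:sat-ps-cat} lifts to an endofunctor of $\cat{$\bm G$-ParSumCat}$ landing in the saturated parsummable categories. Since a saturated parsummable category is in particular weakly saturated, $(\blank)^{\sat}$ restricts to a functor $\cat{$\bm G$-ParSumCat}^{ws}\to\cat{$\bm G$-ParSumCat}^{s}$, and its composite with the inclusion in the other direction is again $(\blank)^{\sat}\colon\cat{$\bm G$-ParSumCat}^{s}\to\cat{$\bm G$-ParSumCat}^{s}$.

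For one of the two natural transformations needed to witness the homotopy equivalence, Corollary~\ref{cor:sat-ps-cat} already supplies a zig-zag of natural categorical equivalences between $(\blank)^{\sat}$ and the identity on $\cat{$\bm G$-ParSumCat}^{s}$; since categorical equivalences are always $G$-global weak equivalences between saturated parsummable categories (as in the remark after Lemma~\ref{lemma:cat-between-ws}), this zig-zag is automatically levelwise a $G$-global weak equivalence. The remaining verification is that the unit transformation $s\colon\mathcal C\to\mathcal C^{\sat}$ is a $G$-global weak equivalence whenever $\mathcal C$ is weakly saturated; once this is established, both composites of the inclusion with $(\blank)^{\sat}$ are connected to the identity by a natural $G$-global weak equivalence.

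To handle this remaining point, I would combine Theorem~\ref{thm:sat-cat} with Lemma~\ref{lemma:cat-between-ws}. Indeed, Theorem~\ref{thm:sat-cat}$(\ref{item:sc-cat-eq})$ shows that $s\colon\mathcal C\to\mathcal C^{\sat}$ is a categorical equivalence of the underlying $E\mathcal M$-$G$-categories, while Theorem~\ref{thm:sat-cat}$(\ref{item:sc-sat})$ shows that $\mathcal C^{\sat}$ is saturated, hence weakly saturated. As $\mathcal C$ is weakly saturated by hypothesis, Lemma~\ref{lemma:cat-between-ws} applies and yields that $s$ is a $G$-global weak equivalence, which is precisely what is needed.

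I do not expect any genuine obstacle: the argument is an immediate parsummable upgrade of Corollary~\ref{cor:ws-vs-s} together with the compatibility of $(\blank)^{\sat}$ with $\boxtimes$ that was already recorded, and it uses only that the notions of weak saturation and of $G$-global weak equivalence for $G$-parsummable categories are defined through the underlying $E\mathcal M$-$G$-categories. If anything, the only mild care required is to ensure that the zig-zag from Corollary~\ref{cor:sat-ps-cat} is natural through morphisms of parsummable categories (which it is by construction, since $s$ is a map of parsummable categories by Schwede's \cite[Theorem~7.25]{schwede-k-theory} quoted above).
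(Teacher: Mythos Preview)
Your proposal is correct and takes essentially the same approach as the paper: the corollary is marked with a bare \qed\ because it is the direct parsummable analogue of Corollary~\ref{cor:ws-vs-s}, using that $(\blank)^{\sat}$ lifts to $\cat{$\bm G$-ParSumCat}$ and that $s\colon\mathcal C\to\mathcal C^{\sat}$ is a map of parsummable categories. One minor simplification: you do not need to invoke a separate ``zig-zag'' from Corollary~\ref{cor:sat-ps-cat} for the saturated side---the single natural transformation $s$ already handles both composites, since saturated implies weakly saturated and your argument via Lemma~\ref{lemma:cat-between-ws} applies uniformly.
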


We will prove in Theorem~\ref{thm:parsum-cat-sat-global} below that also the inclusion \begin{equation*}\cat{$\bm G$-ParSumCat}^{ws}\hookrightarrow\cat{$\bm G$-ParSumCat}\end{equation*} is a homotopy equivalence with respect to the $G$-global weak equivalences, or in other words, that $\cat{$\bm G$-ParSumCat}^{s}\hookrightarrow\cat{$\bm G$-ParSumCat}$ is a homotopy equivalence not only with respect to the categorical equivalences, but also with respect to the $G$-global weak equivalences. We emphasize again that $\mathcal C\to\mathcal C^{\sat}$ is usually \emph{not} a $G$-global weak equivalence, so that this is not a consequence of Corollary~\ref{cor:sat-ps-cat}.

\section{\texorpdfstring{\for{toc}{$E\mathcal M$}\except{toc}{$\bmEM$}}{EM}-simplicial sets and parsummable simplicial sets}\label{sec:parsum-sset}
The functor $(\blank)_0\colon\cat{SSet}\to\cat{Set}$ admits a right adjoint, that we again denote by $E$. Explicitly, $(EX)_n=\prod_{i=0}^nX$ with the evident functoriality in the two variables. By uniqueness of adjoints we can then conclude that the simplicial set $EX$ is canonically isomorphic to the nerve of the category $EX$; more precisely, there is a unique simplicial map that is the identity on vertices, and this map is an isomorphism.

As before, we see that $E\colon\cat{Set}\to\cat{SSet}$ preserves products, and hence sends monoids to simplicial monoids; in particular, we get a monoid $E\mathcal M$, which by the above is identified with $\nerve(E\mathcal M)$ with monoid structure induced by the categorical monoid $E\mathcal M$.

\begin{defi}
An $E\mathcal M$-simplicial set is a simplicial set $X$ together with an action of the simplicial monoid $E\mathcal M$, i.e.~a simplicial map $E\mathcal M\times X\to X$ that is associative and unital. We write $\cat{$\bmEM$-SSet}$ for the category of $E\mathcal M$-simplicial sets and $E\mathcal M$-equivariant simplicial maps.
\end{defi}

If $\mathcal C$ is a small $E\mathcal M$-category, then $\nerve(\mathcal C)$ inherits an $E\mathcal M$-action via
\begin{equation*}
E\mathcal M\times\nerve(\mathcal C)\cong\nerve(E\mathcal M)\times\nerve(\mathcal C)
\cong\nerve(E\mathcal M\times\mathcal C)\xrightarrow{\nerve(\text{act})}\nerve(\mathcal C)
\end{equation*}
where the first isomorphism is the one discussed above and the second one comes from the fact that $\nerve$ preserves finite products. This way, the nerve obviously lifts to a functor $\cat{$\bmEM$-Cat}\to\cat{$\bmEM$-SSet}$.

\begin{rk}\label{rk:nerve-induced-action}
Let $\mathcal C$ be a small $E\mathcal M$-category. We will now make the above $E\mathcal M$-action on $\nerve(\mathcal C)$ explicit, for which we calculate for all $u,v\in\mathcal M$ and $f\colon x\to y$ in $\mathcal C$:
\begin{equation*}
(v,u). f = \big((v,u)\circ\id_u\big).(\id_y\circ f)=
\big((v,u).\id_y\big) \circ (\id_u.f)= [v,u]_y\circ u.f;
\end{equation*}
as $u.f=u^y_\circ f (u_\circ^x)^{-1}$ we conclude from this that the diagram
\begin{equation*}
\begin{tikzcd}[column sep=large]
x\arrow[d, "u_\circ^x"']\arrow[r, "f"] & y\arrow[d, "v_\circ^y"]\\
u.x \arrow[r,"{(v,u).f}"'] & v.y
\end{tikzcd}
\end{equation*}
commutes. Since the vertical maps are isomorphisms, this in fact completely determines $(v,u).f$. We can therefore immediately conclude that the action of $(u_0,\dots,u_k)\in (E\mathcal M)_k$ on a $k$-simplex $x_0\xrightarrow{\alpha_1}x_1\to\cdots\to x_k$ is uniquely characterized by demanding that inserting it as the lower row in
\begin{equation*}
\begin{tikzcd}
x_0\arrow[r, "\alpha_1"]\arrow[d, "(u_0)_\circ^{x_0}"'] & x_1\arrow[d, "(u_1)_\circ^{x_1}"] \arrow[r] & \cdots\arrow[r] & x_k\arrow[d, "(u_k)_\circ^{x_k}"]\\
u_0.x_0\arrow[r] & u_1.x_1\arrow[r] & \cdots\arrow[r] & u_k.x_k
\end{tikzcd}
\end{equation*}
makes all the squares commute.
\end{rk}

\begin{ex}\label{ex:EInj-SSet}
Let $A$ be a (finite) set. Analogously to Example~\ref{ex:EInj-supp}, the canoncial $\mathcal M$-action on $\Inj(A,\omega)$ makes $E\Inj(A,\omega)$ into an $E\mathcal M$-simplicial set. It is then canonically isomorphic to the nerve of the category of the same name considered in the aforementioned example.
\end{ex}

\subsection{Supports and tameness} Next, we want to introduce analogues of the notions of support and tameness for $E\mathcal M$-simplicial sets.

\begin{constr}
Let $0\le k\le n$. We write $i_k\colon\mathcal M\to\mathcal M^{n+1}$ for the homomorphism sending $u\in\mathcal M$ to $(1,\dots,1,u,1\dots,1)$ where $u$ is in the $(k+1)$-th spot.

If $X$ is an $E\mathcal M$-simplicial set, we therefore get $(n+1)$ commuting $\mathcal M$-actions on $X_n$ by pulling back the action of $\mathcal M^{n+1}=(E\mathcal M)_n$ along the injections $i_0,\dots,i_n$.
\end{constr}

\begin{defi}
Let $X$ be an $E\mathcal M$-simplicial set, let $0\le k\le n$, and let $x\in X_n$. Then we say that \emph{$x$ is $k$-supported} on the finite set $A\subset\omega$ if it is supported on $A$ as an element of the $\mathcal M$-set $i_k^*X_n$. We say that $x$ is \emph{$k$-finitely supported} if it is $k$-supported on some finite set, in which case we write $\supp_k(x)$ for its support in $i_k^*X_n$.

We say that $x$ is \emph{supported} on $A$ if it is $k$-supported on $A$ for all $0\le k\le n$, and we call it \emph{finitely supported} if it is supported on some finite set, i.e.~if it is $k$-finitely supported for all $0\le k\le n$. In this case its \emph{support} $\supp(x)$ is defined as $\bigcup_{k=0}^n\supp_k(x)$.

The $E\mathcal M$-simplicial set $X$ is called \emph{tame} if all its simplices are finitely supported. We write $\cat{$\bmEM$-SSet}^\tau\subset\cat{$\bmEM$-SSet}$ for the full subcategory spanned by the tame $E\mathcal M$-simplicial sets.
\end{defi}

\begin{rk}\label{rk:support-vs-m-support}
Any $E\mathcal M$-simplicial set $X$ in particular forgets to an $\mathcal M$-simplicial set, and it is natural to ask whether we can characterize the support of $x\in X_n$ in terms of the resulting $\mathcal M$-action (i.e.~the diagonal action) on $X_n$, analogously to Lemma~\ref{lemma:support-strong}.

We prove in \cite{g-global} that this is indeed the case: $x$ is supported on the finite set $A$ in the above sense if and only if it supported on $A$ as an element of the $\mathcal M$-set $X_n$. However, the combinatorial argument for this is somewhat lengthy, and as we will not need this result for the present article, we have decided to omit it.
\end{rk}

\begin{ex}\label{ex:nerve-supported-on}\label{ex:nerve-kth-support}
Let $\mathcal C$ be a tame $E\mathcal M$-category. We claim that an $n$-simplex $\alpha_\bullet\mathrel{:=}(x_0\xrightarrow{\alpha_1}x_1\to\cdots\to x_n)$ of $\nerve(\mathcal C)$ is $k$-supported on the finite set $A\subset\omega$ if and only if $A\supset\supp(x_k)$, i.e.~if $x_k$ is supported on $A$. In particular this shows that $\alpha_\bullet$ is finitely supported and $\supp_k(\alpha_\bullet)=\supp(x_k)$, $\supp(\alpha_\bullet)=\bigcup_{k=0}^n\supp(x_k)$.

To prove the claim let us first assume that $A$ contains $\supp(x_k)$; we will show that $i_k(u).\alpha_\bullet=\alpha_\bullet$ for all $u$ fixing $A$ pointwise. Indeed, by the description of $(u_0,\dots,u_n).\alpha_\bullet$ from Remark~\ref{rk:nerve-induced-action} applied to $(u_0,\dots,u_n)=i_k(u)$ it suffices that $u_\circ^{x_k}$ is the identity, which is immediate from Lemma~\ref{lemma:support-strong}.

Conversely, assume $\alpha_\bullet$ is $k$-supported on $A$, and let $u$ be any injection fixing $A$ pointwise, so that in particular $i_k(u).\alpha_\bullet=\alpha_\bullet$. Comparing the $k$-th vertices of these $n$-simplices then shows that $u.x_k=x_k$, and letting $u$ vary we see that $x_k$ is supported on $A$ as desired.
\end{ex}

In particular, we conclude from the above example that the nerve restricts to $\cat{$\bmEM$-Cat}^\tau\to\cat{$\bmEM$-SSet}^\tau$.

\begin{ex}\label{ex:EInj-sset-supp}
Let $A$ be a finite set. Then the above together with Example~\ref{ex:EInj-supp} shows that the $E\mathcal M$-simplicial set $E\Inj(A,\omega)$ from Example~\ref{ex:EInj-SSet} is tame and that $\supp_k(i_0,\dots,i_n)=i_k(A)$.
\end{ex}

\begin{warn}
Example~\ref{ex:nerve-kth-support} shows that if $X$ is isomorphic to the nerve of a tame $E\mathcal M$-category, then the $k$-th support of an $n$-simplex $x$ agrees with the support of its $k$-th vertex. This is \emph{not} true for general tame $E\mathcal M$-simplicial sets. Even worse, the support of an $n$-simplex can be strictly larger than the union of the supports of its vertices, for which we will give an example now:

Let $A$ be a non-empty finite set, and let $X$ be obtained from $E\Inj(A,\omega)\times\Delta^1$ by collapsing both copies of $E\Inj(A,\omega)$ to a single point. This is still tame since any $n$-simplex of $X$ can be represented by some $n$-simplex of $E\Inj(A,\omega)\times\Delta^1$, on whose support it is then obviously supported. Moreover, the unique vertex of $X$ has empty support for trivial reasons.

However, the quotient does not identify any two edges $\{i\}\times\Delta^1$, $\{j\}\times\Delta^1$ for distinct injections $i,j\colon A\to\omega$. In particular, $\supp[\{i\}\times\Delta^1]=i(A)\not=\varnothing$.
\end{warn}

\begin{lemma}\label{lemma:k-supp-em-equiv}
Let $f\colon X\to Y$ be an $E\mathcal M$-equivariant map of $E\mathcal M$-simplicial sets, let $0\le k\le n$, and let $x\in X_n$ be $k$-supported on some finite set $A\subset\omega$. Then also $f(x)$ is $k$-supported on $A$.

In particular, if $x$ is supported on $A$, then so is $f(x)$.
\end{lemma}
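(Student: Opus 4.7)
The plan is to reduce everything to the already established Lemma~\ref{lemma:supp-change}-(\ref{item:sc-m-equiv}) for plain $\mathcal M$-sets, by recognizing that $k$-support is, by definition, just ordinary $\mathcal M$-support after pulling back along the homomorphism $i_k\colon\mathcal M\to\mathcal M^{n+1}=(E\mathcal M)_n$.

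More precisely, I would first observe that since $f\colon X\to Y$ is $E\mathcal M$-equivariant, the map $f_n\colon X_n\to Y_n$ is $(E\mathcal M)_n$-equivariant, hence in particular $\mathcal M^{n+1}$-equivariant for the simplicial-level monoid action. Pulling this action back along $i_k$ yields an $\mathcal M$-equivariant map $i_k^*X_n\to i_k^*Y_n$, and by definition $x$ being $k$-supported on $A$ means precisely that $x$ is supported on $A$ as an element of the $\mathcal M$-set $i_k^*X_n$. Then Lemma~\ref{lemma:supp-change}-(\ref{item:sc-m-equiv}) applied to this $\mathcal M$-equivariant map gives that $f_n(x)=f(x)$ is supported on $A$ as an element of $i_k^*Y_n$, i.e.~$f(x)$ is $k$-supported on $A$, as desired.

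For the \emph{in particular} clause, if $x$ is supported on $A$, then by definition $x$ is $k$-supported on $A$ for every $0\le k\le n$, so by what we just proved the same holds for $f(x)$, meaning $f(x)$ is supported on $A$.

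There is no real obstacle here; the proof is essentially unfolding definitions and invoking the corresponding statement for $\mathcal M$-sets. The only point worth being careful about is making the reduction to Lemma~\ref{lemma:supp-change} transparent, i.e.~spelling out that pullback along $i_k$ preserves equivariance so that the $\mathcal M$-set statement can be applied directly on degree-$n$ simplices.
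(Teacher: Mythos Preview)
Your proof is correct and matches the paper's approach exactly: the paper simply states that the first claim is an instance of Lemma~\ref{lemma:supp-change}-(\ref{item:sc-m-equiv}) and that the second follows immediately, which is precisely the reduction you spell out in detail.
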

\begin{proof}
The first statement is an instance of Lemma~\ref{lemma:supp-change}-$(\ref{item:sc-m-equiv})$, and the second one follows immediately from this.
\end{proof}

\begin{lemma}\label{lemma:k-supp-action}
Let $X$ be an $E\mathcal M$-simplicial set, and let $x\in X_n$ be $k$-finitely supported for some $0\le k\le n$. Then $(u_0,\dots,u_n).x$ is $k$-finitely supported for all $u_0,\dots,u_n\in\mathcal M$, and $\supp_k((u_0,\dots,u_n).x)\subset u_k(\supp_k(x))$.
\end{lemma}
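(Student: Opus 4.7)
The plan is to reduce to the case where only one of the $u_j$ is nontrivial, by exploiting the factorization
\[
(u_0,\dots,u_n) = i_0(u_0)\cdot i_1(u_1)\cdots i_n(u_n)
\]
in $\mathcal M^{n+1}=(E\mathcal M)_n$ together with the crucial observation that the factors $i_j(u_j)$ for distinct $j$ commute (their nontrivial entries sit in different coordinates of $\mathcal M^{n+1}$). Rearranging, I would write $(u_0,\dots,u_n).x = i_k(u_k).y$, where $y \mathrel{:=} \bigl(\prod_{j\neq k} i_j(u_j)\bigr).x$.

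The first step is to verify that $\supp_k(y)\subset\supp_k(x)$. By iteration this reduces to showing, for a single $j\neq k$, that $i_j(u_j).x$ is still $k$-supported on $\supp_k(x)$. For this, pick any $v\in\mathcal M$ fixing $\supp_k(x)$ pointwise; then Lemma~\ref{lemma:M-set-supported-on-supp}, applied to the $\mathcal M$-set $i_k^*X_n$, tells us $i_k(v).x=x$. Since $i_k(v)$ and $i_j(u_j)$ commute in $\mathcal M^{n+1}$, we get
\[
i_k(v).\bigl(i_j(u_j).x\bigr) = i_j(u_j).\bigl(i_k(v).x\bigr) = i_j(u_j).x,
\]
so $i_j(u_j).x$ is $k$-supported on $\supp_k(x)$; hence its $k$-support is contained in $\supp_k(x)$, as required.

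The second and final step is to apply Lemma~\ref{lemma:supp-change}-(2) to the $\mathcal M$-set $i_k^*X_n$ (where the action $i_k(u_k).\blank$ is by definition the $\mathcal M$-action), yielding
\[
\supp_k\bigl(i_k(u_k).y\bigr) = u_k\bigl(\supp_k(y)\bigr) \subset u_k\bigl(\supp_k(x)\bigr),
\]
which gives both the finite supportedness of $(u_0,\dots,u_n).x$ and the claimed inclusion. The only slightly delicate point is the bookkeeping of the factorization and the commutation relations in $\mathcal M^{n+1}$; once these are in place, everything reduces cleanly to the $\mathcal M$-set statement of Lemma~\ref{lemma:supp-change}, so I do not expect any substantial obstacle.
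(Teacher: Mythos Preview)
Your proof is correct and follows essentially the same approach as the paper. The paper's version is only slightly more compressed: it factors $(u_0,\dots,u_n).\blank$ as $\big((u_0,\dots,u_{k-1},1,u_{k+1},\dots,u_n).\blank\big)\circ(i_k(u_k).\blank)$, observes that the first factor is $\mathcal M$-equivariant as a self-map of $i_k^*X_n$, and then invokes the two parts of Lemma~\ref{lemma:supp-change} in one breath---whereas you split the non-$k$ part into its individual commuting factors $i_j(u_j)$ and reverse the order of the factorization (applying $i_k(u_k)$ last rather than first), which is immaterial since the factors commute.
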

\begin{proof}
The set map $(u_0,\dots,u_n).\blank\colon X_n\to X_n$ factors as
\begin{equation*}
((u_0,\dots,u_{k-1},1,u_{k+1},\dots,u_n).\blank)\circ(i_k(u_k).\blank).
\end{equation*}
As a self map of the $\mathcal M$-set $i_k^*X_n$, the former is $\mathcal M$-equivariant, so the claim follows from the two parts of Lemma~\ref{lemma:supp-change}.
\end{proof}

\begin{lemma}\label{lemma:k-supp-simplicial}
Let $X$ be a \emph{tame} $E\mathcal M$-simplicial set, let $x\in X_n$, and let $\varphi\colon[m]\to[n]$ be any map in $\Delta$. Then $\supp_k(\varphi^*x)\subset\supp_{\varphi(k)}(x)$ for all $0\le k\le m$.
\end{lemma}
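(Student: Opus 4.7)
The plan is to lift $k$-support information about $y\mathrel{:=}\varphi^*x$ from $\varphi(k)$-support information about $x$ via the $E\mathcal M$-equivariance of the simplicial operator $\varphi^*\colon X_n\to X_m$. Write $A\mathrel{:=}\supp_{\varphi(k)}(x)$ and $S\mathrel{:=}\supp_k(y)$; by tameness of $X$, $S$ is a finite subset of $\omega$, and the goal is to show $S\subset A$. Throughout I use that the simplicial structure on $E\mathcal M$ comes from the nerve, so that $(E\mathcal M)_n=\mathcal M^{n+1}$ and $\varphi^*\colon(E\mathcal M)_n\to(E\mathcal M)_m$ sends $(v_0,\dots,v_n)$ to $(v_{\varphi(0)},\dots,v_{\varphi(m)})$.

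First, I would fix an arbitrary $u\in\mathcal M$ fixing $A$ pointwise and lift it to $v\mathrel{:=}i_{\varphi(k)}(u)\in(E\mathcal M)_n$. Then $v.x=x$ by definition of $A$, while a direct computation shows $\varphi^*(v)=\prod_{j\in K}i_j(u)$ for $K\mathrel{:=}\varphi^{-1}(\varphi(k))\ni k$. The simplicial equivariance of $\varphi^*$ therefore gives
\[
\Bigl(\prod_{j\in K}i_j(u)\Bigr).y=\varphi^*(v.x)=\varphi^*(x)=y.
\]
Since $k\in K$, the $k$-th entry of $\prod_{j\in K}i_j(u)$ is $u$, so Lemma~\ref{lemma:k-supp-action} applied to the left hand side yields $S=\supp_k(y)\subset u(S)$ for every such $u$.

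The remaining step is the purely combinatorial implication that ``$S\subset u(S)$ for all $u\in\mathcal M$ fixing $A$'' forces $S\subset A$. For this I would exhibit one such $u$: take $u$ to be the identity on $A$, send $S\smallsetminus A$ injectively into $\omega\smallsetminus(A\cup S)$ (possible since $S\smallsetminus A$ is finite while $\omega\smallsetminus(A\cup S)$ is infinite), and extend arbitrarily to an injection of $\omega$. Then $u(S)\cap S=A\cap S$, so $S\subset u(S)$ forces $S\subset A$.

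The main subtlety lies in the non-injective case: when $|K|>1$ (e.g.~for $\varphi$ a degeneracy and $k$ adjacent to the collapsed edge), the operator $\varphi^*$ is \emph{not} $\mathcal M$-equivariant with the $i_{\varphi(k)}$-action on the source and the $i_k$-action on the target, and from the equivariance one only extracts the aggregate fix-point relation $\prod_{j\in K}i_j(u).y=y$ rather than the desired $i_k(u).y=y$. The key observation that makes the argument go through is that Lemma~\ref{lemma:k-supp-action} is already strong enough to recover the one-sided inclusion $S\subset u(S)$ from this aggregate relation, and that this weak information, combined with the combinatorial step above, still pins $S$ inside $A$.
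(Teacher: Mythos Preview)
Your argument is correct and follows essentially the same route as the paper: both use the simplicial equivariance of $\varphi^*$ to transport the relation $i_{\varphi(k)}(u).x=x$ to an identity for $\varphi^*x$, then invoke Lemma~\ref{lemma:k-supp-action} to extract $\supp_k(\varphi^*x)\subset u(\supp_k(\varphi^*x))$, and finish with a combinatorial choice of $u$. The only cosmetic difference is that the paper argues one element $b\notin A$ at a time (choosing $u$ with $b\notin\im u$), whereas you pick a single $u$ moving all of $S\smallsetminus A$ outside $S$ at once.
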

\begin{proof}
Let $b\in\omega\smallsetminus\supp_{\varphi(k)}(x)$ and let $u$ be an injection fixing $\supp_{\varphi(k)}(x)$ pointwise with $b\notin\im(u)$. Then $i_{\varphi(k)}(u).x=x$, hence
\begin{equation*}
(i_{\varphi(k)}(u)_{\varphi(0)},\dots, i_{\varphi(k)}(u)_{\varphi(m)}).\varphi^*x=\varphi^*(i_{\varphi(k)}(u)).\varphi^*x = \varphi^*(i_{\varphi(k)}(u).x)=\varphi^*(x).
\end{equation*}
As $\varphi^*x$ is $k$-finitely supported, we conclude from the previous lemma that
\begin{align*}
\supp_k(\varphi^*x)&=\supp_k((i_{\varphi(k)}(u)_{\varphi(0)},\dots, i_{\varphi(k)}(u)_{\varphi(m)}).\varphi^*x)\\
&\subset i_{\varphi(k)}(u)_{\varphi(k)}(\supp_k\varphi^*x)=u(\supp_{k}\varphi^*x),
\end{align*}
hence in particular $b\notin\supp_k(\varphi^*x)$. The claim follows by letting $b$ vary.
\end{proof}

\begin{warn}
The above argument presupposes that $\varphi^*x$ is $k$-finitely supported, so it does \emph{not} show that the finitely supported simplices of a general $E\mathcal M$-simplicial set form a subcomplex. While this does indeed hold (see for example Remark~\ref{rk:support-vs-m-support} above), the proof is harder, and as we will only be interested in tame $E\mathcal M$-simplicial sets below, we have decided to only consider the slightly weaker version above.
\end{warn}

We can now prove the following analogue of Lemma~\ref{lemma:support-strong}:

\begin{lemma}\label{lemma:support-strong-sset}
Let $X$ be a \emph{tame} $E\mathcal M$-simplicial set, let $x\in X_n$ be supported on the finite set $A\subset\omega$, and let $\varphi\colon[m]\to[n]$ be a map in $\Delta$. Then $(u_0,\dots,u_n).\varphi^*x=(v_0,\dots,v_n).\varphi^*x$ for all $u_0,\dots,u_n,v_0,\dots,v_n\in\mathcal M$ such that $u_i(a)=v_i(a)$ for all $i=0,\dots,n$ and $a\in A$.
\end{lemma}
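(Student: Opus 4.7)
The plan is to reduce to the ordinary $\mathcal M$-set statement of Lemma~\ref{lemma:supp-m-set-agree} by factoring the action of $(E\mathcal M)_m=\mathcal M^{m+1}$ one coordinate at a time. With the inclusions $i_k\colon\mathcal M\to\mathcal M^{m+1}$ from the construction before the lemma, I can write $(u_0,\dots,u_m)=i_0(u_0)i_1(u_1)\cdots i_m(u_m)$ as a product of pairwise commuting factors, and similarly for the $v$'s; so it suffices to exchange $u_k$ for $v_k$ one index $k$ at a time.

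First I would use tameness of $X$ together with Lemma~\ref{lemma:k-supp-simplicial} to record that $\supp_k(\varphi^*x)\subset\supp_{\varphi(k)}(x)\subset A$ for every $0\le k\le m$. This is the only place tameness is needed.

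For the exchange step at index $k$, introduce the intermediate tuple $w_k\in\mathcal M^{m+1}$ having $u_j$ in the $j$-th slot for $j<k$, the identity in the $k$-th slot, and $v_j$ in the $j$-th slot for $j>k$. Because the $k$-th entry of $w_k$ is the identity, Lemma~\ref{lemma:k-supp-action} gives
\[
\supp_k(w_k.\varphi^*x)\subset\supp_k(\varphi^*x)\subset A,
\]
so $w_k.\varphi^*x$ is supported on $A$ when regarded as an element of the $\mathcal M$-set $i_k^*X_m$. Since $u_k$ and $v_k$ agree on $A$, Lemma~\ref{lemma:supp-m-set-agree} then yields $i_k(u_k).(w_k.\varphi^*x)=i_k(v_k).(w_k.\varphi^*x)$, which unpacks to
\[
(u_0,\dots,u_{k-1},u_k,v_{k+1},\dots,v_m).\varphi^*x=(u_0,\dots,u_{k-1},v_k,v_{k+1},\dots,v_m).\varphi^*x.
\]
Chaining these identities for $k=0,1,\dots,m$ connects $(v_0,\dots,v_m).\varphi^*x$ to $(u_0,\dots,u_m).\varphi^*x$ through a telescope of equalities, which is the claim.

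The only mild obstacle is verifying at each exchange step that $w_k.\varphi^*x$ really is supported on $A$ in the $k$-th $\mathcal M$-coordinate, so that Lemma~\ref{lemma:supp-m-set-agree} applies. This is precisely arranged by putting the identity in the $k$-th slot of $w_k$, combined with the image-containment bound from Lemma~\ref{lemma:k-supp-action}.
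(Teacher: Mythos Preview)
Your proof is correct and follows essentially the same approach as the paper's. The paper first uses Lemma~\ref{lemma:k-supp-simplicial} to reduce to the case $m=n$, $\varphi=\id$, and then runs a descending induction showing $(1,\dots,1,u_k,\dots,u_n).x=(1,\dots,1,v_k,\dots,v_n).x$ via Lemmas~\ref{lemma:k-supp-action} and~\ref{lemma:supp-m-set-agree}; your version keeps $\varphi^*x$ throughout and uses the mixed intermediate tuples $w_k$, but the mechanism---isolating one coordinate at a time, bounding its $k$-support via Lemma~\ref{lemma:k-supp-action} with the identity in the $k$-th slot, and then swapping $u_k$ for $v_k$ via Lemma~\ref{lemma:supp-m-set-agree}---is the same.
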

\begin{proof}
The previous lemma immediately implies that $\varphi^*x$ is supported on $A$, so we may assume without loss of generality that $m=n$ and $\varphi=\id$. Using Lemma~\ref{lemma:supp-m-set-agree} together with Lemma~\ref{lemma:k-supp-action}, one then easily shows by descending induction that $(1,\dots,1,u_k,\dots,u_n).x=(1,\dots,1,v_k,\dots,v_n).x$ for all $0\le k\le n+1$, which for $k=0$ is precisely what we wanted to prove.
\end{proof}

\begin{prop}\label{prop:corep}
Let $A\subset\omega$ be finite, and let $X$ be a \emph{tame} $E\mathcal M$-simplicial set. Then the simplices of $X$ supported on $A$ form a subcomplex $X_{[A]}\subset X$, and $(\blank)_{[A]}\colon\cat{$\bmEM$-SSet}^\tau\to\cat{SSet}$ is a subfunctor of the forgetful functor.

Moreover, it is corepresentable in the enriched sense by the $E\mathcal M$-simplicial set $E\Inj(A,\omega)$ from Example~\ref{ex:EInj-SSet} via evaluation at the inclusion $\iota_A\colon A\hookrightarrow\omega$.
\end{prop}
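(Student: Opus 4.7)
The argument splits into two parts. For the first assertion, I would use Lemma~\ref{lemma:k-supp-simplicial}: if $x\in X_n$ is supported on $A$ and $\varphi\colon[m]\to[n]$ is any morphism in $\Delta$, then for every $0\le k\le m$ we have $\supp_k(\varphi^*x)\subset\supp_{\varphi(k)}(x)\subset A$, so $\varphi^*x\in X_{[A]}$; hence $X_{[A]}$ is a subcomplex. Subfunctoriality of $(\blank)_{[A]}$ with respect to $E\mathcal M$-equivariant maps is then immediate from Lemma~\ref{lemma:k-supp-em-equiv}.

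For the enriched corepresentability, I would work with the standard simplicial enrichment of $\cat{$\bm{E\mathcal M}$-SSet}$ whose $n$-simplices of the hom-space are the $E\mathcal M$-equivariant maps $Y\times\Delta^n\to X$ (with trivial $E\mathcal M$-action on $\Delta^n$). Evaluation at $\iota_A$ sends such an $f$ to the $n$-simplex $f\big((\iota_A,\dots,\iota_A),\id_{[n]}\big)$ obtained by restriction along $\{\iota_A\}\times\Delta^n\hookrightarrow E\Inj(A,\omega)\times\Delta^n$. Since $((\iota_A,\dots,\iota_A),\id_{[n]})$ is supported on $A$ by Example~\ref{ex:EInj-sset-supp} (the $\Delta^n$-factor having empty support), Lemma~\ref{lemma:k-supp-em-equiv} guarantees that the evaluation lands in $X_{[A],n}$.

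To invert this, given $x\in X_{[A],n}$, I would define $\Phi_x\colon E\Inj(A,\omega)\times\Delta^n\to X$ on a $k$-simplex $((j_0,\dots,j_k),\varphi)$ (with $j_i\in\Inj(A,\omega)$ and $\varphi\colon[k]\to[n]$) by
\begin{equation*}
\Phi_x\big((j_0,\dots,j_k),\varphi\big)\mathrel{:=}(\tilde j_0,\dots,\tilde j_k).\varphi^*x,
\end{equation*}
where each $\tilde j_i\in\mathcal M$ is an arbitrary extension of $j_i$. By the first part $\varphi^*x$ is supported on $A$, so Lemma~\ref{lemma:support-strong-sset} shows that this is independent of the chosen extensions and therefore well-defined. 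Simpliciality and $E\mathcal M$-equivariance are then routine: the former reduces to $\psi^*\big((\tilde j_0,\dots,\tilde j_k).\varphi^*x\big)=(\tilde j_{\psi(0)},\dots,\tilde j_{\psi(l)}).(\varphi\psi)^*x$ for $\psi\colon[l]\to[k]$, while the latter follows by choosing $\widetilde{u_i j_i}=u_i\tilde j_i$ as the extension of $u_i\circ j_i$. Evaluating $\Phi_x$ at $\iota_A$ recovers $x$ by taking all $\tilde j_i=\id$, and conversely, for equivariant $f$ with evaluation $x$ one computes
\begin{equation*}
(\tilde j_0,\dots,\tilde j_k).\varphi^*x=(\tilde j_0,\dots,\tilde j_k).f\big((\iota_A,\dots,\iota_A),\varphi\big)=f\big((\tilde j_0\circ\iota_A,\dots,\tilde j_k\circ\iota_A),\varphi\big)=f\big((j_0,\dots,j_k),\varphi\big)
\end{equation*}
using equivariance of $f$ together with $\tilde j_i\circ\iota_A=j_i$, so $\Phi_x=f$. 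Naturality in $X$ is immediate from the formulas.

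The only real subtlety is arranging the inverse $\Phi_x$ so that the auxiliary extensions $\tilde j_i$ drop out; this is where tameness enters essentially, via Lemma~\ref{lemma:support-strong-sset}. Everything else is then straightforward bookkeeping.
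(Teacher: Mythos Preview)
Your proof is correct and follows essentially the same approach as the paper: both construct the inverse to evaluation at $\iota_A$ by the formula $((j_0,\dots,j_k),\varphi)\mapsto(\tilde j_0,\dots,\tilde j_k).\varphi^*x$ and invoke Lemma~\ref{lemma:support-strong-sset} to see that the choice of extensions $\tilde j_i$ is irrelevant. The only cosmetic difference is that the paper first establishes the corepresentability and then notes that the subcomplex and subfunctor statements follow from it, whereas you verify those directly via Lemmas~\ref{lemma:k-supp-simplicial} and~\ref{lemma:k-supp-em-equiv} before turning to corepresentability.
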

\begin{proof}
We first observe that $E\Inj(A,\omega)$ is tame and that $\iota_A$ is supported on $A$ by Example~\ref{ex:EInj-sset-supp}. We will show that the simplicial map \begin{equation*}\ev\colon\Maps_{E\mathcal M}(E\Inj(A,\omega),X)\to X\end{equation*} given by evaluation at $\iota_A$ is injective with image precisely $X_{[A]}$ for each $E\mathcal M$-simplicial set $X$. All the remaining claims will then easily follow from this.

Let us show that the evaluation is injective. Indeed, by definition a $k$-simplex of $\Maps_{E\mathcal M}(E\Inj(A,\omega),X)$ is given by an $E\mathcal M$-equivariant simplicial map \begin{equation*}E\Inj(A,\omega)\times\Delta^n\to X\end{equation*} and we have to show that any two such maps $f,g$ whose restrictions to $\{\iota_A\}\times\Delta^n${\hskip0pt plus 2pt} agree,{\hskip0pt plus 2pt} are{\hskip0pt plus 2pt} already{\hskip0pt plus 2pt} equal.{\hskip0pt plus 2pt} For{\hskip0pt plus 2pt} this,{\hskip0pt plus 2pt} we{\hskip0pt plus 2pt} consider{\hskip0pt plus 2pt} an{\hskip0pt plus 2pt} arbitrary{\hskip0pt plus 2pt} $k$-simplex{\hskip0pt plus 2pt} $((u_0,\dots,u_k),\varphi)$ of $E\Inj(A,\omega)\times\Delta^n$, and we pick an extension $\hat u_i$ of $u_i\colon A\to\omega$ to an injection $\omega\to\omega$ (i.e.~an element of $\mathcal M$) for every $i=0,\dots,k$. If we moreover write $s$ for the unique map $[k]\to[0]$ in $\Delta$, then
\begin{equation*}
    ((u_0,\dots,u_k),\varphi)=(\hat u_0,\dots,\hat u_k).((\iota_A,\dots,\iota_A),\varphi)=(\hat u_0,\dots,\hat u_k).(s^*\iota_A,\varphi)
\end{equation*}
by definition of the action, and hence
\begin{align*}
f((u_0,\dots,u_k),\varphi)&=(\hat u_0,\dots,\hat u_k).f(s^*\iota_A,\varphi)\\
&=(\hat u_0,\dots,\hat u_k).g(s^*\iota_A,\varphi)=g((u_0,\dots,u_k),\varphi)
\end{align*}
by $E\mathcal M$-equivariance and the assumption. This proves injectivity.

Lemma~\ref{lemma:k-supp-em-equiv} shows that $\ev(f)$ is supported on $A$ for all $f\colon E\Inj(A,\omega)\times\Delta^n\to X$. Conversely, let $x$ be an $n$-simplex supported on $A$. Then Lemma~\ref{lemma:support-strong-sset} shows that the assignment
\begin{align*}
f_x\colon E\Inj(A,\omega)\times\Delta^n &\to X\\
((u_0,\dots,u_k),\varphi) &\mapsto (\hat u_0,\dots,\hat u_k).\varphi^*x
\end{align*}
is independent of the chosen extensions $\hat u_i\in\mathcal M$. From this it easily follows that $f_x$ is simplicial, $E\mathcal M$-equivariant, and that $\ev f_x=x$, proving surjectivity.
\end{proof}

In particular, we see that any map $\alpha\colon K\to X_{[A]}$ admits a unique $E\mathcal M$-equivariant extension $E\Inj(A,\omega)\times K\to X$; we will usually denote this extension by $\tilde\alpha$.

\subsection{Parsummable simplicial sets} We are now ready to introduce the box product of tame $E\mathcal M$-simplicial sets:

\begin{constr}
Let $X,Y$ be tame $E\mathcal M$-simplicial sets, and let $n\ge 0$. We define $(X\boxtimes Y)_n\subset (X\times Y)_n$ to consist of precisely those pairs $(x,y)$ such that $\supp_k(x)\cap\supp_k(y)=\varnothing$ for all $0\le k\le n$.
\end{constr}

\begin{prop}
Let $X,Y$ be tame $E\mathcal M$-simplicial sets. Then the above defines an $E\mathcal M$-simplicial subset $X\boxtimes Y\subset X\times Y$, which we call the \emph{box product} of $X$ and $Y$. Both $X\boxtimes Y$ and $X\times Y$ are tame, and $\blank\boxtimes\blank$ is a subfunctor
\begin{equation*}
\cat{$\bmEM$-SSet}^\tau\times\cat{$\bmEM$-SSet}^\tau\to\cat{$\bmEM$-SSet}^\tau
\end{equation*}
of the cartesian product.
\end{prop}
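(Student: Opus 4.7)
The plan is to verify in turn: (i) closure of $X\boxtimes Y$ under the simplicial operators, (ii) closure under the $E\mathcal M$-action, (iii) tameness of $X\times Y$ (and hence of $X\boxtimes Y$), and (iv) functoriality of $\boxtimes$. All four will be essentially bookkeeping with the support lemmas proved just above; the whole proof is a direct consequence of Lemmas~\ref{lemma:k-supp-em-equiv}, \ref{lemma:k-supp-action}, and \ref{lemma:k-supp-simplicial}.

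For (i), I would take $(x,y)\in(X\boxtimes Y)_n$ and a morphism $\varphi\colon[m]\to[n]$ in $\Delta$, and check that $\varphi^*(x,y)=(\varphi^*x,\varphi^*y)$ is disjointly supported at each level. By Lemma~\ref{lemma:k-supp-simplicial}, $\supp_k(\varphi^*x)\subset\supp_{\varphi(k)}(x)$ and $\supp_k(\varphi^*y)\subset\supp_{\varphi(k)}(y)$, and these two sets are disjoint by assumption, so $\supp_k(\varphi^*x)\cap\supp_k(\varphi^*y)=\varnothing$ as required. For (ii), I would apply Lemma~\ref{lemma:k-supp-action} to each factor: for $(u_0,\dots,u_n)\in(E\mathcal M)_n$ and $(x,y)\in(X\boxtimes Y)_n$ we get $\supp_k\bigl((u_0,\dots,u_n).x\bigr)\subset u_k(\supp_k x)$ and similarly for $y$, and since $u_k$ is injective, disjointness of $\supp_k(x)$ and $\supp_k(y)$ gives disjointness of their images. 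Together (i) and (ii) show that $X\boxtimes Y$ is an $E\mathcal M$-simplicial subset of $X\times Y$.

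For (iii), I would show directly that $\supp_k(x,y)=\supp_k(x)\cup\supp_k(y)$ for $(x,y)\in(X\times Y)_n$, from which tameness of $X\times Y$ is immediate. Indeed, the $i_k$-action on $(X\times Y)_n$ is the diagonal of the $i_k$-actions on $X_n$ and $Y_n$, so any $u\in\mathcal M$ fixing $\supp_k(x)\cup\supp_k(y)$ pointwise satisfies $i_k(u).(x,y)=(x,y)$, showing $\supp_k(x,y)\subset\supp_k(x)\cup\supp_k(y)$; for the reverse inclusion, applying Lemma~\ref{lemma:k-supp-em-equiv} to the two projections $\pr_X,\pr_Y$ gives $\supp_k(x)=\supp_k(\pr_X(x,y))\subset\supp_k(x,y)$ and analogously for $y$. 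Tameness of $X\boxtimes Y$ then follows since it is a sub-$E\mathcal M$-simplicial set of the tame $E\mathcal M$-simplicial set $X\times Y$, and the support of any of its simplices is bounded by its support in $X\times Y$ by another application of Lemma~\ref{lemma:k-supp-em-equiv} to the inclusion.

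Finally, for (iv), given $E\mathcal M$-equivariant maps $f\colon X\to X'$ and $g\colon Y\to Y'$ between tame $E\mathcal M$-simplicial sets, I would check that $f\times g$ restricts to a map $X\boxtimes Y\to X'\boxtimes Y'$: if $(x,y)\in(X\boxtimes Y)_n$, then by Lemma~\ref{lemma:k-supp-em-equiv} we have $\supp_k(f(x))\subset\supp_k(x)$ and $\supp_k(g(y))\subset\supp_k(y)$, which are disjoint, so $(f(x),g(y))\in(X'\boxtimes Y')_n$. The subfunctor property is then immediate. No step here is a real obstacle; the only mild subtlety worth flagging is the observation in (iii) that the support in a subcomplex coincides with the support in the ambient simplicial set, which is what lets us pass tameness from $X\times Y$ down to $X\boxtimes Y$.
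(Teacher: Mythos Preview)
Your proposal is correct and follows essentially the same approach as the paper: both use Lemma~\ref{lemma:k-supp-simplicial} for closure under simplicial operators, Lemma~\ref{lemma:k-supp-action} for closure under the $E\mathcal M$-action, Lemma~\ref{lemma:k-supp-em-equiv} for functoriality, and the identity $\supp_k(x,y)=\supp_k(x)\cup\supp_k(y)$ for tameness. Your write-up is slightly more detailed (e.g.\ you spell out both inclusions for $\supp_k(x,y)$ via the projections, whereas the paper just asserts the containment ``by definition'' and notes the equality parenthetically), but there is no substantive difference.
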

\begin{proof}
Lemma~\ref{lemma:k-supp-simplicial} shows that $X\boxtimes Y$ is a subcomplex, and it is closed under the (diagonal) $E\mathcal M$-action by Lemma~\ref{lemma:k-supp-action}. Moreover, if $f\colon X\to X'$ and $g\colon Y\to Y'$ are $E\mathcal M$-equivariant, then $(f\times g)(X\boxtimes Y)\subset X'\boxtimes Y'$ by Lemma~\ref{lemma:k-supp-em-equiv}.

It only remains to show that $X\times Y$ (and hence $X\boxtimes Y$) is tame, for which it suffices to observe that $(x,y)$ is by definition $k$-supported on $\supp_k(x)\cup\supp_k(y)$ (in fact, $\supp_k(x,y)=\supp_k(x)\cup\supp_k(y)$) for all $x\in X_n$, $y\in Y_n$, and $0\le k\le n$, and hence in particular supported on $\supp(x)\cup\supp(y)$ (in fact, $\supp(x,y)=\supp(x)\cup\supp(y)$).
\end{proof}

\begin{prop}
The unitality, associativity, and symmetry isomorphisms of the cartesian product on $\cat{$\bmEM$-SSet}^\tau$ restrict to corresponding isomorphisms for $\boxtimes$. This makes $\cat{$\bmEM$-SSet}^\tau$ into a symmetric monoidal category with tensor product $\boxtimes$ and unit the terminal $E\mathcal M$-simplicial set.
\end{prop}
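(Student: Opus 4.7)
The plan is to leverage the fact that $\cat{$\bm{E\mathcal M}$-SSet}$ is already symmetric monoidal under $\times$ (as a category of commutative monoids in a symmetric monoidal category, or simply because cartesian products commute with the $E\mathcal M$-action levelwise), and then show that each of the structure isomorphisms restricts to the box product subfunctor. Since these structure isomorphisms are induced by the universal property of the cartesian product, they are $E\mathcal M$-equivariant simplicial isomorphisms between iterated cartesian products; the only thing to check is that they send simplices of appropriately disjoint support to simplices of appropriately disjoint support. Once this is done, the axioms (pentagon, triangle, hexagon) for $\boxtimes$ are inherited from those for $\times$ since $\boxtimes$ is a subfunctor and the relevant diagrams of subcomplexes commute on the nose.

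First I would pin down the key support identity: by the computation at the end of the previous proof, $\supp_k(x,y) = \supp_k(x) \cup \supp_k(y)$ for any simplices $x \in X_n$, $y \in Y_n$. By induction this extends to iterated products, so an $n$-simplex $(x_1,\dots,x_r)$ of $X_1 \times \cdots \times X_r$ lies in the iterated box product (under any parenthesization) if and only if the sets $\supp_k(x_1),\dots,\supp_k(x_r)$ are pairwise disjoint for every $0 \le k \le n$. This symmetric condition is independent of bracketing, which handles the associator: both $(X \boxtimes Y) \boxtimes Z$ and $X \boxtimes (Y \boxtimes Z)$ sit inside $X \times Y \times Z$ as precisely the same subcomplex (the triples of pairwise disjointly supported simplices), and the cartesian associator is the identity on this level. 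For the symmetry, disjointness of $\supp_k(x)$ and $\supp_k(y)$ is obviously symmetric in $x,y$, so the cartesian symmetry restricts to an isomorphism $X \boxtimes Y \cong Y \boxtimes X$.

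For the unit, one observes that the terminal $E\mathcal M$-simplicial set $*$ has a unique simplex in each degree, and this simplex is supported on $\varnothing$ (for instance by Lemma~\ref{lemma:k-supp-em-equiv} applied to the unique map from $*$ to itself, or directly from the definition, since $u.* = *$ for every $u\in\mathcal M$). Therefore every pair $(x,*) \in (X \times *)_n$ satisfies $\supp_k(x) \cap \supp_k(*) = \varnothing$ for all $k$, so the cartesian unit isomorphisms $X \times * \cong X \cong * \times X$ restrict to $X \boxtimes * \cong X \cong * \boxtimes X$.

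The main obstacle is purely bookkeeping: once the support formula $\supp_k(x,y) = \supp_k(x) \cup \supp_k(y)$ and the fact that $*$ has empty support are in hand, the coherence diagrams for $\boxtimes$ are literally restrictions of the coherence diagrams for $\times$ to a common subcomplex, so they commute automatically. Naturality of the restricted isomorphisms follows because the original cartesian transformations are natural and $\boxtimes$ is a subfunctor of $\times$ by the previous proposition. Hence $(\cat{$\bm{E\mathcal M}$-SSet}^\tau, \boxtimes, *)$ is symmetric monoidal as claimed.
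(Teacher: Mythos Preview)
Your proposal is correct and follows essentially the same approach as the paper: both arguments hinge on the identity $\supp_k(x,y)=\supp_k(x)\cup\supp_k(y)$ to show that membership in either parenthesization of the triple box product amounts to pairwise disjointness of the $\supp_k$'s, so the cartesian associator restricts; the paper then omits the (analogous) arguments for symmetry and unitality that you spell out, and both of you observe that the coherence axioms are inherited automatically from the cartesian structure.
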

\begin{proof}
We will show that the associativity isomorphism $(X\times Y)\times Z\to X\times(Y\times Z)$ restricts to an isomorphism $(X\boxtimes Y)\boxtimes Z\to X\boxtimes(Y\boxtimes Z)$ for all $X,Y,Z\in\cat{$\bmEM$-SSet}^\tau$.

Indeed, we have to show that if $x\in X,y\in Y,z\in Z$, then $((x,y),z)\in (X\boxtimes Y)\boxtimes Z$ if and only if $(x,(y,z))\in X\boxtimes (Y\boxtimes Z)$. But the first condition is equivalent to demanding that $\supp_k(x)\cap\supp_k(y)=\varnothing$ and $\supp_k(x,y)\cap\supp_k(z)=\varnothing$. We have seen in the proof of the previous proposition that $\supp_k(x,y)=\supp_k(x)\cup\supp_k(y)$, so these two together are equivalent to demanding that $\supp_k(x),\supp_k(y),\supp_k(z)$ be pairwise disjoint. By a symmetric argument this is then in turn equivalent to $(x,(y,z))\in X\boxtimes(Y\boxtimes Z)$ as desired.

The arguments for the symmetry and unitality isomorphisms are similar, and we omit them. All the necessary coherence conditions of the resulting isomorphisms then follow automatically from the corresponding results for the cartesian symmetric monoidal structure.
\end{proof}

\begin{defi}
A \emph{parsummable simplicial set} is a commutative monoid for $\boxtimes$ in $\cat{$\bmEM$-SSet}^\tau$. We write $\cat{ParSumSSet}$ for the corresponding category.
\end{defi}

\begin{prop}
The canonical isomorphism $\nerve(\mathcal C)\times\nerve(\mathcal D)\to\nerve(\mathcal C\times\mathcal D)$ restricts to an isomorphism $\nerve(\mathcal C)\boxtimes\nerve(\mathcal D)\to\nerve(\mathcal C\boxtimes\mathcal D)$ for all $\mathcal C,\mathcal D\in\cat{$\bmEM$-Cat}^\tau$. Together with the unique map ${*}\to\nerve({*})$ this makes $\nerve\colon\cat{$\bmEM$-Cat}^\tau\to\cat{$\bmEM$-SSet}^\tau$ into a strong symmetric monoidal functor with respect to the box products on both sides.
\end{prop}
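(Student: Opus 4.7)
The plan is to unwind the definition of both box products and reduce the claim to a simplex-by-simplex check, exploiting the explicit computation of the $k$-th support on nerves provided by Example~\ref{ex:nerve-kth-support}. The canonical isomorphism $\nerve(\mathcal C)\times\nerve(\mathcal D)\xrightarrow{\cong}\nerve(\mathcal C\times\mathcal D)$ is already $E\mathcal M$-equivariant (being a natural iso of strong symmetric monoidal functors with respect to cartesian products), so the only thing to verify is that it identifies the two subcomplexes; once this is done, the inverse automatically restricts as well, and $E\mathcal M$-equivariance of the restriction is immediate.

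First I would fix $n\geq 0$ and describe an $n$-simplex of either side. Under the canonical bijection, a pair $(\alpha_\bullet,\beta_\bullet)\in\nerve(\mathcal C)_n\times\nerve(\mathcal D)_n$ with $\alpha_\bullet=(x_0\to\cdots\to x_n)$ and $\beta_\bullet=(y_0\to\cdots\to y_n)$ corresponds to the $n$-simplex $((x_0,y_0)\to\cdots\to(x_n,y_n))$ in $\nerve(\mathcal C\times\mathcal D)$. Membership in $(\nerve\mathcal C\boxtimes\nerve\mathcal D)_n$ amounts, by definition of the box product of tame $E\mathcal M$-simplicial sets, to $\supp_k(\alpha_\bullet)\cap\supp_k(\beta_\bullet)=\varnothing$ for all $0\leq k\leq n$. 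By Example~\ref{ex:nerve-kth-support} this is the same as $\supp(x_k)\cap\supp(y_k)=\varnothing$ for all $k$. On the other hand, membership in $\nerve(\mathcal C\boxtimes\mathcal D)_n$ requires each vertex $(x_k,y_k)$ to lie in the full subcategory $\mathcal C\boxtimes\mathcal D\subset\mathcal C\times\mathcal D$, i.e.~$\supp(x_k)\cap\supp(y_k)=\varnothing$ for all $k$. These conditions coincide, so the canonical iso restricts to a bijection in every degree, hence to the desired isomorphism of $E\mathcal M$-simplicial sets.

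For the unit, the terminal $E\mathcal M$-category and terminal $E\mathcal M$-simplicial set are both the one-point object, so $*\to\nerve(*)$ is the identity. For the strong symmetric monoidal coherence, I would observe that the associator, unitor, and symmetry isomorphisms on both $\cat{$\bm{E\mathcal M}$-Cat}^\tau$ and $\cat{$\bm{E\mathcal M}$-SSet}^\tau$ are by construction the restrictions of the cartesian ones, that the nerve is strong symmetric monoidal for the cartesian structures, and that the comparison isomorphism just exhibited is obtained by restricting the cartesian one; hence all the required coherence diagrams for $(\nerve,\boxtimes)$ sit inside the already-commuting coherence diagrams for $(\nerve,\times)$.

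I do not expect a real obstacle here: the content is entirely in the equality $\supp_k(\alpha_\bullet)=\supp(x_k)$ from Example~\ref{ex:nerve-kth-support}, and the only mild subtlety to keep in mind is that $\mathcal C\boxtimes\mathcal D$ is a \emph{full} subcategory of $\mathcal C\times\mathcal D$, so once all vertices of an $n$-simplex lie in $\mathcal C\boxtimes\mathcal D$ the whole simplex does—making the vertexwise support criterion actually sufficient.
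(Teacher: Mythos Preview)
Your proposal is correct and follows essentially the same argument as the paper: both reduce the simplex-by-simplex check to the identity $\supp_k(\alpha_\bullet)=\supp(x_k)$ from Example~\ref{ex:nerve-kth-support}, invoke fullness of $\mathcal C\boxtimes\mathcal D\subset\mathcal C\times\mathcal D$ to pass from vertices to simplices, and deduce the coherence conditions from the fact that all structure isomorphisms are restrictions of the cartesian ones.
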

\begin{proof}
Let us prove the first statement, which amounts to saying that if
\begin{equation*}
x_0\xrightarrow{\alpha_1}x_1\to\cdots x_n\qquad\text{and}\qquad
y_0\xrightarrow{\beta_1}y_1\to\cdots y_n
\end{equation*}
are $n$-simplices of $\nerve(\mathcal C)$ and $\nerve(\mathcal D)$, respectively, then
\begin{equation}\label{eq:product-of-morphisms}
(x_0,y_0)\xrightarrow{(\alpha_1,\beta_1)}(x_1,y_1)\to\cdots\to (x_n,y_n)
\end{equation}
lies in the image of $\nerve(\mathcal C\boxtimes\mathcal D)\to\nerve(\mathcal C\times\mathcal D)$ if and only if $(\alpha_\bullet,\beta_\bullet)\in\nerve(\mathcal C)\boxtimes\nerve(\mathcal D)$. But indeed, the latter condition is equivalent to $\supp_k(\alpha_\bullet)\cap\supp_k(\beta_\bullet)=\varnothing$ for all $0\le k\le n$, which by Example~\ref{ex:nerve-kth-support} is further equivalent to $\supp(x_k)\cap\supp(y_k)=\varnothing$ for all $0\le k\le n$. But this is by definition equivalent to $(x_k,y_k)\in\mathcal C\boxtimes\mathcal D$ for all $0\le k\le n$, which is in turn equivalent to $(\alpha_k,\beta_k)\colon (x_{k-1},y_{k-1})\to (x_k,y_k)$ being a morphism in $\mathcal C\boxtimes\mathcal D$ for $1\le k\le n$ as $\mathcal C\boxtimes\mathcal D\subset\mathcal C\times\mathcal D$ is a full subcategory. Finally, by definition of the nerve this is further equivalent to $(\ref{eq:product-of-morphisms})$ lying in the image of $\nerve(\mathcal C\boxtimes\mathcal D)\to\nerve(\mathcal C\times\mathcal D)$, which completes the proof of the first statement.

It is clear that also ${*}\to\nerve({*})$ is an isomorphism. As all the structure isomorphisms on both $\cat{$\bmEM$-Cat}^\tau$ and $\cat{$\bmEM$-SSet}^\tau$ are defined as restrictions of the structure isomorphisms of the cartesian symmetric monoidal structures, all the necessary coherence conditions hold automatically, which completes the proof of the proposition.
\end{proof}

In particular, we see that the nerve lifts to $\cat{ParSumCat}\to\cat{ParSumSSet}$. Explicitly, this sends a parsummable category $\mathcal C$ to $\nerve(\mathcal C)$ with $E\mathcal M$-action as above. The additive unit is given by the vertex $0\in\mathcal C$, and if
\begin{equation*}
x_0\xrightarrow{\alpha_1}x_1\to\cdots\to x_n\qquad\text{and}\qquad
y_0\xrightarrow{\beta_1}y_1\to\cdots\to y_n
\end{equation*}
are summable $n$-simplices, then $\supp(x_k)\cap\supp(y_k)$ for $0\le k\le n$, and $\alpha_\bullet+\beta_\bullet$ is the $n$-simplex
\begin{equation*}
(x_0+y_0)\xrightarrow{\alpha_1+\beta_1}(x_1+y_1)\to\cdots\to (x_n+y_n).
\end{equation*}

\subsection{\texorpdfstring{$\bm G$}{G}-global homotopy theory of \texorpdfstring{$\bm G$}{G}-parsummable simplicial sets}
Recall that we fixed a discrete group $G$. As before, we can extend the box product formally to the category $\cat{$\bmEM$-$\bm G$-SSet}^\tau$ of $G$-objects in $\cat{$\bmEM$-SSet}^\tau$, i.e.~tame $E\mathcal M$-simplicial sets with a $G$-action through $E\mathcal M$-equivariant morphisms, which we can further identify with simplicial sets with an action of the simplicial monoid $E\mathcal M\times G$, so that the underlying $E\mathcal M$-simplicial set is tame.

The category $\cat{$\bm G$-ParSumSSet}$ of commutative monoids in $\cat{$\bmEM$-$\bm G$-SSet}^\tau$ is then canonically identified with the $G$-objects in $\cat{ParSumSSet}$. Moreover, the nerve lifts to a strong symmetric monoidal functor $\cat{$\bmEM$-$\bm G$-Cat}^\tau\to\cat{$\bmEM$-$\bm G$-SSet}^\tau$ inducing $\nerve\colon\cat{$\bm G$-ParSumCat}\to\cat{$\bm G$-ParSumSSet}$. We now want to consider these from a $G$-global perspective; again using the notation $(\blank)^\varphi$ for the fixed points with respect to the graph subgroup $\Gamma_{H,\varphi}$, we define for this:

\begin{defi}
A morphism $f\colon X\to Y$ in $\cat{$\bmEM$-$\bm G$-SSet}$ is called a \emph{$G$-global weak equivalence} if $f^\varphi\colon X^\varphi\to Y^\varphi$ is a weak homotopy equivalence for all universal subgroups $H\subset\mathcal M$ and all homomorphisms $\varphi\colon H\to G$.
\end{defi}

\begin{defi}
A morphism $f\colon X\to Y$ in $\cat{$\bm G$-ParSumSSet}$ is called a \emph{$G$-global weak equivalence} if its underlying morphism of $E\mathcal M$-$G$-simplicial sets is.
\end{defi}

\begin{rk}
As the nerve is a right adjoint, it commutes with taking $\varphi$-fixed points (up to canonical isomorphism). Thus $\nerve\colon\cat{$\bmEM$-$\bm G$-Cat}\to\cat{$\bmEM$-$\bm G$-SSet}$ preserves and reflects weak equvialences, and so does $\nerve\colon\cat{$\bm G$-ParSumCat}\to\cat{$\bm G$-ParSumSSet}$.
\end{rk}

\section{The \texorpdfstring{\for{toc}{$E\mathcal M$}\except{toc}{$\bmEM$}}{EM}-category associated to an \texorpdfstring{\for{toc}{$E\mathcal M$}\except{toc}{$\bmEM$}}{EM}-simplicial set}\label{sec:construction}
While $\nerve\colon\cat{Cat}\to\cat{SSet}$ is a homotopy equivalence, its left adjoint $\h$ (sending a simplicial set to its \emph{homotopy category}) is not homotopically meaningful. Instead, a possible homotopy inverse (going back to Quillen) of the nerve is the following:

\begin{defi}
Let $X$ be a simplicial set. Its \emph{category of simplices} $\Delta\downarrow X$ is the small category with objects the simplicial maps $f\colon\Delta^n\to X$ ($n\ge0$) and morphisms $\alpha\colon f\to g$ those simplicial maps $\alpha$ satisfying $f=g\circ\alpha$.
\end{defi}

If $S\subset[m]$, let us write $\Delta^S$ for the unique $(|S|-1)$-simplex of $\Delta^m$ with set of vertices $S$.

\begin{constr}
Let $X$ be simplicial set. A general $k$-simplex $\alpha_\bullet$ of $\nerve(\Delta\downarrow X)$ then corresponds to a diagram
\begin{equation*}
\begin{tikzcd}
\Delta^{n_0}\arrow[r,"\alpha_1"]\arrow[dr, bend right=10pt, "f_0"'] & \Delta^{n_1}\arrow[r, "\alpha_2"]\arrow[d, "f_1"] & \cdots \arrow[r, "\alpha_k"] & \Delta^{n_k}.\arrow[dll, bend left=10pt, "f_k"]\\
& X
\end{tikzcd}
\end{equation*}
There is a unique $k$-simplex $\sigma_{\alpha_\bullet}$ of $\Delta^{n_k}$ with $\ell$-th vertex ($0\le\ell\le k$) given by $\alpha_k\cdots\alpha_{\ell+1}(\Delta^{\{n_\ell\}})$ as $\Delta^{n_k}$ is the nerve of a poset and since $\Delta^{\{n_{\ell+1}\}}\ge\alpha_{\ell+1}(\Delta^{\{n_\ell\}})$ in $\Delta^{n_{\ell+1}}$ for all $\ell=0,\dots,k-1$.

We now define the `last vertex map' $\epsilon\colon\nerve(\Delta\downarrow X)\to X$ via $\epsilon(\alpha_\bullet)\mathrel{:=}f_k(\sigma_{\alpha_\bullet})$.
\end{constr}

One can show that $\epsilon$ is indeed a simplicial map, and that it is natural with respect to the functoriality of $\Delta\downarrow\blank$ via postcomposition. If $X$ is the nerve of a category, the above construction appears in \cite[VI.3]{illusie}, while the general version seems to originate with Thomason \cite[Proposition~4.2]{thomason}.

\begin{prop}\label{prop:last-vertex}
For any simplicial set $X$ the `last vertex map' $\epsilon\colon\nerve(\Delta\downarrow X)\to X$ is a weak homotopy equivalence.
\end{prop}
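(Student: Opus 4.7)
The plan is to prove this by reducing to representables via a skeletal induction, where the statement becomes essentially trivial.

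For the base case $X=\Delta^n$, the identity $\id_{\Delta^n}$ is a terminal object of $\Delta\downarrow\Delta^n$ (any $f\colon\Delta^m\to\Delta^n$ factors uniquely as $\id\circ f$), so $\nerve(\Delta\downarrow\Delta^n)$ is contractible. Since $\Delta^n$ is also contractible, $\epsilon$ is a map between contractible spaces, hence automatically a weak homotopy equivalence. By a secondary induction on skeleta, the same conclusion then also propagates to each boundary $\partial\Delta^n$, viewed as being assembled from lower-dimensional simplices to which the statement has already been established.

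For the main inductive step, I would exploit that both $X\mapsto X$ and $X\mapsto\nerve(\Delta\downarrow X)$ send pushouts along monomorphisms of simplicial sets to homotopy pushouts. For the identity this is trivial. For the other, the key observations are: a monomorphism $A\hookrightarrow B$ induces an injective-on-objects functor $\Delta\downarrow A\hookrightarrow\Delta\downarrow B$; the functor $\Delta\downarrow\blank$ takes pushouts along monomorphisms in $\cat{SSet}$ to honest pushouts in $\cat{Cat}$ (since a simplex of a pushout of simplicial sets lies in one of the two summands and is shared only along the overlap); and $\nerve$ sends injective-on-objects inclusions of categories to cofibrations of simplicial sets. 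Naturality of $\epsilon$ together with the gluing lemma along the skeletal filtration of $X$, combined with the fact that filtered colimits of weak equivalences are weak equivalences, then yields that $\epsilon$ is a weak equivalence for arbitrary $X$.

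The main obstacle I anticipate is the careful bookkeeping needed to verify that $\Delta\downarrow\blank$ preserves the relevant pushouts on the nose and that the resulting maps on nerves are cofibrations, so that the gluing lemma applies cleanly. Should this prove too cumbersome, an alternative route is to identify $\nerve(\Delta\downarrow X)$ with the Bousfield--Kan homotopy colimit of the diagram $[n]\mapsto X_n$ on $\Delta^{\op}$ (via Thomason's theorem on Grothendieck constructions) and to identify $\epsilon$ with the canonical comparison to the ordinary colimit $X$, which is then a weak equivalence by standard homotopy-colimit formalism.
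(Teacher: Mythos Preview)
Your proposal is essentially correct and follows the classical route; it is, however, genuinely different from what the paper does. The paper gives a one-line proof by citation: it invokes \cite[Proposition~7.3.15]{cisinski}, which shows that the last vertex map is an $\infty$-categorical localization, and then observes that any such localization is in particular a weak homotopy equivalence. The paper explicitly notes that Thomason's original argument is for the \emph{topological} realization and does not immediately transfer, since one does not know a priori that $\nerve(\Delta\downarrow\blank)$ preserves weak equivalences; citing Cisinski sidesteps this.

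Your skeletal-induction argument is the traditional, more elementary approach, and it works, but two of your intermediate claims need sharpening. First, ``$\nerve$ sends injective-on-objects inclusions to cofibrations'' is false as stated (take $B(\mathbb Z/2)\to *$); you also need faithfulness. This is harmless here because for a monomorphism $A\hookrightarrow B$ the induced $\Delta\downarrow A\hookrightarrow\Delta\downarrow B$ is a fully faithful sieve inclusion. Second, the assertion that $\Delta\downarrow\blank$ takes the cell-attachment pushouts to honest pushouts in $\cat{Cat}$ is true, but not for the superficial reason you give: one has to use the sieve property to rule out new zig-zag morphisms in the $\cat{Cat}$-pushout, and then separately argue (e.g.\ via Thomason's results on Dwyer maps) that $\nerve$ sends such a $\cat{Cat}$-pushout to a \emph{homotopy} pushout, since $\nerve$ does not preserve pushouts on the nose. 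You flag this bookkeeping yourself, and your alternative route through the Bousfield--Kan identification (Thomason's homotopy-colimit theorem applied to the Grothendieck construction of $[n]\mapsto X_n$) is indeed the cleaner way to package it.

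In short: the paper trades a self-contained argument for a single citation to heavier $\infty$-categorical machinery; your approach is longer and requires the care you anticipate, but is elementary and yields as a byproduct that $\nerve(\Delta\downarrow\blank)$ is homotopical.
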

\begin{proof}
Thomason proves a topological analogue of this as \cite[Proposition~4.2]{thomason}; unfortunately, this does not immediately imply the above simplicial version because it is not clear a priori that $\nerve(\Delta\downarrow\blank)$ preserves weak homotopy equivalences.

Instead, we will use that the last vertex map is an $\infty$-categorical localization, see e.g.~\cite[Proposition~7.3.15]{cisinski}. As any $\infty$-categorical localization is in particular a weak homotopy equivalence, this immediately implies the proposition.
\end{proof}

One crucial step \cite[Proposition~4.5]{thomason} in Thomason's comparison between symmetric monoidal categories and connective spectra is a variant of the above construction yielding a functor from $E_\infty$ spaces to lax symmetric monoidal categories. Similarly, our proofs of Theorems~\ref{thm:perm-vs-par-sum-cat} and~\ref{thm:par-sum-cat-vs-sset} will rely on a parsummable refinement $\textup{C}_\bullet$ of it. The rest of this section is devoted to constructing the underlying $E\mathcal M$-category of this together with an analogue of the `last vertex map.'

\begin{constr}
Let $X$ be an $E\mathcal M$-simplicial set. We define a small category $\textup{C}_X$ as follows: an object of $\textup{C}_X$ is a quadruple $(A,S,m_\bullet,f)$ consisting of two finite subsets $A,S\subset\omega$, a family $(m_a)_{a\in A}$ of non-negative integers $m_a\ge0$, and an $E\mathcal M$-equivariant map $f\colon E\Inj(S,\omega)\times\prod_{a\in A}\Delta^{m_a}\to X$, where $E\mathcal M$ acts on $E\Inj(S,\omega)$ as in Example~\ref{ex:EInj-SSet}. A morphism $(A,S,m_\bullet,f)\to (B,T,n_\bullet,g)$ is an $E\mathcal M$-equivariant map $\alpha\colon E\Inj(S,\omega)\times\prod_{a\in A}\Delta^{m_a}\to E\Inj(T,\omega)\times\prod_{b\in B}\Delta^{n_b}$ such that $g\alpha=f$. Composition is inherited from the composition in $\cat{$\bmEM$-SSet}$; in particular, the identity of $(A,S,m_\bullet,f)$ is given by the identity of $E\Inj(S,\omega)\times\prod_{a\in A}\Delta^{m_a}$.

We now define for each $u\in\mathcal M$ and each object $(A,S,m_\bullet,f)$ of $\textup{C}_X$ the object $u.(A,S,m_\bullet,f)$ as the quadruple $(u(A),u(S),m_{u^{-1}(\bullet)},f\circ (u^*\times u^*))$ where $(m_{u^{-1}(\bullet)})_b=m_{u^{-1}(b)}$ for each $b\in u(A)$, $u^*\colon E\Inj(u(S),\omega)\to E\Inj(S,\omega)$ is restriction along $u\colon S\to u(S)$, and $u^*\colon\prod_{b\in u(A)}\Delta^{m_{u^{-1}(b)}}\to\prod_{a\in A}\Delta^{m_a}$ is the unique map with $\pr_a\circ u^*=\pr_{u(a)}$.

Finally, we define $u_\circ^{(A,S,m_\bullet,f)}\colon(A,S,m_\bullet,f)\to u.(A,S,m_\bullet,f)$ as
\begin{equation*}
(u^*\times u^*)^{-1}\colon E\Inj(S,\omega)\times\prod_{a\in A}\Delta^{m_a}\to E\Inj(u(S),\omega)\times\prod_{b\in u(A)}\Delta^{m_{u^{-1}(b)}}.
\end{equation*}
\end{constr}

\begin{warn}
We have to be slightly careful as two different morphisms in $\textup{C}_X$ might be given by the same morphism of $E\mathcal M$-simplicial sets. As a consequence, whenever we want to prove two morphisms in $\textup{C}_X$ to be equal, we first have to show that their sources and targets agree.
\end{warn}

\begin{lemma}
The above defines an $E\mathcal M$-action on $\textup{C}_X$.
\end{lemma}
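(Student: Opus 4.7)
The plan is to invoke the earlier lemma (the one recalled just before Example~\ref{ex:EInj-supp}) that reduces the specification of an $E\mathcal M$-action on a category to three pieces of data: an $\mathcal M$-action on objects, a family of structure isomorphisms $u_\circ^x\colon x\to u.x$, and the cocycle relation $(uv)_\circ^x=u_\circ^{v.x}v_\circ^x$. All three will be verified by pushing around the functoriality of the starred operations $u^*$, whose key property is the contravariant identity $(uv)^*=v^*\circ u^*$ both on $E\Inj(\blank,\omega)$ (by associativity of composition) and on the simplex factor (by uniqueness of the map induced by the projections).

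First I would check that the assignment $(A,S,m_\bullet,f)\mapsto u.(A,S,m_\bullet,f)$ really defines an $\mathcal M$-action on $\Ob\textup{C}_X$. Unitality is immediate since $1^*=\id$. For associativity, one observes that $(uv)(A)=u(v(A))$ and $(uv)(S)=u(v(S))$, that the reindexed family $m_{(uv)^{-1}(\bullet)}$ agrees with the doubly reindexed family (since $u$ and $v$ restrict to bijections onto their images, so $(uv)^{-1}=v^{-1}u^{-1}$ on $u(v(A))$), and finally that $f\circ((uv)^*\times(uv)^*)=f\circ(v^*\times v^*)\circ(u^*\times u^*)$, which matches the last coordinate of $u.(v.(A,S,m_\bullet,f))$.

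Next I would verify that $u_\circ^{(A,S,m_\bullet,f)}\mathrel{:=}(u^*\times u^*)^{-1}$ is a well-defined morphism in $\textup{C}_X$ of the required source and target. The map $u^*\times u^*$ is an honest isomorphism of $E\mathcal M$-simplicial sets because the restrictions $u|_S\colon S\to u(S)$ and $u|_A\colon A\to u(A)$ are bijections, and its $E\mathcal M$-equivariance follows since the $\mathcal M$-action on $E\Inj(\blank,\omega)$ is by post\-composition while $u^*$ is pre\-composition, and these commute; on the simplex factor $E\mathcal M$ acts trivially. The required compatibility with structure maps, $(f\circ(u^*\times u^*))\circ(u^*\times u^*)^{-1}=f$, is then tautological.

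Finally I would check the cocycle identity. Here one must first confirm that the sources and targets of the two sides agree (this is the associativity of the object action established in the first step), and then note that $(uv)_\circ^x=((uv)^*\times(uv)^*)^{-1}=(u^*\times u^*)^{-1}\circ(v^*\times v^*)^{-1}$ by functoriality, which is literally $u_\circ^{v.x}\circ v_\circ^x$ (where, on the right hand side, $u_\circ^{v.x}$ is built from the restriction $u|_{v(S)}\colon v(S)\to uv(S)$ and the corresponding bijection on index sets, producing precisely $(u^*\times u^*)^{-1}$ on the translated product). The main obstacle is really only bookkeeping: one must keep careful track of which copy of $u^*$ is being taken at each step, heeding the warning that distinct morphisms of $\textup{C}_X$ may have identical underlying simplicial maps, so equalities must always be checked together with source and target.
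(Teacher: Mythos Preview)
Your proposal is correct and follows essentially the same approach as the paper: both invoke the reduction lemma to verify the $\mathcal M$-action on objects together with the cocycle relation $(uv)_\circ^x=u_\circ^{v.x}v_\circ^x$, using the key contravariant identity $(uv)^*=v^*\circ u^*$ on both factors and taking care that the cocycle equality is checked with matching sources and targets before comparing underlying simplicial maps. The paper organizes the argument slightly differently (checking well-definedness of $u_\circ$ first, then packaging the associativity and cocycle verification into a single commutative square), but the content is the same.
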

\begin{proof}
It is clear that $u^*\times u^*$ is $E\mathcal M$-equivariant, so that \begin{equation*}(u(A),u(S),m_{u^{-1}(\bullet)},f\circ(u^*\times u^*))\end{equation*} is again an object of $\textup{C}_X$. Moreover, it is clearly an isomorphism, so that $(u^*\times u^*)^{-1}$ is well-defined and again $E\mathcal M$-equivariant; as it tautologically commutes with the reference maps to $X$, we see that $u^{(A,S,m_\bullet,f)}_\circ$ is indeed an isomorphism $(A,S,m_\bullet,f)\to u.(A,S,m_\bullet,f)$.

To finish the proof it suffices that the above defines an $\mathcal M$-action on $\Ob(\textup{C}_X)$ and that
\begin{equation}\label{eq:CX-EM-associativity}
u^{v.(A,S,m_\bullet,f)}_\circ v_\circ^{(A,S,m_\bullet,f)}=(uv)_\circ^{(A,S,m_\bullet,f)}
\end{equation}
for all $u,v\in\mathcal M$ and $(A,S,m_\bullet,f)\in\textup{C}_X$.

It is clear from the definition that $1.(A,S,m_\bullet,f)=(A,S,m_\bullet,f)$. Moreover, one easily checks that the diagram
\begin{equation*}
\begin{tikzcd}
E\Inj((uv)(S),\omega)\times\prod\limits_{c\in (uv)(A)}\Delta^{m_{(uv)^{-1}(c)}}\arrow[r, "(uv)^*\times(uv)^*"]\arrow[d, equal] & E\Inj(S,\omega)\times\prod\limits_{a\in A}\Delta^{m_a}\\
E\Inj(u(v(S)),\omega)\times\prod\limits_{c\in u(v(A))}\Delta^{(m_{v^{-1}(\bullet)})_{u^{-1}(c)}}\arrow[r,"u^*\times u^*"'] &
E\Inj(v(S),\omega)\times\prod\limits_{b\in v(A)}\Delta^{m_{v^{-1}(b)}}\arrow[u, "v^*\times v^*"']
\end{tikzcd}
\end{equation*}
commutes, which immediately implies the associativity of the $\mathcal M$-action. Moreover, it shows that the identity $(\ref{eq:CX-EM-associativity})$ holds as morphisms in $\cat{$\bmEM$-SSet}$; as both sides are morphisms $(A,S,m_\bullet,f)\to (uv).(A,S,m_\bullet,f)=u.(v.(A,S,m_\bullet,f))$, they then also agree as morphisms in $\textup{C}_X$, which completes the proof of the lemma.
\end{proof}

\begin{lemma}\label{lemma:C-bullet-supp}
The $E\mathcal M$-category $\textup{C}_X$ is tame. Moreover, $\supp(A,S,m_\bullet,f)=A\cup S$ for any object $(A,S,m_\bullet,f)\in\textup{C}_X$.
\end{lemma}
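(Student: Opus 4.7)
The plan is to compute $\supp(A,S,m_\bullet,f)$ directly as $A\cup S$; tameness then follows immediately since $A\cup S$ is finite by definition of an object of $\textup{C}_X$. By Lemma~\ref{lemma:M-set-supported-on-supp} it suffices to produce the inclusion $\supp(A,S,m_\bullet,f)\subset A\cup S$ by exhibiting $A\cup S$ as a supporting set, and the inclusion $A\cup S\subset\supp(A,S,m_\bullet,f)$ by showing that no finite $B\subset\omega$ with $B\not\supset A\cup S$ supports the object.

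For the first inclusion, let $x=(A,S,m_\bullet,f)$ and let $u\in\mathcal M$ fix $A\cup S$ pointwise. Then $u(A)=A$ and $u(S)=S$, so the reindexed family $m_{u^{-1}(\bullet)}$ coincides with $m_\bullet$; moreover, $u^*\colon E\Inj(S,\omega)\to E\Inj(S,\omega)$ is precomposition with $u|_S=\id_S$, hence the identity, and likewise the induced map on $\prod_{a\in A}\Delta^{m_a}$ is the identity by definition. Consequently $f\circ(u^*\times u^*)=f$ and all four coordinates of $u.x$ agree with those of $x$, so $u.x=x$.

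For the second inclusion, suppose $B\subset\omega$ is finite with $B\not\supset A\cup S$, and pick $c\in(A\cup S)\smallsetminus B$. Since $A\cup S\cup B$ is finite, we can choose $c'\in\omega\smallsetminus(A\cup S\cup B)$ and extend the partial assignment $b\mapsto b$ ($b\in B$), $c\mapsto c'$ to an injection $u\in\mathcal M$ (for example by mapping $\omega\smallsetminus(B\cup\{c\})$ injectively into $\omega\smallsetminus(B\cup\{c'\})$). Then $u$ fixes $B$ pointwise but $c'$ lies in $u(A)\smallsetminus A$ if $c\in A$, and in $u(S)\smallsetminus S$ if $c\in S$. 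In either case, $u.x$ differs from $x$ in one of its first two coordinates, so $x$ is not supported on $B$. Intersecting over all finite supporting sets then gives $\supp(x)\supset A\cup S$.

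No real obstacle is expected: the key observation is that the first two coordinates of an object already determine whether $u.x=x$, so we never have to analyse how the pullback of $f$ or the reindexed tuple $m_{u^{-1}(\bullet)}$ interact with $u$ beyond the trivial case covered by the first paragraph. The mildest care is needed in writing down the injection $u$ in the second paragraph, to make sure it is genuinely an element of $\mathcal M=\Inj(\omega,\omega)$ rather than just a partial map fixing $B$.
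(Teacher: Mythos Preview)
Your proof is correct and follows essentially the same route as the paper's: first verify directly that $A\cup S$ supports the object (all four coordinates are unchanged when $u$ fixes $A\cup S$ pointwise), then show any smaller $B$ fails by producing a $u$ fixing $B$ that disturbs the first or second coordinate. The only cosmetic difference is in the construction of the witnessing injection: the paper chooses $u$ so that the chosen element $a$ lies outside $\im u$ (hence $a\in A\smallsetminus u(A)$), whereas you send $c$ to some $c'\notin A\cup S$ (hence $c'\in u(A)\smallsetminus A$ or $c'\in u(S)\smallsetminus S$); both accomplish the same thing.
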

\begin{proof}
Let us first show that $(A,S,m_\bullet,f)$ is supported on $A\cup S$, which will in particular imply tameness of $\textup{C}_X$. If $u$ fixes $A$ and $S$ pointwise, then obviously $u(A)=A,u(S)=S$ and $m_{u^{-1}(\bullet)}=m_\bullet$. Moreover, it is clear from the definition that both $u^*\colon E\Inj(u(S),\omega)\to E\Inj(S,\omega)$ and $u^*\colon\prod_{a\in A}\Delta^{m_a}\to\prod_{b\in u(A)}\Delta^{m_{u^{-1}(b)}}$ are the respective identities, so $f\circ (u^*\times u^*)=f$, and hence altogether $u.(A,S,m_\bullet,f)=(A,S,m_\bullet,f)$ as desired.

Conversely, let $(A,S,m_\bullet,f)$ be supported on some finite set $B$; we have to show that $A\subset B$ and $S\subset B$. We will only prove the first statement (the argument for the second one being analogous), for which we argue by contradiction: if $A\not\subset B$, then we choose any $a\in A\smallsetminus B$ and an injection $u$ fixing $B$ pointwise such that $a\notin\im u$. But then $a\notin u(A)$, hence $u(A)\not=A$ and $u.(A,S,m_\bullet,f)\not=(A,S,m_\bullet,f)$ contradicting the assumption that $(A,S,m_\bullet,f)$ be supported on $B$.
\end{proof}

\begin{constr}
Let $\varphi\colon X\to Y$ be an $E\mathcal M$-equivariant map. We define $\textup{C}_\varphi\colon \textup{C}_X\to\textup{C}_Y$ as follows: an object $(A,S,m_\bullet,f)$ is sent to $(A,S,m_\bullet,\varphi\circ f)$ and a morphism $(A,S,m_\bullet,f)\to (B,T,n_\bullet,g)$ given by $\alpha\colon E\Inj(S,\omega)\times\prod_{a\in A}\Delta^{m_a}\to E\Inj(T,\omega)\times\prod_{b\in B}\Delta^{n_b}$ is sent to the morphism $(A,S,m_\bullet,\varphi\circ f)\to (B,T,n_\bullet,\varphi\circ g)$ given by the same $\alpha$.
\end{constr}

\begin{lemma}\label{lemma:c-bullet-functor}
In the above situation, $\textup{C}_\varphi$ is an $E\mathcal M$-equivariant functor. This defines a functor $\textup{C}_\bullet\colon\cat{$\bmEM$-SSet}\to\cat{$\bmEM$-Cat}^\tau$.
\end{lemma}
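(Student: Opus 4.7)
The plan is to verify the three claims in sequence: that $\textup{C}_\varphi$ is a functor, that it is $E\mathcal M$-equivariant, and finally that $\textup{C}_\bullet$ is itself functorial; tameness of the target then follows from Lemma~\ref{lemma:C-bullet-supp}.

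First I would check well-definedness and functoriality of $\textup{C}_\varphi$ on morphisms. A morphism $\alpha\colon(A,S,m_\bullet,f)\to(B,T,n_\bullet,g)$ in $\textup{C}_X$ is by definition an $E\mathcal M$-equivariant map of the relevant product simplicial sets satisfying $g\alpha=f$; postcomposing both sides with $\varphi$ yields $(\varphi g)\alpha=\varphi f$, so the same $\alpha$ defines a morphism $(A,S,m_\bullet,\varphi f)\to(B,T,n_\bullet,\varphi g)$ in $\textup{C}_Y$. Because $\textup{C}_\varphi$ leaves the underlying $E\mathcal M$-equivariant map unchanged and identities and composition in $\textup{C}_X, \textup{C}_Y$ are both inherited from $\cat{$\bm{E\mathcal M}$-SSet}$, functoriality is immediate (after noting that the sources and targets match, per the warning in the text).

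Next I would verify $E\mathcal M$-equivariance via the criterion of the second lemma after $(\ref{eq:associativity-u-circ})$, which reduces this to checking $\mathcal M$-equivariance on objects and compatibility with the structure isomorphisms $u^{(\blank)}_\circ$. On objects we compute directly
\begin{equation*}
\textup{C}_\varphi\bigl(u.(A,S,m_\bullet,f)\bigr)=\bigl(u(A),u(S),m_{u^{-1}(\bullet)},\varphi\circ f\circ(u^*\times u^*)\bigr)=u.\textup{C}_\varphi(A,S,m_\bullet,f),
\end{equation*}
since postcomposition with $\varphi$ commutes with precomposition with $u^*\times u^*$. For the structure isomorphisms, $\textup{C}_\varphi$ sends $u^{(A,S,m_\bullet,f)}_\circ=(u^*\times u^*)^{-1}$ to the morphism in $\textup{C}_Y$ given by the same underlying map $(u^*\times u^*)^{-1}$, which by the previous computation has source $\textup{C}_\varphi(A,S,m_\bullet,f)$ and target $u.\textup{C}_\varphi(A,S,m_\bullet,f)$, so it equals $u^{\textup{C}_\varphi(A,S,m_\bullet,f)}_\circ$ by definition.

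Finally I would check functoriality of $\textup{C}_\bullet$ itself. The assignment $\textup{C}_{\id_X}$ manifestly preserves every quadruple and every morphism, and for composable $\varphi\colon X\to Y$, $\psi\colon Y\to Z$ we have $\textup{C}_{\psi\varphi}(A,S,m_\bullet,f)=(A,S,m_\bullet,\psi\varphi f)=\textup{C}_\psi\textup{C}_\varphi(A,S,m_\bullet,f)$ on objects, and both functors act as the identity on the underlying $E\mathcal M$-equivariant maps defining morphisms, so they agree there too. That the image lands in $\cat{$\bm{E\mathcal M}$-Cat}^\tau$ rather than just $\cat{$\bm{E\mathcal M}$-Cat}$ is precisely Lemma~\ref{lemma:C-bullet-supp}. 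There is no real obstacle here—the only point demanding care is the warning that equality of morphisms in $\textup{C}_X$ requires first matching sources and targets, and I would invoke this explicitly at each step where two morphisms are compared.
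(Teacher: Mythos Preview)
Your proposal is correct and follows essentially the same approach as the paper: both verify well-definedness and functoriality of $\textup{C}_\varphi$, then reduce $E\mathcal M$-equivariance to $\mathcal M$-equivariance on objects together with the identity $\textup{C}_\varphi(u^{(A,S,m_\bullet,f)}_\circ)=u^{\textup{C}_\varphi(A,S,m_\bullet,f)}_\circ$ (checked as maps in $\cat{$\bm{E\mathcal M}$-SSet}$ after matching sources and targets), and finally observe that $\textup{C}_{\id}=\id$ and $\textup{C}_{\psi\varphi}=\textup{C}_\psi\textup{C}_\varphi$. Your write-up is merely more explicit about details the paper declares ``clear'' or ``obvious.''
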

\begin{proof}
It is clear that $\textup{C}_\varphi$ is a well-defined functor and that it commutes with the $\mathcal M$-action on objects. To show that it is $E\mathcal M$-equivariant, it is then enough to show that $\textup{C}_\varphi(u^{(A,S,m_\bullet,f)}_\circ)=u^{(A,S,m_\bullet,\varphi\circ f)}_\circ$. As we already know that both sides are maps between the same objects, it suffices to prove this as maps in $\cat{$\bmEM$-SSet}$, where it is indeed immediate from the definition that both sides are given by $(u^*\times u^*)^{-1}\colon E\Inj(S,\omega)\times\prod_{a\in A}\Delta^{m_a}\to E\Inj(u(S),\omega)\times\prod_{b\in u(A)}\Delta^{m_{u^{-1}(b)}}$.

Finally, it is obvious from the definition that $\textup{C}_{\id}=\id$, and that $\textup{C}_\psi\textup{C}_\varphi=\textup{C}_{\psi\varphi}$ for any further $E\mathcal M$-equivariant map $\psi\colon Y\to Z$, which then completes the proof of the lemma.
\end{proof}

In order to construct the $E\mathcal M$-equivariant refinement of the `last vertex map,' we need the following easy structural insight on the $E\mathcal M$-simplicial sets appearing in the definition of $\textup{C}_\bullet$:

\begin{rk}
If $A$ is any set and $(m_a)_{a\in A}$ is an $A$-tuple of non-negative integers, then $\prod_{a\in A}\Delta^{m_a}$ is isomorphic to the nerve of the poset $\prod_{a\in A}[m_a]$. The latter has a unique terminal object (i.e.~maximum element) given by $(m_a)_{a\in A}$, and we write ${*}$ for the corresponding vertex of $\prod_{a\in A}\Delta^{m_a}$, i.e.~${*}=\prod_{a\in A}\Delta^{\{m_a\}}$.

If $S$ is any further set, then $E\Inj(S,\omega)$ is by construction the nerve of a category in which there is precisely one morphism $u\to v$ for any two objects $u,v$. It follows, that there exists for any $u\in\Inj(S,\omega)$ and any vertex $x$ of $E\Inj(S,\omega)\times\prod_{a\in A}\Delta^{m_a}$ a unique edge $x\to (u,{*})$.

Finally, again using that in $E\Inj(S,\omega)$ and $[m_a]$ there is at most one morphism $x\to y$ for any two objects $x,y$, we see that any $n$-simplex of $E\Inj(S,\omega)\times\prod_{a\in A}\Delta^{m_a}$ is completely determined by its $(n+1)$-tuple of vertices. Conversely, such an $(n+1)$-tuple $(x_0,\dots,x_n)$ comes from an $n$-simplex if and only if there exists for each $1\le i\le n$ a (necessarily unique) edge $x_{i-1}\to x_i$.
\end{rk}

\begin{constr}\label{constr:epsilon}
Let $X$ be an $E\mathcal M$-simplicial set. We define $\epsilon\colon\nerve(\textup{C}_X)\to X$ as follows: if $(A_0,S_0,m^{(0)}_\bullet,f_0)\xrightarrow{\alpha_1}(A_1,S_1,m^{(1)}_\bullet,f_1)\to\cdots\xrightarrow{\alpha_k}(A_k,S_k,m^{(k)}_\bullet,f_k)$ is a $k$-simplex of $\nerve(\textup{C}_X)$, then we denote by $\sigma_{\alpha_\bullet}$ the unique $k$-simplex of \begin{equation*}E\Inj(S_k,\omega)\times\prod_{a\in A_k}\Delta^{m^{(k)}_a}\end{equation*} whose{\hskip0pt plus 2pt} $\ell$-th{\hskip0pt plus 2pt} vertex{\hskip0pt plus 2pt} ($\ell=0,\dots,k$){\hskip0pt plus 2pt} is{\hskip0pt plus 2pt} given{\hskip0pt plus 2pt} by{\hskip0pt plus 2pt} $\alpha_k\cdots\alpha_{\ell+1}(\iota_{S_\ell},{*})$,{\hskip0pt plus 2pt} where{\hskip0pt plus 2pt} $\iota_{S_\ell}\in\allowbreak\Inj(S_\ell,\omega)$ denotes the inclusion. This is indeed well-defined as there exists an edge $\alpha_{\ell}(\iota_{S_{\ell-1}},{*})\to(\iota_{S_{\ell}},{*})$ in $E\Inj(S_\ell,\omega)\times\Delta^{n_{\ell}}$ for all $1\le\ell\le n$.

We then set $\epsilon(\alpha_\bullet)\mathrel{:=}f_k(\sigma_{\alpha_\bullet})\in X_k$.
\end{constr}

\begin{prop}
The above defines a natural transformation $\epsilon\colon\nerve\circ\textup{C}_\bullet\Rightarrow\id$ of endofunctors of $\cat{$\bmEM$-SSet}$.
\end{prop}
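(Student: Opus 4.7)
The plan is to verify three properties in turn: naturality in $X$, simpliciality of $\epsilon_X$, and its $E\mathcal M$-equivariance.

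Naturality is immediate from the construction: given $\varphi\colon X\to Y$ in $\cat{$\bm{E\mathcal M}$-SSet}$ and a $k$-simplex $\alpha_\bullet$ of $\nerve(\textup{C}_X)$, the image $\nerve(\textup{C}_\varphi)(\alpha_\bullet)$ has the same $A_j,S_j,m^{(j)}_\bullet$ and the same morphisms as $\alpha_\bullet$, only with each $f_j$ replaced by $\varphi\circ f_j$; the auxiliary simplex $\sigma$ in the last space is therefore unchanged, and $\epsilon_Y(\nerve(\textup{C}_\varphi)(\alpha_\bullet))=(\varphi\circ f_k)(\sigma_{\alpha_\bullet})=\varphi(\epsilon_X(\alpha_\bullet))$.

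For simpliciality, fix $\varphi\colon [m]\to[k]$ in $\Delta$. I would invoke two ingredients: first, iterating the defining equation of a morphism in $\textup{C}_X$ gives the telescoping identity $f_\ell=f_k\circ\alpha_k\cdots\alpha_{\ell+1}$ for all $\ell\le k$; second, a simplex of $E\Inj(S,\omega)\times\prod_a\Delta^{m_a}$ is uniquely determined by its vertex tuple, as observed in the remark preceding Construction~\ref{constr:epsilon}. Combined with simpliciality of $f_k$, these reduce the required equality $\epsilon(\varphi^*\alpha_\bullet)=\varphi^*\epsilon(\alpha_\bullet)$ to the claim
\[
\alpha_k\cdots\alpha_{\varphi(m)+1}(\sigma_{\varphi^*\alpha_\bullet})=\varphi^*\sigma_{\alpha_\bullet}
\]
in $E\Inj(S_k,\omega)\times\prod_{a\in A_k}\Delta^{m^{(k)}_a}$, and a direct vertex-by-vertex comparison shows that the $\ell$-th vertex of either side equals $\alpha_k\cdots\alpha_{\varphi(\ell)+1}(\iota_{S_{\varphi(\ell)}},*)$.

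For $E\mathcal M$-equivariance, fix $(u_0,\dots,u_k)\in(E\mathcal M)_k$. Combining Remark~\ref{rk:nerve-induced-action} with the formula for $u^{(A,S,m_\bullet,f)}_\circ$ from the construction of $\textup{C}_X$, the $j$-th morphism $\tilde\alpha_j$ of $(u_0,\dots,u_k).\alpha_\bullet$ is, as a map of $E\mathcal M$-simplicial sets, $(u_j^*\times u_j^*)^{-1}\alpha_j(u_{j-1}^*\times u_{j-1}^*)$; composing from $\ell+1$ to $k$, the adjacent pairs $(u_i^*\times u_i^*)(u_i^*\times u_i^*)^{-1}$ telescope, leaving $(u_k^*\times u_k^*)^{-1}\alpha_k\cdots\alpha_{\ell+1}(u_\ell^*\times u_\ell^*)$. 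Tracing $(\iota_{u_\ell(S_\ell)},*)$ through this and then through $f_k\circ(u_k^*\times u_k^*)$, and using $E\mathcal M$-equivariance of each $\alpha_j$ together with the identity $(u_\ell^*\times u_\ell^*)(\iota_{u_\ell(S_\ell)},*)=(u_\ell|_{S_\ell},*)=u_\ell.(\iota_{S_\ell},*)$, one obtains that $\epsilon((u_0,\dots,u_k).\alpha_\bullet)$ equals $f_k$ applied to the simplex with $\ell$-th vertex $\alpha_k\cdots\alpha_{\ell+1}(u_\ell|_{S_\ell},*)$. A symmetric computation, using the description of the $E\mathcal M$-action on $E\Inj(S_k,\omega)\times\prod_a\Delta^{m^{(k)}_a}$, identifies this with $f_k((u_0,\dots,u_k).\sigma_{\alpha_\bullet})=(u_0,\dots,u_k).\epsilon(\alpha_\bullet)$.

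The main subtlety is purely bookkeeping: the morphisms and simplices involved live in several distinct products $E\Inj(S_j,\omega)\times\prod_a\Delta^{m^{(j)}_a}$, and the warning in the construction of $\textup{C}_X$ forbids us from concluding that two morphisms of $\textup{C}_X$ are equal merely because they agree as maps of simplicial sets. However, once matching sources and targets have been verified, the vertex-uniqueness property quoted above reduces every remaining equality to a finitary computation.
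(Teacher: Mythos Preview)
Your proposal is correct and follows essentially the same approach as the paper's proof: both verify simpliciality by comparing the vertices of $\varphi^*\sigma_{\alpha_\bullet}$ and $\alpha_k\cdots\alpha_{\varphi(m)+1}(\sigma_{\varphi^*\alpha_\bullet})$ and then invoking the telescoping relation $f_{\varphi(m)}=f_k\alpha_k\cdots\alpha_{\varphi(m)+1}$, both verify $E\mathcal M$-equivariance by unraveling the morphisms of $(u_0,\dots,u_k).\alpha_\bullet$ as $(u_j^*\times u_j^*)^{-1}\alpha_j(u_{j-1}^*\times u_{j-1}^*)$, telescoping, and using the identity $(u_\ell^*\times u_\ell^*)(\iota_{u_\ell(S_\ell)},*)=u_\ell.(\iota_{S_\ell},*)$ together with $E\mathcal M$-equivariance of the $\alpha_j$ and $f_k$, and both dispatch naturality by noting that $\textup{C}_\varphi$ leaves the $\alpha_j$ (and hence $\sigma_{\alpha_\bullet}$) unchanged. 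The only difference is the order of presentation.
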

\begin{proof}
Let us first show that $\epsilon_X$ is indeed a simplicial map; this is completely analogous to the argument for the usual last vertex map, but we nevertheless include it for completeness. For this we let $(A_0,S_0,m^{(0)}_\bullet,f_0)\xrightarrow{\alpha_1}\cdots\xrightarrow{\alpha_k}(A_k,S_k,m^{(k)}_\bullet,f_k)$ be any $k$-simplex of $\nerve\textup{C}_X$, and we let $\varphi\colon[\ell]\to[k]$ be any map in $\Delta$. Then $\varphi^*(\sigma_{\alpha_\bullet})$ is the unique $\ell$-simplex of $E\Inj(S_k,\omega)\times\prod_{a\in A_k}\Delta^{m^{(k)}_a}$ with $i$-th vertex $\alpha_k\cdots\alpha_{\varphi(i)+1}(\iota_{S_{\varphi(i)}},{*})$. On the other hand, $\sigma_{\varphi^*\alpha_\bullet}$ is by definition the unique $\ell$-simplex of $E\Inj(S_{\varphi(\ell)},\omega)\times\prod_{a\in A_{\varphi(\ell)}}\Delta^{m^{(\varphi(\ell))}_a}$ with $i$-th vertex given by
\begin{equation*}
\varphi^*(\alpha_\bullet)_\ell\cdots\varphi^*(\alpha_\bullet)_{i+1}(\iota_{S_{\varphi(i)}},{*})
=\alpha_{\varphi(\ell)}\cdots\alpha_{\varphi(i)+1}(\iota_{S_{\varphi(i)}},{*}).
\end{equation*}
Thus, $\alpha_k\cdots\alpha_{\varphi(\ell)+1}(\sigma_{\varphi^*(\alpha_\bullet)})=\varphi^*\sigma_{\alpha_\bullet}$ and hence
\begin{align*}
\epsilon(\varphi^*(\alpha_\bullet))&=f_{\varphi(\ell)}(\sigma_{\varphi^*(\alpha_\bullet)})=
f_{k}\alpha_k\cdots\alpha_{\varphi(\ell)+1}(\sigma_{\varphi^*(\alpha_\bullet)})\\
&=f_k(\varphi^*\sigma_{\alpha_\bullet})=\varphi^*f_k(\sigma_{\alpha_\bullet})=\varphi^*\epsilon(\alpha_\bullet),
\end{align*}
i.e.~$\epsilon$ is indeed a simplicial map.

Next, we have to show that $\epsilon$ is $E\mathcal M$-equivariant, for which we let $(u_0,\dots,u_k)\in\mathcal M^{k+1}$ arbitrary. Then we have a commutative diagram
\begin{equation*}
\begin{tikzcd}[column sep=small]
(A_0,S_0,m^{(0)}_\bullet,f_0)\arrow[r, "\alpha_1"]\arrow[d, "(u_0)_\circ"'] & (A_1,S_1,m^{(1)}_\bullet,f_1)\arrow[d, "(u_1)_\circ"] \arrow[r, "\alpha_2"] & \cdots \arrow[r, "\alpha_k"] & (A_k,S_k,m^{(k)}_\bullet,f_k)\arrow[d, "(u_k)_\circ"]\\
u_0.(A_0,S_0,m^{(0)}_\bullet,f_0)\arrow[r] & u_1.(A_1,S_1,m^{(1)}_\bullet,f_1)\arrow[r] & \cdots \arrow[r] & u_k.(A_k,S_k,m^{(k)}_\bullet,f_k)
\end{tikzcd}
\end{equation*}
in $\textup{C}_X$, where the lower row is given by $(u_0,\dots,u_k).\alpha_\bullet$. Thus, $\sigma_{(u_0,\dots,u_k).\alpha_\bullet}$ is the unique $k$-simplex with $i$-th vertex given by
\begin{equation}\label{eq:vertex-action}
(u_k)_\circ\alpha_k\cdots\alpha_{i+1}(u_i)_\circ^{-1}(\iota_{u(A_i)},{*}).
\end{equation}
By definition, $(u_i)_\circ^{-1}(\iota_{u(A_i)},{*})=(u_i|_A,{*})=u_i.(\iota_A,{*})$; $E\mathcal M$-equivariance of $\alpha_k,\dots,\allowbreak\alpha_{i+1}$ therefore implies that $(\ref{eq:vertex-action})$ equals $(u_k)_\circ(u_i.(\alpha_k\cdots\alpha_{i+1}(\iota_{A_i},{*})))$. Comparing vertices, we conclude that $\sigma_{(u_0,\dots,u_k).\alpha_\bullet}=(u_k^*\times u_k^*)^{-1}((u_0,\dots,u_k).\sigma_{\alpha_\bullet})$, hence
\begin{align*}
\epsilon((u_0,\dots,u_k).\alpha_\bullet)&=f_k\circ(u_k^*\times u_k^*)(\sigma_{(u_0,\dots,u_k).\alpha_\bullet})=f_k((u_0,\dots,u_k).\sigma_{\alpha_\bullet})\\
&=(u_0,\dots,u_k).f_k(\sigma_{\alpha_\bullet})=(u_0,\dots,u_k).\epsilon(\alpha_\bullet),
\end{align*}
i.e.~$\epsilon$ is $E\mathcal M$-equivariant.

Finally, let us show that $\epsilon$ is natural. If $\varphi\colon X\to Y$ is any $E\mathcal M$-equivariant map, then $\nerve(\textup{C}_\varphi)(\alpha_\bullet)=(A_0,S_0,m^{(0)}_\bullet,\varphi\circ f_0)\xrightarrow{\alpha_1}\cdots\xrightarrow{\alpha_k}(A_k,S_k,m^{(k)},\varphi\circ f_k)$. Thus, $\sigma_{\nerve(\textup{C}_\varphi)(\alpha_\bullet)}=\sigma_{\alpha_\bullet}$ and $\epsilon(\nerve(\textup{C}_\varphi)(\alpha_\bullet))=\varphi f_k(\sigma_{\nerve(\textup{C}_\varphi)(\alpha_\bullet)})=\varphi f_k(\sigma_{\alpha_\bullet})=\varphi(\epsilon(\alpha_\bullet))$. This completes the proof of the proposition.
\end{proof}

\begin{rk}\label{rk:tilde-epsilon}
Let $\mathcal C$ be a small $E\mathcal M$-category. Then applying Construction~\ref{constr:epsilon} to the $E\mathcal M$-simplicial set $\nerve(\mathcal C)$ yields an $E\mathcal M$-equivariant map $\nerve(\textup{C}_{\nerve\mathcal C})\to\nerve(\mathcal C)$. As the nerve is fully faithful, this is induced by a unique functor $\tilde\epsilon\colon\textup{C}_{\nerve\mathcal C}\to\mathcal C$, which is then automatically $E\mathcal M$-equivariant again. This way, we get a (unique) natural transformation $\tilde\epsilon\colon\textup{C}_\bullet\circ\nerve\Rightarrow\id$ of endofunctors of $\cat{$\bmEM$-Cat}$ with $\nerve(\tilde\epsilon_{\mathcal C})=\epsilon_{\nerve\mathcal C}$.

Explicitly, $\tilde\epsilon_{\mathcal C}$ is the functor sending an object $(A,S,m_\bullet,f)$ to the object corresponding to the image of $(\iota_S,{*})$ under $f$, and a morphism $\alpha\colon (A,S,m_\bullet,f)\to (B,T,n_\bullet,g)$ to the morphism corresponding to the image under $g$ of the unique edge $\alpha(\iota_S,{*})\to(\iota_T,{*})$ of $E\Inj(T,\omega)\times\prod_{b\in B}\Delta^{n_b}$.
\end{rk}

So far we have only considered $\textup{C}_\bullet$ as a functor $\cat{$\bmEM$-SSet}\to\cat{$\bmEM$-Cat}^\tau$. However, we can formally lift this to $\cat{$\bmEM$-$\bm G$-SSet}\to\cat{$\bmEM$-$\bm G$-Cat}^\tau$ by pulling through the $G$-action via functoriality. Explicitly, if $X$ is an $E\mathcal M$-$G$-simplicial set, then $g\in G$ acts on $(A,S,m_\bullet,f)$ via $g.(A,S,m_\bullet,f)=(A,S,m_\bullet,(g.\blank)\circ f)$, and if $\alpha\colon (A,S,m_\bullet,f)\to (A',S',m'_\bullet,f')$, then $g.\alpha$ is the same morphism of $E\mathcal M$-simplicial sets, but this time considered as a map $(A,S,m_\bullet,(g.\blank)\circ f)\to(A',S',m'_\bullet,(g.\blank)\circ f')$. It follows formally that $\epsilon$ and $\tilde\epsilon$ are $G$-equivariant, and that they define natural transformations of endofunctors of $\cat{$\bmEM$-$\bm G$-SSet}$ and $\cat{$\bmEM$-$\bm G$-Cat}$, respectively.

\section{An unstable comparison}\label{sec:unstable}
In this section we will prove the following predecessor to Theorem~\ref{thm:par-sum-cat-vs-sset}:

\begin{thm}\label{thm:nerve-vs-c-bullet-unstable}
The nerve and the functor $\textup{C}_\bullet$ from Lemma~\ref{lemma:c-bullet-functor} restrict to mutually inverse homotopy equivalences
\begin{equation*}
\textup{C}_\bullet\colon\cat{$\bmEM$-$\bm{G}$-SSet}^\tau\rightleftarrows\cat{$\bmEM$-$\bm{G}$-Cat}^\tau \colon\nerve
\end{equation*}
with respect to the $G$-global weak equivalences on both sides. More precisely, the natural transformations $\epsilon$ from Construction~\ref{constr:epsilon} and $\tilde\epsilon$ from Remark~\ref{rk:tilde-epsilon} restrict to natural levelwise $G$-global weak equivalences between the composites $\nerve\circ\textup{C}_\bullet$ and $\textup{C}_\bullet\circ\nerve$ and the respective identities.
\end{thm}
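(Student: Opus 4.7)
The plan is to first reduce the statement about $\tilde\epsilon$ to the corresponding statement about $\epsilon$, and then to prove the latter by comparing $\epsilon_X^\varphi$ with the classical last vertex map of Proposition~\ref{prop:last-vertex}.

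For the reduction, Remark~\ref{rk:tilde-epsilon} gives $\nerve(\tilde\epsilon_{\mathcal C})=\epsilon_{\nerve\mathcal C}$, while the concluding remark of Section~2 says that $\nerve$ commutes with $\varphi$-fixed points and preserves and reflects weak equivalences. Hence it suffices to show that $\epsilon_X\colon\nerve(\textup{C}_X)\to X$ is a $G$-global weak equivalence for every tame $E\mathcal M$-$G$-simplicial set $X$. Fixing a universal subgroup $H\subset\mathcal M$ and a homomorphism $\varphi\colon H\to G$, the target becomes $\epsilon_X^\varphi\colon\nerve(\textup{C}_X^\varphi)\to X^\varphi$, where the objects of $\textup{C}_X^\varphi$ unpack to quadruples $(A,S,m_\bullet,f)$ with $A,S\subset\omega$ finite and $H$-invariant, $m_\bullet$ an $H$-invariant family, and $f\colon E\Inj(S,\omega)\times\prod_a\Delta^{m_a}\to X$ equivariant for both the $E\mathcal M$-action and the induced $\Gamma_\varphi$-action (the latter being the combination of the $H$-action on $S$ and permutation of the $A$-indexed factors).

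I now propose to compare $\epsilon_X^\varphi$ with the classical last vertex map $\delta\colon\nerve(\Delta\downarrow X^\varphi)\to X^\varphi$ (a weak equivalence by Proposition~\ref{prop:last-vertex}) by constructing a simplicial map $\Phi\colon\nerve(\Delta\downarrow X^\varphi)\to\nerve(\textup{C}_X^\varphi)$ with $\epsilon_X^\varphi\circ\Phi=\delta$. On an $n$-simplex $\sigma\colon\Delta^n\to X^\varphi$ the map $\Phi$ will send $(\Delta^n,\sigma)$ to the object $(\{a\},S_\sigma,n,f_\sigma)$, where $a\in\omega$ is an $H$-fixed element and $S_\sigma\supset\supp(\sigma)$ is an $H$-invariant finite subset (both exist by universality of $H$), and $f_\sigma$ is the unique $E\mathcal M$-equivariant extension of $\sigma$ supplied by Proposition~\ref{prop:corep}; the required $\Gamma_\varphi$-equivariance of $f_\sigma$ will follow from $\Gamma_\varphi$-fixedness of $\sigma$ together with Lemma~\ref{lemma:support-strong-sset}. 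By $2$-out-of-$3$ the theorem will then reduce to showing that $\Phi$ is a weak equivalence, which I expect to verify via Quillen's Theorem~A: for each object $(A,S,m_\bullet,f)\in\textup{C}_X^\varphi$ the associated comma category should be contractible, exploiting universality of $H$ to enlarge $S$ and $A$ in $H$-equivariant fashion and using corepresentability (Proposition~\ref{prop:corep}) to produce the necessary connecting morphisms.

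The main obstacle is twofold. First, turning $\Phi$ into a genuinely simplicial map requires simplex-coherent choices of the enlarging sets $S_\sigma$; a natural workaround is to express $\textup{C}_X^\varphi$ as a filtered colimit over $H$-invariant $S$ and argue one finite stage at a time, or else to fix ahead of time a global exhaustion of $\omega$ by $H$-invariant finite subsets. Second, and more substantially, the Quillen~A contractibility step is nontrivial precisely because of the equivariance: while in the $H=1$ case the relevant comma categories can be shown to be contractible essentially by inspection, in the present setting one must carry out a careful combinatorial analysis using universality of $H$ in order to deform arbitrary $\Gamma_\varphi$-fixed data $(A,S,m_\bullet,f)$ into the specific form produced by $\Phi$.
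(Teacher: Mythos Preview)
Your proposal is essentially the paper's proof. The filtration you mention as a ``workaround'' is exactly what the paper does from the outset: it restricts to $(\textup{C}_X)_{[T]}^\varphi \to X_{[T]}^\varphi$ for finite $H$-subsets $T\subset\omega$ containing an $H$-fixed point $t$, and your $\Phi$ then becomes the paper's functor $i$ with the uniform choice $S_\sigma=T$ and $a=t$, which eliminates the coherence problem. The composite with $\epsilon^\varphi$ is checked to be the classical last vertex map, and one finishes with Quillen's Theorem~A, as you anticipate.

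For the Theorem~A step, however, the paper does not ``deform'' $(A,S,m_\bullet,f)$ into the image of $i$. Instead it exhibits an explicit equivalence of categories $i\downarrow(A,S,m_\bullet,f)\simeq\Delta\downarrow K^\Delta$, where $K=E\Inj(S,T)\times\prod_{a\in A}\Delta^{m_a}$ carries the diagonal of the $H$-action on $T$ and the ``preaction'' (from Remark~\ref{rk:fixed-point-translation}) permuting $S$ and the $A$-factors. By Proposition~\ref{prop:last-vertex} this reduces to contractibility of $K^\Delta$, which is the content of Lemma~\ref{lemma:fixed-point-lemma}: viewing $K$ as the nerve of $E\Inj(S,T)\times\prod_a[m_a]$, the $\Delta$-fixed subcategory has a terminal object precisely when there is an $H$-equivariant injection $S\hookrightarrow T$, and this holds since $S\subset\supp(A,S,m_\bullet,f)\subset T$ as $H$-sets. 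This lemma is the one substantive ingredient your sketch leaves implicit; the ``careful combinatorial analysis'' you allude to is in fact this terminal-object argument.
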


The proof will be given later in this section after some preparations.

\begin{constr}\label{constr:pre-post-actions}
Let $H\subset\mathcal M$ be any subgroup, let $A,S\subset\omega$ be $H$-subsets, and let $m_\bullet\colon A\to\mathbb N$ be constant on $H$-orbits (i.e.~$m_{h.a}=m_a$ for all $a\in A$ and $h\in H$). Then we have an $H$-action on $E\Inj(S,\omega)\times\prod_{a\in A}\Delta^{m_a}$ given by $h.\blank=(h^*\times h^*)^{-1}$, i.e.~$H$ acts by the diagonal of the $H$-action on $S$ and the `shuffling' action on $\prod_{a\in A}\Delta^{m_a}$; we call this the \emph{preaction} as it is essentially given by precomposition.

The $E\mathcal M$-action on $E\Inj(S,\omega)\times\prod_{a\in A}\Delta^{m_a}$ commutes with the preaction, and in particular restricting it to $H$ gives another $H$-action commuting with the preaction, and that we denote by `$*$' instead of the usual `.' in order to avoid confusion. We will refer to this action as the \emph{postaction} as it is given by postcomposition. The pre- and postaction together make $E\Inj(S,\omega)\times\prod_{a\in A}\Delta^{m_a}$ into an $(H\times H)$-simplicial set.
\end{constr}

Let now $X$ be an $E\mathcal M$-$G$-simplicial set and $\varphi\colon H\to G$ a homomorphism. For reasons that will become apparent below, we also refer to the $H$-action on $X$ given by restricting the $G$-action along $\varphi$ as \emph{preaction}. The $E\mathcal M$-action on $X$ then gives rise to another $H$-action (again denoted by $*$ and again called the \emph{postaction}) commuting with the preaction, making it into another $(H\times H)$-simplicial set.

\begin{lemma}\label{lemma:fixed-point-translation}
Let $H\subset\mathcal M$ be a subgroup, let $\varphi\colon H\to G$ be a homomorphism, let $X\in\cat{$\bmEM$-$\bm G$-SSet}$, and let $(A,S,m_\bullet,f)\in \textup C_X$. Then the following are equivalent\/{\rm:}
\begin{enumerate}
    \item $(A,S,m_\bullet,f)$ is $\varphi$-fixed.
    \item $A$ and $S$ are $H$-subsets of $\omega$, $m_\bullet$ is constant on $H$-orbits, and the map $f\colon E\Inj(S,\omega)\times \prod_{a\in A}\Delta^{m_a}\to X$ is $H$-equivariant with respect to the \emph{preactions}.
    \item $A$ and $S$ are $H$-subsets of $\omega$, $m_\bullet$ is constant on $H$-orbits, and $f$ is $H$-equivariant with respect to the \emph{diagonal} of the preaction and the postaction.
\end{enumerate}
Moreover, if $\alpha\colon (A,S,m_\bullet,f)\to (A',S',m'_\bullet,f')$ is a map to another $\varphi$-fixed object, then the following are equivalent:
\begin{enumerate}
    \item $\alpha$ is $\varphi$-fixed.
    \item $\alpha\colon E\Inj(S,\omega)\times\prod_{a\in A}\Delta^{m_a}\to E\Inj(S',\omega)\times\prod_{a\in A'}\Delta^{m'_a}$ is $H$-equivariant with respect to the \emph{preactions}.
    \item $\alpha$ is equivariant with respect to the \emph{diagonal} $H$-actions.
\end{enumerate}
\begin{proof}
For the statement about objects we first observe that $f$ is by construction of $\textup C_X$ always $E\mathcal M$-equivariant, hence $H$-equivariant with respect to the postactions. Thus, $(2)\Leftrightarrow(3)$ follows immediately. For $(1)\Leftrightarrow(2)$ we compute for any $h\in H$
\begin{equation*}
(h,\varphi(h)).(A,S,m_\bullet,f)=(h(A), h(S), m_{h^{-1}(\bullet)}, (\varphi(h).\blank)\circ f\circ (h^*\times h^*)).
\end{equation*}
In particular, the first three components are fixed if and only if $A,S\subset\omega$ are $H$-subsets, and $m_\bullet$ is constant on $H$-orbits. On the other hand, the condition $(\varphi(h).\blank)\circ f\circ (h^*\times h^*)=f$ for all $h\in H$ by construction precisely means that $f$ is an $H$-equivariant map with respect to the preactions.

For the characterization of $\varphi$-fixed morphisms, we note that $(h,\varphi(h)).\alpha$ is again a map $(A,S,m_\bullet,f)\to (A',S',m_\bullet',f')$ for all $h\in H$, so $\alpha=(h,\varphi(h)).\alpha$ as morphisms of $\textup{C}_X$ if and only if both sides agree in $\cat{$\bmEM$-SSet}$. But acting with $\varphi(h)$ does not affect $\alpha$ as a morphism of $\cat{$\bmEM$-SSet}$, and the action of $h$ is given by conjugating with $(h^*\times h^*)$. Thus, $\alpha$ is a $\varphi$-fixed point if and only if it is $H$-equivariant with respect to the preactions constructed above. As before this is equivalent to $\alpha$ being equivariant with respect to the diagonal action as $\alpha$ is $E\mathcal M$-equivariant.
\end{proof}
\end{lemma}

The following technical lemma provides the necessary equivariant information about the above $H$-objects:

\begin{lemma}\label{lemma:fixed-point-lemma}
Let $S\subset\omega$ be finite, let $Y\in\cat{SSet}$ be isomorphic to the nerve of a poset $P$ with a maximum element, and let ${*}$ denote the corresponding vertex of $Y$. Write $X\mathrel{:=}E\Inj(S,\omega)\times Y$ and let $H$ be any group.
\begin{enumerate}
\item Any $H$-action on $X$ restricts to an $H$-action on $E\Inj(S,\omega)\times\{{*}\}$.
\item If the $H$-action on $X$ is through $E\mathcal M$-equivariant maps, then its restriction to $E\Inj(S,\omega)\times\{{*}\}$ is induced by a unique $H$-action on $S$.\label{item:fpl-S-action}
\item Assume again that $H$ acts on $X$ through $E\mathcal M$-equivariant maps, so that $X$ is an $(E\mathcal M\times H)$-simplicial set, but assume moreover that $H$ is a subgroup of $\mathcal M$. Let $\Delta$ be the diagonal subgroup of $\mathcal M\times H$, let $T$ be any $H$-subset of $\omega$, and consider $E\Inj(S,T)\times Y$ with the restriction of the $\Delta$-action on $E\Inj(S,\omega)\times Y$.

Then $\big(E\Inj(S,T)\times Y\big)^\Delta$ is contractible provided that there exists an $H$-equivariant injection $S\to T$ with respect to the $H$-action on $S$ from $(\ref{item:fpl-S-action})$.
\end{enumerate}
\end{lemma}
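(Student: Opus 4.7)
For part~(1), I would characterize the subcomplex $E\Inj(S,\omega) \times \{*\}$ intrinsically, which makes stability under any simplicial automorphism (in particular under the $H$-action) automatic. Observe that $X$ is the nerve of the product category $E\Inj(S,\omega) \times P$, a category in which any two objects admit at most one morphism between them; in particular, simplices of $X$ are determined by their vertex sequences. A vertex $(u,p)$ lies in $E\Inj(S,\omega) \times \{*\}$ iff $p = *$, which in turn holds iff \emph{every} vertex of $X$ admits an edge into $(u,p)$---using that $E\Inj(S,\omega)$ is chaotic while in $P$ an edge $q \to p$ exists precisely when $q \le p$. Simplicial automorphisms preserve this characterization, so they stabilize the subcomplex.

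For part~(2), Proposition~\ref{prop:corep} identifies the $n$-simplices of $\Maps_{E\mathcal M}(E\Inj(S,\omega), E\Inj(S,\omega))$ with $E\Inj(S,\omega)_{[S],n}$ via evaluation at $\iota_S$; at the vertex level this gives $\Inj(S,\omega)_{[S]} = \textup{Aut}(S)$ by Example~\ref{ex:Inj}. A short calculation using the explicit formula $f_x(u) = \hat u.x$ from the same proposition shows this bijection is an anti-isomorphism of monoids, so an $H$-action on $E\Inj(S,\omega)$ through $E\mathcal M$-equivariant maps is equivalent data to an $H$-action on $S$, uniquely determined by the given action on $E\Inj(S,\omega) \times \{*\}$ supplied by part~(1).

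For part~(3), I would identify $E\Inj(S,T) \times Y$ with $\nerve(\mathcal D)$ for $\mathcal D := E\Inj(S,T) \times P$, still a category with at most one morphism between any two objects. Since nerve commutes with limits, $(E\Inj(S,T) \times Y)^\Delta = \nerve(\mathcal D^\Delta)$, so it suffices to exhibit a terminal object in $\mathcal D^\Delta$---the nerve of a category with a terminal object is contractible via the natural transformation from the identity to the constant functor at that object. My candidate is $(i_0,*)$ where $i_0 \colon S \to T$ is the given $H$-equivariant injection. Its $\Delta$-fixedness splits into two parts: fixedness of $*$ under the $H$-action on $Y$, which follows from part~(1) (the $H$-action preserves $E\Inj(S,\omega) \times \{*\}$, forcing $* \in Y_0$ to be fixed as the unique second coordinate occurring there); and fixedness of $i_0 \in \Inj(S,T)$ under the combined $\mathcal M$- and $H$-action, which by part~(2) unwinds to an identity equivalent to the hypothesized $H$-equivariance of $i_0$. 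Terminality of $(i_0,*)$ in $\mathcal D^\Delta$ is then automatic: for any $\Delta$-fixed $(u,p)$, the unique morphism $(u,p) \to (i_0,*)$ in $\mathcal D$---which exists because $p \le *$ and Hom in $E\Inj(S,T)$ is always a singleton---must be $\Delta$-invariant, since $\Delta$ permutes the singleton Hom-set trivially.

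The main obstacle is the convention-bookkeeping in part~(3): one has to carefully reconcile the $H$-action on $S$ coming from part~(2) with the restriction of the tautological $\mathcal M$-action to $H \subset \mathcal M$ on $T$, so that the $H$-equivariant-injection hypothesis translates cleanly into the $\Delta$-fixed-point identity $h \circ i_0 \circ \tau(h) = i_0$ (with $\tau$ the anti-homomorphism from part~(2)). Once that alignment is checked, the structural observations---uniqueness of morphisms forcing $\Delta$-invariance, and a terminal object making the nerve contractible---are formal.
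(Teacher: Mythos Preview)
Your proposal is correct and follows essentially the same route as the paper's proof: for (1) you use the terminal-object characterization of $E\Inj(S,\omega)\times\{*\}$ (the paper phrases this via full faithfulness of the nerve and preservation of terminal objects under isomorphisms of categories, you phrase it directly on the simplicial set); for (2) both arguments invoke Proposition~\ref{prop:corep} to identify $E\mathcal M$-endomorphisms of $E\Inj(S,\omega)$ with $\Sigma_S$, the paper packaging this as a homomorphism $\sigma\mapsto(\blank\circ\sigma^{-1})$ while you record the evaluation map as an anti-isomorphism, which is equivalent; and for (3) both arguments exhibit $(i_0,*)$ as a $\Delta$-fixed terminal object of the underlying category, using uniqueness of morphisms to get that $\mathcal D^\Delta$ is a full subcategory.
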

\begin{proof}
For the first statement we observe that $X$ is canonically identified with the nerve of $\mathcal C\mathrel{:=}E\Inj(S,\omega)\times P$, and as $\nerve$ is fully faithful, it suffices to prove the analogous statement for $\mathcal C$. For this it is then enough to observe that any isomorphism of categories preserves the full subcategory spanned by the terminal objects, and that this is precisely given by $E\Inj(S,\omega)\times\{{*}\}$ in our case.

For the second statement we observe that evaluation at $\iota_S$ provides a bijection $\Hom_{E\mathcal M}(E\Inj(S,\omega),E\Inj(S,\omega))\to (E\Inj(S,\omega)_{[S]})_0=\Inj(S,\omega)_{[S]}$ by Proposition~\ref{prop:corep}. On the other hand, we have a map
\begin{equation}\label{eq:permutation-action-universal}
\Sigma_S\to\Hom_{E\mathcal M}(E\Inj(S,\omega),E\Inj(S,\omega))
\end{equation}
from the symmetric group on the set $S$ sending a permutation $\sigma$ to the map given by precomposition with $\sigma^{-1}$. The composition $\Sigma_S\to\Inj(S,\omega)_{[S]}$ is then given by $\sigma\mapsto\iota_S\sigma^{-1}$, which is obviously bijective. We conclude that also $(\ref{eq:permutation-action-universal})$ is bijective. In particular, there exists for each $h\in H$ a unique $\sigma(h)$ such that $h.\blank\colon E\Inj(S,\omega)\to E\Inj(S,\omega)$ agrees with $\blank\circ\sigma(h)^{-1}$. It only remains to show that this defines an action on $S$, i.e.~that $\sigma$ is a group homomorphism, for which it is enough to observe that $(\ref{eq:permutation-action-universal})$ is a monoid homomorphism.

For the final statement, we again switch to the categorical perspective. As the nerve is continuous, it then suffices to show that $\mathcal C^\Delta$ is contractible, for which it is enough that it has a terminal object. For this we observe that there is at most one map $x\to y$ for any $x,y\in\mathcal C= E\Inj(S,\omega)\times P$. Thus, a morphism in $\mathcal C$ is fixed by $\Delta$ if and only if its two endpoints are, i.e.~$\mathcal C^\Delta$ is a full subcategory of $\mathcal C$. It is therefore enough to show that one of the terminal objects of $\mathcal C$ is fixed by $\Delta$. But by the previous steps, the $\Delta$-action restricts to a $\Delta$-action on $E\Inj(S,T)\times\{{*}\}$, where $(h,h)$ for $h\in H$ acts by $(f,*)\mapsto (h\circ f\circ(h^{-1}.\blank),{*})$. Obviously, a terminal object $(f,{*})$ is fixed under this action if and only if the injection $f$ is $H$-equivariant, and such an $f$ exists by assumption.
\end{proof}

\begin{proof}[Proof of Theorem~\ref{thm:nerve-vs-c-bullet-unstable}]
We will show that $\epsilon_X$ is a $G$-global weak equivalence for each tame $E\mathcal M$-$G$-simplicial set $X$. If $\mathcal C$ is any tame $E\mathcal M$-$G$-category, then applying this to $\nerve(\mathcal C)$ will also show that $\nerve(\tilde\epsilon_{\mathcal C})=\epsilon_{\nerve\mathcal C}$ is a $G$-global weak equivalence, and as the nerve creates the $G$-global weak equivalences in $\cat{$\bmEM$-$\bm G$-Cat}^\tau$, this will then imply that also $\tilde\epsilon$ is a levelwise $G$-global weak equivalence. Moreover, we can conclude from this by $2$-out-of-$3$ that $\nerve\circ\textup{C}_\bullet$ is homotopical, and hence so is $\textup{C}_\bullet$, which then altogether implies the theorem.

Therefore let us fix a tame $E\mathcal M$-$G$-simplicial set $X$, a universal subgroup $H\subset\mathcal M$, and a homomorphism $\varphi\colon H\to G$. We claim that the restriction of $\epsilon$ to $\nerve(\textup{C}_X)_{[T]}\to X_{[T]}$ induces a weak equivalence on $\varphi$-fixed points for each finite $H$-subset $T\subset\omega$ containing an $H$-fixed point. For varying $T$, these exhaust $\nerve\textup{C}_X$ and $X$, as both are tame by assumption and since any finite set $T'\subset\omega$ is contained in a finite $H$-subset containing an $H$-fixed point (the latter uses that $\omega^H\not=\varnothing$ by universality). Now $(\blank)^\varphi=(\blank)^{\Gamma_{H,\varphi}}$ is given by a finite limit in $\cat{SSet}$ (as $\Gamma_{H,\varphi}\cong H$ is a finite group), and in $\cat{SSet}$ filtered colimits commute with finite limits. Thus, the claim implies the theorem by passing to the filtered colimit of the weak equivalences $\nerve(\textup{C}_X)_{[T]}^\varphi\to X_{[T]}^\varphi$ over all such $T$ as filtered colimits in $\cat{SSet}$ are homotopical.

To prove the claim, we fix $t\in T^H$ and we consider the functor $i\colon\Delta\downarrow X_{[T]}^\varphi\to(\textup{C}_X)_{[T]}$ sending an object $k\colon\Delta^n\to X$ to $(\{t\},T,n,\tilde k)$, where $\tilde k$ is the unique $E\mathcal M$-equivariant map $E\Inj(T,\omega)\times\Delta^n\to X$ with $\tilde k(\iota_T,\blank)=k$, and a morphism $\alpha\colon k\to\ell$ to $E\Inj(T,\omega)\times\alpha$. We omit the easy verification that $i$ is well-defined.

We now claim that $i$ actually lands in the $\varphi$-fixed points. Let us first check this on objects: if $k\colon\Delta^n\to X$ is any object of $\Delta\downarrow X_{[T]}^\varphi$, then we have to show that $(\{t\},T,n,\tilde k)$ is $\varphi$-fixed. But indeed, $T\subset\omega$ is an $H$-subset by assumption, $\{t\}\subset\omega$ is an $H$-subset as $t\in T^H$, and any family on $\{t\}$ is constant on orbits for trivial reasons, so it only remains to show by Lemma~\ref{lemma:fixed-point-translation} that $\tilde k$ is equivariant with respect to the preactions. But since for any $h\in H$ both $\tilde k\circ (h.\blank)$ and $(h.\blank)\circ\tilde k$ are $E\mathcal M$-equivariant, it suffices to show that they agree on $\{\iota_T\}\times\Delta^n$, for which we let $\sigma$ denote any simplex of $\Delta^n$. Then
\begin{equation*}
\tilde k(h.\iota_T,\sigma)=\tilde k(\iota_T\circ h^{-1}|_T,\sigma)=\tilde k(h^{-1}*\iota_T,\sigma)=h^{-1}*(\tilde k(\iota_T,\sigma))=\varphi(h).(\tilde k(\iota_T,\sigma))
\end{equation*}
as desired, where the last equation uses that $\tilde k(\iota_T,\sigma)=k(\sigma)$ is $\varphi$-fixed.

Now let $\ell\colon\Delta^{n'}\to X_{[T]}^\varphi$ be another object of $\Delta\downarrow X_{[T]}^\varphi$ and let $\alpha\colon k\to\ell$ be any morphism. Then $E\Inj(T,\omega)\times\alpha\colon E\Inj(T,\omega)\times\Delta^n\to E\Inj(T,\omega)\times\Delta^{n'}$ is obviously equivariant in the preactions, hence $\varphi$-fixed by Lemma~\ref{lemma:fixed-point-translation}. This completes the proof that $i$ lands in $(\textup{C}_X)_{[T]}^\varphi$.

Next, we consider the composite
\begin{equation}\label{eq:composite-last-vertex}
\nerve(\Delta\downarrow X_{[T]}^\varphi)\xrightarrow{\nerve(i)}\nerve\big((\textup{C}_X)_{[T]}^\varphi\big)\cong(\nerve\textup{C}_X)_{[T]}^\varphi\xrightarrow{\epsilon_{[T]}^\varphi} X_{[T]}^\varphi,
\end{equation}
where the unlabelled isomorphism comes from Example~\ref{ex:nerve-supported-on} together with the fact that $\nerve$ is a right adjoint.

If $g_0\xrightarrow{\alpha_1} g_1\to\cdots\xrightarrow{\alpha_k}g_k$ is a $k$-simplex of the left hand side (where each $g_\ell$ is a map $\Delta^{n_\ell}\to X_{[T]}^\varphi)$, then the above composite sends this to the image of $\sigma$ under $\tilde g_k$, where $\sigma$ is the unique $\ell$-simplex of $E\Inj(T,\omega)\times \Delta^{m_k}$ whose $\ell$-th vertex is
\begin{align*}
i(\alpha_k)\cdots i(\alpha_{\ell+1})(\iota_T,{*})&=(E\Inj(T,\omega)\times\alpha_k)\cdots(E\Inj(T,\omega)\times\alpha_{\ell+1})(\iota_T,{*})\\
&=(\iota_T,\alpha_{k}\cdots\alpha_{\ell+1}({*}))=(\iota_T,\alpha_{k}\cdots\alpha_{\ell+1}(\Delta^{\{n_\ell\}}))
\end{align*}
Thus, if $\tau$ is the unique $k$-simplex of $\Delta^{n_k}$ with $\ell$-th vertex $\alpha_{k}\cdots\alpha_{\ell+1}(\Delta^{\{n_\ell\}})$, then $\sigma=(\iota_T,\tau)$, and hence $\tilde g_k(\sigma)=\tilde g_k(\iota_T,\tau)=g_k(\tau)=\epsilon(\alpha_\bullet)$. We therefore conclude that the composite $(\ref{eq:composite-last-vertex})$ agrees with the last vertex map $\nerve(\Delta\downarrow X_{[T]}^\varphi)\to X_{[T]}^\varphi$, so that it is a weak homotopy equivalence by Proposition~\ref{prop:last-vertex}.

It is therefore enough to show that $i$ is a weak homotopy equivalence. By Quillen's Theorem A \cite[§1]{quillen-theorem-A}, it suffices for this that the slice $i\downarrow(A,S,m_\bullet,f)$ has weakly contractible nerve for each $(A,S,m_\bullet,f)\in(\textup{C}_X)_{[T]}^\varphi$.

So let $(A,S,m_\bullet,f)$ be any $\varphi$-fixed point supported on $T$. Then \begin{equation*}K\mathrel{:=}E\Inj(S,T)\times\prod_{a\in A}\Delta^{m_a}\end{equation*} is canonically identified with the subcomplex of $E\Inj(S,\omega)\times\prod_{a\in A}\Delta^{m_a}$ consisting of the simplices supported on $T$, and from this it inherits the two commuting $H$-actions considered before: the postaction given by restriction of the $E\mathcal M$-action on $E\Inj(S,\omega)$ (i.e.~induced by the $H$-action on $T$) and the preaction given by the $H$-actions on $A$ and $S$. We will be interested in the fixed points $K^\Delta$ for the diagonal of these two actions. Namely, let us define a functor $c\colon i\downarrow(A,S,m_\bullet,f)\to\Delta\downarrow K^\Delta$ as follows: an object of the left hand side consists by definition of a map $g\colon\Delta^n\to X_{[T]}^\varphi$ together with a $\varphi$-fixed morphism $\alpha\colon i(g)\to (A,S,m_\bullet,f)$, i.e.~an $(E\mathcal M\times H)$-equivariant map $\alpha\colon E\Inj(T,\omega)\times\Delta^n\to E\Inj(S,\omega)\times\prod_{a\in A}\Delta^{m_a}$ such that $\tilde g=f\alpha$. We now claim that the composition
\begin{equation*}
\Delta^n\xrightarrow{(\iota_T,\blank)}E\Inj(T,\omega)\times\Delta^n\xrightarrow{\alpha} E\Inj(S,\omega)\times\prod_{a\in A}\Delta^{m_a}
\end{equation*}
actually lands in $K^\Delta$. Indeed, it is clear that it lands in $K$, so we only have to show that $\alpha(\iota_T,\sigma)$ is $\Delta$-fixed for each simplex $\sigma$ of $\Delta^n$. But indeed, as $\alpha$ is $\Delta$-equivariant, it suffices that $\iota_T$ is a $\Delta$-fixed point of $E\Inj(T,\omega)$, which is immediate from the definition. With this established, we now define $c(g,\alpha)$ as $\alpha(\iota_T,\blank)$ considered as a map $\Delta^n\to K^\Delta$.

If \begin{equation*}(g'\colon\Delta^{n'}\to X_{[T]}^\varphi,\alpha'\colon i(g')\to (A,S,m_\bullet,f))\end{equation*} is another object of $i\downarrow(A,S,m_\bullet,f)$, then a morphism $(g,\alpha)\to (g',\alpha')$ is given by a map $\mathfrak a\colon\Delta^n\to\Delta^{n'}$ such that $g=g'\circ\mathfrak a$ (i.e.~$\mathfrak a$ is a map $g\to g'$ in $\Delta\downarrow X_{[T]}^\varphi$) and $\alpha=i(\mathfrak a)\circ\alpha'$. As $i(\mathfrak a)=E\Inj(T,\omega)\times\mathfrak a$, restricting to $\{\iota_T\}\times\Delta^n$ shows that $\alpha(\iota_T,\blank)=\alpha'(\iota_T,\blank)\circ\mathfrak a$, i.e.~$\mathfrak a$ also defines a morphism $c(g,\alpha)\to c(g',\alpha')$ in $\Delta\downarrow K^\Delta$, which we take as the definition of $c(\mathfrak a)$. It is clear that $c$ is a functor.

\begin{claim*}
$c$ is an equivalence of categories $i\downarrow(A,S,m_\bullet,f)\simeq\Delta\downarrow K^\Delta$.

\begin{proof}
We will show that $c$ is fully faithful and surjective on objects; in fact, it is not hard to show that $c$ is also injective on objects, hence an isomorphism of categories, but we will not need this.

It is clear from the definition that $c$ is faithful. To see that it is full we let $(g\colon\Delta^n\to X_{[T]}^\varphi,\alpha), (g'\colon\Delta^{n'}\to X_{[T]}^\varphi,\alpha')$ be objects of the left hand side, and we let $\mathfrak a\colon\Delta^n\to\Delta^{n'}$ be a morphism $c(g,\alpha)\to c(g',\alpha')$, i.e.~
\begin{equation}\label{eq:morphism-right-slice}
\alpha(\iota_T,\blank)=\alpha'(\iota_T,\blank)\circ\mathfrak a.
\end{equation}
We want to show that $\mathfrak a$ also defines a morphism $(g,\alpha)\to (g',\alpha')$, i.e.~that the two triangles
\begin{equation*}
\begin{tikzcd}[column sep=tiny, cramped]
\Delta^{n}\arrow[rr, "\mathfrak a"]\arrow[rd, bend right=10pt, "g"'] && \Delta^{n'}\arrow[ld, "g'", bend left=10pt]\\
& X_{[T]}^\varphi\vphantom{\prod\limits_{a\in A}}
\end{tikzcd}
\qquad\hskip-.2em\text{and}\hskip.2em\quad
\begin{tikzcd}[column sep=-10pt,cramped]
E\Inj(T,\omega)\times\Delta^n\arrow[rr, "{E\Inj(T,\omega)\times\mathfrak a}"]\arrow[rd, "\alpha"', bend right=10pt] && E\Inj(T,\omega)\times\Delta^{n'}\arrow[ld, "\alpha'", bend left=10pt]\\
&E\Inj(S,\omega)\times\prod\limits_{a\in A}\Delta^{m_a}
\end{tikzcd}
\end{equation*}
commute. For the second one we observe that both paths through the diagram are $E\mathcal M$-equivariant, so that it suffices to show this after restricting to $\{\iota_T\}\times\Delta^n$, where this is precisely the identity $(\ref{eq:morphism-right-slice})$. On the other hand, the commutativity of the first diagram follows once we observe that $\tilde g=f\circ\alpha$ as $\alpha$ is a morphism $i(g)\to(A,S,m_\bullet,f)$, hence $g=f\circ\alpha(\iota_T,\blank)$ and analogously $g'=f\circ\alpha'(\iota_T,\blank)$. Thus, $\mathfrak a$ also defines a morphism $(g,\alpha)\to(g',\alpha')$ which is then obviously the desired preimage.

Finally, let us show that $c$ is surjective on objects. We let $\hat\alpha\colon\Delta^n\to K^\Delta$ be any map; then the composition
\begin{equation*}
\Delta^n\xrightarrow{\hat\alpha}K^\Delta=\left(E\Inj(S,T)\times\prod_{a\in A}\Delta^{m_a}\right)^\Delta\hookrightarrow E\Inj(S,\omega)\times\prod_{a\in A}\Delta^{m_a}
\end{equation*}
by construction lands in the subcomplex of those simplices that are supported on $T$, so it extends to a unique $E\mathcal M$-equivariant map $\alpha\colon E\Inj(T,\omega)\times\Delta^n\to E\Inj(S,\omega)\times\prod_{a\in A}\Delta^{m_a}$.

We claim that $(f\hat\alpha,\alpha)$ defines an element of $i\downarrow (A,S,m_\bullet,f)$, which amounts to saying that $f\hat\alpha\colon\Delta^n\to X$ factors through $X_{[T]}^\varphi$, that $\alpha$ is $H$-equivariant with respect to the preactions, and that the diagram
\begin{equation*}
\begin{tikzcd}[column sep=tiny]
E\Inj(T,\omega)\times\Delta^n\arrow[rr, "\alpha"]\arrow[rd, "\widetilde{(f\hat\alpha)}"', bend right=10pt] && E\Inj(S,\omega)\times\smash{\prod\limits_{a\in A}\Delta^{m_a}}\arrow[ld, bend left=10pt, "f"]\\
&X
\end{tikzcd}
\end{equation*}
commutes. For the first statement, we observe that $f\hat\alpha$ lands in $X_{[T]}$ as $\hat\alpha$ lands in $(E\Inj(S,\omega)\times\prod_{a\in A}\Delta^{m_a})_{[T]}$ and because $f$ is $E\mathcal M$-equivariant. To see that it also lands in the $\varphi$-fixed points, it suffices to observe that $f$ restricts to \begin{equation*}(E\Inj(S,\omega)\times\prod_{a\in A}\Delta^{m_a})^\Delta\to X^\varphi\end{equation*} by Lemma~\ref{lemma:fixed-point-translation}.

For the second statement it is again enough that $h.(\hat\alpha(\sigma))=\alpha(h.\iota_T,\sigma)$ for all simplices $\sigma$ of $\Delta^n$ and all $h\in H$. But indeed, $h.\iota_T=h^{-1}*\iota_T$ as before, so $\alpha(h.\iota_T,\sigma)=h^{-1}*(\hat\alpha(\sigma))$, and this in turn agrees with $h.(\hat\alpha(\sigma))$ because $\hat\alpha(\sigma)$ is $\Delta$-fixed.

Finally, for the third statement it suffices again to check this on $\{\iota_T\}\times\Delta^n$, where it holds tautologically. Altogether we have shown that $(f\hat\alpha,\alpha)$ defines an element of $i\downarrow(A,S,m_\bullet,f)$. It is then immediate from the definition that $c(f\hat\alpha,\alpha)=\hat\alpha$, which completes the proof of the claim.
\end{proof}
\end{claim*}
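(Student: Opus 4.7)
The plan is to verify that $c$ is fully faithful and surjective on objects, from which being an equivalence follows a fortiori. The single technical tool throughout is Proposition~\ref{prop:corep}: any map $\beta\colon\Delta^n\to X_{[T]}$ extends uniquely to an $E\mathcal M$-equivariant map $\tilde\beta\colon E\Inj(T,\omega)\times\Delta^n\to X$, and the same applies with target $E\Inj(S,\omega)\times\prod_{a\in A}\Delta^{m_a}$ for maps landing in the $T$-supported subcomplex. This is precisely what organizes the data of $i\downarrow(A,S,m_\bullet,f)$: each $\alpha$ is determined by its restriction to $\{\iota_T\}\times\Delta^n$, which is exactly the datum $c(g,\alpha)$.

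Faithfulness is immediate since by construction $c$ is the identity on the underlying map $\mathfrak a\colon\Delta^n\to\Delta^{n'}$. For fullness, given $\mathfrak a\colon c(g,\alpha)\to c(g',\alpha')$, i.e.~$\alpha(\iota_T,\blank)=\alpha'(\iota_T,\blank)\circ\mathfrak a$, I would verify that $\mathfrak a$ also defines a morphism $(g,\alpha)\to(g',\alpha')$. The triangle $g=g'\circ\mathfrak a$ follows by noting $g=f\circ\alpha(\iota_T,\blank)$ (and analogously for $g'$) because $\alpha$ is a morphism $i(g)\to(A,S,m_\bullet,f)$. The remaining condition $\alpha=\alpha'\circ(E\Inj(T,\omega)\times\mathfrak a)$ is an equality of $E\mathcal M$-equivariant maps $E\Inj(T,\omega)\times\Delta^n\to E\Inj(S,\omega)\times\prod_a\Delta^{m_a}$, so by the uniqueness half of Proposition~\ref{prop:corep} it suffices to verify it after restricting to $\{\iota_T\}\times\Delta^n$, where it reduces to the defining relation of $\mathfrak a$.

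For surjectivity on objects, given $\hat\alpha\colon\Delta^n\to K^\Delta$, the composition $\Delta^n\to K^\Delta\hookrightarrow E\Inj(S,\omega)\times\prod_a\Delta^{m_a}$ lands in simplices supported on $T$, so Proposition~\ref{prop:corep} produces a unique $E\mathcal M$-equivariant extension $\alpha\colon E\Inj(T,\omega)\times\Delta^n\to E\Inj(S,\omega)\times\prod_a\Delta^{m_a}$. Setting $g\mathrel{:=}f\circ\hat\alpha$, I would show $(g,\alpha)$ is an object of $i\downarrow(A,S,m_\bullet,f)$ with $c(g,\alpha)=\hat\alpha$. This requires: (i) $g$ lands in $X_{[T]}^\varphi$, which reduces via Remark~\ref{rk:fixed-point-translation} to the observation that $f$ sends $\Delta$-fixed simplices to $\varphi$-fixed simplices; (ii) $\alpha$ is equivariant for the preactions (so that it defines a $\varphi$-fixed morphism); and (iii) $\widetilde{(f\hat\alpha)}=f\circ\alpha$, which is again a uniqueness statement after restriction to $\{\iota_T\}\times\Delta^n$.

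The main obstacle I anticipate is step (ii). The strategy will be to check that $\alpha\circ(h.\blank)$ and $(h.\blank)\circ\alpha$ agree after restriction to $\{\iota_T\}\times\Delta^n$, since both composites are $E\mathcal M$-equivariant and hence uniquely determined by their restrictions. At $\iota_T$, the identity $h.\iota_T=\iota_T\circ h^{-1}|_T=h^{-1}*\iota_T$ converts the preaction on the source into the postaction, and then $E\mathcal M$-equivariance of $\alpha$ transports this to a postaction on the target; this postaction then agrees with the preaction exactly because $\hat\alpha(\sigma)$ lies in $K^\Delta$, i.e.~the pre- and postactions coincide on its image. Granting (i)--(iii), it is tautological that $c(g,\alpha)=\hat\alpha$, completing surjectivity and hence the claim.
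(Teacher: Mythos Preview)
Your proposal is correct and follows essentially the same approach as the paper's proof: faithfulness is immediate, fullness and the triangle $\widetilde{(f\hat\alpha)}=f\circ\alpha$ are reduced to restriction along $\{\iota_T\}\times\Delta^n$ via the corepresentability in Proposition~\ref{prop:corep}, and the $H$-equivariance of $\alpha$ is verified by the identity $h.\iota_T=h^{-1}*\iota_T$ together with $\Delta$-fixedness of $\hat\alpha(\sigma)$. The only cosmetic difference is that you name Proposition~\ref{prop:corep} explicitly where the paper invokes it tacitly.
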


We conclude that in particular $\nerve(i\downarrow(A,S,m_\bullet,f))\simeq\nerve(\Delta\downarrow K^\Delta)$. By Proposition~\ref{prop:last-vertex} we further see that $\nerve(\Delta\downarrow K^\Delta)$ is weakly equivalent to $K^\Delta$, so it only remains to prove that the latter is (weakly) contractible. This is a direct application of Lemma~\ref{lemma:fixed-point-lemma}: the restriction of the preaction on $E\Inj(S,\omega)\times\prod_{a\in A}\Delta^{m_a}$ to $E\Inj(S,\omega)\times\{{*}\}$ is by construction induced by the $H$-action on $S$ coming from the $H$-action on $\omega$. On the other hand, $S$ is a subset of $\supp(A,S,m_\bullet,f)\subset T$ by  Lemma~\ref{lemma:C-bullet-supp}, so the inclusion $S\hookrightarrow T$ is the desired $H$-equivariant injection. Altogether, this completes the proof of the theorem.
\end{proof}

So far we have only used the third part of Lemma~\ref{lemma:fixed-point-lemma}. However, we will now need its full strength for the proof of the following result:

\begin{prop}\label{prop:c-bullet-ws}
Let $X$ be a tame $E\mathcal M$-$G$-simplicial set. Then $\textup{C}_X$ is weakly saturated.
\end{prop}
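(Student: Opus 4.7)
The plan is to show that for each universal subgroup $H\subset\mathcal M$ and each homomorphism $\varphi\colon H\to G$ the inclusion $\iota\colon (\textup{C}_X)^\varphi\hookrightarrow\Fun^\varphi(EH,\textup{C}_X)$ is a weak homotopy equivalence on nerves. The strategy is to run a Quillen's Theorem A argument in the same spirit as the proof of Theorem~\ref{thm:nerve-vs-c-bullet-unstable}: I would filter both sides by support on finite $H$-subsets $T\subset\omega$ containing an $H$-fixed point, and for each such $T$ analyze the composite $j:=\iota\circ i\colon\Delta\downarrow X_{[T]}^\varphi\to\Fun^\varphi(EH,\textup{C}_X)_{[T]}$, where $i$ is the functor appearing in that proof.

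First I would unpack the structure of $\Fun^\varphi(EH,\textup{C}_X)$: a $\varphi$-fixed functor $\Phi\colon EH\to\textup{C}_X$ is determined by its value $x=(A,S,m_\bullet,f)$ at $1\in H$ together with a cocycle $\tau=(\tau_h)_{h\in H}$ of isomorphisms $\tau_h\colon x\to(h,\varphi(h)).x$ satisfying $\tau_{h'h}=(h',\varphi(h')).\tau_h\circ\tau_{h'}$. For $\Phi$ supported on a finite $H$-invariant set $T$ we must have $A\cup S\subset T$ by Lemma~\ref{lemma:C-bullet-supp}. Composing each $\tilde\tau_h$ with the canonical comparison $(h^*\times h^*)$ produces an $E\mathcal M$-equivariant self-isomorphism of $E\Inj(S,\omega)\times\prod_{a\in A}\Delta^{m_a}$, and the cocycle condition translates into these assembling to an $H$-action by $E\mathcal M$-equivariant automorphisms; by part~(2) of Lemma~\ref{lemma:fixed-point-lemma} this action is induced by a unique (possibly twisted) $H$-action on $S$, and an analogous statement holds for $A$.

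The composite $\nerve(\Delta\downarrow X_{[T]}^\varphi)\xrightarrow{\nerve(j)}\nerve(\Fun^\varphi(EH,\textup{C}_X)_{[T]})\to X_{[T]}^\varphi$, with the second arrow induced by the last vertex map $\epsilon$, should again reduce to the classical last vertex map on $\Delta\downarrow X_{[T]}^\varphi$ (because $j$ factors through $(\textup{C}_X)^\varphi_{[T]}$ via $i$, for which this was checked in the proof of Theorem~\ref{thm:nerve-vs-c-bullet-unstable}), hence a weak homotopy equivalence by Proposition~\ref{prop:last-vertex}. By Quillen's Theorem A it therefore suffices to show that every slice $j\downarrow(x,\tau)$ has weakly contractible nerve. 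Unpacking morphisms in $\Fun^\varphi(EH,\textup{C}_X)$ and invoking Proposition~\ref{prop:corep}, this slice should be identified with the category of simplices of $\bigl(E\Inj(S,T)\times\prod_{a\in A}\Delta^{m_a}\bigr)^\Delta$, where $\Delta\subset\mathcal M\times H$ denotes the diagonal subgroup associated to the twisted $H$-action on $S$ supplied above. Contractibility then follows from part~(3) of Lemma~\ref{lemma:fixed-point-lemma}. The main obstacle is verifying that lemma's hypothesis, namely the existence of an $H$-equivariant injection $S\hookrightarrow T$ for this twisted $H$-action: this should follow from universality of $H\subset\mathcal M$ together with the fact that we have the freedom to enlarge $T$ to any finite $H$-subset of $\omega$ (exhausting in a filtered colimit as in the proof of Theorem~\ref{thm:nerve-vs-c-bullet-unstable}), so that $T$ may be arranged to contain a copy of every finite $H$-set that can arise.
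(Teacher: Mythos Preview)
Your overall strategy---applying Quillen's Theorem~A to $j=\iota\circ i$ and identifying the slices with $\Delta\downarrow K^\Delta$ for $K=E\Inj(S,T)\times\prod_{a}\Delta^{m_a}$---is exactly the paper's approach. The gap is in how you filter the target.

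You propose to filter $\textup{C}_X^{\myh\varphi}$ by the condition $A\cup S\subset T$ (where $(A,S,m_\bullet,f)=\Phi(1)$). But as you yourself observe, for a genuine homotopy fixed point the $H$-action on $S$ extracted from the cocycle is \emph{not} the one inherited from $\omega$; so $S\subset T$ as sets says nothing about the existence of an $H$-equivariant injection $S\to T$ for the twisted action, and Lemma~\ref{lemma:fixed-point-lemma}(3) does not apply. Your suggested remedy of ``enlarging $T$'' cannot rescue this: the filtered-colimit reduction requires the map to be a weak equivalence at \emph{each fixed} level $T$, so all slices $j\downarrow\Phi$ for $\Phi$ at that level must be contractible simultaneously, and there is no finite $H$-subset $T$ for which the bare inclusion $S\subset T$ forces the twisted $S$ to embed equivariantly.

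The paper's fix is to use a different filtration on the target: $(\textup{C}_X^{\myh\varphi})_{\langle T\rangle}$ consists of those $\Phi$ for which the twisted $H$-set $S$ \emph{does} admit an $H$-equivariant injection into $T$ (with no condition on $A$ at all). This makes the hypothesis of Lemma~\ref{lemma:fixed-point-lemma}(3) hold by fiat; one then checks separately that this filtration is exhaustive (using that $\omega$ is a complete $H$-set universe) and that $c$ restricts to $(\textup{C}_X)_{[T]}^\varphi\to(\textup{C}_X^{\myh\varphi})_{\langle T\rangle}$ (because for honest fixed points the twisted action on $S$ \emph{is} the tautological one, so $S\hookrightarrow T$ is equivariant). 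A minor secondary point: your ``second arrow induced by $\epsilon$'' from $\nerve\bigl((\textup{C}_X)^{\myh\varphi}\bigr)_{[T]}$ to $X_{[T]}^\varphi$ is not well-defined as stated (evaluation at $1$ does not land in $\varphi$-fixed points), but it is also unnecessary---the paper simply invokes $2$-out-of-$3$ with the already-established weak equivalence $i$.
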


\begin{rk}
Since the above proposition will have non-trivial consequences later, let us give some intuition why one should expect this to be true: as the inclusion $\mathcal C^\varphi\hookrightarrow\mathcal C^{\myh\varphi}$ is fully faithful for any $E\mathcal M$-$G$-category $\mathcal C$, we can heuristically interpret the failure of $\mathcal C$ to be saturated either as $\mathcal C$ not having enough fixed points or as $\mathcal C$ having too many (categorical) homotopy fixed points. While our previous saturation construction solves this issue by potentially introducing additional fixed points, we will see below that the categories of the form $\textup{C}_X$ instead have very few homotopy fixed points. This is in turn to be expected as $\textup{C}_X$ contains only few non-trivial isomorphisms; more specifically, all automorphisms of objects in $\textup{C}_X$ come from automorphisms of the $E\mathcal M$-equivariant simplicial sets $E\Inj(S,\omega)\times\prod_{a\in A}\Delta^{m_a}$, over which we have good control by Lemma~\ref{lemma:fixed-point-lemma}. In particular, if $X=\nerve\mathcal C$, then the isomorphisms in $\textup{C}_{\nerve\mathcal C}$ are detached from the isomorphisms in $\mathcal C$.

(In fact, $\textup{C}_X$ has so few isomorphisms, that usually not every homotopy fixed point will be \emph{isomorphic} to an honest fixed point, i.e.~$\textup{C}_X^\varphi\hookrightarrow\textup{C}_X^{\myh\varphi}$ won't be an equivalence, see Remark~\ref{rk:CX-not-saturated}. What we will see below, however, is that for each homotopy fixed point the space of maps from honest fixed points to it is contractible, which is enough to imply that the inclusion is a weak homotopy equivalence.)
\end{rk}

In order to turn the above intuition into a rigorous proof, we need to understand the categories $\textup{C}_X^{\myh\varphi}$ better:

\begin{constr}
Let us first assume that $X={*}$, and let $\Phi\in\textup{C}_*^{\myh\varphi}$ arbitrary. As there are no non-trivial actions on $*$, this means that $\Phi\colon EH\to\textup{C}_*$ is $H$-equivariant with respect to the restriction of the $\mathcal M$-action on $\textup{C}_*$ to $H$.

Let us write $(A,S,m_\bullet,*)\mathrel{:=}\Phi(1)$ (where ${*}$ will always denote the unique map from an implicitly understood object to the fixed terminal simplicial set ${*}$). Then we have for each $h\in H$ an $E\mathcal M$-equivariant self-map $h.\blank$ of $E\Inj(S,\omega)\times\prod_{a\in A}\Delta^{m_a}$ given by the composition
\begin{equation*}
\Phi(1)\xrightarrow{h_\circ=(h^*\times h^*)^{-1}} h.\Phi(1)=\Phi(h)\xrightarrow{\Phi(1,h)}\Phi(1);
\end{equation*}
it is not hard to check that this defines an $H$-action, also cf.~\cite[Construction~7.4]{schwede-k-theory}. In analogy with Construction~\ref{constr:pre-post-actions} we call this the \emph{preaction} induced by $\Phi$.
\end{constr}

\begin{rk}\label{rk:hfp-preaction}
More generally, if $\Phi\in \textup{C}_X^{\myh\varphi}$, $(A,S,m_\bullet,f)\mathrel{:=}\Phi(1)$, then we get an \emph{$H$-preaction} on $E\Inj(S,\omega)\times\prod_{a\in A}\Delta^{m_a}$ by pushing $\Phi$ forward along the unique map $X\to {*}$. In explicit terms, this action is given by the composites
\begin{align*}
h.\blank\colon E\Inj(S,\omega)\times\prod_{a\in A}\Delta^{m_a}&\xrightarrow{(h^*\times h^*)^{-1}} E\Inj(h(S),\omega)\times\prod_{b\in h(A)} \Delta^{m_{h^{-1}(b)}}\\
&\xrightarrow{\Phi(1,h)} E\Inj(S,\omega)\times\prod_{a\in A}\Delta^{m_a}.
\end{align*}
Beware however, that this will typically not be an $H$-action on $\Phi(1)\in \textup{C}_X$; instead, the identities $f\circ\Phi(1,h)=(\varphi(h).\blank)\circ f\circ (h^*\times h^*)$ coming from the requirement that $\Phi(1,h)$ be a map $(h,\varphi(h)).\Phi(1)=\Phi(h)\to\Phi(1)$ translate to the condition that $f\colon E\Inj(S,\omega)\times\prod_{a\in A}\Delta^{m_a}\to\varphi^*X$ is $H$-equivariant with respect to the above $H$-action on the source, i.e.~$f$ is equivariant in the preactions. As before, this is equivalent for the $E\mathcal M$-equivariant map $f$ to being equivariant with respect to the diagonal actions.
\end{rk}

\begin{lemma}\label{lemma:homotopy-fixed-points}
Let $\Phi,\Psi\in\textup{C}_X^{\myh\varphi}$ arbitrary, and write $\Phi(1)\mathrel{=:}(A,S,m_\bullet,f)$, $\Psi(1)\mathrel{=:}(B,T,n_\bullet,g)$. Then the following are equivalent for a map $\alpha_1\colon\Phi(1)\to\Psi(1)$ in $\textup{C}_X$\/{\rm:}
\begin{enumerate}
    \item $\alpha_1$ extends to a map $\alpha\colon\Phi\to\Psi$ in $\textup{C}_X^{\myh\varphi}$.
    \item $\alpha_1\colon E\Inj(S,\omega)\times\prod_{a\in A}\Delta^{m_a}\to E\Inj(T,\omega)\times\prod_{b\in B}\Delta^{n_b}$ is $H$-equivariant with respect to the \emph{preactions} coming from $\Phi$ and $\Psi$.
    \item $\alpha_1$ is $H$-equivariant with respect to the diagonal of the preaction and the postaction (coming from the $E\mathcal M$-action).
\end{enumerate}
Moreover, we have a bijection
\begin{equation}\label{eq:hfp}
\begin{aligned}
    \Hom_{\textup{C}_X^{\myh\varphi}}(\Phi,\Psi)&\to\Hom_{\cat{$\bm H$-SSet}\downarrow X}(f,g)\\
    \alpha&\mapsto\alpha_1
\end{aligned}
\end{equation}
where we view $f\colon E\Inj(S,\omega)\times\prod_{a\in A}\Delta^{m_a}\to X$ and $g\colon E\Inj(T,\omega)\times\prod_{b\in B}\Delta^{n_b}\to X$ as maps in $\cat{$\bm H$-SSet}$ either by equipping all objects with the preactions or by equipping all objects with the diagonal $H$-action.
\begin{proof}
We begin by proving the equivalence of $(1)$--$(3)$.
As before, $(2)\Leftrightarrow(3)$ is clear. For the proof of $(1)\Rightarrow(2)$, we may assume that $X={*}$, and we let $\alpha\colon\Phi\to\Psi$ be any map in $\textup{C}_*^{\myh\varphi}$ extending $\alpha_1$. Then in the diagram
\begin{equation}\label{diag:h-phi-morphism-equivariant}
\begin{tikzcd}
\Phi(1)\arrow[d, "\alpha_1"']\arrow[r, "h_\circ"] & h.\Phi(1)\arrow[d, "h.\alpha_1"']\arrow[r, equal] & \Phi(h)\arrow[d, "\alpha_h"]\arrow[r, "{\Phi(1,h)}"] & \Phi(1)\arrow[d, "\alpha_1"]\\
\Psi(1)\arrow[r, "h_\circ"'] & h.\Psi(1)\arrow[r,equal] & \Psi(h)\arrow[r, "{\Psi(1,h)}"'] & \Psi(1)
\end{tikzcd}
\end{equation}
the left hand square commutes by naturality of $h_\circ$, the middle square commutes by equivariance of $\alpha$, and the right hand square commutes by naturality of $\alpha$. Thus, the total rectangle commutes, i.e.~$\alpha_1$ is equivariant in the preactions as claimed.

To prove $(2)\Rightarrow(1)$, we first consider the special case $X={*}$. We set $\alpha_h\mathrel{:=}h.\alpha_1=(h^*\times h^*)^{-1}\circ\alpha_1\circ(h^*\times h^*)$ and observe that the outer rectangle in $(\ref{diag:h-phi-morphism-equivariant})$ commutes by assumption on $\alpha_1$, and so do the left hand and middle square by the same arguments as above. As all horizontal morphisms are isomorphisms, we conclude that also the right hand square commutes, i.e.~$\alpha$ is compatible with the edges $(1,h)$ in $EH$. Since these generate $EH$ as a groupoid, we conclude that $\alpha$ is natural. As it is $H$-equivariant by construction, it is therefore a morphism $\Phi\to\Psi$ in $\textup{C}_*^{\myh\varphi}$.

In the case of a general $E\mathcal M$-$G$-simplicial set $X$, we apply the above to the pushforwards of $\Phi$ and $\Psi$ to $\textup{C}_*$; it then only remains to show that each $\alpha_h=h.\alpha_1$ is a map $\Phi(h)\to\Psi(h)$ in $\textup{C}_X$, i.e.~that \begin{equation*}\big((\varphi(h).\blank)\circ g\circ(h^*\times h^*)\big)\circ\alpha_h= (\varphi(h).\blank)\circ f\circ(h^*\times h^*).\end{equation*} This is however immediate from the explicit description of $\alpha_h$ and the fact that $\alpha_1$ is a map in $\textup{C}_X$.

With this at hand, we can easily show that $(\ref{eq:hfp})$ is bijective. Namely, we first observe that it is indeed well-defined by the implications $(1)\Rightarrow(2)$ and $(1)\Rightarrow(3)$, and that it is injective as any $\alpha\colon\Phi\to\Psi$ is determined by $\alpha_1$ by $H$-equivariance. Finally, surjectivity precisely amounts to the converse implications $(2)\Rightarrow(1)$ and $(3)\Rightarrow(1)$.
\end{proof}
\end{lemma}

\begin{proof}[Proof of Proposition~\ref{prop:c-bullet-ws}]
Let $H\subset\mathcal M$ be a universal subgroup and $\varphi\colon H\to G$ a group homomorphism. We have to show that the canonical map $c\colon\textup{C}_X^\varphi\to\textup{C}_X^{\myh\varphi}$ is a weak homotopy equivalence. To this end we consider for each (finite) $H$-subset $T\subset\omega$ the following full subcategory $(\textup{C}_X^{\myh\varphi})_{\langle T\rangle}\subset\textup{C}_X^{\myh\varphi}$: if $\Phi\in\textup{C}_X^{\myh\varphi}$, $(A,S,m_\bullet,f)\mathrel{:=}\Phi(1)$, then Remark~\ref{rk:hfp-preaction} describes a canonical $H$-action on $E\Inj(S,\omega)\times\prod_{a\in A}\Delta^{m_a}$, whose restriction to $E\Inj(S,\omega)\times\{*\}$ is induced by a unique $H$-action on $S$ according to Lemma~\ref{lemma:fixed-point-lemma}; we now declare that $\Phi$ should belong to $(\textup{C}_X^{\myh\varphi})_{\langle T\rangle}$ if and only if the $H$-set $S$ admits an $H$-equivariant injection into $T$.

If $(A,S,m_\bullet,f)$ is $\varphi$-fixed, then $A,S\subset\omega$ are $H$-subsets, and the above $H$-action on $E\Inj(S,\omega)\times\prod_{a\in A}\Delta^{m_a}$ is simply the preaction from Construction~\ref{constr:pre-post-actions}. In particular, its restriction to $E\Inj(S,\omega)\times\{{*}\}$ is induced by the tautological $H$-action on $S\subset\omega$. Thus, if $(A,S,m_\bullet,f)$ is supported on $T$, then the inclusion $S\hookrightarrow T$ is $H$-equivariant with respect to the above action, so that $c\colon\textup{C}_X^\varphi\to\textup{C}_X^{\myh\varphi}$ restricts to $(\textup{C}_X)_{[T]}^\varphi\to(\textup{C}_X^{\myh\varphi})_{\langle T\rangle}$.

Next, we observe that the $(\textup{C}_{X}^{\myh\varphi})_{\langle T\rangle}$ exhaust $\textup{C}_X^{\myh\varphi}$ when we let $T$ run through all finite $H$-subsets of $\omega$ with $T^H\not=\varnothing$: indeed, if $\Phi$ is arbitrary, $(A,S,m_\bullet,f)\mathrel{:=}\Phi(1)$, then we consider the finite $H$-set $S\amalg\{{*}\}$ where $H$ acts on $S$ as above and trivially on ${*}$. As $\omega$ is a complete $H$-set universe, there exists an $H$-equivariant injection $S\amalg\{*\}\rightarrowtail\omega$, whose image is then the desired $T$. Thus, the inclusions express $\textup{C}_X^{\myh\varphi}$ as a filtered colimit along the inclusions of the $(\textup{C}_X^{\myh\varphi})_{\langle T\rangle}$ over all finite $H$-subsets $T\subset\omega$ with $T^H\not=\varnothing$.

Altogether, we are reduced to showing that $(\textup{C}_X)_{[T]}^\varphi\to(\textup{C}_X^{\myh\varphi})_{\langle T\rangle}$ is a weak homotopy equivalence for all such $T$, for which it is enough by $2$-out-of-$3$ that the composition
\begin{equation*}
j\colon\Delta\downarrow X_{[T]}^\varphi\xrightarrow{i}(\textup{C}_X)_{[T]}^\varphi\xrightarrow{c}(\textup{C}_X^{\myh\varphi})_{\langle T\rangle}
\end{equation*}
is a weak homotopy equivalence, where $i$ is the weak homotopy equivalence  from the proof of Theorem~\ref{thm:nerve-vs-c-bullet-unstable}.

For this it is again enough by Quillen's Theorem~A that the slice $j\downarrow\Phi$ has weakly contractible nerve for each $\Phi\in(\textup{C}_X^{\myh\varphi})_{\langle T\rangle}$. To prove this, let $\Phi(1)\mathrel{=:}(A,S,m_\bullet,f)$ and define $K\mathrel{:=}E\Inj(S,T)\times\prod_{a\in A}\Delta^{m_a}$ with $H$-action via the $H$-action on $T$ and the restriction of the preaction on $E\Inj(S,\omega)\times\prod_{a\in A}\Delta^{m_a}$ induced by $\Phi$. Using Lemma~\ref{lemma:homotopy-fixed-points} one can show precisely as in the proof of Theorem~\ref{thm:nerve-vs-c-bullet-unstable} that we have an equivalence of categories $d\colon j\downarrow\Phi\to\Delta\downarrow K^\Delta$ sending $\alpha\colon j(g\colon\Delta^n\to X_{[T]}^\varphi)\to\Phi$ to $\alpha_1(\iota_T,\blank)$ and a map $(g,\alpha)\to(g',\alpha')$ given by $\mathfrak a\colon\Delta^n\to\Delta^{n'}$ to the map $d(g,\alpha)\to d(g',\alpha')$ given by the same $\mathfrak a$. In particular, we conclude together with Proposition~\ref{prop:last-vertex} that $\nerve(j\downarrow\Phi)\simeq\nerve(\Delta\downarrow K^\Delta)\simeq K^\Delta$. By definition of $(\textup{C}_X^{\myh\varphi})_{\langle T\rangle}$ there exists an $H$-equivariant injection $S\to T$ with respect to the $H$-action on $S$ induced by $\Phi$. Thus, Lemma~\ref{lemma:fixed-point-lemma} implies that $K^\Delta$ is contractible, which completes the proof of the proposition.
\end{proof}

\begin{rk}\label{rk:CX-not-saturated}
Let $X\not=\varnothing$ be a tame $E\mathcal M$-$G$-simplicial set. Then $\textup{C}_X$ is \emph{not} saturated:

Indeed, let $x\in X_0$ be arbitrary and write $S\mathrel{:=}\supp(x)$. Then there exists a (unique) $E\mathcal M$-equivariant map $\tilde x\colon E\Inj(S,\omega)\to X$ sending $\iota_S$ to $x$. We pick a finite set $T\subset\omega\smallsetminus S$ with at least two elements and we define $f$ as the composition
\begin{equation*}
E\Inj(S\cup T,\omega)\times\Delta^1\xrightarrow{\pr} E\Inj(S\cup T,\omega)\xrightarrow{\text{res}} E\Inj(S)\xrightarrow{\tilde x} X.
\end{equation*}
We moreover choose a universal subgroup $H$ of $\mathcal M$ together with an isomorphism $\psi\colon H\to\Sigma_T$, and we write $\varphi\colon H\to 1$ for the unique homomorphism. For any $h\in H$ we define its action on $E\Inj(S\cup T,\omega)\times\Delta^1$ as the unique self-map $\tau_h$ sending $(u,0)$ to $(u,0)$ and $(u,1)$ to $(u\circ(S\cup\psi(h^{-1})),1)$ for each $u\in\mathcal M$. We omit the easy verification that this is a well-defined $H$-action.

Let now $a\in\omega$ be any $H$-fixed point. It is then not hard to check that $\Phi\colon EH\to\textup{C}_X$ with $\Phi(h)=(\{a\},h(S\cup T),1,f\circ(h^*\times h^*))$ and structure maps $\Phi(h_2,h_1)=(h_2)_\circ \tau_{h_2^{-1}h_1} (h_1)_\circ^{-1}$ defines an element of $\textup{C}_X^{\myh\varphi}$. The induced $H$-action on $E\Inj(S\cup T,\omega)\times\Delta^1$ is then simply the one given above. By the description of the morphisms in $\textup{C}_X^{\myh\varphi}$ given in Lemma~\ref{lemma:homotopy-fixed-points} it is then enough to show that this is not $H$-equivariantly isomorphic to a simplicial set of the form $E\Inj(U,\omega)\times\prod_{b\in B}\Delta^{n_b}$ for some finite $H$-subsets $B,U\subset\omega$, with $H$ acting via its tautological actions on $B$ and $U$.

Indeed, if there were such an isomorphism $\alpha$, then it would restrict to $H$-equivariant isomorphisms $E\Inj(U,\omega)\times\prod_{b\in B}\Delta^{\{0\}}\cong E\Inj(S\cup T,\omega)\times\Delta^{\{0\}}$ and $E\Inj(U,\omega)\times\prod_{b\in B}\Delta^{\{n_b\}}\cong E\Inj(S\cup T,\omega)\times\Delta^{\{1\}}$. In particular, the two $H$-simplicial sets $E\Inj(S\cup T,\omega)\times\Delta^{\{0\}}$ and $E\Inj(S\cup T,\omega)\times\Delta^{\{1\}}$ would be $H$-equivariantly isomorphic. But this is obviously not the case as precisely one of them has trivial $H$-action, yielding the desired contradiction.
\end{rk}

With Proposition~\ref{prop:c-bullet-ws} at hand we can now prove:

\begin{cor}\label{cor:sat-ge}
All the inclusions in
\begin{equation*}
\cat{$\bmEM$-$\bm G$-Cat}^{\tau,s}\hookrightarrow\cat{$\bmEM$-$\bm G$-Cat}^{\tau,ws}\hookrightarrow\cat{$\bmEM$-$\bm G$-Cat}^\tau
\end{equation*}
are homotopy equivalences with respect to the $G$-global weak equivalences.
\end{cor}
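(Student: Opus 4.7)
My plan is to split the statement along the two inclusions: the first one, $\cat{$\bm{E\mathcal M}$-$\bm G$-Cat}^{\tau,s}\hookrightarrow\cat{$\bm{E\mathcal M}$-$\bm G$-Cat}^{\tau,ws}$, is literally the content of Corollary~\ref{cor:ws-vs-s} (with the saturation construction serving as homotopy inverse), so there is nothing new to do there. Thus the real work lies in exhibiting a homotopy inverse to the inclusion $i\colon\cat{$\bm{E\mathcal M}$-$\bm G$-Cat}^{\tau,ws}\hookrightarrow\cat{$\bm{E\mathcal M}$-$\bm G$-Cat}^\tau$, and the natural candidate suggested by the preceding material is $F\mathrel{:=}\textup{C}_\bullet\circ\nerve$.

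To make this work I would check four things in order. First, that $F$ indeed takes values in $\cat{$\bm{E\mathcal M}$-$\bm G$-Cat}^{\tau,ws}$: this is exactly Proposition~\ref{prop:c-bullet-ws} applied to $X=\nerve(\mathcal C)$ (using that $\nerve\colon\cat{$\bm{E\mathcal M}$-$\bm G$-Cat}^\tau\to\cat{$\bm{E\mathcal M}$-$\bm G$-SSet}^\tau$ lands in tame $E\mathcal M$-$G$-simplicial sets). Second, that $F$ is homotopical with respect to the $G$-global weak equivalences; but $\nerve$ creates $G$-global weak equivalences by the remark at the end of Section~2, and $\textup{C}_\bullet$ is homotopical by Theorem~\ref{thm:nerve-vs-c-bullet-unstable}, so this is immediate.

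Third, I would observe that the natural transformation $\tilde\epsilon\colon\textup{C}_\bullet\circ\nerve\Rightarrow\id$ from Remark~\ref{rk:tilde-epsilon} is a levelwise $G$-global weak equivalence (again by Theorem~\ref{thm:nerve-vs-c-bullet-unstable}). Viewed as a natural transformation $i\circ F\Rightarrow\id$ on $\cat{$\bm{E\mathcal M}$-$\bm G$-Cat}^\tau$, this shows $iF$ is connected to the identity by a one-step zig-zag of natural levelwise weak equivalences. Fourth, by restricting $\tilde\epsilon$ to $\cat{$\bm{E\mathcal M}$-$\bm G$-Cat}^{\tau,ws}\subset\cat{$\bm{E\mathcal M}$-$\bm G$-Cat}^\tau$, the same natural transformation becomes one between $Fi$ and the identity on the weakly saturated subcategory, giving the other half of the homotopy equivalence. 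Combining the two inclusions then yields the full statement.

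I do not anticipate a genuinely hard step here: all the nontrivial work has been done in Theorem~\ref{thm:nerve-vs-c-bullet-unstable} and Proposition~\ref{prop:c-bullet-ws}. The only subtlety to watch out for is making sure the source and target of the homotopy inverse are in the correct subcategories -- in particular that $F$ genuinely factors through weakly saturated $E\mathcal M$-$G$-categories, which is precisely the payoff of having established weak saturatedness of $\textup{C}_X$ in Proposition~\ref{prop:c-bullet-ws}. Once that is in hand, the corollary falls out by assembling the pieces.
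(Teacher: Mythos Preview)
Your proposal is correct and follows essentially the same approach as the paper: reduce to the right-hand inclusion via Corollary~\ref{cor:ws-vs-s}, then use $\textup{C}_\bullet\circ\nerve$ as homotopy inverse, invoking Proposition~\ref{prop:c-bullet-ws} to ensure it lands in the weakly saturated subcategory and Theorem~\ref{thm:nerve-vs-c-bullet-unstable} for the natural $G$-global weak equivalence $\tilde\epsilon$. The paper is slightly terser (it does not separately check that $F$ is homotopical, since this was already established in the proof of Theorem~\ref{thm:nerve-vs-c-bullet-unstable}), but the content is the same.
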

\begin{proof}
We already know this for the left hand inclusion by Corollary~\ref{cor:ws-vs-s}, so it suffices to consider the right hand inclusion. We claim that $\textup{C}_\bullet\circ\nerve$ defines a homotopy inverse. Indeed, this lands in $\cat{$\bmEM$-$\bm G$-Cat}^{\tau,ws}$ by Proposition~\ref{prop:c-bullet-ws}; moreover, the natural map $\tilde\epsilon\colon\textup{C}_{\nerve\mathcal C}\to\mathcal C$ is a $G$-global weak equivalence for every tame $E\mathcal M$-$G$-category $\mathcal C$ by Theorem~\ref{thm:nerve-vs-c-bullet-unstable}. As $\cat{$\bmEM$-$\bm G$-Cat}^{\tau,ws}$ is a full subcategory of $\cat{$\bmEM$-$\bm G$-Cat}^\tau$, this immediately implies the claim.
\end{proof}

\section{Lifting the parsummable structure}\label{sec:comp-parsum}
In this section we will prove the parsummable analogues of Theorem~\ref{thm:nerve-vs-c-bullet-unstable} and Corollary~\ref{cor:sat-ge} by lifting $\textup{C}_\bullet$ to a functor $\cat{$\bm G$-ParSumSSet}\to\cat{$\bm G$-ParSumCat}$ and showing that the natural transformations $\epsilon,\tilde\epsilon$ are compatible with the resulting structure.

\begin{constr}
Let $A',S'$ be finite sets and let $A\subset A', S\subset S'$. Let moreover $(m_a)_{a\in A'}$ be a family of non-negative integers. Then we define
\begin{equation*}
\rho^{A',S'}_{A,S}\colon E\Inj(S',\omega)\times\prod_{a\in A'}\Delta^{m_a}\to E\Inj(S,\omega)\times\prod_{a\in A}\Delta^{m_a}
\end{equation*}
as the product of the restriction $E\Inj(S',\omega)\to E\Inj(S,\omega)$ and the projection $\prod_{a\in A'}\Delta^{m_a}\to\prod_{a\in A}\Delta^{m_a}$.
\end{constr}

\begin{lemma}\label{lemma:rho-basic-properties}
Throughout, let $m_\bullet$ be an appropriately indexed family of non-negative integers.
\begin{enumerate}
\item $\rho^{A',S'}_{A,S}$ is $E\mathcal M$-equivariant for all $A\subset A'$, $S\subset S'$.\label{item:rbp-em}
\item If $A\subset A'\subset A''$, $S\subset S'\subset S''$, then $\rho^{A',S'}_{A,S}\rho^{A'',S''}_{A',S'}=\rho^{A'',S''}_{A,S}$.\label{item:rbp-associative}
\item $\rho_{A,S}^{A,S}=\id$ for all $A,S$\label{item:rbp-unital}
\item If $A\cap B=\varnothing=S\cap T$, then $(\rho^{A\cup B,S\cup T}_{A,S}, \rho^{A\cup B,S\cup T}_{B,T})$ restricts to an isomorphism\label{item:rbp-isomorphism}
\begin{equation*}
E\Inj(S\cup T,\omega)\times\prod_{i\in A\cup B}\Delta^{m_i}
\cong
\left(E\Inj(S,\omega)\times\prod_{a\in A}\Delta^{m_a}\right)\boxtimes
\left(E\Inj(T,\omega)\times\prod_{b\in B}\Delta^{m_b}\right).
\end{equation*}
\item If $A',S'\subset\omega$, and $u\in\mathcal M$, then $\rho_{A,S}^{A',S'}\circ(u^*\times u^*)=(u^*\times u^*)\circ\rho_{u(A),u(S)}^{u(A'),u(S')}$ for all $A\subset A', S\subset S'$.\label{item:rbp-equivariant}
\end{enumerate}
\end{lemma}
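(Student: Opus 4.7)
My plan is to reduce all five parts to direct calculations, with the only real content being part (4). Parts (1)--(3) are immediate from the fact that $\rho^{A',S'}_{A,S}$ factors as the product of the restriction map on $E\Inj(\blank,\omega)$ and the projection on $\prod\Delta^{m_\bullet}$: the restriction is $E\mathcal M$-equivariant because the $E\mathcal M$-action is by postcomposition, which commutes with restricting the domain, while the projection is trivially equivariant since $E\mathcal M$ acts trivially on the simplex factors; the composition law $\rho^{A',S'}_{A,S}\rho^{A'',S''}_{A',S'}=\rho^{A'',S''}_{A,S}$ and the unit law $\rho^{A,S}_{A,S}=\id$ are then inherited from the corresponding statements for nested restrictions and projections. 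For part (5), it suffices to check the equality on vertices (both sides are maps of nerves of chaotic categories, hence determined by their effect on vertices), where both composites send a vertex $(j,(x_b)_{b\in u(A')})$ of $E\Inj(u(S'),\omega)\times\prod_{b\in u(A')}\Delta^{m_{u^{-1}(b)}}$ to $(j\circ u|_S,(x_{u(a)})_{a\in A})$ after unwinding that $u^*$ is precomposition with $u$ on the $E\Inj$ factor and satisfies $\pr_a\circ u^*=\pr_{u(a)}$ on the simplex factor.

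For (4), the substantive part, I would split the pair of maps into its two components. On the $E\Inj$ factor, an injection $S\cup T\to\omega$ restricts to a pair $(i,j)\in\Inj(S,\omega)\times\Inj(T,\omega)$ whose images are automatically disjoint (since the original map is injective and $S\cap T=\varnothing$), and conversely any such pair assembles uniquely into an injection on $S\cup T$. By Example~\ref{ex:EInj-sset-supp}, the image-disjointness condition $i(S)\cap j(T)=\varnothing$ is exactly disjointness of $\supp_0(i)$ and $\supp_0(j)$, so on vertices the restriction map identifies $E\Inj(S\cup T,\omega)$ with $E\Inj(S,\omega)\boxtimes E\Inj(T,\omega)$; since both sides are nerves of chaotic categories, this bijection on vertices extends to an isomorphism of $E\mathcal M$-simplicial sets. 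On the simplex factor, the disjointness $A\cap B=\varnothing$ yields a tautological isomorphism $\prod_{i\in A\cup B}\Delta^{m_i}\cong\prod_{a\in A}\Delta^{m_a}\times\prod_{b\in B}\Delta^{m_b}$; since all simplex factors carry trivial $E\mathcal M$-action, every pair is summable and this product already coincides with the box product. Combining the two, the additivity $\supp_k(x,y)=\supp_k(x)\cup\supp_k(y)$ (established in the proof that $\boxtimes$ is a subfunctor of the cartesian product on $\cat{$\bm{E\mathcal M}$-SSet}^\tau$) shows that supports on the target factors are determined by their $E\Inj$ components, so the two box-product conditions line up and the combined map is the asserted isomorphism.

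The main obstacle is the bookkeeping in (4): one must ensure that the combined map actually lands in the box product and surjects onto it, which rests on the support computation of Example~\ref{ex:EInj-sset-supp} together with support additivity in products, and that the simplex factors contribute no further constraint because they are supported on $\varnothing$ at every level.
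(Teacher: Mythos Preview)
Your proof is correct in substance and close to the paper's, but there is one terminological slip worth fixing. In part~(5) you justify checking equality on vertices by saying ``both sides are maps of nerves of chaotic categories.'' This is not literally true: the simplex factors $\prod\Delta^{m_a}$ are nerves of posets, not chaotic categories, so the full source and target are not chaotic unless all $m_a=0$. The argument still goes through, because what you actually need is that the \emph{target} category $E\Inj(S,\omega)\times\prod_{a\in A}[m_a]$ has at most one morphism between any two objects (it is a product of a chaotic category with a poset), and hence functors into it are determined on objects. With that correction your vertex check is valid and your computation is right.

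The paper handles (5) by instead splitting $\rho$ into its two factors and verifying the two separate commutative squares (one for the $E\Inj$ factor, one for the projection factor); your vertex approach is a legitimate alternative once the justification above is stated correctly. For (4), the paper works directly at the level of $n$-simplices: it computes $\supp_k$ of each component of the image to see the map lands in the box product, and then writes down an explicit inverse $(u_0\cup v_0,\dots;\sigma\cup\tau)$. Your approach---first identifying $E\Inj(S\cup T,\omega)\cong E\Inj(S,\omega)\boxtimes E\Inj(T,\omega)$ on the chaotic factor, then using that the simplex factors have empty support so contribute nothing to the box-product constraint---is a cleaner conceptual decomposition and relies on the same support computations (Example~\ref{ex:EInj-sset-supp} and support additivity in products). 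Both routes are short; yours makes the role of the two factors more transparent, while the paper's explicit inverse avoids needing to invoke that $\nerve$ takes categorical box products to simplicial ones.
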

\begin{proof}
The first three statements are obvious. For the fourth statement let us first show that $(\rho^{A\cup B,S\cup T}_{A,S}, \rho^{A\cup B,S\cup T}_{B,T})$ lands in the box product. Indeed, it sends an $n$-simplex $(u_0,\dots,u_n,(\sigma_{i})_{i\in A\cup B})$ to $\big((u_0|_S,\dots,u_n|_S;\sigma|_A),(u_0|_T,\dots,u_n|_T;\sigma|_B)$. Obviously, $\supp_k(u_0|_S,\dots,u_n|_S;\sigma|_A)=u_k(S)$ and $\supp_k(u_0|_T,\dots,u_n|_T;\sigma|_B)=u_k(T)$; as $u_k$ is injective and $S\cap T=\varnothing$, these are disjoint, i.e.~this is indeed an $n$-simplex of the box product.

Conversely, given any $n$-simplex $\big((u_0,\dots,u_n;\sigma), (v_0,\dots,v_n;\tau)\big)$ of the box product, $(u_0\cup v_0,\dots,u_n\cup v_n;\sigma\cup\tau)$ with
\begin{equation*}
(u_k\cup v_k)(x)=\begin{cases}
\,u_k(x) & \text{if }x\in S\\
\,v_k(x) & \text{if }x\in T
\end{cases}
\qquad\text{and}\qquad
(\sigma\cup\tau)_i=\begin{cases}
\,\sigma_i & \text{if }i\in A\\
\,\tau_i & \text{if }i\in B
\end{cases}
\end{equation*}
is well-defined because $S\cap T=\varnothing$ and $A\cap B=\varnothing$, respectively. Moreover, this is an $n$-simplex of the left hand side: $u_k\cup v_k$ is injective, since its restrictions to $S$ and $T$ are, and since $(u_k\cup v_k)(S)=u_k(S)$ is disjoint from $(u_k\cup v_k)(T)=v_k(T)$ by the same support calculation as above. It is then trivial to check that this is inverse to the restriction of $(\rho^{A\cup B,S\cup T}_{A,S}, \rho^{A\cup B,S\cup T}_{B,T})$, which completes the proof of the fourth statement.

For the final statement, it suffices to observe that the diagram
\begin{equation*}
\begin{tikzcd}
E\Inj(u(S'),\omega)\arrow[r, "u^*"]\arrow[d, "\textup{res}"'] & E\Inj(S',\omega)\arrow[d, "\textup{res}"]\\
E\Inj(u(S),\omega)\arrow[r, "u^*"'] & E\Inj(S,\omega)
\end{tikzcd}
\end{equation*}
commutes as both paths through it are given by restricting along $S\to u(S'),s\mapsto u(s)$, and that
\begin{equation*}
\begin{tikzcd}
\prod_{b\in u(A')}\Delta^{m_{u^{-1}(b)}}\arrow[r, "u^*"]\arrow[d, "\pr"'] & \prod_{a\in A'}\Delta^{m_a}\arrow[d,"\pr"]\\
\prod_{b\in u(A)}\Delta^{m_{u^{-1}(b)}}\arrow[r, "u^*"'] & \prod_{a\in A}\Delta^{m_a}
\end{tikzcd}
\end{equation*}
commutes because after postcomposition with $\pr_a$, $a\in A$, both paths agree with the projection $\prod_{b\in u(A')}\Delta^{m_{u^{-1}(b)}}\to \Delta^{m_a}$ onto the $u(a)$-th factor.
\end{proof}

\begin{constr}
Let $X,Y\in\cat{$\bmEM$-SSet}^\tau$. We define
\begin{equation*}
\nabla\colon\textup{C}_X\boxtimes\textup{C}_Y\to\textup{C}_{X\boxtimes Y}
\end{equation*}
as follows: an object $\big((A,S,m_\bullet,f),(B,T,n_\bullet,g)\big)$ is sent to $(A\cup B,S\cup T,(m\cup n)_\bullet,f\cup g)$ where
\begin{equation*}
(m\cup n)_i=\begin{cases}
\,m_i & \text{if }i\in A\\
\,n_i & \text{if }i\in B
\end{cases}
\end{equation*}
and $f\cup g=(f\boxtimes g)\circ(\rho^{A\cup B,S\cup T}_{A,S},\rho^{A\cup B,S\cup T}_{B,T})=(f\circ\rho^{A\cup B,S\cup T}_{A,S},g\circ\rho^{A\cup B,S\cup T}_{B,T})$. Moreover, a morphism $\big((A,S,m_\bullet,f),(B,T,n_\bullet,g)\big)\to\big((A',S',m'_\bullet,f'),(B',T',n'_\bullet,g')\big)$ given by a pair
\begin{align*}
\alpha\colon E\Inj(S,\omega)\times\prod_{a\in A}\Delta^{m_a}&\to E\Inj(S',\omega)\times\prod_{a'\in A'}\Delta^{m'_{a'}}\\
\beta\colon E\Inj(T,\omega)\times\prod_{b\in B}\Delta^{n_b}&\to E\Inj(T',\omega)\times\prod_{b'\in B'}\Delta^{n'_{b'}}
\end{align*}
is sent to the morphism $(A\cup B,S\cup T, (m\cup n)_\bullet,f\cup g)\to (A'\cup B',S'\cup T', (m'\cup n')_\bullet,f'\cup g')$ given by the composition
\begin{align*}
&(\rho^{A'\cup B',S'\cup T'}_{A',S'},\rho^{A'\cup B',S'\cup T'}_{B',T'})^{-1}\circ (\alpha\boxtimes\beta) \circ (\rho^{A\cup B,S\cup T}_{A,S},\rho^{A\cup B,S\cup T}_{B,T})\\
&\qquad=
(\rho^{A'\cup B',S'\cup T'}_{A',S'},\rho^{A'\cup B',S'\cup T'}_{B',T'})^{-1}\circ (\alpha\rho^{A\cup B,S\cup T}_{A,S},\beta\rho^{A\cup B,S\cup T}_{B,T}).
\end{align*}
Finally, we define $\iota\colon {*}\to\textup{C}_*$ as the functor sending the unique object of the left hand side to $(\varnothing,\varnothing,\varnothing,0)$, where $0$ denotes the unique map $E\Inj(\varnothing,\omega)\times\prod_\varnothing\to {*}$.
\end{constr}

\begin{prop}
The above functors are well-defined and $E\mathcal M$-equivariant.
\end{prop}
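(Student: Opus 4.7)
The claim for $\iota$ is essentially formal: the quadruple $(\varnothing,\varnothing,\varnothing,0)$ is indeed an object of $\textup{C}_*$, it is fixed by the $\mathcal M$-action, and its structure isomorphism $u_\circ^{(\varnothing,\varnothing,\varnothing,0)}$ is the identity of the terminal $E\mathcal M$-simplicial set, which matches the image under $\iota$ of the identity. All the content therefore concerns $\nabla$. Let us fix a morphism $((A,S,m_\bullet,f),(B,T,n_\bullet,g))$ of $\textup{C}_X\boxtimes\textup{C}_Y$; by Lemma~\ref{lemma:C-bullet-supp} the disjoint-support condition translates to $(A\cup S)\cap(B\cup T)=\varnothing$, so in particular $A\cap B=\varnothing$ and $S\cap T=\varnothing$, which makes the combinatorial formula for $(m\cup n)_\bullet$ unambiguous.

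Under this hypothesis, Lemma~\ref{lemma:rho-basic-properties}-(\ref{item:rbp-isomorphism}) identifies $E\Inj(S\cup T,\omega)\times\prod_{i\in A\cup B}\Delta^{(m\cup n)_i}$ with the box product of the two constituent simplicial sets via $(\rho^{A\cup B,S\cup T}_{A,S},\rho^{A\cup B,S\cup T}_{B,T})$, so composing this isomorphism with $f\boxtimes g$ produces an $E\mathcal M$-equivariant map $f\cup g$ landing in $X\boxtimes Y$; this shows that $\nabla$ is well-defined on objects. For a morphism $(\alpha,\beta)$, the same identification lets us interpret the inverse $(\rho^{A'\cup B',S'\cup T'}_{A',S'},\rho^{A'\cup B',S'\cup T'}_{B',T'})^{-1}$, so $\nabla(\alpha,\beta)$ is a well-defined $E\mathcal M$-equivariant simplicial map, and the relations $f'\alpha=f$, $g'\beta=g$ give $(f'\cup g')\circ\nabla(\alpha,\beta)=f\cup g$ by a direct diagram chase. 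Functoriality is immediate: for the identity the two $\rho$-insertions collapse by Lemma~\ref{lemma:rho-basic-properties}-(\ref{item:rbp-unital}), and for a composable pair $(\alpha_2,\beta_2)\circ(\alpha_1,\beta_1)$ the middle $(\rho',\rho')\circ(\rho',\rho')^{-1}$ cancels, yielding $\nabla(\alpha_2\alpha_1,\beta_2\beta_1)$.

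For $E\mathcal M$-equivariance I plan to invoke the relative reconstruction criterion for $E\mathcal M$-equivariant functors (the unnamed lemma stated shortly after equation~(\ref{eq:associativity-u-circ})), reducing the check to $\mathcal M$-equivariance of $\nabla$ on objects and the identity $\nabla(u_\circ^{(A,S,m,f)},u_\circ^{(B,T,n,g)})=u_\circ^{\nabla((A,S,m,f),(B,T,n,g))}$. For the object identity the first three components of the two sides agree trivially, while the fourth components agree by Lemma~\ref{lemma:rho-basic-properties}-(\ref{item:rbp-equivariant}), which shifts $(u^*\times u^*)$ past $\rho$. For the identity of structure isomorphisms I will use that each relevant $(u^*\times u^*)$ between the upper spaces is in fact an isomorphism, since $u$ restricts to bijections $S\to u(S)$, $A\to u(A)$, $T\to u(T)$, $B\to u(B)$; Lemma~\ref{lemma:rho-basic-properties}-(\ref{item:rbp-equivariant}) can then be rewritten as $(u^*\times u^*)^{-1}\rho=\rho'(u^*\times u^*)^{-1}$, and applying this componentwise collapses the formula for $\nabla(u_\circ,u_\circ)$ to the single $(u^*\times u^*)^{-1}$ defining $u_\circ^{\nabla(-)}$. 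The only real obstacle is the bookkeeping of the various $\rho$'s and $u^*$'s with different sources and targets; everything reduces to a clean application of the five parts of Lemma~\ref{lemma:rho-basic-properties}.
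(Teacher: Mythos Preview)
Your proof is correct and follows essentially the same route as the paper: both reduce well-definedness and $E\mathcal M$-equivariance to the five parts of Lemma~\ref{lemma:rho-basic-properties}, checking $\mathcal M$-equivariance on objects via part~(\ref{item:rbp-equivariant}) and then the identity $\nabla(u_\circ,u_\circ)=u_\circ$ by the same $\rho$--$u^*$ compatibility. (Minor slip: in your second paragraph you write ``fix a morphism $((A,S,m_\bullet,f),(B,T,n_\bullet,g))$'' where you mean \emph{object}.)
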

\begin{proof}
Let us first show that $\nabla$ is well-defined. For this we observe that $A\cup S=\supp(A,S,m_\bullet,f)$ and $B\cup T=\supp(B,T,n_\bullet,g)$ by Lemma~\ref{lemma:C-bullet-supp}, so $(A\cup S)\cap (B\cup T)=\varnothing$ by definition of the box product, and in particular $A\cap B=\varnothing$. Thus, $m\cup n$ is well-defined. Moreover, $\rho^{A\cup B,S\cup T}_{A,S}$ and $\rho^{A\cup B,S\cup T}_{B,T}$ are $E\mathcal M$-equivariant by Lemma~\ref{lemma:rho-basic-properties}-$(\ref{item:rbp-em})$, so $f\cup g=(f\boxtimes g)\circ(\rho^{A\cup B,S\cup T}_{A,S},\rho^{A\cup B,S\cup T}_{B,T})$ is again $E\mathcal M$-equivariant. This shows that $\nabla$ is well-defined on objects. To prove that it is well-defined on morphisms, we observe that as above $A'\cap B'=\varnothing,S'\cap T'=\varnothing$, so that $(\rho^{A'\cup B',S'\cup T'}_{A',S'},\rho^{A'\cup B',S'\cup T'}_{B',T'})$ is indeed invertible by Lemma~\ref{lemma:rho-basic-properties}-$(\ref{item:rbp-isomorphism})$. By another application of Lemma~\ref{lemma:rho-basic-properties}-$(\ref{item:rbp-em})$ we then see that $\nabla(\alpha,\beta)$ is $E\mathcal M$-equivariant. Finally,
\begin{align*}
(f'\cup g')\nabla(\alpha,\beta)&=
(f'\boxtimes g')(\alpha\boxtimes\beta)(\rho_{A,S}^{A\cup B,S\cup T},\rho_{B,T}^{A\cup B,S\cup T})\\
&=\big((f'\alpha)\boxtimes(g'\beta)\big)(\rho_{A,S}^{A\cup B,S\cup T},\rho_{B,T}^{A\cup B,S\cup T})\\
&=(f\boxtimes g)(\rho_{A,S}^{A\cup B,S\cup T},\rho_{B,T}^{A\cup B,S\cup T})=f\cup g,
\end{align*}
i.e.~$\nabla(\alpha,\beta)$ is indeed a morphism in $\textup{C}_{X\boxtimes Y}$ from $\nabla((A,S,m_\bullet,f),(B,T,n_\bullet,g))$ to $\nabla((A',S',m'_\bullet,f'),(B',T',n'_\bullet,g'))$.

It is trivial to check that $\nabla$ is a functor. Let us now prove that it is $E\mathcal M$-equivariant, for which we let $u\in\mathcal M$ be arbitrary. Then
\begin{align*}
&\nabla(u.(A,S,m_\bullet,f),u.(B,T,n_\bullet,g))\\
&\quad=
\nabla\big((u(A),u(S),m_{u^{-1}(\bullet)},f\circ(u^*\times u^*)),
(u(B),u(T),n_{u^{-1}(\bullet)},g\circ(u^*\times u^*))\big)\\
&\quad=\big(u(A\cup B),u(S\cup T),(m_{u^{-1}(\bullet)}\cup n_{u^{-1}(\bullet)})_\bullet, (f\circ(u^*\times u^*))\cup(g\circ(u^*\times u^*))\big).
\end{align*}
It is clear that $(m_{u^{-1}(\bullet)}\cup n_{u^{-1}(\bullet)})_\bullet=(m\cup n)_{u^{-1}(\bullet)}$, so for $\mathcal M$-equivariance on objects it only remains to show that $(f\circ(u^*\times u^*))\cup(g\circ(u^*\times u^*))=(f\cup g)\circ(u^*\times u^*)$. But indeed,
Lemma~\ref{lemma:rho-basic-properties}-$(\ref{item:rbp-equivariant})$ implies that
\begin{equation}\label{eq:rho-u-times-u}
\begin{aligned}
&(\rho^{A\cup B,S\cup T}_{A,S},\rho^{A\cup B,S\cup T}_{B,T})\circ(u^*\times u^*)\\
&\quad=(\rho^{A\cup B,S\cup T}_{A,S}\circ(u^*\times u^*),\rho^{A\cup B,S\cup T}_{B,T}\circ(u^*\times u^*))\\
&\quad=((u^*\times u^*)\circ\rho^{u(A\cup B),u(S\cup T)}_{u(A),u(S)},(u^*\times u^*)\circ\rho^{u(A\cup B),u(S\cup T)}_{u(B),u(T)}),
\end{aligned}
\end{equation}
hence
\begin{align*}
&(f\circ(u^*\times u^*))\cup(g\circ(u^*\times u^*))\\
&\quad=
(f\circ(u^*\times u^*)\circ\rho^{u(A\cup B),u(S\cup T)}_{u(A),u(S)},
g\circ(u^*\times u^*)\circ\rho^{u(A\cup B),u(S\cup T)}_{u(B),u(T)})\\
&\quad=(f\boxtimes g)\circ((u^*\times u^*)\circ\rho^{u(A\cup B),u(S\cup T)}_{u(A),u(S)},(u^*\times u^*)\circ\rho^{u(A\cup B),u(S\cup T)}_{u(B),u(T)})\\
&\quad=(f\boxtimes g)\circ(\rho^{A\cup B,S\cup T}_{A,S},\rho^{A\cup B,S\cup T}_{B,T})\circ(u^*\times u^*)\\
&\quad=(f\cup g)\circ(u^*\times u^*).
\end{align*}

Next, we have to show that $\nabla(u_\circ^{(A,S,m_\bullet,f)},u_\circ^{(B,T,n_\bullet,g)})=u_\circ^{\nabla((A,S,m_\bullet,f), (B,T,n_\bullet,g))}$. As we already know that both sides are maps between the same two objects in $\textup{C}_{X\boxtimes Y}$, it suffices to show this as maps in $\cat{$\bmEM$-SSet}$, for which it is in turn enough that their inverses agree. But indeed,
\begin{align*}
&\nabla(u_\circ^{(A,S,m_\bullet,f)},u_\circ^{(B,T,n_\bullet,g)})^{-1}\\
&\quad=\nabla\big((u_\circ^{(A,S,m_\bullet,f)})^{-1},(u_\circ^{(B,T,n_\bullet,g)})^{-1}\big)\\
&\quad=(\rho^{A\cup B,S\cup T}_{A,S},\rho^{A\cup B,S\cup T}_{B,T})^{-1}\circ\big((u^*\times u^*)\rho^{u(A\cup B),u(S\cup T)}_{u(A),u(S)},(u^*\times u^*)\rho^{u(A\cup B),u(S\cup T)}_{u(B),u(T)}\big)\\
&\quad=(\rho^{A\cup B,S\cup T}_{A,S},\rho^{A\cup B,S\cup T}_{B,T})^{-1}\circ(\rho^{A\cup B,S\cup T}_{A,S},\rho^{A\cup B,S\cup T}_{B,T})\circ(u^*\times u^*)\\
&\quad=u^*\times u^*=(u_\circ^{\nabla((A,S,m_\bullet,f), (B,T,n_\bullet,g))})^{-1}
\end{align*}
where we used $(\ref{eq:rho-u-times-u})$. This completes the argument for $\nabla$.

Finally, $E\mathcal M$-equivariance of $\iota$ amounts to saying that $\iota({*})=(\varnothing,\varnothing,\varnothing,0)$ has empty support, which is immediate from Lemma~\ref{lemma:C-bullet-supp}.
\end{proof}

\begin{prop}
The functors $\iota$ and $\nabla$ define a lax symmetric monoidal structure on $\textup{C}_\bullet\colon\cat{$\bmEM$-SSet}^\tau\to\cat{$\bmEM$-Cat}^\tau$.
\end{prop}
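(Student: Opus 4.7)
The plan is to verify naturality of $\nabla$ together with associativity, unitality, and symmetry directly from the defining formulas, exploiting that $\boxtimes$ on both $\cat{$\bm{E\mathcal M}$-Cat}^\tau$ and $\cat{$\bm{E\mathcal M}$-SSet}^\tau$ is a strict restriction of the cartesian monoidal structure. Consequently, the associator, unitor, and symmetry for $\boxtimes$ coincide with those of the cartesian product, and on the combinatorial data $(A,S,m_\bullet)$ of a quadruple the two sides of each coherence square will literally agree---because the only operation involved is disjoint union of finite sets and of families indexed by them, which is strictly associative, strictly unital (with respect to $\varnothing$), and strictly commutative. In particular the source and target objects of any two morphisms being compared will automatically match, which by the warning preceding Lemma~\ref{lemma:rho-basic-properties} reduces every coherence identity in $\textup{C}_\bullet$ to an equality of underlying $E\mathcal M$-equivariant simplicial maps.

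First I would record naturality: given $\varphi\colon X\to X'$ and $\psi\colon Y\to Y'$, a direct computation shows both $\textup{C}_{\varphi\boxtimes\psi}\circ\nabla$ and $\nabla\circ(\textup{C}_\varphi\boxtimes\textup{C}_\psi)$ send $\bigl((A,S,m_\bullet,f),(B,T,n_\bullet,g)\bigr)$ to $(A\cup B,\,S\cup T,\,(m\cup n)_\bullet,\,(\varphi\circ f)\cup(\psi\circ g))$ and act by the same conjugate-by-$\rho$ formula on morphisms. Next I would verify the three coherence axioms. After the reduction above, each one becomes an identity about composites of $\rho$-maps, and these are essentially prepackaged by Lemma~\ref{lemma:rho-basic-properties}: associativity uses part~(\ref{item:rbp-associative}) to rewrite both $(f\cup g)\cup h$ and $f\cup(g\cup h)$ as the common expression obtained by composing the iterated box product $f\boxtimes g\boxtimes h$ with the triple-$\rho$ map into $(A\cup B\cup C,\,S\cup T\cup U)$; unitality uses $\rho^{A,S}_{A,S}=\id$ from part~(\ref{item:rbp-unital}), together with the fact that $\iota(*)=(\varnothing,\varnothing,\varnothing,0)$ so the $\varnothing$-component drops out; and symmetry is automatic because swapping the two factors merely swaps the two components of the pair $(\rho^{A\cup B,S\cup T}_{A,S},\rho^{A\cup B,S\cup T}_{B,T})$, which is by construction the effect of the cartesian symmetry. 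The action on morphisms in all three cases is handled by the same unfolding, using invertibility of the relevant $\rho$-pairs supplied by part~(\ref{item:rbp-isomorphism}) to collapse the iterated conjugations into a single one.

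The main obstacle, I expect, will be purely bookkeeping: one has to track four pieces of data simultaneously across multi-factor compositions, and---per the warning cited above---every asserted equality of morphisms in $\textup{C}_\bullet$ must be preceded by checking that the source and target objects match, which is automatic here but must be noted. No genuinely new idea is required beyond Lemma~\ref{lemma:rho-basic-properties}, and the coherence conditions on higher compositions then follow formally from the corresponding coherence for the cartesian symmetric monoidal structures.
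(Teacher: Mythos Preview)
Your proposal is correct and follows essentially the same approach as the paper: the paper likewise dispatches naturality as trivial and then verifies unitality, associativity, and symmetry in turn, each time observing that the first three components $(A,S,m_\bullet)$ agree automatically and reducing the remaining identity to an equality of $E\mathcal M$-equivariant simplicial maps proved via the parts of Lemma~\ref{lemma:rho-basic-properties} you cite. The only minor point worth tightening is your associativity sketch: $(f\cup g)\cup h$ and $f\cup(g\cup h)$ land in $(X\boxtimes Y)\boxtimes Z$ and $X\boxtimes(Y\boxtimes Z)$ respectively, so the correct identity is $a\circ\bigl((f\cup g)\cup h\bigr)=f\cup(g\cup h)$ rather than literal equality---but this is exactly what your ``triple-$\rho$'' common expression establishes once the cartesian associator is inserted, and the paper makes the same computation.
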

\begin{proof}
It is trivial to check that $\nabla$ is natural; it remains to show the compatibility of $\nabla$ and $\iota$ with the unitality, symmetry, and associativity isomorphisms.

\textit{Unitality.} We will only prove left unitality, the argument for right unitality being analogous (in fact, right unitality will also follow from left unitality together with symmetry). For this we have to show that the composition
\begin{equation*}
{*}\boxtimes\textup{C}_X\xrightarrow{\iota\boxtimes\textup{C}_X}\textup{C}_*\boxtimes\textup{C}_X\xrightarrow{\nabla}\textup{C}_{{*}\boxtimes X}\xrightarrow{\textup{C}_\lambda}\textup{C}_X
\end{equation*}
agrees with the left unitality isomorphism of $(\cat{$\bmEM$-Cat}^\tau,\boxtimes)$, i.e.~projection to the second factor.

Let us first check this on objects: if $(A,S,m_\bullet,f)\in\textup{C}_X$ is arbitrary, then the above sends $({*},(A,S,m_\bullet,f))$ by definition to $(\varnothing\cup A, \varnothing\cup A, (\varnothing\cup m)_\bullet, \lambda\circ(0\cup f))$, so the only non-trivial statement is that $\lambda\circ(0\cup f)=f$. Indeed, by definition $0\cup f=(0\circ\rho^{A,S}_{\varnothing,\varnothing},f\circ\rho^{A,S}_{A,S})$. As $\lambda\colon {*}\boxtimes X\to X$ is given by projection to the second factor, we conclude $\lambda\circ(0\cup f)=f\circ\rho^{A,S}_{A,S}$, so the claim follows from Lemma~\ref{lemma:rho-basic-properties}-$(\ref{item:rbp-unital})$.

Next, let $\alpha\colon(A,S,m_\bullet,f)\to(B,T,n_\bullet,g)$; we have to show that the above composite sends $(\id_*,\alpha)$ to $\alpha$. As we already know that this has the correct source and target, it suffices to show this as morphism in $\cat{$\bmEM$-SSet}$. But indeed, plugging in the definition we see that $\alpha$ is sent to
\begin{equation}\label{eq:unital-morphisms}
(\rho_{\varnothing,\varnothing}^{A,S},\rho^{A,S}_{A,S})^{-1}(\id\circ\rho_{\varnothing,\varnothing}^{A,S},\alpha\circ\rho^{A,S}_{A,S}).
\end{equation}
As $\rho^{A,S}_{A,S}=\id$ by Lemma~\ref{lemma:rho-basic-properties}-$(\ref{item:rbp-unital})$, we see that projecting onto the second factor is left inverse to $(\rho_{\varnothing,\varnothing}^{A,S},\rho^{A,S}_{A,S})$; as the latter is an isomorphism by Lemma~\ref{lemma:rho-basic-properties}-$(\ref{item:rbp-isomorphism})$ (or alternatively using that the projection is an isomorphism for obvious reasons), it is then also right inverse, and $(\ref{eq:unital-morphisms})$ equals $\alpha\rho^{A,S}_{A,S}=\alpha$ as desired.

\textit{Associativity.} We have to show that the diagram
\begin{equation*}
\begin{tikzcd}
(\textup{C}_X\boxtimes\textup{C}_Y)\boxtimes\textup{C}_Z\arrow[r, "a", "\cong"']\arrow[d, "\nabla\boxtimes\textup{C}_Z"'] & \textup{C}_X\boxtimes(\textup{C}_Y\boxtimes\textup{C}_Z)\arrow[d, "\textup{C}_X\boxtimes\nabla"]\\
\textup{C}_{X\boxtimes Y}\boxtimes\textup{C}_Z\arrow[d,"\nabla"'] & \textup{C}_X\boxtimes\textup{C}_{Y\boxtimes Z}\arrow[d,"\nabla"]\\
\textup{C}_{(X\boxtimes Y)\boxtimes Z}\arrow[r, "\cong", "\textup{C}_a"'] & \textup{C}_{X\boxtimes(Y\boxtimes Z)}
\end{tikzcd}
\end{equation*}
commutes for all $X,Y,Z\in\cat{$\bmEM$-SSet}^\tau$; here we denote the associativity isomorphism by `$a$' instead of the usual `$\alpha$' in order to avoid confusion with our notation for a generic morphism in $\textup{C}_\bullet$.

To check this on objects we let $\big(((A,S,m_\bullet,f), (B,T,n_\bullet,g)), (C,U,o_\bullet,h)\big)$ be any object of the top left corner. Then the upper right path through the diagram sends this to $(A\cup(B\cup C), S\cup(T\cup U),(m\cup(n\cup o))_\bullet, f\cup(g\cup h))$ while the lower left path sends it to $((A\cup B)\cup C, (S\cup T)\cup U, ((m\cup n)\cup o)_\bullet, a\circ ((f\cup g)\cup h)$. It is clear that the first three components agree, so it only remains to show that $f\cup(g\cup h)=a\circ((f\cup g)\cup h)$ as maps $E\Inj(S\cup T\cup U,\omega)\times\prod_{i\in A\cup B\cup C}\Delta^{(m\cup n\cup o)_i}\to X\boxtimes(Y\boxtimes Z)$. But indeed,
\begin{align*}
f\cup(g\cup h)&=(f\rho_{A,S}^{A\cup B\cup C,S\cup T\cup U},(g\cup h)\rho^{A\cup B\cup C,S\cup T\cup U}_{B\cup C,T\cup U})\\
&=(f\rho^{A\cup B\cup C,S\cup T\cup U}_{A,S},(g\rho^{B\cup C,T\cup U}_{B,T}, h\rho^{B\cup C,T\cup U}_{C,U})\rho^{A\cup B\cup C,S\cup T\cup U}_{B\cup C,T\cup U})\\
&=(f\rho^{A\cup B\cup C,S\cup T\cup U}_{A,S},(g\rho^{A\cup B\cup C,S\cup T\cup U}_{B,T},h\rho^{A\cup B\cup C,S\cup T\cup U}_{C,U}))
\end{align*}
where the final equality follows from Lemma~\ref{lemma:rho-basic-properties}-$(\ref{item:rbp-associative})$. Analogously, one shows that
\begin{equation*}
a\circ\big((f\cup g)\cup h\big)=a\circ((f\rho^{A\cup B\cup C,S\cup T\cup U}_{A,S},g\rho^{A\cup B\cup C,S\cup T\cup U}_{B,T}),h\rho^{A\cup B\cup C,S\cup T\cup U}_{C,U})
\end{equation*}
and this is obviously equal to the above.

Next, we let $\big(((A',S',m'_\bullet,f'), (B',T',n'_\bullet,g')), (C',U',o'_\bullet,h')\big)$ be another such object, and we let $((\alpha,\beta),\gamma)$ be a morphism. We have to show that both paths through the diagram send this to the same morphism in $\textup{C}_{X\boxtimes(Y\boxtimes Z)}$, for which it is then enough to show equality as morphisms in $\cat{$\bmEM$-SSet}^\tau$. For this we first observe that on the one hand by Lemma~\ref{lemma:rho-basic-properties}-$(\ref{item:rbp-associative})$
\begin{align*}
&(\rho_{A',S'}^{A'\cup B'\cup C',S'\cup T'\cup U'},(\rho^{A'\cup B'\cup C',S'\cup T'\cup U'}_{B',T'},\rho^{A'\cup B'\cup C',S'\cup T'\cup U'}_{C',U'}))\\
&\quad=\big(\id\boxtimes(\rho^{B'\cup C',T'\cup U'}_{B',T'},\rho^{B'\cup C',T'\cup U'}_{C',U'})\big)\circ(\rho^{A'\cup B'\cup C',S'\cup T'\cup U'}_{A',S'},\rho^{A'\cup B'\cup C',S'\cup T'\cup U'}_{B'\cup C',T'\cup U'})
\end{align*}
(in particular this is an isomorphism), and on the other hand obviously
\begin{align*}
&(\rho_{A',S'}^{A'\cup B'\cup C',S'\cup T'\cup U'},(\rho^{A'\cup B'\cup C',S'\cup T'\cup U'}_{B',T'},\rho^{A'\cup B'\cup C',S'\cup T'\cup U'}_{C',U'}))\\
&\quad=a\circ\big((\rho^{A'\cup B'\cup C',S'\cup T'\cup U'}_{A',S'}, \rho^{A'\cup B'\cup C',S'\cup T'\cup U'}_{B',T'}),\rho^{A'\cup B'\cup C',S'\cup T'\cup U'}_{C',U'}\big).
\end{align*}
We now calculate
\begin{equation}\label{eq:associativity-lhs}
\begin{aligned}
&(\rho_{A',S'},(\rho_{B',T'},\rho_{C',U'}))\nabla(\alpha,\nabla(\beta,\gamma))\\
&\quad=
(\id\boxtimes(\rho_{B',T'},\rho_{C',U'}))(\alpha\rho_{A,S},\nabla(\beta,\gamma)\rho_{B\cup C,T\cup U})\\
&\quad=
(\alpha\rho_{A,S},(\rho_{B',T'},\rho_{C',U'})\nabla(\beta,\gamma)\rho_{B\cup C,T\cup U})\\
&\quad=(\alpha\rho_{A,S},(\beta\rho_{B,T},\gamma\rho_{C,U}))
\end{aligned}
\end{equation}
where we omitted the superscripts for legibility. Analogously,
\begin{align*}
(\rho_{A',S'},(\rho_{B',T'},\rho_{C',U'}))\nabla(\nabla(\alpha,\beta),\gamma)
&=a\circ((\rho_{A',S'},\rho_{B',T'}),\rho_{C',U'})\nabla(\nabla(\alpha,\beta),\gamma)\\
&=a\circ ((\alpha\rho_{A,S},\beta\rho_{B,T}),\gamma\rho_{C,U})
\end{align*}
which equals $(\ref{eq:associativity-lhs})$. We conclude that $\nabla(\alpha,\nabla(\beta,\gamma))=\nabla(\nabla(\alpha,\beta),\gamma)$ as morphisms in $\cat{$\bmEM$-SSet}^\tau$ as they agree after postcomposing with an isomorphism. This completes the proof of associtativity.

\textit{Symmetry.} Finally, we have to show that the diagram
\begin{equation*}
\begin{tikzcd}
\textup{C}_X\boxtimes\textup{C}_Y\arrow[r, "\tau", "\cong"']\arrow[d, "\nabla"'] & \textup{C}_Y\boxtimes\textup{C}_X\arrow[d, "\nabla"]\\
\textup{C}_{X\boxtimes Y}\arrow[r, "\cong", "\textup{C}_\tau"'] & \textup{C}_{Y\boxtimes X}
\end{tikzcd}
\end{equation*}
commutes for all tame $E\mathcal M$-simplicial sets $X,Y$, where $\tau$ denotes the symmetry isomorphism of $\boxtimes$ on $\cat{$\bmEM$-Cat}^\tau$ and $\cat{$\bmEM$-SSet}^\tau$, respectively; in both cases it is given by restriction of the flip map $K\times L\cong L\times K$.

Again, let us first check this on objects. If $\big((A,S,m_\bullet,f),(B,T,n_\bullet,g)\big)$ is an object of the top left corner, then the upper right path through this diagram sends this to $(B\cup A, T\cup S, (n\cup m)_\bullet, g\cup f)$, while the lower left path sends it to $(A\cup B,S\cup T,(m\cup n)_\bullet,\tau\circ(f\cup g))$. The first three components agree trivially, while for the fourth components we simply calculate
\begin{equation*}
\tau\circ(f\cup g)=\tau\circ(f\rho_{A,S},g\rho_{B,T})=(g\rho_{B,T},f\rho_{A,S})=g\cup f.
\end{equation*}
This proves commutativity on objects. If now $\big((A',S',m'_\bullet,f'),(B',T',n'_\bullet,g')\big)$ is another such object and $(\alpha,\beta)$ is a morphism, then in order to show that both paths through the diagram send $(\alpha,\beta)$ to the same morphism of $\textup{C}_{Y\boxtimes X}$ it is again enough to check this as morphisms in $\cat{$\bmEM$-SSet}$. But indeed, the top right path through the diagram sends $(\alpha,\beta)$ to $(\rho_{B',T'},\rho_{A',S'})^{-1}(\beta\rho_{B,T},\alpha\rho_{A,S})$. Using that $(\rho_{B',T'},\rho_{A',S'})=\tau\circ(\rho_{A',S'},\rho_{B',T'})$, this equals
\begin{equation*}
(\rho_{A',S'},\rho_{B',T'})^{-1}\circ\tau\circ(\beta\rho_{B,T},\alpha\rho_{A,S})=(\rho_{A',S'},\rho_{B',T'})^{-1}(\alpha\rho_{A,S},\beta\rho_{B,T})
\end{equation*}
which is by definition the image of $(\alpha,\beta)$ under the lower left composition. This completes the proof of symmetry and hence of the proposition.
\end{proof}

As before, the corresponding result for $\textup{C}_\bullet\colon\cat{$\bmEM$-$\bm G$-SSet}^\tau\to\cat{$\bmEM$-$\bm G$-Cat}^\tau$ follows formally. In particular, $\textup{C}_\bullet$ canonically lifts to a functor $\cat{$\bm G$-ParSumSSet}\to\cat{$\bm G$-ParSumCat}$. Explicitly, if $X$ is a parsummable simplicial set, then $\textup{C}_X$ has the same underlying $E\mathcal M$-$G$-category as before. The sum of two disjointly supported objects $(A,S,m_\bullet,f),(B,T,n_\bullet,g)$ is $(A\cup B,S\cup T, (m\cup n)_\bullet,f+g)$, with $f+g$ given by the composition
\begin{equation*}
E\Inj(S\cup T,\omega)\times\prod_{i\in A\cup B}\Delta^{(m\cup n)_i}\xrightarrow{f\cup g} X\boxtimes X\xrightarrow{+}X
\end{equation*}
where $+$ denotes the sum operation of the $G$-parsummable simplicial set $X$. Moreover, the sum of two morphisms $\alpha$, $\beta$ having disjointly supported sources and disjointly supported targets agrees as a map of $E\mathcal M$-simplicial sets with $\nabla(\alpha,\beta)$ as defined above. Finally, the unit is given by $(\varnothing,\varnothing,\varnothing,0)$ where $0$ denotes the map $E\Inj(\varnothing,\omega)\times\prod_\varnothing\to X$ with image the zero vertex of $X$.

Next, we will show that the natural maps $\epsilon$ and $\tilde\epsilon$ also define natural transformations between these lifts.

\begin{prop}\label{prop:epsilon-monoidal}
The natural transformation $\epsilon\colon\nerve\circ\textup{C}_\bullet\Rightarrow\id_{\cat{$\bmEM$-SSet}^\tau}$ is {\rm(}symmetric{\rm)} monoidal.
\end{prop}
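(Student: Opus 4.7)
The plan is to unwind the definition: since the identity functor has trivial monoidal structure, saying $\epsilon$ is symmetric monoidal means checking that $\epsilon_*\circ\nerve(\iota)=\id_*$ and that the rectangle
\begin{equation*}
\begin{tikzcd}
\nerve(\textup{C}_X)\boxtimes\nerve(\textup{C}_Y)\arrow[r,"\cong"]\arrow[d,"\epsilon_X\boxtimes\epsilon_Y"'] & \nerve(\textup{C}_X\boxtimes\textup{C}_Y)\arrow[r,"\nerve(\nabla)"] & \nerve(\textup{C}_{X\boxtimes Y})\arrow[d,"\epsilon_{X\boxtimes Y}"]\\
X\boxtimes Y\arrow[rr,equal] & & X\boxtimes Y
\end{tikzcd}
\end{equation*}
commutes for all $X,Y\in\cat{$\bm{E\mathcal M}$-SSet}^\tau$; there are no additional axioms for the symmetry, since a monoidal transformation between symmetric monoidal functors is automatically symmetric. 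The unit compatibility is immediate from the fact that $*$ is terminal.

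For the main diagram I would check commutativity on each $k$-simplex. Such a simplex of the upper-left corner corresponds, via the strong monoidality of $\nerve$ already established, to a pair of composable chains $\alpha_\bullet=\big((A_0,S_0,m^{(0)}_\bullet,f_0)\xrightarrow{\alpha_1}\cdots\xrightarrow{\alpha_k}(A_k,S_k,m^{(k)}_\bullet,f_k)\big)$ in $\textup{C}_X$ and $\beta_\bullet$ in $\textup{C}_Y$ (with components $(B_\ell,T_\ell,n^{(\ell)}_\bullet,g_\ell)$) that are disjointly supported at each stage. The upper path applies $\nabla$ to obtain the chain with objects $(A_\ell\cup B_\ell,S_\ell\cup T_\ell,(m^{(\ell)}\cup n^{(\ell)})_\bullet,f_\ell\cup g_\ell)$ and then evaluates $f_k\cup g_k$ on the distinguished simplex $\sigma_{\nabla(\alpha_\bullet,\beta_\bullet)}$ from Construction~\ref{constr:epsilon}, while the lower path produces the pair $\big(f_k(\sigma_{\alpha_\bullet}),g_k(\sigma_{\beta_\bullet})\big)$.

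The key computational step is the identity
\begin{equation*}
\bigl(\rho^{A_k\cup B_k,S_k\cup T_k}_{A_k,S_k},\rho^{A_k\cup B_k,S_k\cup T_k}_{B_k,T_k}\bigr)\bigl(\sigma_{\nabla(\alpha_\bullet,\beta_\bullet)}\bigr)=(\sigma_{\alpha_\bullet},\sigma_{\beta_\bullet})
\end{equation*}
inside the box product on the right-hand side of Lemma~\ref{lemma:rho-basic-properties}-$(\ref{item:rbp-isomorphism})$. Since each factor of this box product is the nerve of a category in which any two objects have at most one morphism between them, a simplex is determined by its vertex tuple, so I only need to check the identity vertex-by-vertex. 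By the definition of $\nabla$ on morphisms, the composite $\nabla(\alpha_k,\beta_k)\circ\cdots\circ\nabla(\alpha_{\ell+1},\beta_{\ell+1})$ telescopes to $(\rho,\rho)^{-1}\circ(\alpha_k\cdots\alpha_{\ell+1}\boxtimes\beta_k\cdots\beta_{\ell+1})\circ(\rho,\rho)$; applying this to $(\iota_{S_\ell\cup T_\ell},*)$, which is sent by $(\rho,\rho)$ to $\big((\iota_{S_\ell},*),(\iota_{T_\ell},*)\big)$, yields the pair of $\ell$-th vertices of $\sigma_{\alpha_\bullet}$ and $\sigma_{\beta_\bullet}$ (after reapplying $(\rho,\rho)$).

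Given that identity, the equality of both paths is immediate from the definition $f_k\cup g_k=(f_k\boxtimes g_k)\circ(\rho,\rho)$: applying it to $\sigma_{\nabla(\alpha_\bullet,\beta_\bullet)}$ yields $(f_k\boxtimes g_k)(\sigma_{\alpha_\bullet},\sigma_{\beta_\bullet})=\big(f_k(\sigma_{\alpha_\bullet}),g_k(\sigma_{\beta_\bullet})\big)=\big(\epsilon(\alpha_\bullet),\epsilon(\beta_\bullet)\big)$, as required. The only real obstacle is bookkeeping: keeping straight which $\rho$-isomorphism lives over which subset and not confusing the various product decompositions. Finally, the analogous statement for $\tilde\epsilon\colon\textup{C}_\bullet\circ\nerve\Rightarrow\id$ will follow formally because $\nerve$ is fully faithful and strong symmetric monoidal, so that the relevant diagrams can be checked after applying $\nerve$, reducing to the case just handled.
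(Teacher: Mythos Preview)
Your proposal is correct and follows essentially the same approach as the paper's own proof: both reduce the unit compatibility to the terminality of $*$, and for the multiplicativity both compute the $\ell$-th vertex of $\sigma_{\nabla(\alpha_\bullet,\beta_\bullet)}$ by telescoping the composite $\nabla(\alpha_k,\beta_k)\cdots\nabla(\alpha_{\ell+1},\beta_{\ell+1})$, arriving at the identity $(\rho,\rho)(\sigma_{\nabla(\alpha_\bullet,\beta_\bullet)})=(\sigma_{\alpha_\bullet},\sigma_{\beta_\bullet})$ and then concluding via $f_k\cup g_k=(f_k\boxtimes g_k)\circ(\rho,\rho)$. Your closing remark about deducing the monoidality of $\tilde\epsilon$ by applying the fully faithful $\nerve$ is also exactly how the paper proceeds in the subsequent proposition.
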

\begin{proof}
We have to show that the diagrams
\begin{equation*}
\begin{tikzcd}
(\nerve\textup{C}_X)\boxtimes(\nerve\textup{C}_Y)\arrow[r,"\epsilon\boxtimes\epsilon"]\arrow[d, "\nabla"'] & X\boxtimes Y\\
\nerve\textup{C}_{X\boxtimes Y}\arrow[ur, bend right=10pt, "\epsilon"']
\end{tikzcd}
\qquad
\begin{tikzcd}
{*}\arrow[r, "="]\arrow[d, "\iota"'] & {*}\\
\nerve\textup{C}_*\arrow[ru, bend right=10pt, "\epsilon"']
\end{tikzcd}
\end{equation*}
commute, where $\nabla$ and $\iota$ denote the compositions of the structure maps of $\nerve$ and $\textup{C}_\bullet$ of the same name.

The commutativity of the right hand triangle is trivial as the target is terminal. For the left hand triangle, we consider any $k$-simplex of $(\nerve\textup{C}_X)\boxtimes(\nerve\textup{C}_Y)$. This is by definition and Example~\ref{ex:nerve-kth-support} given by a pair of a $k$-simplex
\begin{equation*}
(A_0,S_0,m^{(0)}_\bullet,f_0)\xrightarrow{\alpha_1}\cdots\xrightarrow{\alpha_k}(A_k,S_k,m^{(k)}_\bullet,f_k)
\end{equation*}
of $\nerve(\textup{C}_X)$ and a $k$-simplex
\begin{equation*}
(B_0,T_0,n^{(0)}_\bullet,g_0)\xrightarrow{\beta_1}\cdots\xrightarrow{\beta_k}(B_k,T_k,n^{(k)}_\bullet,g_k)
\end{equation*}
of $\nerve(\textup{C}_Y)$ such that $\supp(A_i,S_i,m^{(i)}_\bullet,f_i)\cap\supp(B_i,T_i,n^{(i)}_\bullet,g_i)=\varnothing$ for $i=0,\dots,k$.

If $\sigma_{\alpha_\bullet}$, $\sigma_{\beta_\bullet}$ are defined as before, then the top arrow in this diagram sends $(\alpha_\bullet,\beta_\bullet)$ to $(f_k(\sigma_{\alpha_\bullet}),g_k(\sigma_{\beta_\bullet}))$. On the other hand, the lower path sends $(\alpha_\bullet,\beta_\bullet)$ to $(f_k\cup g_k)(\sigma_{\nabla(\alpha_\bullet,\beta_\bullet)})$. Here $\sigma_{\nabla(\alpha_\bullet,\beta_\bullet)}$ is uniquely characterized by demanding that its $\ell$-th vertex be given by
\begin{equation*}
\nabla(\alpha_k,\beta_k)\cdots\nabla(\alpha_{\ell+1},\beta_{\ell+1})(\iota_{S_\ell\cup T_\ell},{*})\in E\Inj(S_k\cup T_k,\omega)\times\prod_{i\in A_k\cup B_k}\Delta^{(m\cup n)_i}.
\end{equation*}
By functoriality of $\nabla$ and its definition, this is equal to
\begin{equation*}
(\rho_{A_k,S_k},\rho_{B_k,T_k})^{-1}
\big((\alpha_k\cdots\alpha_{\ell+1})\boxtimes(\beta_k\cdots\beta_{\ell+1})\big)(\rho_{A_\ell,S_\ell},\rho_{B_\ell,T_\ell})(\iota_{S_\ell\cup T_\ell,{*}}),
\end{equation*}
and as obviously $(\rho_{A_\ell,S_\ell},\rho_{B_\ell,T_\ell})(\iota_{S_\ell\cup T_\ell},*)=\big((\iota_{S_\ell},{*}),(\iota_{T_\ell},{*})\big)$, we conclude that $\sigma_{\nabla(\alpha_\bullet,\beta_\bullet)}=(\rho_{A_k,S_k},\rho_{B_k,T_k})^{-1}(\sigma_{\alpha_\bullet},\sigma_{\beta_\bullet})$. Thus,
\begin{equation*}
\epsilon(\nabla(\alpha_\bullet,\beta_\bullet))=
(f_k\cup g_k)(\sigma_{\nabla(\alpha_\bullet,\beta_\bullet)})=(f_k\boxtimes g_k)(\sigma_{\alpha_\bullet},\sigma_{\beta_\bullet})=(f_k(\sigma_{\alpha_\bullet}),g_k(\sigma_{\beta_\bullet}))
\end{equation*}
as claimed.
\end{proof}

\begin{prop}\label{prop:tilde-epsilon-monoidal}
The natural transformation $\tilde\epsilon\colon \textup{C}_\bullet\circ\nerve\Rightarrow\id_{\cat{$\bmEM$-Cat}^\tau}$ is {\rm(}symmetric{\rm)} monoidal.
\end{prop}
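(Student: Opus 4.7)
My plan is to leverage two facts: that the nerve $\nerve\colon\cat{$\bm{E\mathcal M}$-Cat}^\tau\to\cat{$\bm{E\mathcal M}$-SSet}^\tau$ is a fully faithful, strong symmetric monoidal functor for $\boxtimes$, and that by Remark~\ref{rk:tilde-epsilon} the transformation $\tilde\epsilon$ is characterized by $\nerve(\tilde\epsilon_\mathcal C)=\epsilon_{\nerve\mathcal C}$. Using these I will reduce the claim to the monoidality of $\epsilon$ established in Proposition~\ref{prop:epsilon-monoidal}.

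More concretely, there are two diagrams to verify in $\cat{$\bm{E\mathcal M}$-Cat}^\tau$: the compatibility with the multiplications $\nabla$ and with the units $\iota$. Since $\nerve$ is faithful, it suffices to check each identity of morphisms after applying $\nerve$. I will then invoke the strong monoidality of $\nerve$ to commute it past $\boxtimes$ via its structure isomorphism, and substitute $\nerve(\tilde\epsilon_\mathcal C)=\epsilon_{\nerve\mathcal C}$ throughout. After these manipulations, each applied diagram will rewrite as precisely the corresponding monoidality diagram for $\epsilon$ evaluated at the $E\mathcal M$-simplicial sets $\nerve\mathcal C$ and $\nerve\mathcal D$ (respectively at the unit), which commutes by Proposition~\ref{prop:epsilon-monoidal}.

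The mildly subtle step will be the bookkeeping that identifies the lax structure map of $\textup{C}_\bullet\circ\nerve$, after whiskering with $\nerve$, with the lax structure map of $\nerve\circ\textup{C}_\bullet$ evaluated on nerves, modulo the strong structure isomorphism of $\nerve$. However, this should be purely formal, following from the fact that the lax structure on a composite of monoidal functors is by definition the pasting of the individual structures. The unit compatibility will be even simpler, since the unit of both monoidal structures is the terminal object, so both sides of the relevant triangle agree automatically after applying $\nerve$.
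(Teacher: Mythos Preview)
Your proposal is correct and follows essentially the same approach as the paper: apply $\nerve$, use full faithfulness and the identity $\nerve(\tilde\epsilon_{\mathcal C})=\epsilon_{\nerve\mathcal C}$, and reduce to the monoidality of $\epsilon$ from Proposition~\ref{prop:epsilon-monoidal}; the unit triangle is trivial since the target is terminal. The paper makes your ``mildly subtle bookkeeping step'' explicit by packaging it into a three-dimensional cube whose front face is the $\epsilon$-coherence diagram, whose back face is the nerve of the $\tilde\epsilon$-diagram, and whose front-to-back maps are the strong monoidal structure isomorphisms of $\nerve$, but this is exactly the formal pasting argument you outline.
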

\begin{proof}
We have to prove commutativity of the diagrams
\begin{equation*}
\begin{tikzcd}
\textup{C}_{\nerve\mathcal C}\boxtimes\textup{C}_{\nerve\mathcal D}\arrow[r,"\tilde\epsilon\boxtimes\tilde\epsilon"]\arrow[d, "\nabla"'] & \mathcal C\boxtimes\mathcal D\\
\textup{C}_{\nerve(\mathcal C\boxtimes\mathcal D)}\arrow[ur, bend right=10pt, "\tilde\epsilon"']
\end{tikzcd}
\qquad
\begin{tikzcd}
{*}\arrow[r, "="]\arrow[d, "\iota"'] & {*}\\
\textup{C}_{\nerve({*})}\arrow[ru, bend right=10pt, "\tilde\epsilon"']
\end{tikzcd}
\end{equation*}
which in the case of the right hand triangle is trivial again. For the left hand diagram, it suffices to prove this after applying $\nerve$ as the latter is fully faithful. The resulting diagram is
\begin{equation}\label{diag:to-check-nerve}
\begin{tikzcd}
\nerve(\textup{C}_{\nerve\mathcal C}\boxtimes\textup{C}_{\nerve\mathcal D})\arrow[r,"\nerve(\tilde\epsilon\boxtimes\tilde\epsilon)"]\arrow[d, "\nerve(\nabla)"'] & \nerve(\mathcal C\boxtimes\mathcal D)\\
\nerve(\textup{C}_{\nerve(\mathcal C\boxtimes\mathcal D)})\arrow[ur, bend right=10pt, "\epsilon"']
\end{tikzcd}
\end{equation}
where we have applied the definition of $\tilde\epsilon$. We now consider the 3-dimensional diagram
\begin{equation*}
\begin{tikzcd}[column sep=small, row sep=small]
& \nerve(\textup{C}_{\nerve\mathcal C}\boxtimes\textup{C}_{\nerve\mathcal D})\arrow[rr]\arrow[dd] && \nerve(\mathcal C\boxtimes\mathcal D)\\
(\nerve\textup{C}_{\nerve\mathcal C})\boxtimes(\nerve\textup{C}_{\nerve\mathcal D})\arrow[ur,"\cong"', "\nabla"]\arrow[rr, crossing over]\arrow[dd] && (\nerve\mathcal C)\boxtimes (\nerve\mathcal D)\arrow[ur, "\cong", "\nabla"']\\
& \nerve\textup{C}_{\nerve(\mathcal C\boxtimes\mathcal D)}\arrow[uurr, bend right=25pt]\\
\nerve\textup{C}_{(\nerve\mathcal C)\boxtimes(\nerve\mathcal D)}\arrow[ur, "\nerve\textup{C}_\nabla", "\cong"']\arrow[uurr, bend right=25pt, crossing over]
\end{tikzcd}
\end{equation*}
where the back face is $(\ref{diag:to-check-nerve})$, the front face is the coherence diagram for the symmetric monoidal transformation $\epsilon\colon\nerve\circ\textup{C}_\bullet\Rightarrow\id$, and the front-to-back maps are induced by the structure isomorphisms of the strong symmetric monoidal functor $\nerve$ as indicated. Then the front face commutes by the previous proposition, the left face commutes by the definition of the structure maps of a composition of lax symmetric monoidal functors, the top face commutes by naturality of $\nabla$, and the lower right face commutes by naturality of $\epsilon$. As all the front-to-back maps are isomorphisms, it follows that also the back face commutes, which then completes the proof of the proposition.
\end{proof}

As before, we automatically get the corresponding statements for the lifts of $\epsilon$ and $\tilde\epsilon$ to $\cat{$\bmEM$-$\bm G$-SSet}^\tau$ and $\cat{$\bmEM$-$\bm G$-Cat}^\tau$, respectively. We can now immediately prove the following precise form of Theorem~\ref{thm:par-sum-cat-vs-sset} from the introduction:

\begin{thm}\label{thm:nerve-vs-c-bullet}
The lifts of $\nerve$ and $\textup{C}_\bullet$ constructed above define mutually inverse homotopy equivalences
\begin{equation*}
\textup{C}_\bullet\colon\cat{$\bm G$-ParSumSSet}\rightleftarrows
\cat{$\bm G$-ParSumCat} :\nerve
\end{equation*}
with respect to the $G$-global weak equivalences on both sides. More precisely, the natural maps $\epsilon\colon\nerve(\textup{C}_X)\to X$ and $\tilde\epsilon\colon\textup{C}_{\nerve\mathcal C}\to\mathcal C$ define levelwise $G$-global weak equivalences between the two composites and the respective identities.
\end{thm}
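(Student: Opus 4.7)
The plan is to assemble the theorem from pieces that are essentially already in hand. First, since $\nerve\colon\cat{$\bm{E\mathcal M}$-$\bm G$-Cat}^\tau\to\cat{$\bm{E\mathcal M}$-$\bm G$-SSet}^\tau$ is strong symmetric monoidal and $\textup{C}_\bullet\colon\cat{$\bm{E\mathcal M}$-$\bm G$-SSet}^\tau\to\cat{$\bm{E\mathcal M}$-$\bm G$-Cat}^\tau$ is lax symmetric monoidal (as constructed via $\iota$ and $\nabla$ above), both functors canonically restrict to commutative monoids for the respective box products, yielding the claimed lifts $\cat{$\bm G$-ParSumSSet}\rightleftarrows\cat{$\bm G$-ParSumCat}$; this is exactly the way the functors displayed in the theorem were defined in the paragraph preceding Proposition~\ref{prop:epsilon-monoidal}.

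Next, I would invoke Propositions~\ref{prop:epsilon-monoidal} and~\ref{prop:tilde-epsilon-monoidal}, which say precisely that $\epsilon$ and $\tilde\epsilon$ are symmetric monoidal natural transformations. In particular, their components at a parsummable simplicial set $X$ or a parsummable category $\mathcal C$ are compatible with the unit and addition, and hence define morphisms $\epsilon_X$ and $\tilde\epsilon_{\mathcal C}$ in $\cat{$\bm G$-ParSumSSet}$ and $\cat{$\bm G$-ParSumCat}$, respectively. In this way $\epsilon$ and $\tilde\epsilon$ lift to natural transformations $\nerve\circ\textup{C}_\bullet\Rightarrow\id$ and $\textup{C}_\bullet\circ\nerve\Rightarrow\id$ between the parsummable lifts.

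The second sentence of the theorem then follows immediately from Theorem~\ref{thm:nerve-vs-c-bullet-unstable}: a morphism in either $\cat{$\bm G$-ParSumSSet}$ or $\cat{$\bm G$-ParSumCat}$ is a $G$-global weak equivalence by definition if and only if its underlying $E\mathcal M$-$G$-equivariant map is, and the underlying maps of $\epsilon_X$ and $\tilde\epsilon_{\mathcal C}$ are $G$-global weak equivalences by that theorem. For the first sentence it remains only to verify that $\nerve$ and $\textup{C}_\bullet$ are both homotopical on the parsummable level. For $\nerve$ this is the remark at the end of Section~2 (fixed points commute with a right adjoint). For $\textup{C}_\bullet$ the usual $2$-out-of-$3$ argument in the naturality square
\begin{equation*}
\begin{tikzcd}
\nerve(\textup{C}_X)\arrow[r, "\epsilon_X"]\arrow[d, "\nerve(\textup{C}_f)"'] & X\arrow[d, "f"]\\
\nerve(\textup{C}_Y)\arrow[r, "\epsilon_Y"'] & Y
\end{tikzcd}
\end{equation*}
combined with the fact that $\nerve$ reflects $G$-global weak equivalences implies that $\textup{C}_f$ is a $G$-global weak equivalence whenever $f$ is. Together with the levelwise $G$-global weak equivalences $\epsilon$ and $\tilde\epsilon$, this exhibits $\nerve$ and $\textup{C}_\bullet$ as mutually inverse homotopy equivalences, completing the theorem.

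I do not expect a genuine obstacle in this argument: all the real content has already been absorbed into the unstable comparison of Theorem~\ref{thm:nerve-vs-c-bullet-unstable} and the monoidality results, and what remains is a formal assembly. The only point that deserves a moment of care is checking that the natural transformations genuinely \emph{lift} (rather than merely being compatible up to something), which is precisely what Propositions~\ref{prop:epsilon-monoidal} and~\ref{prop:tilde-epsilon-monoidal} were designed to give.
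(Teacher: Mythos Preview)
Your proposal is correct and follows essentially the same approach as the paper: invoke Propositions~\ref{prop:epsilon-monoidal} and~\ref{prop:tilde-epsilon-monoidal} to lift $\epsilon$ and $\tilde\epsilon$ to the parsummable level, then use that the $G$-global weak equivalences are created in the underlying $E\mathcal M$-$G$-categories/simplicial sets together with Theorem~\ref{thm:nerve-vs-c-bullet-unstable}. The only difference is that you spell out the $2$-out-of-$3$ argument for $\textup{C}_\bullet$ being homotopical, which the paper already absorbed into the proof of Theorem~\ref{thm:nerve-vs-c-bullet-unstable} and therefore does not repeat here.
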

\begin{proof}
We{\hskip0pt minus .75pt} first{\hskip0pt minus .75pt} observe{\hskip0pt minus .75pt} that{\hskip0pt minus .75pt} these{\hskip0pt minus .75pt} indeed{\hskip0pt minus .75pt} assemble{\hskip0pt minus .75pt} into{\hskip0pt minus .75pt} natural{\hskip0pt minus .75pt} transformations $\nerve\circ\textup{C}_\bullet\Rightarrow\id_{\cat{$\bm G$-ParSumSSet}}${\hskip0pt minus .75pt} and{\hskip0pt minus .75pt} $\textup{C}_\bullet\circ\nerve\Rightarrow\id_{\cat{$\bm G$-ParSumCat}}${\hskip0pt minus .75pt} by{\hskip0pt minus .75pt} Propositions~\ref{prop:epsilon-monoidal} and~\ref{prop:tilde-epsilon-monoidal}, respectively. As the weak equivalences of $\cat{$\bm G$-ParSumSSet}$ and $\cat{$\bm G$-ParSumCat}$ are created in $\cat{$\bmEM$-$\bm G$-SSet}^\tau$ and $\cat{$\bmEM$-$\bm G$-Cat}^\tau$, respectively, the claim now follows from Theorem~\ref{thm:nerve-vs-c-bullet-unstable}.
\end{proof}

Moreover, we can now prove:

\begin{thm}\label{thm:parsum-cat-sat-global}
All the inclusions in
\begin{equation*}
\cat{$\bm G$-ParSumCat}^{s}\hookrightarrow\cat{$\bm G$-ParSumCat}^{ws}\hookrightarrow\cat{$\bm G$-ParSumCat}
\end{equation*}
are homotopy equivalences with respect to the $G$-global weak equivalences.
\end{thm}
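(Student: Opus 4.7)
The plan is to imitate the proof of Corollary~\ref{cor:sat-ge}, now taking advantage of the parsummable lifts of $\textup{C}_\bullet$, $\nerve$, and $\tilde\epsilon$ constructed earlier in this section. The left hand inclusion $\cat{$\bm G$-ParSumCat}^{s}\hookrightarrow\cat{$\bm G$-ParSumCat}^{ws}$ is already a homotopy equivalence with respect to the $G$-global weak equivalences by Corollary~\ref{cor:ws-vs-s-parsummable}, so the entire content of the theorem is that the right hand inclusion is also a homotopy equivalence; by $2$-out-of-$3$ for homotopy equivalences this will also yield the analogous statement for the composite $\cat{$\bm G$-ParSumCat}^{s}\hookrightarrow\cat{$\bm G$-ParSumCat}$.

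For the right hand inclusion I claim that the lift of $\textup{C}_\bullet\circ\nerve$ to $\cat{$\bm G$-ParSumCat}$ provides a homotopy inverse. First, this functor actually lands in $\cat{$\bm G$-ParSumCat}^{ws}$: weak saturatedness depends only on the underlying $E\mathcal M$-$G$-category, and Proposition~\ref{prop:c-bullet-ws} shows that $\textup{C}_X$ is weakly saturated for any tame $E\mathcal M$-$G$-simplicial set $X$, in particular for $X=\nerve\mathcal C$.

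Next, by Proposition~\ref{prop:tilde-epsilon-monoidal} (and the formal upgrade to the $G$-equivariant setting), the collection of maps $\tilde\epsilon_{\mathcal C}\colon\textup{C}_{\nerve\mathcal C}\to\mathcal C$ assembles into a natural transformation of endofunctors of $\cat{$\bm G$-ParSumCat}$. Since the $G$-global weak equivalences in $\cat{$\bm G$-ParSumCat}$ are created in $\cat{$\bm{E\mathcal M}$-$\bm G$-Cat}^\tau$, Theorem~\ref{thm:nerve-vs-c-bullet-unstable} (or equivalently Theorem~\ref{thm:nerve-vs-c-bullet}) implies that $\tilde\epsilon$ is a levelwise $G$-global weak equivalence. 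Restricting $\tilde\epsilon$ to $\cat{$\bm G$-ParSumCat}^{ws}$ likewise gives a natural $G$-global weak equivalence between the restriction of $\textup{C}_\bullet\circ\nerve$ and the identity of $\cat{$\bm G$-ParSumCat}^{ws}$, which is the second zig-zag required for a homotopy equivalence.

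There is essentially no obstacle left at this point: all the substantial work has already been carried out, namely the weak saturatedness of $\textup{C}_X$ in Proposition~\ref{prop:c-bullet-ws}, the symmetric monoidality of $\tilde\epsilon$ in Proposition~\ref{prop:tilde-epsilon-monoidal}, and the fact that $\tilde\epsilon$ is a $G$-global weak equivalence in Theorem~\ref{thm:nerve-vs-c-bullet-unstable}. The only thing to check is that $\textup{C}_\bullet\circ\nerve$ is homotopical on $\cat{$\bm G$-ParSumCat}$, but this is immediate since both $\nerve$ and $\textup{C}_\bullet$ are parts of a homotopy equivalence by Theorem~\ref{thm:nerve-vs-c-bullet}. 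Hence the proof reduces to assembling these ingredients in the order above.
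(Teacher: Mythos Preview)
Your proposal is correct and follows essentially the same approach as the paper: invoke Corollary~\ref{cor:ws-vs-s-parsummable} for the left inclusion, and use the parsummable lift of $\textup{C}_\bullet\circ\nerve$ (landing in the weakly saturated subcategory by Proposition~\ref{prop:c-bullet-ws}) together with the monoidal natural transformation $\tilde\epsilon$ from Proposition~\ref{prop:tilde-epsilon-monoidal} and Theorem~\ref{thm:nerve-vs-c-bullet} as a homotopy inverse for the right inclusion. The paper's proof is more terse, simply pointing to Theorem~\ref{thm:nerve-vs-c-bullet} and the argument of Corollary~\ref{cor:sat-ge}, but the content is the same.
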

\begin{proof}
For the left hand inclusion we have already shown this as Corollary~\ref{cor:ws-vs-s-parsummable}. For the right hand inclusion it suffices to observe again that $\textup{C}_\bullet\circ\nerve$ is homotopy inverse, which follows from Theorem~\ref{thm:nerve-vs-c-bullet} by the same arguments as in Corollary~\ref{cor:sat-ge}.
\end{proof}

\section{$G$-global homotopy theory of symmetric monoidal $G$-categories}\label{sec:sym-mon}
We will write $\cat{SymMonCat}$ for the $1$-category of small symmetric monoidal categories and strong symmetric monoidal functors, and we will denote by $\cat{$\bm G$-SymMonCat}$ the corresponding category of $G$-objects. Explicitly, an object of $\cat{$\bm G$-SymMonCat}$ is a small symmetric monoidal category equipped with a strict $G$-action through strong symmetric monoidal functors (a small \emph{symmetric monoidal $G$-category}, for short), and the morphisms are given by strong symmetric monoidal functors that strictly preserve the actions.

We want to study $\cat{$\bm G$-SymMonCat}$ from a $G$-global perspective, for which we introduce the following notion of weak equivalence:

\begin{defi}
A $G$-equivariant functor $f\colon\mathscr C\to\mathscr D$ of small $G$-categories is a \emph{$G$-global weak equivalence} if \begin{equation*}\Fun(EH,\varphi^*f)^H\colon\Fun(EH,\varphi^*\mathscr C)^H\to\Fun(EH,\varphi^*\mathscr D)^H\end{equation*} is a weak homotopy equivalence for every finite group $H$ and every homomorphism $\varphi\colon H\to G$. A morphism in $\cat{$\bm G$-SymMonCat}$ is called a $G$-global weak equivalence if and only if its underlying $G$-equivariant functor is.
\end{defi}

By Lemma~\ref{lemma:universal-abundant} we may restrict ourselves to those $H$ that are universal subgroups of $\mathcal M$ in the above definition without changing the notion of $G$-global weak equivalence.

\begin{ex}
If $G=1$, then the $1$-global weak equivalences of small categories are precisely the global equivalences in the sense of \cite[Definition~3.2]{schwede-cat}.
\end{ex}

\begin{ex}\label{ex:categorical-are-g-global}
Any underlying equivalence of categories induces equivalences on categorical homotopy fixed points, so it is in particular a $G$-global weak equivalence.
\end{ex}

In what follows, we will again abbreviate $\Fun(EH,\varphi^*(\blank))^H\mathrel{=:}(\blank)^{\myh\varphi}$, corresponding to the fact that the above agrees with the previous definition of $(\blank)^{\myh\varphi}$ for the case of a trivial $E\mathcal M$-action. As this creates potential ambiguity for $E\mathcal M$-$G$-categories, we will always distinguish between an $E\mathcal M$-$G$-category and its underlying $G$-category below; homotopically, however, this ambiguity is inconsequential anyhow by the following easy observation:

\begin{lemma}\label{lemma:comparison-homotopy-fp}
Let $\mathcal C$ be an $E\mathcal M$-$G$-category, and let $\forget\colon\cat{$\bmEM$-$\bm G$-Cat}\to\cat{$\bm G$-Cat}$ be the forgetful functor. Then there is a natural zig-zag of equivalences between $\mathcal C^{\myh\varphi}$ and $(\forget\mathcal C)^{\myh\varphi}$ for any subgroup $H\subset\mathcal M$ and any $\varphi\colon H\to G$.
\begin{proof}
While the claim could be proven analogously to \cite[Proposition~7.6]{schwede-k-theory}, we will give a slightly different argument: let us consider the zig-zag
\begin{equation}\label{eq:action-vs-no-action}
\mathcal C\xleftarrow{\textup{action}} E\mathcal M\times \mathcal C^{\textup{triv}}\xrightarrow{\pr} \mathcal C^{\textup{triv}},
\end{equation}
where $\mathcal C^{\textup{triv}}$ denotes $\mathcal C$ with trivial $E\mathcal M$-action. There is an evident way to make the middle term functorial in $\mathcal C$, and with respect to this the above two maps are clearly natural. Moreover, one easily checks that they are both $(E\mathcal M\times G)$-equivariant.

We claim that they are also underlying equivalences of categories. Indeed, this is obvious for the projection as $E\mathcal M$ is contractible. The non-equivariant functor $(1,\blank)\colon\mathcal C\to E\mathcal M\times\mathcal C$ is right-inverse to it, hence again an equivalence of categories. But it is also right inverse to the action map $E\mathcal M\times\mathcal C\to\mathcal C$, hence also the latter is an equivalence of categories as desired.

The claim now simply follows by applying $(\blank)^{\myh\varphi}$ to $(\ref{eq:action-vs-no-action})$.
\end{proof}
\end{lemma}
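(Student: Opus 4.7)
The plan is to produce a natural zig-zag of $(E\mathcal M\times G)$-equivariant equivalences of underlying categories between $\mathcal C$, with its given $E\mathcal M$-action, and some $E\mathcal M$-$G$-category whose $E\mathcal M$-action is \emph{trivial}. Once we have this, we apply the functor $\Fun^\varphi(EH,\blank)=(\blank)^{\myh\varphi}$ to the zig-zag. Since this functor preserves equivariant equivalences of underlying categories (being essentially a $2$-limit indexed by $EH$), the result is a zig-zag of equivalences of categories. Moreover, for any $E\mathcal M$-$G$-category in which $\mathcal M$ acts trivially, the diagonal $H$-action agrees with the $\varphi$-action alone, so that $(\blank)^{\myh\varphi}$ taken in the $E\mathcal M$-$G$-sense coincides on the nose with $(\forget(\blank))^{\myh\varphi}$ taken in the $G$-sense. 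Thus the proof reduces to constructing the zig-zag.

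For the zig-zag, I would write $\mathcal C^{\mathrm{triv}}$ for the same $G$-category as $\mathcal C$ but with trivial $E\mathcal M$-action, and consider the $E\mathcal M$-$G$-category $E\mathcal M\times\mathcal C^{\mathrm{triv}}$ where $E\mathcal M$ acts by left multiplication on the first factor and trivially on the second, while $G$ acts only on the second factor. Then I would produce two natural $(E\mathcal M\times G)$-equivariant functors out of this object: the projection $\pr\colon E\mathcal M\times\mathcal C^{\mathrm{triv}}\to\mathcal C^{\mathrm{triv}}$ and the action map $a\colon E\mathcal M\times\mathcal C^{\mathrm{triv}}\to\mathcal C$. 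Equivariance of $\pr$ is immediate, while equivariance of $a$ follows from associativity and unitality of the original $E\mathcal M$-action on $\mathcal C$; $G$-equivariance on both sides is automatic since $G$ and $E\mathcal M$ commute in their respective actions. Naturality in $\mathcal C$ is built into the construction.

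It then remains to check that $\pr$ and $a$ are equivalences of underlying categories. This is easiest to do simultaneously by noting that the (non-equivariant) functor $(1,\blank)\colon\mathcal C\to E\mathcal M\times\mathcal C^{\mathrm{triv}}$ is a common right inverse; combined with the fact that $E\mathcal M\simeq *$ (so $\pr$ is an equivalence), this forces $a$ to be an equivalence as well by $2$-out-of-$3$. I expect the only mildly subtle point to be bookkeeping of which $E\mathcal M$- and $G$-actions live on which factor so that the two maps are genuinely equivariant in the required sense; the equivalence question itself is completely formal. After applying $(\blank)^{\myh\varphi}$, the resulting zig-zag $\mathcal C^{\myh\varphi}\xleftarrow{a^{\myh\varphi}} (E\mathcal M\times\mathcal C^{\mathrm{triv}})^{\myh\varphi}\xrightarrow{\pr^{\myh\varphi}}(\mathcal C^{\mathrm{triv}})^{\myh\varphi}=(\forget\mathcal C)^{\myh\varphi}$ is the desired natural chain of equivalences.
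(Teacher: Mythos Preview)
Your proposal is correct and follows essentially the same approach as the paper: the same zig-zag $\mathcal C\xleftarrow{a} E\mathcal M\times\mathcal C^{\mathrm{triv}}\xrightarrow{\pr}\mathcal C^{\mathrm{triv}}$, the same verification of equivariance, and the same use of the common section $(1,\blank)$ together with contractibility of $E\mathcal M$ to see that both maps are underlying equivalences. If anything, you are slightly more explicit than the paper in spelling out why $(\mathcal C^{\mathrm{triv}})^{\myh\varphi}$ literally coincides with $(\forget\mathcal C)^{\myh\varphi}$.
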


\begin{rk}
The $G$-global weak equivalences might look a bit counterintuitive at first, so let us explain the connection to classical equivariant $K$-theory, for which we assume that $G$ is finite.

If $\mathscr C$ is a small symmetric monoidal category, then the Shimada-Shimakawa construction \cite[Definition~2.1]{shimada-shimakawa} associates to this a special $\Gamma$-space $\Gamma(\mathscr C)$. If we now let $G$ act suitably on $\mathscr C$, then $\Gamma(\mathscr C)$ acquires a $G$-action through functoriality, making it into a $\Gamma$-$G$-space; more precisely, $\Gamma$ is functorial in the category $\cat{SymMonCat}^0$ of small symmetric monoidal categories and \emph{strictly unital} strong symmetric monoidal functors, so it induces a functor $\cat{$\bm G$-SymMonCat}^0\to\cat{$\bm\Gamma$-$\bm G$-SSet}$ where the left hand side again denotes the category of $G$-objects in $\cat{SymMonCat}^0$; in particular, its objects have $G$-actions through \emph{strictly unital} strong symmetric monoidal functors.

Unfortunately (or interestingly), $\Gamma(\mathscr C)$ is usually \emph{not} special in the correct $G$-equivariant sense. However, it is an observation going back to Shimakawa \cite[discussion before Theorem~$\text{A}'$]{shimakawa} and later extensively used by \cite{merling} that this defect can be cured by replacing $\mathscr C$ with $\Fun(EG,\mathscr C)$ equipped with the conjugation action.

Thus, the natural way to obtain a \emph{special} $\Gamma$-$G$-space is via the composition
\begin{equation}\label{eq:shimakawa-kg}
\cat{$\bm G$-SymMonCat}^0\xrightarrow{\Fun(EG,\blank)}\cat{$\bm G$-SymMonCat}^0\xrightarrow{\Gamma}\cat{$\bm\Gamma$-$\bm G$-SSet},
\end{equation}
and this is also the basis for the usual definition of the equivariant algebraic $K$-theory of $\mathscr C$.

There are several useful notions of $G$-equivariant weak equivalences on the right hand side, the simplest (and strongest) of which are the \emph{level equivalences}, see e.g.~\cite[4.2.1]{equivariant-gamma}. It is then not hard to check from the definitions that a map $f\colon\mathscr C\to\mathscr D$ in $\cat{$\bm G$-SymMonCat}^0$ induces a level equivalence under $(\ref{eq:shimakawa-kg})$ if and only if the induced functor $\Fun(EG,f)^H\colon\Fun(EG,\mathscr C)^H\to\Fun(EG,\mathscr D)^H$ is a weak homotopy equivalence for every subgroup $H\subset G$. Using that the inclusion $H\hookrightarrow G$ induces an $H$-equivariant equivalence $EH\to EG$, we conclude in particular that any $G$-global weak equivalence induces a level equivalence under $(\ref{eq:shimakawa-kg})$.
\end{rk}

Finally, we will compare the $G$-global homotopy theory of $\cat{$\bm G$-SymMonCat}$ (and $\cat{$\bm G$-SymMonCat}^0$) to the models considered so far, for which it will be useful to introduce an intermediate step. We therefore recall:

\begin{defi}
A \emph{permutative category} is a symmetric monoidal category in which the associativity and unitality isomorphisms are the respective identities. We write $\cat{PermCat}$ for the category of small permutative categories and \emph{strict} symmetric monoidal functors.
\end{defi}

\begin{prop}
The inclusions
\begin{equation*}
\cat{$\bm G$-PermCat}\hookrightarrow\cat{$\bm G$-SymMonCat}^0\hookrightarrow\cat{$\bm G$-SymMonCat}
\end{equation*}
are homotopy equivalences with respect to the underlying equivalences of categories {\rm(}and hence also with respect to the $G$-global weak equivalences{\rm).}
\begin{proof}
It suffices to prove the first statement, for which it is in turn enough to consider the case $G=1$.

The composition $\cat{PermCat}\hookrightarrow\cat{SymMonCat}$ is a homotopy equivalence as a consequence of MacLane's strictification construction \cite[Section~XI.3]{cat-working}, see e.g.~\cite[Theorem~1.19]{perm-parsum-categorical} for an elaboration on this argument.

We are therefore reduced to showing that $\cat{SymMonCat}^0\hookrightarrow\cat{SymMonCat}$ is a homotopy equivalence; this is again well-known, but I do not know of an explicit reference, so let me sketch the construction of a homotopy inverse:

For a small symmetric monoidal category $\mathscr C$, consider the map $\pi\colon\Ob(\mathscr C)\amalg\{{\bm1}'\}\to\Ob(\mathscr C)$ that is given by the identity on $\Ob(\mathscr C)$ and that sends ${\bm 1}'$ to $\bm1$. We then define $\mathscr C^0$ as the category with set of objects $\Ob(\mathscr C)\amalg\{{\bm1}'\}$ and hom sets $\Hom_{\mathscr C^0}(X,Y)\mathrel{:=}\Hom_{\mathscr C}(\pi(X),\pi(Y))$; then $\pi$ tautologically extends to a functor $\mathscr C^0\to\mathscr C$, and this is clearly an equivalence of categories.

We extend the tensor product from $\Ob(\mathscr C)$ to $\Ob(\mathscr C^0)$ by demanding that $\textbf{1}'$ be a strict unit. Then $\pi$ commutes with the tensor products on objects, so there is by full faithfulness a unique way to extend the tensor product on $\Ob(\mathscr C^0)$ to morphisms in such a way that $\pi$ strictly preserves the tensor products. By the same argument, we can uniquely lift the associativity, unitality, and symmetry isomorphism from $\mathscr C$ to $\mathscr C^0$ through $\pi$, making $\mathscr C^0$ a symmetric monoidal category with unit ${\bm 1}'$, and $\pi$ a strict symmetric monoidal functor.

On the other hand, the inclusion $\eta\colon\mathscr C\hookrightarrow\mathscr C^0$ is right inverse to $\pi$, so there is a unique strong symmetric monoidal structure on $\eta$ such that the composition $\pi\eta$ agrees as a strong symmetric monoidal functor with the identity. We now claim that $\eta\colon\mathscr C\to\mathscr C^0$ has the following universal property: for any strong symmetric monoidal $f\colon\mathscr C\to\mathscr D$ there exists a unique strictly unital strong monoidal functor $\tilde f\colon\mathscr C^0\to\mathscr D$ with $f=\tilde f\circ\eta$. With this established, it will then follow formally that the assignment $\mathscr C\mapsto\mathscr C^0$ extends to a functor $(\blank)^0\colon\cat{SymMonCat}\to\cat{SymMonCat}^0$ left adjoint to the inclusion and with unit $\eta$. As $\eta$ is an underlying equivalence and the inclusion $\cat{SymMonCat}^0\hookrightarrow\cat{SymMonCat}$ creates underlying equivalences, it follows by $2$-out-of-$3$ that also $(\blank)^0$ is homotopical. Together with the triangle identity we moreover see that also the counit $\epsilon$ is an underlying equivalence, finishing the proof (in fact, one can also easily check that the counit is simply the functor $\pi$ considered above).

It remains to prove the claim. Let us first show that the underlying functor of $\tilde f$ is unique. Indeed, as $\tilde f$ is strictly unital, $\tilde f({\bm1}')=\bm1$; together with $\tilde f\circ\eta=f$, this means that $\tilde f$ is uniquely prescribed on objects. On the other hand, this also prescribes $\tilde f$ on all morphisms between objects of $\mathscr C$, whereas the equality of the unit isomorphisms for $\tilde f\circ\eta$ and $f$ prescribes $\tilde f$ on the unit isomorphism $\iota\colon{\bm 1}'\to\bm1$ of $\eta$ (i.e.~the map corresponding to the identity of $\bm1$). The claim follows as any morphism in $\mathscr C^0$ can be expressed as a composition of a morphism in $\mathscr C$ and (possibly) $\iota$ and $\iota^{-1}$. To see that $\tilde f$ is also unique as a strictly unital strong symmetric monoidal functor, it suffices to show that there is at most one choice of the multiplicativity isomorphisms $\nabla_{X,Y}\colon\tilde f(X)\otimes\tilde f(Y)\to\tilde f(X\otimes Y)$. But indeed, as $\nabla$ is natural, it suffices to show this after precomposing with the equivalence $\eta\times\eta$, where this follows from the equality $\tilde f\circ\eta=f$ of strong symmetric monoidal functors.

Finally, we construct a strictly unital strong symmetric monoidal $\tilde f\colon\mathscr C^0\to\mathscr D$ as follows: we define $\tilde f(X)=f(X)$ for any $X\in\mathscr C$ and $\tilde f({\bm1}')=\bm1$. We now consider the isomorphisms $\theta_X=\id_{f(X)}\colon\tilde f(X)\to f(\pi(X))$ for $X\in\mathscr C$, $\theta_{{\bm1}'}=\iota\colon\bm1\to f(\bm1)=f(\pi({\bm1}'))$. There is then a unique way to extend $\tilde f$ to a functor such that $\theta$ becomes a natural transformation $\tilde f\Rightarrow f\circ\pi$, and this then acquires a unique strong symmetric monoidal structure such that $\theta$ is (symmetric) monoidal. It is easy to check that $\tilde f$ is strictly unital and satifies $\tilde f\circ\eta=f$, finishing the proof.
\end{proof}
\end{prop}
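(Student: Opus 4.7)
The plan is to produce, for each of the two inclusions, a functorial homotopy inverse given by a replacement construction. Naturality of the constructions will then automatically promote them to inverses on the categories of $G$-objects, reducing the statement to the case $G=1$: any $G$-action on $\mathscr C$ is transported through the replacement, and the unit/counit maps are natural, hence $G$-equivariant, and levelwise underlying equivalences.

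For the outer inclusion $\cat{$\bm G$-SymMonCat}^0\hookrightarrow\cat{$\bm G$-SymMonCat}$ I would build a left adjoint $(\blank)^0$ by formally adjoining a strict unit. Given $\mathscr C\in\cat{SymMonCat}$, I set $\Ob(\mathscr C^0)\mathrel{:=}\Ob(\mathscr C)\amalg\{\bm1'\}$ with hom-sets $\Hom_{\mathscr C^0}(X,Y)\mathrel{:=}\Hom_{\mathscr C}(\pi X,\pi Y)$, where $\pi$ collapses $\bm1'$ to $\bm1$ and is the identity elsewhere. The tensor product is transported through $\pi$, except that $\bm1'$ is decreed a strict two-sided unit (with $\bm1'\otimes\bm1'=\bm1'$); associativity, unitality, and symmetry isomorphisms are taken to be identities whenever $\bm1'$ appears and inherited from $\mathscr C$ otherwise. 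This makes $\pi\colon\mathscr C^0\to\mathscr C$ into a strict symmetric monoidal equivalence with section $\eta\colon\mathscr C\hookrightarrow\mathscr C^0$ that is a strong (but not strictly unital) symmetric monoidal equivalence. The universal property to check is that any strong symmetric monoidal $f\colon\mathscr C\to\mathscr D$ with $\mathscr D$ strictly unital factors uniquely through $\eta$ via a strictly unital strong symmetric monoidal $\tilde f$: on objects $\tilde f(\bm1')\mathrel{:=}\bm1_{\mathscr D}$, and the action on the isomorphism $\bm1'\to\bm1$ in $\mathscr C^0$ is forced by the multiplicativity data of $f$. This yields the left adjoint, and since $\eta$ is an underlying equivalence, the triangle identity plus two-out-of-three show that $(\blank)^0$ is homotopical and the counit is an underlying equivalence as well.

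For the inner inclusion $\cat{$\bm G$-PermCat}\hookrightarrow\cat{$\bm G$-SymMonCat}^0$ I would appeal to MacLane's coherence and strictification theorem \cite[\S XI.3]{cat-working}: every small symmetric monoidal category admits a symmetric monoidal equivalence to a permutative one, and this can be arranged functorially for strictly unital strong symmetric monoidal functors so that the output is a \emph{strict} symmetric monoidal functor between the permutative strictifications. The unit of the resulting adjunction is a natural underlying equivalence, so strictification is homotopy inverse to the inclusion.

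The main obstacle is the second inclusion: ensuring that the strictification lands in $\cat{PermCat}$ (with strict symmetric monoidal functors as morphisms) rather than merely the category of permutative categories with strong morphisms requires a coherence argument that strictifies both objects and morphisms simultaneously, and must be carried out naturally enough to be compatible with the $G$-action. The construction of $(\blank)^0$ above, by contrast, is a straightforward formal adjunction once the bookkeeping for the new object $\bm1'$ is set up correctly.
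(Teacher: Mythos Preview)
Your proposal is essentially the same as the paper's: both reduce to $G=1$ by naturality, both build the left adjoint $(\blank)^0$ by adjoining a strict unit $\bm1'$ exactly as you describe, and both invoke MacLane's strictification. The one tactical difference is worth noting.

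You treat the two inclusions independently, handling $\cat{PermCat}\hookrightarrow\cat{SymMonCat}^0$ directly via strictification. The paper instead shows that the \emph{composite} $\cat{PermCat}\hookrightarrow\cat{SymMonCat}$ is a homotopy equivalence (this is the form in which MacLane's result is usually packaged, e.g.\ \cite[Theorem~1.19]{perm-parsum-categorical}), then proves the outer inclusion is one via $(\blank)^0$, and concludes the inner inclusion is a homotopy equivalence by $2$-out-of-$3$. This sidesteps exactly the ``main obstacle'' you flag: in the standard strictification the comparison map $\mathscr C\to\mathscr C^{\mathrm{str}}$ sends $\bm1_{\mathscr C}$ to the length-one word $(\bm1_{\mathscr C})$ rather than to the empty word, so it is strong but \emph{not} strictly unital, and hence is not a morphism in $\cat{SymMonCat}^0$. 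Your direct approach would therefore need either a modified strictification with a strictly unital unit, or a further application of $(\blank)^0$ to repair this---at which point you have effectively reproduced the paper's $2$-out-of-$3$ argument. Everything else in your sketch matches the paper's proof.
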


Thus, roughly speaking, $\cat{PermCat}$ is just as good as $\cat{SymMonCat}$ from a purely formal point of view. In practice, however, working with permutative categories is often easier than working with general symmetric monoidal categories as there are less coherence data to keep track of.

As a concrete manifestation of this, Schwede constructs in \cite[Construction~11.1]{schwede-k-theory} an explicit functor $\Phi\colon\cat{PermCat}\to\cat{ParSumCat}$ (partially recalled in Example~\ref{ex:Phi} above), and while it is plausible that his construction could be extended to all small symmetric monoidal categories, working out the details would probably become quite technical and cumbersome. Accordingly, the parsummable categories associated to general small symmetric monoidal categories are only defined indirectly by applying $\Phi$ to a permutative replacement.

If we look at parsummable categories from a categorical angle, then $\Phi$ is very well-behaved. Namely, we proved as the main result of \cite{perm-parsum-categorical}:

\begin{thm}\label{thm:categorical-comparison}
The functor $\Phi\colon\cat{PermCat}\to\cat{ParSumCat}$ is a homotopy equivalence with respect to the \emph{underlying equivalences of categories} on both sides.
\end{thm}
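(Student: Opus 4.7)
My plan is to construct an explicit candidate homotopy inverse $\Psi\colon\cat{ParSumCat}\to\cat{PermCat}$ and then produce natural zig-zags of underlying equivalences of categories connecting the composites $\Psi\Phi$ and $\Phi\Psi$ to the respective identity functors. Since $\Phi$ is already given to us, most of the work lies in picking $\Psi$ in a way that on the one hand lands in \emph{strict} symmetric monoidal categories and on the other hand allows an explicit comparison using the $\mathcal M$-combinatorics developed in Section~1.

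For the construction of $\Psi$, I would take as objects of $\Psi(\mathcal C)$ the finite sequences $(x_1,\dots,x_n)$ with $n\ge 0$ and each $x_i$ an object of $\mathcal C$ that is supported on the singleton $\{i\}$; morphisms are tuples of morphisms of $\mathcal C$, and the tensor product is concatenation. The symmetry isomorphism $(x_1,\dots,x_n)\otimes(y_1,\dots,y_m)\to(y_1,\dots,y_m)\otimes(x_1,\dots,x_n)$ is induced by the block permutation $\sigma\in\Sigma_{n+m}\hookrightarrow\mathcal M$ interchanging the first $n$ and last $m$ coordinates, whose action on the relevant objects is controlled by Lemma~\ref{lemma:support-strong} and the structure isomorphisms $u^x_\circ$. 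Associativity and unitality are strict by construction, so $\Psi(\mathcal C)$ is permutative, and $\Psi$ is functorial in morphisms of parsummable categories.

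To compare composites, I would use that for every $\mathcal C\in\cat{ParSumCat}$ every finitely supported object is isomorphic, via the structure isomorphisms $u^x_\circ$, to one whose support is an initial segment of $\omega$ decomposed into singletons; together with the fact that addition in $\mathcal C$ is strictly associative, unital and commutative on disjointly supported tuples, this shows that the `fold' functor $\Phi\Psi(\mathcal C)\to\mathcal C$ induced by addition is essentially surjective and (after unwinding the explicit description of $\Phi$ from \cite[Construction~11.1]{schwede-k-theory}) fully faithful. In the other direction, the evident inclusion $\mathscr C\to\Psi\Phi(\mathscr C)$ sending $x$ to the length-one sequence $(x)$ (supported on $\{1\}$) is fully faithful by the same explicit description, and essentially surjective because arbitrary tuples in $\Psi\Phi(\mathscr C)$ admit iterated sums that land, up to isomorphism, in the length-one subcategory.

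The main obstacle is coherence: addition in $\Psi\Phi(\mathscr C)$ is strictly associative and commutative, while the tensor product of $\mathscr C$ is so only coherently, and the multiplicativity isomorphism of $\mathscr C\to\Psi\Phi(\mathscr C)$ records precisely the associator and symmetry of $\mathscr C$. Since $\cat{PermCat}$ only has \emph{strict} symmetric monoidal functors as morphisms, this comparison cannot be given directly by a single morphism in $\cat{PermCat}$; instead, one has to factor it as a zig-zag of strict symmetric monoidal functors, most cleanly by running MacLane's strictification argument (as in the proof that $\cat{PermCat}\hookrightarrow\cat{SymMonCat}$ is a homotopy equivalence) to replace $\mathscr C$ by a permutative category together with a strict comparison zig-zag. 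Verifying that the resulting diagram is levelwise an equivalence of underlying categories is then routine, but the bookkeeping of coherence data and the compatibility of the strictification with $\Phi$ and $\Psi$ is where the technical weight of the proof lives.
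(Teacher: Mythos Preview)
The paper does not actually prove this statement here: its proof is the single sentence ``This is \cite[Theorem~3.1]{perm-parsum-categorical},'' i.e.\ a citation to a companion paper. So there is no in-paper argument to compare your proposal against; your proposal is an attempt to supply a proof where the paper only gives a reference.

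That said, your sketch has a genuine gap. With your definition of $\Psi(\mathcal C)$---objects finite sequences $(x_1,\dots,x_n)$ with $\supp(x_i)=\{i\}$ and \emph{morphisms tuples of morphisms of $\mathcal C$}---the category $\Psi(\mathcal C)$ is the disjoint union over $n\ge 0$ of the full subcategories of length-$n$ sequences: there are simply no morphisms between sequences of different length. In particular, your comparison $\mathscr C\to\Psi\Phi(\mathscr C)$, $x\mapsto(x)$, cannot be essentially surjective: a length-$2$ sequence $(x_1,x_2)$ is not isomorphic \emph{in $\Psi\Phi(\mathscr C)$} to any length-$1$ sequence, because there is no morphism between them at all. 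Your appeal to ``iterated sums'' happens in $\Phi(\mathscr C)$, not in $\Psi\Phi(\mathscr C)$; the sum $x_1+\cdots+x_n$ is an object of $\Phi(\mathscr C)$ supported on $\{1,\dots,n\}$, and while it may be isomorphic in $\Phi(\mathscr C)$ to something supported on $\{1\}$, that isomorphism does not live in $\Psi\Phi(\mathscr C)$ under your definition of morphisms. The same issue obstructs the other comparison: the fold functor $\Phi\Psi(\mathcal C)\to\mathcal C$ will not be fully faithful, since morphisms in $\mathcal C$ between sums of tuples of different lengths have no preimage.

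To repair this you would have to enlarge the morphisms of $\Psi(\mathcal C)$, e.g.\ by declaring $\Hom_{\Psi(\mathcal C)}\big((x_1,\dots,x_n),(y_1,\dots,y_m)\big)\mathrel{:=}\Hom_{\mathcal C}(x_1+\cdots+x_n,\,y_1+\cdots+y_m)$. But then concatenation is no longer obviously a functor (you must use the parsummable structure to define it on morphisms and check the hexagon axiom), and the symmetry isomorphism must be identified with the structure isomorphisms $[u,1]$ rather than defined freely. That is essentially the content of the cited paper, and it is where the real work lies; your current proposal has not yet engaged with it.
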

\begin{proof}
This is \cite[Theorem~3.1]{perm-parsum-categorical}.
\end{proof}

However, \emph{from a global perspective} $\Phi(\mathscr C)$ is not yet the `correct' parsummable category associated to $\mathscr C$. For example, \cite[Proposition~11.9]{schwede-k-theory} implies that if $\mathscr C$ is any small permutative replacement of the symmetric monoidal category of finite dimensional $\mathbb C$-vector spaces and $\mathbb C$-linear isomorphisms under $\oplus$, then the global $K$-theory of $\Phi(\mathscr C)$ is different from the usual definition of the global algebraic $K$-theory $\textbf{K}_{\textup{gl}}(\mathbb C)$ of the complex numbers. In order to avoid this issue, one applies the saturation construction first, so that the global $K$-theory of $\mathscr C$ is obtained by feeding $\Phi(\mathscr C)^\sat$ into Schwede's machinery.

Thus, if we write $\cat{$\bm G$-PermCat}$ for the category of $G$-objects in $\cat{PermCat}$, then it is actually the composition
\begin{equation}\label{eq:perm-cat-vs-parsumcat}
\cat{$\bm G$-PermCat}\xrightarrow{\Phi}\cat{$\bm G$-ParSumCat}\xrightarrow{(\blank)^{\sat}}\cat{$\bm G$-ParSumCat}
\end{equation}
that is the natural way to associate a $G$-parsummable category to a small $G$-permutative category, at least from a $G$-global point of view. We therefore want to prove:

\begin{thm}\label{thm:perm-cat-vs-parsumcat}
The composition $(\ref{eq:perm-cat-vs-parsumcat})$
is a homotopy equivalence with respect to the $G$-global weak equivalences on both sides.
\end{thm}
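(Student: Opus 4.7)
The strategy is to reduce to Theorem~\ref{thm:categorical-comparison} by exploiting that on weakly saturated $G$-parsummable categories, categorical equivalences are automatically $G$-global weak equivalences (Lemma~\ref{lemma:cat-between-ws}), so that the categorical comparison can be promoted to a $G$-global statement once we land in the saturated subcategory.

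First, since $(\blank)^{\sat}\circ\Phi$ factors through $\cat{$\bm G$-ParSumCat}^{s}$ and since the inclusion $\cat{$\bm G$-ParSumCat}^{s}\hookrightarrow\cat{$\bm G$-ParSumCat}$ is a $G$-global homotopy equivalence by the combination of Corollary~\ref{cor:ws-vs-s-parsummable} and Theorem~\ref{thm:parsum-cat-sat-global}, two-out-of-three reduces the claim to showing that the corestricted functor $\bar\Phi\colon\cat{$\bm G$-PermCat}\to\cat{$\bm G$-ParSumCat}^{s}$ is a $G$-global homotopy equivalence.

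Next, I would observe that $\bar\Phi$ is a \emph{categorical} homotopy equivalence: it is the composition of $\Phi$---categorical by Theorem~\ref{thm:categorical-comparison} (lifted to the $G$-equivariant setting by formal functoriality)---with $(\blank)^{\sat}$, which is a categorical homotopy equivalence by Corollary~\ref{cor:sat-ps-cat}. Moreover, the natural categorical equivalences witnessing this are automatically $G$-global weak equivalences, by Example~\ref{ex:categorical-are-g-global} on the source side and by Lemma~\ref{lemma:cat-between-ws} on the target side (the latter via $\cat{$\bm G$-ParSumCat}^{s}\subset\cat{$\bm G$-ParSumCat}^{ws}$).

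The main obstacle is to verify that $\bar\Phi$ is itself \emph{homotopical} with respect to the $G$-global weak equivalences; its categorical homotopy inverse $\bar\Psi$ is then automatically homotopical as well, since for a $G$-global equivalence $f$ in the target the naturality square connecting $\bar\Phi\bar\Psi(f)$ to $f$ through categorical (hence $G$-global) equivalences forces $\bar\Phi\bar\Psi(f)$ to be a $G$-global equivalence, whence $\bar\Psi(f)$ is too once we know $\bar\Phi$ reflects $G$-global equivalences. To verify that $\bar\Phi$ both preserves and reflects $G$-global weak equivalences, I would analyze the fixed-point categories $\Phi(\mathscr C)^{\sat,\varphi}$: saturatedness of $\Phi(\mathscr C)^{\sat}$ gives $\Phi(\mathscr C)^{\sat,\varphi}\simeq\Phi(\mathscr C)^{\sat,\myh\varphi}$, which by Lemma~\ref{lemma:comparison-homotopy-fp} together with the categorical equivalence $\Phi(\mathscr C)\to\Phi(\mathscr C)^{\sat}$ depends, up to natural zigzag of categorical equivalences, only on the underlying $G$-category of $\Phi(\mathscr C)$; an inspection of Schwede's construction of $\Phi$ then identifies this underlying $G$-category with $\mathscr C$ naturally, yielding the desired equivalence $\Phi(\mathscr C)^{\sat,\varphi}\simeq\mathscr C^{\myh\varphi}$ and thus completing the proof.
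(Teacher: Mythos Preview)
Your proposal is correct and follows essentially the same approach as the paper: reduce to the saturated subcategory via Theorem~\ref{thm:parsum-cat-sat-global}, invoke the categorical comparison (Theorem~\ref{thm:categorical-comparison} and Corollary~\ref{cor:sat-ps-cat}), observe that categorical equivalences are $G$-global on both sides, and then verify that $(\blank)^{\sat}\circ\Phi$ preserves and reflects $G$-global weak equivalences via the chain $\Phi(\mathscr C)^{\sat,\varphi}\simeq\Phi(\mathscr C)^{\myh\varphi}\simeq(\forget\Phi(\mathscr C))^{\myh\varphi}\simeq\mathscr C^{\myh\varphi}$. The paper packages this last step as a separate lemma and cites \cite[Remark~11.4]{schwede-k-theory} for the natural equivalence $\forget\Phi(\mathscr C)\simeq\mathscr C$, but the logical structure is identical to what you outline.
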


\begin{rk}
Since the inclusion defines a homotopy equivalence between $\cat{PermCat}$ and $\cat{SymMonCat}$, the above theorem is the appropriate $G$-global generalization of Theorem~\ref{thm:perm-vs-par-sum-cat} from the introduction. We can moreover conclude from it that if $\widehat\Phi\colon\cat{$\bm G$-SymMonCat}\to\cat{$\bm G$-ParSumCat}$ is any (hypothetical) extension of $\Phi$ respecting underlying equivalences of categories, then the homotopical functor $(\blank)^{\sat}\circ\widehat\Phi\colon\cat{$\bm G$-SymMonCat}\to\cat{$\bm G$-ParSumCat}$ is a homotopy equivalence with respect to the $G$-global weak equivalences on both sides, and analogously for $\cat{$\bm G$-SymMonCat}^0$.
\end{rk}

\begin{rk}
Elaborating on Schwede's argument cited above, we showed in \cite[Proposition~4.4]{perm-parsum-categorical} that there is no small permutative category $\mathscr C$ at all such that the global algebraic $K$-theory of $\Phi(\mathscr C)$ is equivalent to $\textbf{K}_{\textup{gl}}(\mathbb C)$, which in particular implies that there is no notion of weak equivalence on $\cat{PermCat}$ such that $\Phi$ becomes an equivalence of homotopy theories with respect to the global weak equivalences on $\cat{ParSumCat}$, and this even remains impossible when we pass to the larger class of those morphisms that induce global weak equivalences on $K$-theory. Thus, the passage to saturations is not a mere artifact of our proof (or our prejudices against the parsummable categories $\Phi(\mathscr C)$ and their global $K$-theory), but actually necessary.
\end{rk}

For the proof of the theorem we need the following lemma:

\begin{lemma}
The functor $(\blank)^{\textup{sat}}\circ\Phi\colon\cat{$\bm G$-PermCat}\to\cat{$\bm G$-ParSumCat}$ preserves and reflects $G$-global weak equivalences.
\begin{proof}
We fix a universal subgroup $H\subset\mathcal M$ together with a homomorphism $\varphi\colon H\to G$. If now $f\colon\mathscr C\to\mathscr D$ is a $G$-equivariant strict symmetric monoidal functor, then Theorem~\ref{thm:sat-cat} implies that $\Phi(f)^{\sat}$ induces a weak homotopy equivalence on $\varphi$-fixed points if and only if $\Phi(\mathscr C)^{\myh\varphi}\to\Phi(\mathscr D)^{\myh\varphi}$ is a weak homotopy equivalence, which is in turn equivalent by Lemma~\ref{lemma:comparison-homotopy-fp} to $(\forget\Phi(f))^{\myh\varphi}$ being a weak equivalence. Finally, we have natural equivalences of categories $\forget\Phi(\mathscr C)\simeq\mathscr C$, $\forget\Phi(\mathscr D)\simeq\mathscr D$ by \cite[Remark~11.4]{schwede-k-theory}, and these are automatically $G$-equivariant as the $G$-actions on the left hand sides are induced by functoriality of $\Phi$.

Thus, we altogether see that $\Phi(f)^\sat$ induces a weak homotopy equivalence on $\varphi$-fixed points if and only if $f^{\myh\varphi}$ is a weak homotopy equivalence. Letting $\varphi$ vary, this precisely yields the definitions of the $G$-global weak equivalences on $\cat{$\bm G$-ParSumCat}$ and $\cat{$\bm G$-PermCat}$, respectively, which completes the proof of the theorem.
\end{proof}
\end{lemma}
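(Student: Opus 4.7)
The plan is to reduce the statement to a chain of equivalences on $\varphi$-fixed points for universal $H\subset\mathcal M$ and homomorphisms $\varphi\colon H\to G$. Fix such $H$ and $\varphi$, as well as a morphism $f\colon\mathscr C\to\mathscr D$ in $\cat{$\bm G$-PermCat}$. Unpacking the definitions, what must be shown is that $\Phi(f)^{\sat,\varphi}$ is a weak homotopy equivalence for every such $H,\varphi$ if and only if $f^{\myh\varphi}$ is.

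The first step I would carry out uses Theorem~\ref{thm:sat-cat} twice: the saturation map $s\colon\Phi(\mathscr C)\to\Phi(\mathscr C)^{\sat}$ is a categorical equivalence, and $\Phi(\mathscr C)^{\sat}$ is saturated. By the remark following Lemma~\ref{lemma:cat-between-ws}, categorical equivalences between saturated (or just between weakly saturated) objects induce equivalences on $(\blank)^{\myh\varphi}$, so the first property gives $\Phi(\mathscr C)^{\myh\varphi}\simeq\Phi(\mathscr C)^{\sat,\myh\varphi}$. The second property gives $\Phi(\mathscr C)^{\sat,\varphi}\simeq\Phi(\mathscr C)^{\sat,\myh\varphi}$. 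Naturality of both comparisons in $f$ then yields that $\Phi(f)^{\sat,\varphi}$ is a weak homotopy equivalence iff $\Phi(f)^{\myh\varphi}$ is.

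Next, I would apply Lemma~\ref{lemma:comparison-homotopy-fp} to replace the $E\mathcal M$-enriched homotopy fixed points $\Phi(\mathscr C)^{\myh\varphi}$ by the plain ones $(\forget\Phi(\mathscr C))^{\myh\varphi}$ via a natural zig-zag of equivalences, and then invoke the natural $G$-equivariant underlying equivalence of categories $\forget\Phi(\mathscr C)\simeq\mathscr C$ from \cite[Remark~11.4]{schwede-k-theory} (the $G$-equivariance being automatic, as the $G$-actions on both sides arise by functoriality of $\Phi$) to obtain $(\forget\Phi(\mathscr C))^{\myh\varphi}\simeq\mathscr C^{\myh\varphi}$. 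Chaining this with the previous step gives a natural zig-zag of equivalences between $\Phi(\mathscr C)^{\sat,\varphi}$ and $\mathscr C^{\myh\varphi}$, so $\Phi(f)^{\sat,\varphi}$ is a weak homotopy equivalence iff $f^{\myh\varphi}$ is. Letting $\varphi$ and $H$ vary then recovers exactly the two definitions of $G$-global weak equivalence.

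I do not expect a real obstacle: the proof is a formal concatenation of previously established results. The only point requiring a bit of care is verifying that each intermediate comparison map is natural enough to be $G$-equivariant, so that it descends to the $\myh\varphi$-fixed points; this is built into the constructions since both $\Phi$ and $(\blank)^{\sat}$ pull $G$-actions through functoriality, and the underlying-equivalence statement from \cite{schwede-k-theory} is itself natural.
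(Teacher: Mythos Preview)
Your proposal is correct and follows essentially the same route as the paper's proof: both arguments reduce to showing that $\Phi(f)^{\sat,\varphi}$ is a weak homotopy equivalence iff $f^{\myh\varphi}$ is, by combining Theorem~\ref{thm:sat-cat} (saturation of $\Phi(\mathscr C)^{\sat}$ and categorical equivalence of $s$), Lemma~\ref{lemma:comparison-homotopy-fp}, and the natural $G$-equivariant equivalence $\forget\Phi(\mathscr C)\simeq\mathscr C$ from \cite[Remark~11.4]{schwede-k-theory}. One small remark: when you invoke the remark after Lemma~\ref{lemma:cat-between-ws} to justify that $s$ induces an equivalence on $(\blank)^{\myh\varphi}$, note that this step requires no saturation hypothesis at all---any categorical equivalence induces an equivalence on categorical homotopy fixed points, as these are homotopy limits with respect to categorical equivalences---so the argument goes through even though $\Phi(\mathscr C)$ itself is not known to be (weakly) saturated.
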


\begin{proof}[Proof of Theorem~\ref{thm:perm-cat-vs-parsumcat}]
By Theorem~\ref{thm:sat-cat}, the functor $(\ref{eq:perm-cat-vs-parsumcat})$ factors through the inclusion of the full subcategory $\cat{$\bm G$-ParSumCat}^s$; as the latter is a homotopy equivalence with respect to the $G$-global weak equivalences by Theorem~\ref{thm:parsum-cat-sat-global}, it is then enough to show that $(\ref{eq:perm-cat-vs-parsumcat})$ is a homotopy equivalence when viewed as a functor into $\cat{$\bm G$-ParSumCat}^s$. This is true with respect to the \emph{categorical equivalences} by Theorem~\ref{thm:categorical-comparison} together with Corollary~\ref{cor:sat-ps-cat}; moreover, the $G$-global weak equivalences on $\cat{$\bm G$-PermCat}$ are coarser than the categorical ones by Example~\ref{ex:categorical-are-g-global}, and so are the $G$-global weak equivalences on $\cat{$\bm G$-ParSumCat}^s$ by Lemma~\ref{lemma:cat-between-ws}. The claim follows as $(\blank)^\sat\circ\Phi$ preserves and reflects $G$-global weak equivalences by the previous lemma.
\end{proof}

\end{document}